\numberwithin{equation}{section}
\newtheorem{teo}{Theorem}[section]
\newtheorem*{Theorem*}{Theorem}
\newtheorem{cor}[teo]{Corollary}
\newtheorem{lem}[teo]{Lemma}
\newtheorem{prop}[teo]{Proposition}
\newtheorem{prob}[teo]{Problem}
\theoremstyle{definition}
	\newtheorem{defin}[teo]{Definition}
	\newtheorem{ese}[teo]{Example}
	\newtheorem{oss}[teo]{Remark} }
\DeclareFontFamily{U}{wncy}{}
\DeclareFontShape{U}{wncy}{m}{n}{<->wncyr10}{}
\DeclareSymbolFont{mcy}{U}{wncy}{m}{n}
\DeclareMathSymbol{\sha}{\mathord}{mcy}{"58}
\newcommand{\N}{\mathbb{N}}
\newcommand{\Z}{\mathbb{Z}}
\newcommand{\C}{\mathbb{C}}
\renewcommand{\S}{\mathbb{S}}
\renewcommand{\i}{\mathrm{i}}
\renewcommand{\d}{\mathrm{d}}
\newcommand{\Y}{\mathrm{Y}}
\newcommand{\g}{\mathfrak{g}}
\newcommand{\zem}{\mathfrak{Z}}
\newcommand{\cB}{\mathcal{B}}
\newcommand{\Sel}{\mathrm{Sel}}
\newcommand{\Fp}{\mathrm{F}}
\newcommand{\rFp}{\tilde{\mathrm{F}}}
\newcommand{\f}{\mathsf{f}}
\begin{document}
\allowdisplaybreaks

\newcommand{\arXivNumber}{1911.11040}

\renewcommand{\PaperNumber}{018}

\FirstPageHeading

\ShortArticleName{Algebras of Non-Local Screenings and Diagonal Nichols Algebras}

\ArticleName{Algebras of Non-Local Screenings\\ and Diagonal Nichols Algebras}

\Author{Ilaria FLANDOLI and Simon D.~LENTNER}
\AuthorNameForHeading{I.~Flandoli and S.D.~Lentner}
\Address{Department of Mathematics, University of Hamburg,\\
Bundesstra{\ss}e 55, 20146 Hamburg, Germany}
\Email{\href{mailto:ilaria.flandoli@gmail.com}{ilaria.flandoli@gmail.com}, \href{mailto:simon.lentner@uni-hamburg.de}{simon.lentner@uni-hamburg.de}}
\URLaddress{\url{http://simon.lentner.net}}

\ArticleDates{Received November 02, 2020, in final form February 14, 2022; Published online March 11, 2022}

\Abstract{In a vertex algebra setting, we consider non-local screening operators associated to the basis of any non-integral lattice. We have previously shown that, under certain restrictions, these screening operators satisfy the relations of a quantum shuffle algebra or Nichols algebra associated to a diagonal braiding, which encodes the non-locality and non-integrality. In the present article, we take all finite-dimensional diagonal Nichols algebras, as classified by Heckenberger, and find all lattice realizations of the braiding that are compatible with reflections. Usually, the realizations are unique or come as one- or two-parameter families. Examples include realizations of Lie superalgebras.	We then study the associated algebra of screenings with improved methods. Typically, for positive definite lattices we obtain the Nichols algebra, such as the positive part of the quantum group, and for negative definite lattices we obtain a certain extension of the Nichols algebra generalizing the infinite quantum group with a large center.}

\Keywords{Nichols algebras; quantum groups, screening operators; conformal field theory}

\Classification{16T05; 17B69}
	
{\small \tableofcontents}

\section{Introduction}
Let $\Lambda$ be a lattice with basis $a_1,\dots,a_r$ and inner product $(-,-)$, not necessarily integral. We consider the Gram matrix $m_{ij}=(a_i,a_j)$ and a braiding matrix $q_{ij}={\rm e}^{\pi {\rm i} (a_i,a_j)}$. In the theory of Nichols algebras, we can associate to the data $(q_{ij})$ the Nichols algebra $\mathcal{B}(q_{ij})$ of diagonal type. In the theory of vertex algebras, we can associate to $\Lambda,(m_{ij})$ an abelian intertwining algebra and to each $a_i$ a non-local screening operator $\zem_{i}$ on the Heisenberg vertex algebra $\mathcal{H}^r$.

 It was proven by the second author in \cite{Len17} that the screening operators $\zem_{i}$ obey the relations of the Nichols algebra $\mathcal{B}(q_{ij})$, provided that $(m_{ij})$ is \emph{subpolar} (Definition~\ref{def_smallnessF}). This condition imposes a certain lower bound for the sums of~$m_{ij}$ over any subset of indices and it ensures convergence of integrals such as
 \[\int_0^1\cdots\int_0^1 \prod_{i<j} (z_i-z_j)^{m_{ij}}\mathrm{d} z_1\cdots\mathrm{d}z_n.\]

The goal of this article is to find all lattices $\Lambda,(m_{ij})$, such that the associated diagonal braiding $(q_{ij})$ gives a \emph{finite-dimensional} Nichols algebra in the classification of \cite{Heck06}, and such that the Weyl reflections on $(q_{ij})$ in the theory of Nichols algebras lift to reflections on $(m_{ij})$ in a suitable sense (Definition \ref{Mm}). In this case we say that $\Lambda$, $(m_{ij})$ \emph{realizes} the braiding matrix~$(q_{ij})$. This provides an interesting zoo of examples to extend the screening method and all its related questions, such as the \emph{logarithmic Kazhdan Lusztig conjecture} to cases beyond quantum groups. This main motivation for our article will be discussed further below.

As a second goal, for each realizing lattice $\Lambda$ we continue to study the algebra of screening operators. First we determine which data $(m_{ij})$ are subpolar, in which case the screening algebra is a surjective image of the Nichols algebra. We find that all Nichols algebras that do not come as $q$-parametrized families have at most one realizing lattice, and this realization is subpolar. For the realizing lattices that come in $q$-parametrized families,\footnote{Cartan type, super Lie type and the color Lie algebra \cite[Table~1, row~6]{Hecklist}. Some come with more than one parameter~$q$, $m$. The families \cite{Heck06} rows~5 and~6 have no realizations.} we encounter $m$-parametrized families of realizations with $q={\rm e}^{\pi\i m}$ and outside a certain range of parameters~$m$ the subpolarity of~$(m_{ij})$ fails. In these situations we study the products of screening operators and their relations using analytic continuation. Since we have not yet found a general approach for this, we explicitly treat the most common relations: the commutation relations $[x_i,x_j]_q$, the truncation relations of simple root vectors $x_i^n=0$, and the Serre relations $[x_i,[x_i,x_j]_q]_q$ for a~Cartan matrix entry $c_{ij}=-1$, where $[x_i,x_j]_q$ is the braided commutator defined in formula~\eqref{formula_qcommutator}. As it turns out, each of these relations can be analytically continued and holds outside an explicit set of poles.

The method of analytic continuation we employ to this end are on the one hand explicit expressions in terms of Gamma functions due to the Selberg integral formula and to the $A_n$-Selberg integral formula~\cite{TV03,War09}, and on the other hand a new less explicit analytic continuation for general Selberg integrals using recursion in Section~\ref{sec_recursion}. We believe that the latter method can be greatly improved and should settle a large class of relations. Moreover we believe that our results suggest an explicit $\g$-Selberg integral formula for all generalized root systems (e.g., for Lie superalgebras), but both questions are beyond the scope of the present article. From a~Nichols algebra perspective it is interesting, that we require the knowledge which \emph{order} the zero in the quantum symmetrizer of a relation has. For example, the quantum Serre relation for a~simply-laced quantum group at $q^2\neq -1$ exhibits a~simple zero. At $q^2=-1$ it exhibits a~double zero and at the same time the Selberg integral exhibits for $m\in -\frac{1}{2}-\N_0$ a~simple pole, thus the quantum Serre relation holds even in these cases.

As a main example, let $\mathfrak{g}$ be a complex finite-dimensional semisimple Lie algebra of rank $r$ with simple roots $\alpha_1, \dots, \alpha_r$ and Killing form normalized to $(\alpha_i,\alpha_i)\in\{2,2d\}$ with lacity $d\in\{1,2,3\}$. Let $q\neq \pm1$ be a root of unity and consider the braiding $q_{ij} = q^{(\alpha_i, \alpha_j)}$, whose associated Nichols algebra $\mathcal{B}(q_{ij})$ is the positive part of the small quantum group $u_q(\mathfrak{g})^+$ \cite{AS10, Lusz93,Rosso98}. For every rational number $m\not\in \Z$ with $q= {\rm e}^{{\rm i} \pi m}$ we obtain a realizing lattice by taking the root lattice of~$\mathfrak{g}$ rescaled by~$m$.

We find that the rescaled root lattices are the only realizations $(m_{ij})$ of $(q_{ij})$ if $\operatorname{ord}\big(q^2\big)>d+1$ and $\operatorname{ord}\big(q^{2d}\big)>2$. Otherwise, there exist additional families of realizations $(m_{ij})$, mostly associated to Lie superalgebras, which incidentally have for these small orders of $q$ the same braiding matrices, but different realizing lattices. For example, for $\g$ of type $A_{n}$ at $q^2=-1$ we have additional realizations associated to $A(n'\vert n'')$, $n'+n''=n-1$ that contain root lattices of $A(n')$, $A(n'')$ rescaled by $m'$, $m''$ with $m'+m''=1$ and a fermionic root of length~$1$. Since the screening operators and their relations depend on $(m_{ij})$, not just on $(q_{ij})$, these realizations behave very different, more similar to Lie superalgebras.

Assume now that $(m_{ij})$ is the realization obtained by rescaling the root lattice of $\g$ by $m$, assume further that $\g$ is simply-laced and $q^2\neq -1$, then we can summarize our analytic continuation results on the relations of the screening operators as follows:
\begin{itemize}\itemsep=0pt
		\item For $0<m<1$ the parameters $(m_{ij})$ are subpolar and thus all Nichols algebra relations hold. Differently spoken, the algebra of screenings is a surjective image of the Borel part of the small quantum group $u_q(\g)$.
		\item For $m<0$ the Serre relations hold. The truncation relations of simple root vectors fail. Differently spoken, for $q^2\neq 1$ the algebra of screenings is a surjective image of the Borel part of the Kac--DeConcini--Procesi quantum group $U_q^\mathcal{K}(\g)$.
		\item for $m>1$ the Serre relations and the truncation relations of simple root vectors hold.
	\end{itemize}
We conjecture that for $m>1$ also the truncation relations of non-simple root vectors hold, so that also in this case we get the Borel part of $u_q(\g)$. 	
	For $q^2=-1$ there is an additional relation $[x_j,[x_i,[x_j,x_k]_q]_q]_q=0$ in \cite{AA17}, which we conjecture to hold for $m<0$, but we have no guess whether it holds for $m>1$.	We conjecture that all surjections above are in fact isomorphisms by the universal property of the Nichols algebra, since the screening operators have a Leibniz-type product rule.
	
The similar statement for simply-laced quantum supergroups is more complicated and depends on $m'$, $m''$ and their relation in the specific case, see Corollary \ref{cor_superNA}. We encounter again the small quantum super group and the Kac--DeConcini--Procesi quantum super group. We also encounter versions of the latter where only the truncation relations for simple root vectors rescaled by $m'$ or $m''$ fail, for example in $A(n',n'')$ those in $A(n')$ or $A(n'')$, which are interesting intermediate Hopf algebras. The truncation relations for the fermionic simple root $x_\f^2=0$ always holds as expected. The additional relation has ranges for $m$ in which we can prove it, but in particular for large $m'$, $m''$ we have no assertion on the additional relation or the Serre relation.

We now discuss the background and application on the vertex algebra side in more detail: A vertex algebra $\mathcal{V}$ \cite{FBZ04, Kac98} is, very roughly speaking, an algebra whose multiplication depends analytically on a complex variable $z$.
Screening operators $\zem_a$ are linear endomorphisms on $\mathcal{V}$ constructed by taking the left-multiplication with a fixed element $a\in\mathcal{V}$ and integrating~$z$ over a circle around the singularity at $z=0$. Using the operator product expansion one can compute the commutator of such operators, and in this way screening operators are important sources of actions of Lie algebras on vertex algebras.

Vertex algebras have a natural notion of representations, and under certain fi\-ni\-te\-ness-as\-sump\-tions the category of representations becomes a braided tensor category \cite{HLZ06}. The tensor product is defined by a universal property involving so-called intertwining operators and the braiding comes from analytically continuing these from $z$ to $-z$ around their singularity at $z=0$. In particular, nontrivial double braiding means the analytical continuation is multivalued.

We can now attempt to define screening operators $\zem_a$ for elements $a$ which are not in the vertex algebra $\mathcal{V}$ but in some vertex algebra module $\mathcal{M}$, using intertwining operators instead of the vertex algebra's multiplication. We call these \emph{non-local screening operators}, because we are now dealing with integrals over multivalued functions and we encounter nontrivial double braidings. Such screening operators are less well-behaved, e.g., with respect to the Virasoro action, and they do not form a Lie algebra. The main result of \cite{Len17} is that instead, under the assumption of subpolarity mentioned above, any fixed set of non-local screenings on the Heisenberg vertex algebra and lattice vertex algebras (and conjecturally on every suitable vertex algebra) generates the Nichols algebra associated to the braiding on $\mathcal{M}$. Instances of non-local screening operators have appeared in the literature for a~long time (see, e.g., \cite{FF84,Fel89}). In the setting discussed next, it was conjectured that they generate the Borel parts of quantum groups, which is now settled by the results in~\cite{Len17}. Semikhatov and Tipunin proposed in \cite{Sem14, ST13} to extend this program to Nichols algebras of diagonal type. Our article builds on these ideas.

One major intention behind studying these non-local screening operators was and is the \emph{logarithmic Kazhdan Lusztig conjecture} \cite{AM08,FT10,FGST06a}, which roughly states the following: Fix a~semisimple complex finite-dimensional Lie algebra~$\g$ and consider the vertex algebra $\mathcal{V}_\Lambda$ associated to the rescaled root lattice $\Lambda=\sqrt{p}\Lambda_\g$, whose braided tensor category of representations is the category of vector spaces graded by $\Lambda^*/\Lambda$. Consider the non-local screening operators~$\zem_{a_i}$ associated to inversely rescaled coroots $a_i=\alpha_i^\vee/\sqrt{p}$. Then the kernel of all $\zem_{a_i}$ defines a~vertex subalgebra $\mathcal{W}\subset\mathcal{V}$, whose category of representations is conjecturally a non-semisimple modular tensor category equivalent to the category of representations of the small quantum group $u_q(\g)$, $q={\rm e}^{\frac{2\pi {\rm i}}{2p}}$, more precisely to some quasi-Hopf algebra variant. In the smallest case $\g=\mathfrak{sl}_2$ the conjecture was solved affirmatively, after about 20 years of research by several groups \cite{AM08,CLR21, CGR20,FGST06a, GN21, NT11, TW13}. For quantum groups associated to arbitrary~$\g$ see \cite{AM14,FT10,GLO18,Len17}.

In a more general setting and in view of the mentioned results on Nichols algebras, the second author has proposed the following problem, which is probably very hard:
\begin{prob} Let $\mathcal{V}$ be a vertex algebra and $\zem_{a_1},\dots,\zem_{a_r}$ a set of non-local screenings generating a Nichols algebra $\mathfrak{B}$ $($or some extension, due to poles$)$. What is the category of representations of the kernel of screenings $\mathcal{W}\subset\mathcal{V}$?
\end{prob}

We expect that the finite-dimensionality of $\mathfrak{B}$ implies finiteness conditions on $\mathcal{W}$ and its category of representations. From a physics intuition, the problem can be interpreted as asking for the representation theory of an orbifold of $\mathcal{V}$ by an action of the Nichols algebra $\mathfrak{B}$ or some extension. A generic guess for a braided tensor category would be the relative Drinfeld center of the representations of the algebra $\mathfrak{B}$ inside $\operatorname{Rep}(\mathcal{V})$, relative to $\operatorname{Rep}(\mathcal{V})$. In the original setting of the logarithmic Kazhdan Lusztig conjecture, this correctly reproduces the expected quasi-Hopf algebra variant of the small quantum group \cite{CGR20,GLO18, Ne21}. However, already for the so-called $p,p'$-models, which would correspond to $\mathfrak{sl}_2$, $q={\rm e}^{\pi {\rm i}\frac{p'}{p}}$, the result is slightly larger, see, e.g.,~\cite{GRW09}. A~second question is how general are the modular tensor categories obtained by this procedure:

\begin{prob}
Classifying semisimple modular tensor categories is hard, let alone non-semi\-simple modular tensor categories. Can we classify modular tensor categories $\mathcal{U}$ whose semisimple part $($roughly speaking$)$ is a fixed semisimple modular tensor category~$\mathcal{C}$? Again, the generic choice for such $\mathcal{U}$ is a relative Drinfeld center of representations of some Nichols algebra $\mathfrak{B}\in\mathcal{C}$.
\end{prob}

A very small step towards such a classification for $u_q(\mathfrak{sl}_2)$ was recently obtained in work of the second author~\cite{CLR21}, which finally settled the smallest case of the logarithmic Kazhdan Lusztig conjecture. The overall idea behind this second proposed problem is a categorical version of the Andruskiewitsch--Schneider program~\cite{AS10}, which aims to classify finite-dimensional Hopf algebras for a given semisimple coradical, and additionally to ask for the existence of a nondegenerate braiding. For example, the main result in cit.~loc.~is a classification of all finite-dimensional Hopf algebras with coradical a finite abelian group algebra $\C[A]$ (of order excluding certain small prime divisors) in terms of one ore more Nichols algebras $u_q(\mathfrak{g})^+$ in the modular tensor category of $A$-Yetter--Drinfeld modules, plus so-called lifting data. In the present context, we would equivalently speak about the modular tensor category $\mathcal{C}=\operatorname{Vect}_{A}$ and different choices of quadratic forms on $A$, and we want to ask further which of the lifting data admits a nondegenerate braiding. A probable answer would be, that only the quantum doubles $U(\chi)$ of Nichols algebras of diagonal type defined in~\cite{Heck07} (respectively again quasi-Hopf algebra versions, depending on the quadratic form) give rise to such modular tensor categories with semisimple part $\mathcal{C}$. Of course there might be further examples not coming from quasi-Hopf algebras.

The intend behind the present article is to conduct a sweep for all possible examples, in which $\mathcal{V}$ is still the lattice vertex algebra and the Nichols algebra involved is not necessarily associated to a quantum group, but finite-dimensional. Thereby we encounter most (but not all) finite-dimensional Nichols algebras of diagonal type. Our guess would be that the kernel of screenings has the same modular tensor category of representations as the associated~$U(\chi)$. Where we have parametrized families of solutions (in particular in Lie and super-Lie type) we have ranges of parameters in which some or all truncation relations in the Nichols algebras do not hold. One would expect the associated $\mathcal{W}$ to have a representation category related to the relative Drinfeld center of this algebra, or, more algebraically spoken, to the category of representations of a mixed quantum group, see, e.g., \cite{G21, GG17} and \cite{Ang16, L19}. A~further step would be to consider the folded Nichols algebras in~\cite{Len14} over central extensions of abelian groups, for which $\mathcal{V}$ should be taken to be the respective orbifold of a lattice vertex algebra.

The paper is organized as follows:
In Section~\ref{section2} we present preliminaries on Nichols algebras and in particular on the Weyl groupoid of reflections for a braiding of diagonal type.

In Section~\ref{section3} we briefly present the notion of a vertex algebra $\mathcal{V}_\Lambda$, its representation theory
and the action of screening operators on it. Then we review the main result and its proof in~\cite{Len17} that non-local screenings $\zem_i=\zem_{a_i}$ for the top elements ${\rm e}^{a_i}$ in modules of the Heisenberg vertex algebra fulfill the Nichols algebra relations, if the symmetric matrix $m_{ij}=(a_i,a_j)$ is \emph{subpolar} (Definition \ref{def_smallnessF}), which puts lower bounds on the sums of $m_{ij}$ over all subsets $J$ of indices and ensures the convergence of the relevant integrals and related hypergeometric series. As we show, subpolarity of $(m_{ij})$ and of all matrices $(m_{ij}^\iota)$ obtained from $(m_{ij})$ by repeating some rows and columns follows, if the matrix $(m_{ij})$ is positive definite and all $|m_{ii}|\leq 1$. Under the assumption of subpolarity, the Nichols algebra relations for screening operators follow by expanding monomials of screening operators in terms of certain integrals $\mathrm{F}((m_i),(m_{ij}))$, which in turn can be rewritten as a quantum symmetrizer of another integral $\tilde{\mathrm{F}}((m_i),(m_{ij}))$, which exists for subpolar parameters.

In Section~\ref{section4} we want to improve the results reviewed in the last section by analytically continuing $\tilde{\mathrm{F}}((m_i),(m_{ij}))$ to parameters $(m_{ij})$ beyond subpolarity and study which of the most common Nichols algebra relations still hold for screening operators: In Section~\ref{sec_commutativity} we take an explicit analytic continuation in terms of Beta functions and show that the commutativity relations always continue to hold (see however Example \ref{ex_notcommuative} for a counterexample). In Section~\ref{sec_truncation} we take an explicit analytic continuation in terms of Gamma functions using the Selberg integral formula \cite{Sel44} and thereby we show that the truncation relations of simple root vectors continue to hold iff $m_{ii}>0$. In Section \ref{sec_recursion} we derive (somewhat similar to the Gamma function) a recursive formula for generalized Selberg integrals for $n=3$ and thereby we obtain a non-explicit analytic continuation with explicit simple poles in the region $m_{12}+m_{23}+m_{13}>-2$ and $m_{12}>-1$. We would expect this ansatz to work in general if the subpolarity condition holds merely for $J=I$. This result implies our result on the Serre relations for realizations $(m_{ij})$ of Cartan type for $m>0$, while for $m<0$ one would have to consider additional boundary terms, and for the realization $(m_{ij})$ of super Lie type for $m<0$, while for $m>0$ the subpolarity condition fails even for $J=I$.
On the other hand, we use the $A_n$-Selberg integral formula in \cite{TV03,War09} which produces an explicit expression in terms of Gamma functions for realizations of type $A_n$, but only for a certain linear combination of generalized Selberg integrals. For this reason we can only put this to use for Serre relations of Cartan type for $m<0$. On the other hand this second approach has the potential to give explicit answers for, say, truncation relations of non-simple root vectors in Cartan type $A_n$.

In Section~\ref{section5} we formulate the precise conditions under which we call $(m_{ij})$ (resp.~the associated lattices $\Lambda$) a \emph{realization} of a given braiding~$(q_{ij})$. We also introduce the notions of a pair of vertices $(i,j)$ resp.~a root $\alpha$ being $m$-Cartan or $m$-truncation, according to the respective notions of being $q$-Cartan and $q$-truncation in Section~\ref{section2}, and we discuss the relations between these notions. We visualize a realization by adding the rational numbers $m_{ii}$, $m_{ij}+m_{ji}=2{m_{ij}}$ below a $q$-diagram with decorations $q_{ii}$, $q_{ij}q_{ji}$. For example, the Cartan type realization of $u_q(\g)^+$ in type $C_3$ in the next section with $q={\rm e}^{\pi\i m}$ is depicted as
\[
	\begin{tikzpicture}
	\draw (0,0)--(1.6,0) (1.8,0)--(3.4,0);
	\draw (-0.1,0) circle[radius=0.1cm] node[anchor=south]{$ q^2$} node[anchor=north]{$ 2m$}
	(1.7,0) circle[radius=0.1cm] node[anchor=south]{$ q^2 $}node[anchor=north]{$2m$}
	(3.5,0) circle[radius=0.1cm] node[anchor=south]{$ q^4 $}node[anchor=north]{$ 4m$};
	\draw (0.9,0) node[anchor=south]{$ q^{-2}$}node[anchor=north]{$ -2m\;\;$};
	\draw (2.7,0) node[anchor=south]{$ q^{-4}$} node[anchor=north]{$ -4m\;\;$};
	\end{tikzpicture}
\]

In Section~\ref{Cartan} we address braidings and Nichols algebras of Cartan type, which give the Borel parts $u_q(\g)^+$ of the small quantum groups. We rescale the root lattice of $\g$ by a parameter $m$ and prove that this lattice $\Lambda$ with $m_{ij}=(\alpha_i,\alpha_j)_\g m$ always realizes this braiding. Then we aim to calculate the algebra of screening operators depending on $m$. We establish that subpolarity holds for $\frac{1}{2d}\geq m>0$. Beyond these values, the truncation relations of simple root vectors hold for $m>0$ and the Serre relations (for $\g$ simply laced) hold regardless of $m$. Regrettably, we are neiter able to check the truncation relations for non-simple root vectors (although we would assume them to hold accordingly) nor the additional relations that appear for certain small orders of $q$ \cite{AA17}. For both we would either require analytic continuation of more complicated integrals (extending the techniques discussed in Section~\ref{section4}) or a reflection theory of screening operators, which links the screening operator expression associated to a non-simple root to the corresponding screening operator of a simple root after a change of base in $\Lambda$ by reflection. On the other hand, we regard the possibly failing additional relations to be interesting candidates for highly nontrivial extensions or liftings of Nichols algebra appearing as algebras of screenings.

In Section~\ref{SuperLie} we proceed as in the previous section, but for Lie superalgebras. In Definition \ref{mSuperLie} we consider the standard chamber, in which there is a single fermionic simple root $\alpha_{\f}$ and make an ansatz for a matrix $(m_{ij})$ depending in general on $2$ parameters $m'$, $m''$ corresponding to a~rescaling of the root lattices of the subsystems generated by $\alpha_1,\dots, \alpha_{\f-1}$ and $\alpha_{\f+1},\dots,\alpha_r$ and ${m_{\f\f}=1}$. To prove that this is indeed a realization, we formulate in Corollary \ref{Mcondotherchamb} a~condition for all roots, which we prove in Corollary~\ref{3 conditions II} to hold automatically except in four explicit situation. When we go through all root systems in Sections~\ref{subsec_rank2super} and~\ref{subsec_rankNsuper}, we determine these open situations and compute additional conditions relating $m'$, $m''$ such that also these situations hold. We then verify that the subsequent conditions relating $q'$, $q''$ are indeed the conditions given in Heckenberger's list; one could say that we derive the logarithmic versions of these conditions. For example for $A(n',n'')$ we find $m'+m''=1$ and correspondingly the condition $q'q''=-1$.

Proving that $(m_{ij})$ is the unique realization for large order of $q$ in Lemma~\ref{superlieclassification} is on the other hand a simple application of our result in Cartan type. Again, for small order of $q$ we can have roots that are both $q$-Cartan and $q$-truncation, and correspondingly multiple solutions $(m_{ij})$, for which these roots are possibly either $m$-Cartan or $m$-truncation.

In Section~\ref{section8} we construct realizing lattices for all other finite-dimensional diagonal Nichols~algebras in rank~2. In Section~\ref{Strange} we construct realizations $(m_{ij})$ and check that they are compatible under reflection. The first case (row~6 in \cite[Table~1]{Hecklist}) corresponds to a $\Z_3$-(co\-lor-)Lie algebra \cite{AAB14, Yam07}, it has a free parameter~$q$ and is accordingly again realized by a $1$-parameter family of lattices. As for the Lie and Lie super type we get a Nichols algebra for $m>0$, while for $m<0$ some truncation relations fail. For $q=-1$ there are again additional relations in~\cite{AA17} that we cannot account for. For all other Nichols algebras in rank 2 we find a unique realization, subpolarity holds and the screening algebra is the Nichols algebra.
In Section~\ref{Classification} we show that the examples presented in the previous sections
exhaust all realizing lattices for rank~2 finite-dimensional diagonal
Nichols algebras braiding and that the classification is thus complete.

Finally in Sections~\ref{rank3} and~\ref{section10} we generalize the construction and classification to rank~3 and explain how the answer in rank $\geq 4$ can be obtained in each instance. We conclude by a table of all realizations in rank~$2$ and~$3$.

\section{Preliminaries on Nichols algebras}\label{section2}
We start by giving the basic definitions and examples regarding Nichols algebras of diagonal type. For a thorough account, we refer the reader to the text book~\cite{HS20} and the survey~\cite{AA17}, which includes generators and relations for each Nichols algebra of diagonal type.

\subsection{Definition and properties}
In this article we work over the field of complex numbers $\C$. A \emph{braided vector space} $(M,c)$ is a~vector space together with a \emph{braiding} $c\colon M\otimes M\to M\otimes M$, which is a linear map that fulfills the braid relation or Yang--Baxter equation
\begin{gather*}
(\mathrm{id}\otimes c)(c\otimes \mathrm{id})(\mathrm{id}\otimes c)
=(c\otimes \mathrm{id})(\mathrm{id}\otimes c)(c\otimes \mathrm{id}).\end{gather*}
Hence, a braided vector space comes with an action $\rho_n$ of the \emph{braid group} $\mathbb{B}_n$ on $M^{\otimes n}$ by acting on the $i$-th and $(i+1)$-th tensor factor:
\[c_{i,i+1}:=\mathrm{id}\otimes \cdots \otimes c \otimes \cdots \otimes \mathrm{id}.\]
In this article we restrict ourselves to braidings of the following type:
\begin{defin}
	Let $M$ be a vector space of dimension~$r$, later called \emph{rank}, and with a fixed basis $x_1,\dots, x_r$. Let $(q_{ij})$ for ${i,j=1,\dots, r}$ be an arbitrary matrix with $q_{ij}\in \C^\times$. With this data we define a \emph{braiding of diagonal type} on~$M$ via
	\[c\colon \ c(x_i\otimes x_j)=q_{ij} x_j\otimes x_i.\]
\end{defin}
\begin{defin}\label{def_QuantumSymmetrizer}
 Let $(M, c)$ be a braided vector space. There is a canonical projection $\mathbb{B}_n\twoheadrightarrow \mathbb{S}_n$, which sends the braiding $c_{i,i+1}$ to the transposition~{$(i,i+1)$}. The set-theoretic \emph{Matsumoto section} $s\colon \mathbb{S}_n \rightarrow \mathbb{B}_n$ is defined by $(i,i+1)\mapsto c_{i,i + 1}$ and by the property $s(xy) = s(x)s(y)$ whenever the length of~$xy$ (as a shortest word in a finitely presented group) is the sum of the lengths of~$x$ and~$y$. Then we define the \emph{quantum symmetrizer} as a linear endomorphism of $M^{\otimes n}$ by\looseness=-1
\begin{equation*}
 \sha_{q,n}: =\sum_{\tau\in \mathbb{S}_n} \rho_n(s(\tau)),
\end{equation*}
where $\rho_n$ is the representation of $\mathbb{B}_n$ on $M^{\otimes n}$ induced by the braiding $c$.
Then the \emph{Nichols algebra} or \emph{quantum shuffle algebra} generated by $(M,c)$ is defined~by
\[\cB(M) :=\bigoplus _{n} M^{\otimes n}/ \operatorname{ker} (\sha_{q,n}).\]
\end{defin}
This particular definition of Nichols algebras is due to Woronowicz \cite{Wor89} and Rosso \cite{Rosso98}, and in this way Nichols algebras will appear in the present article. It enables one in principle to compute $\mathcal{B}(M)$ in each degree, but it is very difficult to find generators and relations for $\mathcal{B}(M)$, since in general the kernel of the map $\sha_{q,n}$ is hard to calculate in explicit terms.
In fact $\cB(M)$ is a Hopf algebra in a braided sense and as such it enjoys several equivalent universal properties, see, e.g.,~\cite[Chapters 1 and~7]{HS20}.

\subsection{Examples}
We now present some examples.
\begin{lem}\label{lm_braidingfactor}
	For a diagonal braiding $(q_{ij})$ we have explicitly
	\[\sha_{q,n}: =\sum _{\sigma\in \mathbb{S}_n} q(\sigma)\sigma,\qquad
	q(\sigma)=\prod_{i<j,\;\sigma(i)>\sigma(j)} q_{ij}.\]
\end{lem}	

 As an immediate consequence:
\begin{lem}[Rank $1$]\label{lm_truncation}
 Let $M=x\C$ be a $1$-dimensional vector space with braiding given by $q_{11}=q\in\C^\times$, then
 \[\C\ni \sha_{q,n}=\sum_{\tau\in\mathbb{S}_n}q_{11}^{|\tau|}=\prod_{k=1}^n\frac{1-q^k}{1-q}=:[n]_q!.\]
 This polynomial has zeros at all $q\neq 1$ of order $\leq n$, so the Nichols algebra is
 \[\cB(M)=\begin{cases}
		\C[x]/\big(x^\ell\big), & \text{for } q\in\mathbb{G}_\ell, \ \ell>1,\\
		\C[x], & \text{else},
 \end{cases}\]
where we denote by $\mathbb{G}_\ell$ the set of primitive $\ell$th root of unity.
\end{lem}
\begin{ese}[quadratic relations]\label{exm_quadraticRelation}
	Let $x_i,x_j\in M$. Then we have a quadratic relation
	\[x_ix_j-q_{ij}x_jx_i=0\]
in the Nichols algebra iff the double braiding is trivial $q_{ij}q_{ji}=1$.
For example, the braiding matrices with all entries $+1$ or $-1$ have as Nichols algebra the polynomial algebra or the exterior algebra, respectively.
\end{ese}

We introduce the notation of the $q$-commutator
\begin{gather}
[x_{i_1}\cdots x_{i_m},x_{j_1}\cdots x_{j_n}]_q\nonumber\\
\qquad{} :=(x_{i_1}\cdots x_{i_m})(x_{j_1}\cdots x_{j_n})
-\bigg(\prod_{\substack{1\leq a\leq m \\ 1 \leq b\leq n}} q_{i_aj_b}\bigg)
(x_{j_1}\cdots x_{j_n})(x_{i_1}\cdots x_{i_m}).\label{formula_qcommutator}
\end{gather}
\begin{defin}Starting with \cite{Heck06}, a braiding of diagonal type $(q_{ij})$ can be encoded into a~graph decorated with complex numbers:	
	Each node corresponds to an element~$x_i$, $1\leq i\leq r$ in the basis of $M$ and is decorated with the complex number $q_{ii}$ (self braiding). The edge between any~$x_i$ and~$x_j$ is decorated with the complex number $q_{ij}q_{ji}$ (double braiding):
 	\begin{gather*}
		\begin{tikzpicture}
		\draw (0,0)--(1.6,0);
		\draw (-0.1,0) circle[radius=0.1cm] node[anchor=south]{$ q_{11}$}
		(1.7,0) circle[radius=0.1cm] node[anchor=south]{$ q_{22}$};
		\draw (0.7,0) node[anchor=south]{$ q_{12}q_{21}$};
		\end{tikzpicture}
	\end{gather*}
For $q_{ij}q_{ji}=1$ we do not draw the edge; as discussed above, in this case $x_i$, $x_j$ commute up to the factor $q_{ij}$. For $q_{ij}q_{ji}\neq 1$ we call the vertices \emph{connected} $i\sim j$. The authors call this a \emph{$q$-diagram} to distinguish it from the notion of a Dynkin diagram in the next subsection.
\end{defin}

A deeper reason behind this definition is that the $q$-diagram captures the information that essentially determines the structure of the Nichols algebra. Nichols algebras with the same $q$-diagram are not isomorphic in general, but they have, e.g., the same dimension. In a suitable sense, they are equivalent up to a $2$-cocycle twist.

\begin{ese}[quantum group, \cite{AS10, Lusz93,Rosso98}]\label{exm_quantumgroup}
 Let $\g$ be a finite-dimensional complex semisimple Lie algebra of rank $r$ with root system $\Phi$ and simple roots $\alpha_1,\dots,\alpha_r$ and Killing form $(\alpha_i,\alpha_j)$. Let $q$ be a primitive $\ell$-th root of unity. Consider the $r$-dimensional vector space $M$ with diagonal braiding
 $q_{ij}:=q^{(\alpha_i,\alpha_j)}$.
 Then the Nichols algebra $\cB(M)$ is isomorphic to the positive part $u_q(\g)^+$ of the small quantum group $u_q(\g)$, which is a quotient of the deformation of the universal enveloping of a Lie algebra $U(\g)$.
\end{ese}

\noindent
As an example, the $q$-diagrams for $\g$ of type $A_1\times A_1$, $A_2$ and $B_2$ are
\[
	\begin{tikzpicture}
	\draw (-0.1,0) circle[radius=0.1cm] node[anchor=south]{$ q^2$}
	(1.7,0) circle[radius=0.1cm] node[anchor=south]{$ q^{2}$};
	\end{tikzpicture}
	\hspace{2cm}
	\begin{tikzpicture}
	\draw (0,0)--(1.6,0);
	\draw (-0.1,0) circle[radius=0.1cm] node[anchor=south]{$ q^2$}
	(1.7,0) circle[radius=0.1cm] node[anchor=south]{$ q^{2}$};
	\draw (0.7,0) node[anchor=south]{$ q^{-2}$};
	\end{tikzpicture}
	\hspace{2cm}
	\begin{tikzpicture}
	\draw (0,0)--(1.6,0);
	\draw (-0.1,0) circle[radius=0.1cm] node[anchor=south]{$ q^2$}
	(1.7,0) circle[radius=0.1cm] node[anchor=south]{$ q^{4}$};
	\draw (0.7,0) node[anchor=south]{$ q^{-4}$};
	\end{tikzpicture}
\]

\subsection{Generalized root system and Weyl groupoid}\label{sec_rootsys}
Every Nichols algebra, which is finite-dimensional and of diagonal type, comes with a generalized \emph{root system}, a \emph{Cartan graph}, a \emph{Weyl groupoid} and a \emph{PBW-type basis} \cite{Heck06,HS08, HY08}. For the same statement beyond diagonal type see \cite{AHS10}. These structures play in many respects a similar role as root system, Cartan matrix and Weyl group play for Lie algebras.

Before giving the definitions relevant to this article, we summarize how Weyl groupoids and generalized root systems are explained geometrically in \cite{Cun10}: A root system in the usual sense is an arrangement of hyperplanes in some Euclidean vector space $\mathbb{R}^r$ and a choice of normal vectors thereof, called \emph{roots}, which is stable under the reflections on these hyperplanes and fulfills some integrality condition. The connected components of the complement of all hyperplanes are called \emph{Weyl chambers}, and the normal vectors of the walls of any fixed Weyl chamber give a basis of the vector space, a \emph{set of simple roots}. The reflections act transitively on the Weyl chambers and every roots has integer coefficients with respect to every fixed set of simple roots.

In a generalized setting, called a \emph{crystallographic arrangement}, we drop the euclidean product of the ambient vector space (so the roots are a choice of linear functions), but we keep demanding that every root has integer coefficients with respect to every set of simple roots. The reflection~$\sigma_i$ on the $i$-th wall is the associated linear map that sends the simple root~$\alpha_i$ to $-\alpha_i$ and $\alpha_j$, $i\neq j$ to $\alpha_j-c_{ij}\alpha_i$, where $-c_{ij}$ is the maximal value for which this is a root. In contrast to usual root systems, the set of all roots, written in different bases of simple roots, may not always give the same set of coordinate tuples. Consequently the reflections generate a Weyl groupoid, whose objects are different types of Weyl chambers, and each object has attached its own Cartan matrix~$c_{ij}$ depicted by a Dynkin diagram. Such data has been axiomatized and classified under the name \emph{Cartan graph} \cite{AHS10, Heck07, HS08, HY08}, which we now discuss.
Actually, this behaviour is already familiar from Lie superalgebras, where different sets of simple roots contain different parities, and in extreme cases such as $D(2,1;\alpha)$ even different Dynkin diagrams, see Example~\ref{D21}.

In the following account we follow \cite[Section~3.2]{Heck07} or \cite[Chapters~9,~10 and~15]{HS20}, where more details can be found:

\newcommand{\ZZ}{\Z}
\newcommand{\Cm}{C}
\newcommand{\cm}{c}
\newcommand{\rfl}{\rho}
\newcommand{\s}{\sigma}
\renewcommand{\r}{\mathrm{r}}
\newcommand{\Cc}{\mathcal{C}}
\newcommand{\Wg}{\mathcal{W}}
\newcommand{\re }{^\mathrm{re}}
\newcommand{\rer }[1]{(R\re)^{#1}}
\newcommand{\rsC }{\mathcal{R}}
\begin{defin}[generalized Cartan matrix]
	Let $I:=\{1,\dots,r\}$, where $r$ is called {\it rank}, and
	$\{\alpha_i\,|\,i\in I\}$ the standard basis of $\ZZ ^I$.
	A {\it generalized Cartan matrix}
	$\Cm =(\cm _{ij})_{i,j\in I}$
	is a matrix in $\ZZ ^{I\times I}$ such that
	\begin{enumerate}\itemsep=0pt
		\item[(M1)] $\cm _{ii}=2$ and $\cm _{jk}\le 0$ for all $i,j,k\in I$ with
		$j\not=k$,
		\item[(M2)] if $i,j\in I$ and $\cm _{ij}=0$, then $\cm _{ji}=0$.
	\end{enumerate}
\end{defin}
In the following definition we think on $A$ as a set of Weyl chambers.
\begin{defin}[Cartan graph]
	Let $A$ be a non-empty set, $\rfl _i\colon A \to A$ a map for all $i\in I$,
	and $\Cm ^a=(\cm ^a_{jk})_{j,k \in I}$ a generalized Cartan matrix
	in $\ZZ ^{I \times I}$ for all $a\in A$. The quadruple
	\[ \Cc = \Cc \big(I,A,(\rfl _i)_{i \in I}, (\Cm ^a)_{a \in A}\big)\]
	is called a \textit{Cartan graph} if
	\begin{enumerate}\itemsep=0pt
		\item[(C1)] $\rfl _i^2 = \mathrm{id}$ for all $i \in I$,
		\item[(C2)] $\cm ^a_{ij} = \cm ^{\rfl _i(a)}_{ij}$ for all $a\in A$ and
		$i,j\in I$.
	\end{enumerate}
\end{defin}

\begin{defin}[Weyl groupoid]
	Let $\Cc = \Cc \big(I,A,(\rfl _i)_{i \in I}, (\Cm ^a)_{a \in A}\big)$ be a
	Cartan graph. For all $i \in I$ and $a \in A$ define $\s _i^a \in
	\operatorname{Aut}(\Z ^I)$ by
	\begin{align*}
	\s _i^a (\alpha_j) = \alpha_j - \cm _{ij}^a \alpha_i \qquad
	\text{for all $j \in I$.}
	\end{align*}
	The \textit{Weyl groupoid of} $\Cc $
	is the category $\Wg (\Cc )$ such that $\operatorname{Ob} (\Wg (\Cc ))=A$ and
	the morphisms are compositions of maps
	$\s _i^a$ with $i\in I$ and $a\in A$,
	where $\s _i^a$ is considered as an element in $\operatorname{Hom} (a,\rfl _i(a))$.
	The cardinality of $I$ is called the \textit{rank of} $\Wg (\Cc )$.
\end{defin}
A Cartan graph axiomatizes a set of Cartan matrices, one for every Weyl chamber (or every type of Weyl chamber) $a\in A$, and reflections $\s_i^a$ on simple roots~$\alpha_i$ in the Weyl chamber~$a$, which are linear maps between a space $\Z^I$ attached to $a$ and to the Weyl chamber after reflection~$\rho_i(a)$.

Let $\Cc $ be a Cartan graph. For all $a\in A$ define the set of \emph{real roots} at $a$ by
\[ \rer a=\big\{ \s _{i_1}\cdots \s_{i_k}(\alpha_j)\,|\,
k\in \N _0,\,i_1,\dots,i_k,j\in I\big\}\subseteq \ZZ ^I.\]
A real root $\alpha\in \rer a$ is called positive if $\alpha\in \N _0^I$.

\begin{defin}[root system]
	Let $\Cc =\Cc \big(I,A,(\rfl _i)_{i\in I},(\Cm ^a)_{a\in A}\big)$ be a Cartan
	graph. For all $a\in A$ let $R^a\subseteq \ZZ ^I$, and define
	$m_{i,j}^a= |R^a \cap (\N_0 \alpha_i + \N_0 \alpha_j)|$ for all $i,j\in
	I$ and $a\in A$. We say that
	\[ \rsC = \rsC (\Cc, (R^a)_{a\in A}) \]
	is a \textit{root system of type} $\Cc $, if it satisfies the following
	axioms.
	\begin{enumerate}\itemsep=0pt
		\item[(R1)]
		$R^a=R^a_+\cup - R^a_+$, where $R^a_+=R^a\cap \N_0^I$, for all
		$a\in A$.
		\item[(R2)]
		$R^a\cap \ZZ\alpha_i=\{\alpha_i,-\alpha_i\}$ for all $i\in I$, $a\in A$.
		\item[(R3)]
		$\s _i^a(R^a) = R^{\rfl _i(a)}$ for all $i\in I$, $a\in A$.
		\item[(R4)]
		If $i,j\in I$ and $a\in A$ such that $i\not=j$ and $m_{i,j}^a$
		finite, then
		$(\rfl _i\rfl _j)^{m_{i,j}^a}(a)=a$.
	\end{enumerate}
\end{defin}
\begin{lem}
 Let $\Cc $ be a Cartan graph and $\rsC$ a root system of type $\Cc$. Let $a\in A$. Then for all $i\neq j$
 \[-c^a_{ij}=\max \big\{m\in\N_0\,|\,\alpha_j+m\alpha_i \in R^a_+\big\}.\]
\end{lem}

As a convention, we name a positive root $\alpha$ by indicating with which multiplicity each simple root $\alpha_i$ appears in $\alpha$ (in some fixed Weyl chamber), e.g.,
\[\alpha_{12}=\alpha_1+\alpha_2,\qquad
\alpha_{123}=\alpha_1+\alpha_2+\alpha_3,\qquad
\alpha_{12 2}=\alpha_{112}=2\alpha_1+\alpha_2, \qquad \dots.\]
For roots $\alpha,\beta\in R^a$ we can define a Cartan matrix entry independent of $a$
 \[-c_{\alpha,\beta}=\max \{m\in\N_0\,|\,\alpha+m\beta \in R^+\}.\]

The root system $\rsC $ is called \textit{finite} iff for all $a\in A$ the
set $R^a$ is finite. By~\cite{CH09} if $\rsC $ is a finite root system
of type $\Cc $, then $\rsC =\rsC \re $, and hence $\rsC \re $ is a root
system of type $\Cc$ in that case.

\begin{ese}[Cartan type]Let $\g$ be a semisimple finite-dimensional complex Lie algebra. It is well-known that is uniquely determined (up to isomorphisms) by its root system, which is the root system of a finite Weyl group $W$. The corresponding Cartan graph has exactly one object~$a$ and~$C^a$ is the Cartan matrix of $W$. The set $R^a$ is the root system of~$W$. Alternatively, we can consider the Cartan graph with one object $a$ for each Weyl chamber and the Cartan matrices attached to all objects are equal.
\end{ese}

 The finite Weyl groupoids are classified in \cite{CH09, CH10}; apart from the finite Weyl groups there are an infinite family in rank~$2$, parametrized by triangulations of $n$-gons, an additional series $D_{n,m}$ and 74 sporadic examples.
 It is proven in \cite[Theorem~1.1]{Cun10} that the crystallographic arrangement mentioned in this section's introduction are in bijection to connected simply connected Cartan graphs for which the set of real roots is finite.

We now discuss how Cartan graphs are attached to Nichols algebras of diagonal type, see \cite[Section~3.3]{Heck07} respectively \cite[Chapter~15]{HS20}, and we introduce some additional notation for our purpose:

\begin{lem}\label{lm_Serre} Suppose for some $m\in\N_0$ holds $q_{ii}^{-m} = q_{ij}q_{ji}$ or $q_{ii}^{(1+m)} = 1$, then in the Nichols algebra $\mathfrak{B}(q_{ij})$ the \emph{quantum Serre relation} holds
	\[(\mathrm{ad}_c\,x_i)^{1+m} x_j=0\]
	with $(\mathrm{ad}_c\,x_i):=[x_i,y]_c$ the braided commutator defined above.
\end{lem}
\begin{defin}
To every braiding matrix $(q_{ij})$ we define the associated Cartan matrix $(c_{ij})$ for all $i\neq j$ by
\begin{equation*}
c_{ii}=2 \qquad \text{and} \qquad c_{ij}:=-\min  \big\lbrace m \in \N_0 \mid q_{ii}^{-m} = q_{ij}q_{ji} \  \text{or} \  q_{ii}^{(1+m)} = 1,
\ q_{ii}\neq 1\big\rbrace.
\end{equation*} 	
We assume from now on that $c_{ij}$ is finite. We call two vertices $i\neq j$ \emph{connected} $i\sim j$ iff $c_{ij}\neq 0$.
\end{defin}
We need technical terms to refer to these two conditions. More global terms will be discussed in Definition~\ref{def_qCartanRoot}. We add a prefix $q$- to stress the dependence on the braiding~$(q_{ij})$ and to distinguish them from the respective notions with prefix $m$-~depending on the inner product~$(m_{ij})$ of the lattice.
\begin{defin}
	For a braiding matrix $(q_{ij})$, we call an ordered pair of indices $(i,j)$ with $i\neq j$ to be \emph{$q$-Cartan} if the first condition holds in the minimum~$m$, and we call it \emph{$q$-truncation} if the second condition holds in the minimal $m$, i.e., if
	\begin{align*}
	q_{ii}^{c_{ij}}=q_{ij}q_{ji},  \qquad \text{or}  \qquad q_{ii}^{1-c_{ij}}=1.
	\end{align*}
	A pair $(i,j)$ can be \emph{both} $q$-Cartan and $q$-truncation, namely iff
	$q_{ij}q_{ji}=q_{ii}$.
 	Otherwise we will call a pair \emph{only} $q$-Cartan, respectively \emph{only} $q$-truncation.
\end{defin}

We remark that if $q_{ij}q_{ji}$ is a power of $q_{ii}$ at all, then $(i,j)$ is already $q$-Cartan.

Pairs $(i,j)$ with $i\not\sim j$ are always $q$-Cartan and never $q$-truncation for $q_{ii}\neq 1$.

The matrix $(c_{ij})$ associated to $(q_{ij})$ is a generalized Cartan matrix. The braiding matrix $(q_{ij})$ can be extended uniquely to a bicharacter $q\colon \Z^r\times \Z^r\to \C^\times$ with $q(\alpha_i,\alpha_j)=q_{ij}$. A base change by precomposing with a reflection~$\s_k$ gives a new bicharacter and braiding matrix $(q_{ij}')=\r_k(q_{ij})$ defined by
\[q_{ij}':=q(\s_k(\alpha_i),\s_k(\alpha_j))\]
A short but important calculation \cite[equation~(3)]{Heck05} gives
\[
q_{kk}'=q_{kk},\qquad
q_{ii}'=q_{ii}\cdot p_{ki}^{c_{ki}},\qquad
q_{ki}'q_{ik}'=q_{ki}q_{ik}\cdot p_{ki}^{-2},\qquad
q_{ij}'q_{ji}'=q_{ij}q_{ji}\cdot p_{ki}^{c_{kj}}p_{kj}^{c_{ki}},
\]
where, in our previous wording,
\[p_{ki}:=\begin{cases}
1,&\text{if $(k,i)$ is $q$-Cartan},\\
q_{ii}^{-1}q_{ij}q_{ji}&\text{if $(k,i)$ is $q$-truncation}.
\end{cases}
\]
\begin{cor}
If $(k,i)$ is $q$-Cartan for all $i\neq k$, then $\r_k(q_{ij})=(q_{ij})$.
\end{cor}
The set $A$ of all braiding matrices arising in this way together with the maps $\r_k$ defines a Cartan graph. The Nichols algebras associated to these different braiding matrices are not isomorphic, but they have the same dimension (if finite), an isomorphic Drinfeld double, and are closely related by \cite{BLS15, Heck07,HS11}. By \cite[Remark~2.8]{AHS10} and \cite[Theorem~3.13]{Heck07} we can write as $\N_0^r$-graded vector spaces
\[\mathfrak{B}(q_{ij})= \bigotimes_ {\alpha\in R^+} \C[x_\alpha]/\big(x_\alpha^{\operatorname{ord}(q(\alpha,\alpha))}\big)\]
and then the set of \emph{degrees} $R^+$ is a root system for this Cartan graph. In particular, $\mathfrak{B}(q)$ has finite dimension if the root system $R^+$ is finite and all $\operatorname{ord}(q(\alpha,\alpha))$ are finite.

\begin{ese}[$D(2,1;\alpha)$, \cite{HY08}]\label{D21}
	We consider, as an example, the finite-dimensional diagonal Nichols algebra of rank $3$ with a braiding $(q_{ij})$ in an initial Weyl chamber, which has the following properties
	\[q_{ii}=-1,\qquad q_{ij}q_{ji}=\zeta,\]
	with $i\neq j$ and $\zeta \in \mathbb{G}_3$ a primitive third root of unity. As $q$-diagram
\[
		\begin{tikzpicture}
		\draw (0,0)--(-0.8,-1.4)--(0.8,-1.4)--(0,0);
		\draw (0,0.1) circle[radius=0.1cm] node[anchor=south]{$ -1$}
		(-0.9,-1.4) circle[radius=0.1cm] node[anchor=east]{$ -1$}
		(0.9,-1.4) circle[radius=0.1cm] node[anchor=west]{$ -1$};
		\draw (-0.6,-0.7) node[anchor=east]{$ \zeta$};
		\draw (0.6,-0.7) node[anchor=west]{$ \zeta$};
		\draw (0,-1.4) node[anchor=north]{$ \zeta$};
		\end{tikzpicture}
\]

As it turns out, the overall root system has seven positive roots. If $\lbrace\alpha_1, \alpha_2, \alpha_3\rbrace$ are the simple roots in the Weyl chamber shown above, then the positive roots in this basis are
\[\{ \alpha_1, \alpha_2,  \alpha_3,  \alpha_{12}, \alpha_{23},  \alpha_{13},  \alpha_{123} \}\]
and the Cartan matrix of this Weyl chamber is
\[\big(c_{ij}^{\mathrm{I}}\big)= \begin{bmatrix}
\hphantom{-}2 & -1 & -1 \\
-1 & \hphantom{-}2 & -1 \\
-1 & -1 & 2
\end{bmatrix}.
\]
We now reflect around $\alpha_2$. Then the new simple roots are $\lbrace\alpha_{12}, -\alpha_2, \alpha_{23}\rbrace$. We compute the new $q$-diagram $\r_2(q_{ij})=(q_{ij}')$ using this basis transformation and the extension of $(q_{ij})$ to a~bicharacter~$q$:
\begin{gather*}
q_{22}'=q(-\alpha_{2},-\alpha_{2})= q_{22}=-1,\\
q_{11}'=q(\alpha_{12},\alpha_{12})= (q_{12}q_{21})q_{11}q_{22} =\zeta,\\
q_{33}'=q(\alpha_{23},\alpha_{23})= (q_{23}q_{32})q_{33}q_{22}=\zeta,\\
q_{12}'q_{21}'=q(\alpha_{12},-\alpha_{2})q(\alpha_{12},-\alpha_{2})
=(q_{12}q_{21})q_{22}^{-2}=\zeta^{-1}, \\
q_{32}'q_{23}' =q(\alpha_{23},-\alpha_{2})q(\alpha_{23},-\alpha_{2})
=(q_{23}q_{32})q_{22}^{-2}=\zeta^{-1}, \\
q_{13}'q_{31}' =q(\alpha_{12},\alpha_{23})q(\alpha_{23},\alpha_{12})
=(q_{12}q_{21})(q_{23}q_{32})(q_{13}q_{31})q_{22}^{2}=1.
\\
	\begin{tikzpicture}
	\draw (0,0)--(1.6,0) (1.8,0)--(3.4,0);
	\draw (-0.1,0) circle[radius=0.1cm] node[anchor=south]{$ \zeta$}
	(1.7,0) circle[radius=0.1cm] node[anchor=south]{$ -1$}
	(3.5,0) circle[radius=0.1cm] node[anchor=south]{$ \zeta$};
	\draw (0.8,0) node[anchor=south]{$ \zeta^{-1}$};
	\draw (2.6,0) node[anchor=south]{$ \zeta^{-1}$};
	\end{tikzpicture}
\end{gather*}
In this new basis the positive roots are
\[\{ \alpha_{12},  -\alpha_2,  \alpha_{23},  \alpha_{1},  \alpha_{3},  \alpha_{123},  \alpha_{13} \}\]
and the new Cartan matrix is hence
\[\big(c_{ij}^{\mathrm{II}}\big)= \begin{bmatrix}
\hphantom{-}2 & -1 & \hphantom{-}0 \\
-1 & \hphantom{-}2 & -1 \\
\hphantom{-}0 & -1 & \hphantom{-}2
\end{bmatrix}.
\]
Even though this particular Cartan matrix is of type $A_3$, the root system has one root more than the root system $A_3$. In the following figure we show the hyperplane arrangement of the root system in an affine resp.~projective picture:
\begin{center}
	\includegraphics[width=0.47\textwidth]{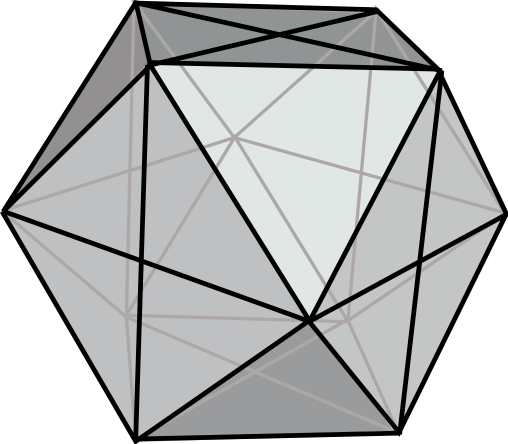}
	\hspace{1cm}
	\includegraphics[width=0.43\textwidth]{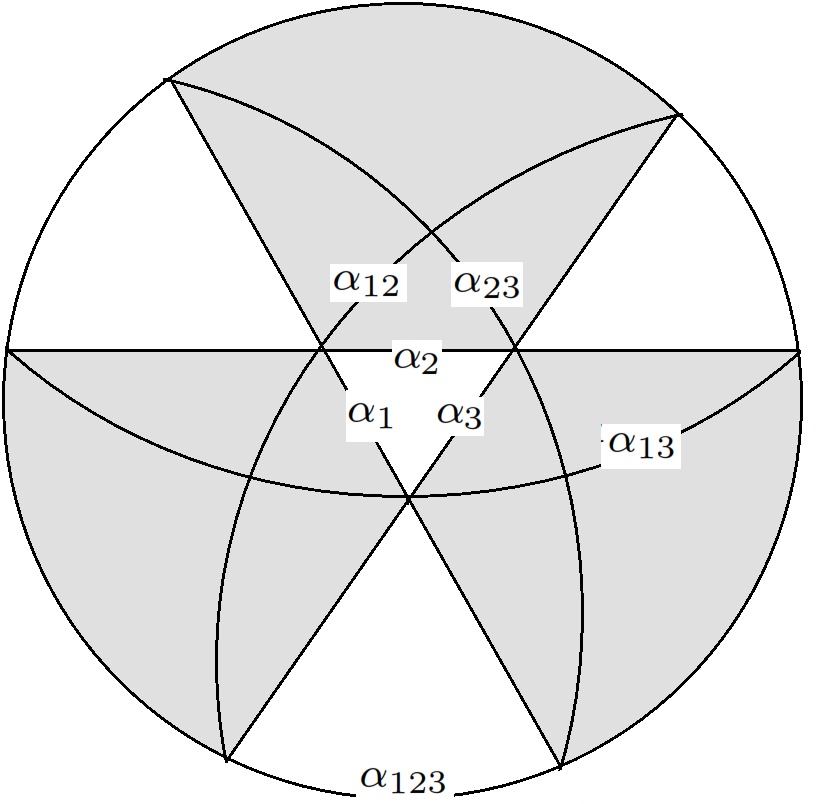}
\end{center}
Each of the seven projective lines corresponds to the hyperplane through the origin orthogonal to one root. Each triangle is a Weyl chamber with the three adjacent hyperplanes corresponding to the three simple roots. Equilateral triangles (white) correspond to the Cartan matrix I and right triangles (grey) to the Cartan matrix~II.
\end{ese}

We now introduce some more properties of roots in a root system associated to a braiding matrix $(q_{ij})$, which are relevant to this article.

\begin{defin} \label{def_qCartanRoot}\quad
\begin{enumerate}\itemsep=0pt
	\item A root $\alpha$ is called \emph{$q$-Cartan root}, if in every Weyl chamber containing $\alpha=\alpha_i$ as a~simple root, all pairs of vertices $(i,j)$ with $i\sim j$ are $q$-Cartan.
	\item A root $\alpha$ is called \emph{$q$-truncation}, if in every Weyl chamber containing $\alpha=\alpha_i$ as a~simple root, all pairs of vertices $(i,j)$ with $i\sim j$ are $q$-truncation.
	\item A root $\alpha$ is called \emph{only $q$-Cartan root}, if in every Weyl chamber containing $\alpha=\alpha_i$ as a~simple root, all pairs of vertices $(i,j)$ with $i\sim j$ are only $q$-Cartan.
\end{enumerate}
\end{defin}
The term \emph{Cartan-} was coined in this context by \cite{Ang13} as \emph{Cartan vertex}~$\alpha_i$ (meaning that all pairs of vertices $(i,j)$ are $q$-Cartan). The term \emph{Cartan root} $\alpha$ appears first in \cite{Ang16}. In \cite{AAR19} initially the independence of this notion from the basis of simple roots was proven implicitly. The authors thank I. Angiono for explaining the following much simpler proof that uses a base-independent characterization of the term: 	
\begin{prop}\label{prop_IvanCartanRoot} A root
	$\alpha$ is a $q$-Cartan root iff the values $q(\alpha,\beta)q(\beta,\alpha)$ for all $\beta\in\Z^r$ lie in the multiplicative group generated by $q(\alpha,\alpha)$.
	
	As a consequence, suppose in some Weyl chamber, which containing $\alpha=\alpha_i$ as a simple root, that all pairs of vertices $(i,j)$ with $i\sim j$ are $q$-Cartan. Then the same is already true in all such Weyl chambers and $\alpha$ is a Cartan root.
\end{prop}
The other two notions we defined above do not have such nice characterizations (and are probably less fundamental) and require the knowledge of the entire root system. The following special case is frequent and easy to recognize:
\begin{ese}[fermionic root]
	A root $\alpha$ with $q(\alpha,\alpha)=-1$ is $q$-truncation.
\end{ese}
\begin{oss}Only very few root systems admit roots $\alpha$ that are neither $q$-Cartan nor $q$-truncation. In such a case, there has to exist a Weyl chamber containing $\alpha=\alpha_i$ as a simple root and the following type of rank $3$ subdiagram
	\begin{gather*}
		\begin{tikzpicture}
		\draw (0,0)--(1.6,0) (1.8,0)--(3.4,0);
		\draw (-0.1,0) circle[radius=0.1cm];
		\draw (1.7,0) circle[radius=0.1cm]
		node[anchor=south]{$q$}
		node[anchor=north]{$\vphantom{X^X}\alpha_i$};
		\draw (3.5,0) circle[radius=0.1cm];
		\draw (0.8,0) node[anchor=south]{$q^{-a}$};
		\draw (2.6,0) node[anchor=south]{$q'$};
		\end{tikzpicture}
	\end{gather*}
	where $q^{-a}\neq q$ (in particular $q\neq -1$), so the pair $(2,1)$ is $q$-Cartan and not $q$-truncation and where $q'$ is no power of $q$, so the pair $(2,3)$ is $q$-truncation and not $q$-Cartan. Such a chamber must exist, otherwise $\alpha$ is $q$-truncation, and then such a neighbor must exist in this chamber, otherwise $\alpha$ is $q$-Cartan in all chambers.
	
A quick inspection of all $q$-diagrams of rank $3$ thus shows that roots being neither $q$-Cartan nor $q$-truncation appear for~\cite[rows~16 and~17]{Hecklist}, and consequently for $q$-diagrams of rank $\geq 4$ containing either of these.
\end{oss}
Roots $\alpha$ that are only $q$-Cartan are $q$-Cartan roots such that $\operatorname{ord}(q(\alpha,\alpha))>1-c_{ij}$ in all Weyl chambers containing $\alpha=\alpha_i$ as a simple root, and all $j\sim i$. For example for Nichols algebras of Cartan type $u_q(\mathfrak{g})^+$ we have (see Proposition~\ref{prop_CartanTrunc})
\begin{center}\renewcommand{\arraystretch}{1.2}
	\begin{tabular}{lll}
		$\mathfrak{g}$ & $q$ & both $q$-Cartan and $q$-truncation\\
		\hline
		$A_n$ & $q^2=-1$ & all roots \\
		$B_n$, $C_n$, $F_4$ & $q^4=-1$ & long roots \\
		$B_n$, $C_n$, $F_4$ & $q^2\in\mathbb{G}_3$ & short roots \\
		$G_2$ & $q^6=-1$ & long roots \\
		$G_2$ & $q^2\in \mathbb{G}_4$ & short roots
	\end{tabular}
\end{center}
It is interesting, that these are precisely the cases, in which the Nichols algebras need additional relations, see~\cite{AA17}.

\newcommand{\V}{\mathcal{V}}
\section{Preliminaries on screening operators}\label{section3}
\subsection{Vertex algebras and their representation theory}\label{VOAREP}
A \emph{vertex operator algebra} (VOA) is, roughly speaking, a commutative algebra that depends analytically on a complex variable~$z$. More precisely, a vertex operator algebra~$\V$ is an infinite-dimensional graded vector space with a linear map
\[\Y\colon \ \V\otimes_\C \V\to \V\big[\big[z,z^{-1}\big]\big],\]
where $\V\big[\big[z,z^{-1}\big]\big]$ denotes Laurent series in a formal variable $z$ with coefficients in~$\V$. The axioms of a vertex operator algebra include a version of commutativity or locality, which relates $\Y(a,z)\Y(b,w)$ and $\Y(b,w)\Y(a,z)$ for $z,w,z-w\neq 0$. As an implication, one also has a version of associativity, which relates these two expressions to $\Y(\Y(a,z-w)b,w)$.
An additional axiom requires that conformal transformations of the variable $z$ in $\Y(a,z)$ are compatible with an action of the Virasoro algebra on $\V$, which is part of the data. Standard mathematical textbooks on vertex operator algebras include \cite{FBZ04, Kac98}. Vertex operator algebras are motivated by physics, where they describe the holomorphic (chiral) part of a 2-dimensional quantum field theory with conformal symmetry.

There is a straightforward notion of a module $W$ over a vertex algebra $\V$. Under certain finiteness-assumptions on a vertex operator algebra $\mathcal{V}$, the category of $\mathcal{V}$-modules has a tensor product $\boxtimes$ and a braiding~\cite{HLZ10}.

\begin{ese}
 The easiest example of a vertex operator algebra is the Heisenberg algebra~$\mathcal{H}^r$ based on an $r$-dimensional Euclidean vector space $\mathbb{R}^r$. The irreducible modules $\mathcal{H}_a$ are pa\-ra\-met\-ri\-zed by vectors $a\in\mathbb{R}^r$, with $\mathcal{H}_0=\mathcal{H}^r$, tensor product $\mathcal{H}_a\boxtimes \mathcal{H}_b=\mathcal{H}_{a+b}$, and braiding given by the scalar ${\rm e}^{\pi {\rm i}(a,b)}$.
\end{ese}

From the perspective of our article, this is already an interesting vertex operator algebra: In the next section we will define screening operators $\zem_{a_i}$, and the idea of this article is to analyze the algebra generated by these screening operators, which will be largely determined by the braidings $q_{ij}={\rm e}^{\pi {\rm i}(a_i,a_j)}$.

We also introduce some more vertex algebras that eventually come into play, and which motivate our work:

\begin{ese}
For every even integral lattice $\Lambda$ with inner product $(\,, \,)$, it is possible to associate a \emph{lattice vertex algebra} $\mathcal{V}_\Lambda$. Its category of representations is equivalent to the category of $\Lambda^*/\Lambda$-graded vector spaces, with associator $\omega$ and braiding~$\sigma$ associated to a quadratic form~${\rm e}^{\pi {\rm i} (a, a)}$, $a \in \Lambda^*$.
If we restrict these modules to the Heisenberg algebra $\mathcal{H}^r\subset \V_\Lambda$, the modules decompose and we have $\mathcal{V}_{a+\Lambda}=\bigoplus_{a'\in a+\Lambda} \mathcal{H}_{a'}$.
\end{ese}

Both examples are treated in \cite{FBZ04,Kac98} at the level of vertex algebras and modules. The standard reference for the tensor product and braiding in both examples is~\cite{DL93}. See also the overview in \cite[Section~2.3]{CGR20}.

\subsection{Screening operators and Nichols algebra relations} \label{smallTheorems}
We are now going to define the main protagonists of this paper, the \emph{screening operators}. They go back to~\cite{DF84} and appear throughout vertex operator literature. Our main focus are screening operators for elements in a module different from the vacuum module, and we call those \emph{non-local screening operators}. We introduce them from a slightly novel perspective:

Given $\mathcal{V}$ a VOA, $W$ module of $\mathcal{V}$ and $w \in W$. The tensor product $W\boxtimes U$ with some other module $U$ is defined in~\cite{HLZ06} by the universal property that there exists an intertwining operator
\[\Y(w, z)\colon \ W\otimes U \rightarrow (W \boxtimes U)[\mathrm{log} z]\{\{z\}\},\]
where $\{\{z\}\}$ denotes power series with arbitrary complex exponents, and their matrix elements are multivalued analytic functions on $\C\backslash\{0\}$. If we evaluate $\Y$ on our fixed element $w\in W$, we get a map
\[\Y(w, z)\colon \ U \rightarrow (W \boxtimes U)[\mathrm{log} z]\{\{z\}\}.\]
	Integrating the variable $z$ over the unit circle around $z=0$, lifted to a path in the multivalued covering, we get linear maps into the algebraic closure
	\[\mathfrak{Z}_w \colon \  U \rightarrow \overline{W \boxtimes U}.\]
	These maps are called \emph{screening operators}. If $W=\mathcal{V}$ is the vertex algebra itself, then the intertwining operator is simply the vertex operator, which is a power series with integer exponents and without logarithms; in this case the integration returns simply the $z^{-1}$-coefficient or residue of $\Y$. If we ask in addition that the conformal weight is $h(w)=1$, then the commutator formula shows that the screening operator commutes with the action of the Virasoro algebra. On the other hand, for non-local screening operators, as defined above, all non-integer $z$-powers contribute to the integral in the multivalued covering, which is why we get a result in the algebraic closure. Also the consequence of $h(w)=1$ is more subtle, one would expect that at least certain suitable powers of screening operators commute with the Virasoro algebra. Defining and studying non-local screening operators in this way is new and might appear strange~-- however, non-local screening operators do appear prominently in literature for a long time. For example, the Felder complex~\cite{Fel89} consists of suitable powers of non-local screening operators.
	
If the singularity of products $\Y(w_1,z_1)\cdots \Y(w_n,z_n)$ at points $z_i=z_j$ is mild enough so that certain integrals converge (\emph{subpolar}, see below), then we expect these screening operators to fulfill the relations of the Nichols algebra~$\cB(W)$ associated with the module~$W$ and its braiding in the representation category $\operatorname{Rep}(\V)$, which expresses the multivaluedness (or non-locality) of the intertwining operator.
If the singularity at $z=0$ is more severe, then the screening operators should generate some algebra extension of the Nichols algebra.

Establishing this result for general VOAs is work in progress, but for Heisenberg and lattice VOA and their explicit intertwiners, it is proven in \cite{Len17}:
\begin{defin}[{\cite[Definition 5.4]{Len17}}]\label{def_smallnessF}
	Let $I=\{1,\dots,n\}$. A set of complex parameters $(m_{ij})$ with $1\leq i\leq j\leq n$ is called \emph{subpolar}, if for every subset $J\subset I$ with $|J|\geq 2$ the following inequality holds:
	\[\sum_{i<j,\;i,j\in J} \mathfrak{Re}(m_{ij}) > -|J|+1.\]
	As a weakening, $(m_{ij})$ is called \emph{subpolar on intervals} with respect to a total order of the index set, if the inequality holds for all $J\subset I$ with $|J|\geq 2$ which have the form of an interval $J=[a,b]=\{x\,|\, a\leq x \leq b\}$.
\end{defin}
We later consider monomials $x_{\iota(1)}\cdots x_{\iota(n)}$ of total degree $n$ in $r$ variables $x_1,\dots,x_r$. Correspondingly, we now consider the following sets of parameters:
\begin{defin}
Suppose $(m_{ij})$ with $1\leq i\leq j\leq r$ is given. Then for any map
\[\iota\colon \ \{1,\dots,n \}\to \{1,\dots, r\}\]
we define a new set of complex parameters $(m^\iota_{i'j'})$
\[m^\iota_{i'j'}:=m_{\iota(i'),\iota(j')},\qquad 1\leq i'\leq j'\leq n.\]
Up to permuting indices, $(m^\iota_{i'j'})$ is determined by $(m_{ij})$ and the \emph{degrees} of~$\iota$
\[d_i^\iota:=\big|\iota^{-1}(i)\big|,\qquad \sum_{i=1}^r d_i^\iota=n.\]
\end{defin}
\begin{oss}
		 $\Lambda=\Z^r$ with basis $\alpha_1,\dots,\alpha_r$ and inner product $(\alpha_1,\alpha_j)=m_{ij}$. Then, as a~geometric reformulation, subpolarity of $m^\iota_{i'j'}$ is equivalent to the cube $\prod_{i=1}^r[0,d_i]$ intersecting the ball defined by
\[\frac{1}{2}(\alpha,\alpha)-\big(\alpha,\rho-\rho^\vee\big)\leq 1\]
only in $\alpha=0$ and $\alpha=\alpha_i$, which is on the boundary, if $\rho$, $\rho^\vee$ are vectors in the complexification of $\Lambda$ satisfying $(\alpha_i,\rho)=m_{ii}$ and $\big(\alpha_i,\rho^\vee\big)=1$.
\end{oss}

\begin{lem}[{\cite[Lemma 5.5]{Len17}}]\label{pos+small} If $\Lambda$ is positive definite and $(a_i, a_i)\leq 1$ for a fixed basis $a_i$, then $(m_{ij})$ and $(m^\iota_{i'j'})$ for all $\iota$ are subpolar.
\end{lem}

\begin{teo}[{\cite[Theorem 7.1]{Len17}}]\label{NArelations}
For a non-integral lattice $\Lambda$ of rank $r$ and elements $a_1, \dots, \allowbreak a_n \in \Lambda$, we consider the elements ${\rm e}^{a_i}$ in modules of the associated Heisenberg VOA $\mathcal{H}$. The braiding between two elements is
\[{\rm e}^{a_i} \otimes {\rm e}^{a_j} \mapsto q_{ij} \; {\rm e}^{a_j} \otimes {\rm e}^{a_i},\qquad
 \text{where} \quad q_{ij} := {\rm e}^{{\rm i} \pi m_{ij}}, \quad m_{ij}:=(a_i, a_j).\]
Consider the diagonal Nichols algebra $\mathcal{B}(q_{ij})$ with braiding matrix $(q_{ij})$ and generators $x_{i}$ with $1\leq i\leq r$. Then for any relation in the Nichols algebra, homogeneous in degree $(d_1, \dots, d_r) \in \N^r$, the same relation holds for the screening operators $\mathfrak{Z}_{a_i}$, assuming that $\big(m^\iota_{ij}\big)$ is subpolar for $d_i=\iota^{-1}(i)$.
\end{teo}

\begin{ese} In the case $\Lambda= \frac{1}{\sqrt{p}} \Lambda_\mathfrak{g}$, with $\Lambda_\mathfrak{g}$ the root-lattice of a complex finite-dimensional simple Lie algebra $\mathfrak{g}$ of rank $r$, and $\ell=2p$ an even integer, we obtain as $\mathcal{B}(q_{ij})$, the small quantum group $u_q(\mathfrak{g})^+$, where $q$ is a primitive $\ell$-th root of unity and the braiding is
\[ q_{ij} = {\rm e}^{{\rm i} \pi \big(\frac{1}{\sqrt{p}}\alpha_i, \frac{1}{\sqrt{p}}\alpha_j\big)} = {\rm e}^{\frac{2{\rm i} \pi}{\ell}(\alpha_i,\alpha_j)} = q^{(\alpha_i,\alpha_j)},\]
for $\alpha_1,\dots,\alpha_r$ the basis of simple roots in $\Lambda_\mathfrak{g}$.
\end{ese}

Since we want to improve Theorem \ref{NArelations}, we first recall the steps of the proof in~\cite{Len17}: Any iteration of screening operators (in a rather general type of vertex algebra) can by Theorem~4.3 cit.~loc.~be expanded as follows
\begin{teo}\label{associativity}
\begin{gather*}
\left(\prod_{i=1}^n \zem_{a_i}\right)  v
=\sum_{(m_i),(m_{ij})} \sum_{(k_i)\in \N^n} [(k_i),(m_i),(m_{ij}),(a_i)]\cdot \Fp((m_i+k_i),(m_{ij})),
\end{gather*}
where we introduce
\begin{gather*}
[(k_i),(m_i),(m_{ij}),(a_i)]\\
\qquad{} :=v^{(n+1)}\prod_{i=n}^1 \frac{\partial^{k_i}}{k_i!}a_i^{(n+1)}
  \prod_{1\leq i \leq n}\big\langle a_i^{(1)},v^{(i)}\big\rangle_{m_i}
\prod_{1\leq i<j \leq n} \big\langle a_i^{(n-j+2)},a_j^{(n-i+1)}\big\rangle_{m_{ij}},\\
\Fp((m_i),(m_{ij}))
:=\sum_{(k_{ij})\in \N_0^{n\choose 2}}\prod_{i=1}^n \operatorname{res}\big(z_i^{(m_i+k_i)+\sum_{i<j}(m_{ij}-k_{ij})+\sum_{j<i}k_{ji}}\big)\prod_{i<j} (- 1)^{k_{ij}}{m_{ij} \choose k_{ij}}.
\end{gather*}
The notation in these formulae is as follows:
\begin{itemize}\itemsep=0pt
	\item $[(k_i),(m_i),(m_{ij}),(a_i)]$ as well as $(a_i)$, $v$ are elements in modules of the vertex algebra, which is modeled by some $($in our case commutative, cocommutative$)$ Hopf algebra, see Section~\ref{section3} cit.~loc. Expressions $a^{(1)}\otimes a^{(2)}$ denote in Sweedler notation the iterated coproduct and $\langle-,-\rangle_m$ are certain pairings, nonzero only for a~finite number of values $m_i,m_{ij}\in \C$ over which we sum. The only properties of $[(k_i),(m_i),(m_{ij}),(a_i)]$ we require in the following are
	\begin{itemize}\itemsep=0pt
		\item It is invariant under permutation of the index set, i.e., simultaneously permuting $(k_i)$, $(m_i)$, $(m_{ij})$, $(a_i)$, if commutativity and cocommutativity is in place. Thus commutativity of screening operators depends entirely on $\Fp((m_i),(m_{ij}))$.
		\item It is invariant under permuting only the $(k_i)$, if the permutation preserves the partition of the index into subsets of indices with equal~$a_i$. In particular, the product of screenings only depends on the symmetrization $\Fp((m_i),(m_{ij}))^{{\rm sym}}$ over these subsets of indices.
	\end{itemize}
	\item $\Fp((m_i),(m_{ij}))$ is an infinite ${n\choose 2}$-fold series of complex numbers $($a~generalized hypergeometric series on the boundary of its convergence disc$)$ depending on a set of complex parameters $(m_{ij})$ and $(m_i)$ for $1\leq i<j\leq n$. The symbol $\operatorname{Res}(z^m)$ denotes a~formal residue of the possibly multivalued function
	\[\operatorname{Res}(z^m)
	 :=  \begin{cases}
	0, & m\in\Z \backslash\{-1\},\\
	1, & m= -1,\\
	\dfrac{({\rm e}^{2\pi{\rm i} (m+1)}-1)}{2\pi{\rm i}(m+1)},
	& m\not\in\Z.
	\end{cases}\]
	Convergence of this series is rather subtle and holds for example if $(m_{ij})$ is subpolar, see Lemmas~{\rm 5.2} and~{\rm 5.22} cit.~loc.
	\end{itemize}
Note that the quantity $(m_{ij})$ is coupled to the pairing of $a_i$, $a_j$, and the quantity $(m_i)$ is coupled to the pairing of $a_i$ with the element $v$ acted upon.
\end{teo}

We call $\Fp((m_i),(m_{ij}))$ the \emph{quantum monodromy numbers}, and they play in some analytical sense the role of structure constants for screening operators. They can be expressed as an integral of a multivalued function in $n$ complex variables over a suitable lift of $(\mathbb{S}_1)^n$ to the multivalued covering. The integral converges for subpolar~$(m_{ij})$, see Section~5.2 cit.~loc.
\begin{gather*}
\Fp((m_i),(m_{ij})) =\int\cdots\int_{[e^0,{\rm e}^{2\pi}]^n} {\rm d} z_1\cdots {\rm d} z_n \prod_i z_i^{m_i}\prod_{i<j}(z_i - z_j)^{m_{ij}}.
\end{gather*}
	With the Main Theorem~5.20 cit.~loc.~we express this function as quantum symmetrizer of another integral, which converges if $(m_{ij})$ is subpolar on intervals
\begin{gather*}
\Fp((m_i),(m_{ij})) =\sum_{\sigma\in \S_n} q(\sigma) \rFp\big(\big(m_{\sigma^{-1}(i)}\big),\big(m_{\sigma^{-1}(i)\sigma^{-1}(j)}\big)\big),\\
\rFp((m_i),(m_{ij}))
 =\int\cdots\int_{\triangle} {\rm d} z_1\cdots {\rm d} z_n \prod_i z_i^{m_i}\prod_{i<j}(z_i - z_j)^{m_{ij}},
\end{gather*}
where $\triangle:=\big\{\big({\rm e}^{2\pi\i t_1},\dots,{\rm e}^{2\pi\i t_n}\big) \,|\, 0<t_1<\cdots <t_n<1\big\}$ and where $q(\sigma)$ is the braiding factor in the quantum symmetrizer with respect to the braiding $q_{ij}={\rm e}^{\pi\i m_{ij}}$. The contour integral $\rFp$ can by Lemma~5.15 cit.~loc.~be deformed to a sum of real integrals, which have additional poles depending on~$(m_i)$:
\begin{gather*}
\rFp((m_i),(m_{ij}))
 :=\frac{1}{(2\pi\i)^n}\sum_{k=0}^n (-1)^{n-k}\left(\prod_{i=n-k+1}^n {\rm e}^{2\pi\i\;m_i}\right)\sum_{\eta\in\S_{n-k,\overline{k}}}\left(\prod_{i<j,\;\eta(i)>\eta(j)}{\rm e}^{\pi\i\;m_{ij}}\right)\\
 \hphantom{\rFp((m_i),(m_{ij})):= }{}
 \times
\Sel\big(\big(m_{\eta^{-1}(i)}\big),(0),\big(m_{\eta^{-1}(i)\eta^{-1}(j)}\big)\big),
\end{gather*}
where $\mathbb{S}_{n-k,\overline{k}}:=\{\eta\in \mathbb{S}_n \,|\, \forall_{i<j\leq n-k}\;\eta(i)<\eta(j)\text{ and } \forall_{n-k<i<j}\;\eta(i)>\eta(j)\}$ denotes a version of $(n-k,k)$-shuffles and $\Sel$ the generalized Selberg integral
\begin{gather*}
\Sel((m_i),(\bar{m}_i),(m_{ij})):=\int\!\cdots\!\int_{1> z_1>\dots >z_{n}> 0}\! {\rm d}z_1\cdots {\rm d}z_{n}
\prod_i z_i^{m_i}\prod_i (1-z_i)^{\bar{m}_i}\prod_{i<j} (z_i-z_j)^{m_{ij}}.
\end{gather*}
Now we restrict ourselves to the Heisenberg vertex algebra $\mathcal{H}^r$ of rank~$r$, to a not necessarily integral lattice $\Lambda=\Z^r$ with basis $\alpha_1,\dots, \alpha_r$ and scalar product $(\alpha_i,\alpha_j)$, and to screening operators $\zem_1,\dots,\zem_r$ associated to the elements $a_i={\rm e}^{\alpha_i}$.
An arbitrary monomial of total degree~$n$ in these screening operators is
\[ \zem_{\iota(1)}\cdots \zem_{\iota(n)},\qquad \iota\colon \ \{1,\dots,n\}\to \{1,\dots,r\}\]
and if we expand the action of this monomial on an element $v\in \mathcal{H}_\lambda$ by the formula above, then quantum monodromy numbers $\Fp((m_i+k_i),(m_{ij}))$ with $m_i=(\alpha_i,\lambda)$ and $m_{ij}=(\alpha_i,\alpha_j)$ and $k_i\in \Z$ appear.

Consider accordingly the braided vector space $\C^r$ with basis $x_1,\dots, x_r$ and braiding $q_{ij}={\rm e}^{(\alpha_i,\alpha_j)}$.
Consider the Nichols algebra of diagonal type (be it finite or infinite-dimensional). A~monomial in the tensor algebra is of the form
\[\hphantom{\sum_{\iota} c_\iota\cdot }
x_{\iota(1)}\cdots x_{\iota(n)},\qquad \iota\colon \ \{1,\dots,n\}\to \{1,\dots,r\}.\]
A~linear combination of such monomials
\[\sum_{\iota} c_\iota\cdot x_{\iota(1)}\cdots x_{\iota(n)},\qquad \iota\colon \ \{1,\dots,n\}\to \{1,\dots,r\}\]
is zero in the Nichols algebra iff it is in the kernel of the quantum symmetrizer $\sha_q$ with respect to the braiding matrix $q_{\iota(i),\iota(j)}$ and the associated braiding factor~$q_\iota(\sigma)$. This can be reformulated to a condition on the coefficients~$c_\iota$:
\begin{align*}
0&=\sum_{\iota}c_{\iota}\sum_{\sigma\in\S_n} q_{\iota}(\sigma) \cdot \cdot \left(x_{\iota(\sigma^{-1}(1))} \cdots x_{\iota(\sigma^{-1}(n))}\right)\\
&=\sum_{\iota}\left(\sum_{\sigma\in\S_n} q_{\iota\circ\sigma}(\sigma)c_{\iota\circ\sigma} \right) \left(x_{\iota(1)} \cdots x_{\iota(n)}\right),
\end{align*}
which means that the bracket vanishes for each $\iota$.

Take on the other hand a corresponding product of screenings $\zem_{\iota(1)}\cdots \zem_{\iota(n)}$, expand it by Theorem \ref{associativity} and then rearrange by substituting $\iota\circ\sigma$ for~$\iota$, which also changes $a_{\iota(i)}$ to $a_{\iota(\sigma^{-1}(i))}$, and then by using the simultaneous symmetry in $[(k_i),(m_{\iota(i)}),(m_{\iota(i)\iota(j)})]$ while renaming the summation indices $k_{\sigma^{-1}(i)}$ again~$k_i$:
 \begin{gather*}
\sum_{\iota} c_{\iota}\left(\prod_{i=1}^n \zem_{\iota(i)}\right)  v
=\sum_\iota c_\iota \sum_{(k_{i})} [(k_i),(m_{\iota(i)}),(m_{\iota(i)\iota(j)})]\cdot \Fp((m_{\iota(i)}+k_i),(m_{\iota(i),\iota(j)})) \\
=\sum_\iota c_\iota \sum_{(k_{i})} [(k_i),(m_{\iota(i)}),(m_{\iota(i)\iota(j)})]
\sum_{\sigma\in \S_n} q_{\iota}(\sigma)\;\rFp((m_{\iota(\sigma^{-1}(i))}+k_{\sigma^{-1}(i)}),(m_{\iota(\sigma^{-1}(i))\iota(\sigma^{-1}(j))}))\\
=\sum_\iota\sum_{\sigma\in \S_n} q_{\iota\circ\sigma}(\sigma) c_{\iota\circ\sigma} \sum_{(k_{i})} [(k_i),(m_{\iota(\sigma(i))}),(m_{\iota(\sigma(i))\iota(\sigma(j))})]
)\cdot\rFp((m_{\iota(i)}+k_{\sigma^{-1}(i)}),(m_{\iota(i)\iota(j)}))\\
=\sum_\iota\sum_{(k_{i})} [(k_{i}),(m_{\iota(i)}),(m_{\iota(i)\iota(j)})]
\cdot\left(\sum_{\sigma\in \S_n} q_{\iota\circ\sigma}(\sigma) c_{\iota\circ\sigma} \right)\rFp((m_{\iota(i)}+k_{i}),(m_{\iota(i)\iota(j)})).
\end{gather*}
Assume that the integral $\rFp((m_i),(m_{ij}))$ converges at the given set of parameters, then by the assumed Nichols algebra relation the bracket vanishes and
\[\left(\sum_{\sigma\in \S_n} q_{\iota\circ\sigma}(\sigma) c_{\iota\circ\sigma} \right)\rFp((m_{\iota(i)}+k_{i}),(m_{\iota(i)\iota(j)}))=0.\]
As already stated, this is the case for subpolar parameters $(m_{ij})$. Otherwise we are interested in the nonzero result after analytic continuation. As discussed, only the symmetrizations over all permutations of the index set enter in this formula. In particular for the Nichols algebra relations to hold, it suffices that
\[\left(\sum_{\sigma\in \S_n} q_{\iota\circ\sigma}(\sigma) c_{\iota\circ\sigma} \right)\rFp(\iota(1)\cdots\iota(n))^{{\rm sym}}=0,\]
where we abbreviate the following functions in $(k_i)$, $\big(m_{\iota(i)}\big)$, $\big(m_{\iota(i)\iota(j)}\big)$ by
\begin{gather*}
\Fp(\iota(1)\cdots\iota(n)):=\Fp\big(\big(m_{\iota(i)}+k_{i}\big),\big(m_{\iota(i)\iota(j)}\big)\big),\\
\rFp(\iota(1)\cdots\iota(n)):=\rFp\big(\big(m_{\iota(i)}+k_{i}\big),\big(m_{\iota(i)\iota(j)}\big)\big).
\end{gather*}
A different way to put the previous computation would be writing the quantum symmetrizer formula in the new notation and in a formal basis of the tensor algebra $x_{\iota(1)}\cdots x_{\iota(n)}$ as
\begin{gather*}
\sum_\iota  \Fp(\iota(1)\cdots\iota(n))\cdot (x_{\iota(1)} \cdots x_{\iota(n)})^{{\rm sym}}\\
\qquad {} = \sum_{\sigma\in\S_n} q_\iota(\sigma)
\Fp\big(\iota\big(\sigma^{-1}(1)\big)\cdots\iota\big(\sigma^{-1}(n)\big)\big)^{{\rm sym}}
\cdot x_{\iota(\sigma^{-1}(1))} \cdots \big(x_{\iota(\sigma^{-1}(n))}\big).
\end{gather*}
This concludes the overview of the proof of Theorem~\ref{NArelations}.

\subsection{Central charge}
A realization in the sense of our article provides a set of elements in a Euclidean vector space $a_1,\dots a_r\in \C^r$, the corresponding screening operators of the Heisenberg algebra $\mathcal{H}^r$, and their algebra relations. We now also wish to fix an action of the Virasoro algebra on $\mathcal{H}^r$. As discussed in \cite{FL17}, it is usually desirable to choose the Virasoro structure, where all conformal weights $h(a_i)=1$, which gives a unique choice for $Q$. This implies for local screenings that the screenings commute with the Virasoro algebra action, but for nonlocal screenings the implication is more subtle: Their action on the vacuum modules commutes with the Virasoro action, and in general we expect that suitable powers of nonlocal screenings commute with the Virasoro action, just like in the Felder complex \cite{Fel89}. Regardless, we now fix the Virasoro action such that $h(v_i)=1$ holds.

\begin{prop}\label{central_charge} For the Heisenberg algebra, there is a family of Virasoro structures pa\-ra\-met\-ri\-zed by the choice of an element $Q\in \C^r$, called background charge~{\rm \cite{FF84}}. There is a uni\-que~$Q$ solving
	\begin{equation*}
	h(a_i)=\frac{1}{2}(a_i,a_i) -(a_i,Q)=1, \qquad i=1, \dots, n.
	\end{equation*}
	The central charge of the system will be \[c = r - 12(Q, Q).\]
	In particular for rank $2$, we have as in~{\rm \cite{Sem14}} the explicit formula
	\begin{equation}\label{centralcharge}
	c = 2-3\frac{|a_1(m_{22}-2)-a_2(m_{11}-2)|^2}{m_{11}m_{22}-m_{12}^2}.
	\end{equation}
\end{prop}

\section{Analytical continuation of screening relations}\label{section4}

A product of screening operators does not necessarily converge beyond subpolar~$(m_{ij})$. However, we can attempt to analytically continue the functions $\Fp$, $\rFp$, $\Sel$ to the subpolar region. Our main interest is in which regions of parameters Theorem~\ref{NArelations} holds or which extensions of Nichols algebras we find.

We consider linear combinations of monomials $\zem_{a_1}\cdots \zem_{a_n}$ of $n$ screening operators, acting on the $\mathcal{H}^r$-module $\mathcal{H}_{v}$ generated by ${\rm e}^v$. We set $m_i := (a_i, v)$ and $m_{ij} := (a_i, a_j)$ for $1 \leq i, j \leq n$.

\begin{prob} For each $\iota\colon \{1,\dots,n\}\to\{1,\dots,r\}$ and each $(m_{ij})$ attached to a finite-dimen\-sio\-nal Nichols algebra in this article, find the full analytic continuation and poles of the functions $\Sel(\iota(1)\cdots \iota(n))= \Sel((m_{\iota(i)}+k_i),(m_{\iota(i)\iota(j)}))$.
\end{prob}
For a realization of Cartan type $m_{ij}=(\alpha_i,\alpha_j)_\g m$ with $m\in \mathbb{Q}$ as in Definition~\ref{mCartan}, a linear combination of these integrals is the so-called $\g$-Selberg integral and can be expressed as a~product of Gamma functions. This existence of such an formula in certain cases is the Mukhin--Varchenko conjecture~\cite{MV00} and a general $\g$-Selberg integral formula was proven for~$A_2$ in~\cite{TV03} and for~$A_n$ in~\cite{War09}. The last source also contains in Theorem~6.1 a version of Kadell's integral with Jack polynomials necessary for the case~$(k_i)\neq (0)$. It would be tempting to use these results to get at least in the case~$A_n$ a full analytic continuation of all monomials, and to check other relations, such as the truncation relation of non-simple roots and the additional relations for $q=-1$.
\begin{prob}
Does there exist a Selberg integral formula in the sense of {\rm \cite{MV00,TV03,War09}} attached to any Nichols algebra root system for $\Sel((m_{\iota(i)}+k_i),(m_{\iota(i)\iota(j)}))$ with the parameters~$(m_{ij})$ obtained in the present article?
\end{prob}

We start with a toy case.

\subsection{Commutativity relations}\label{sec_commutativity}
\newcommand{\Beta}{\mathsf{B}}
For $n=2$ the subpolarity condition reads $m_{12}>-1$. From the formulas in \cite[Example~5.21]{Len17} we immediately obtain the following analytic continuations, where $\Beta(x,y)=\frac{\Gamma(x)\Gamma(y)}{\Gamma(x+y)}$ is the Euler Beta function, and $\Gamma(x)$ is the Gamma function, which is meromorphic on $\C$ with simple poles at $x\in-\N_0$:
\begin{gather*}
\Sel(m_1,m_2,0,0,m_{12})
=\frac{1}{2+m_1+m_2+m_{12}} \Beta(m_2+1,m_{12}+1).
\end{gather*}
This function has poles at $m_2,m_{12}\in -\N$ and at $m_1+m_2+m_{12}=-2$:
\begin{gather*}
\rFp(m_1,m_2,m_{12})=\frac{1}{(2\pi\i)^2}\big(1-{\rm e}^{2\pi\i m_2}\big)
	 \frac{\Beta(m_2+1,m_{12}+1)}{m_1+m_2+m_{12}+2}\\
\hphantom{\rFp(m_1,m_2,m_{12})=}{}
-\frac{1}{(2\pi\i)^2}{\rm e}^{2\pi\i m_2+\pi\i m_{12}}\big(1-{\rm e}^{2\pi\i m_1}\big)
	 \frac{\Beta(m_1+1,m_{12}+1)}{m_1+m_2+m_{12}+2}.
\end{gather*}
This function has poles at most at $m_{12}\in -\N$. The poles at $m_i\in-\N$ are removed by the exponential prefactor, and the pole at $m_1+m_2+m_{12}=-2$ is removed by an equality of the two summands at these values, visible after applying the Euler reflection formula
\begin{gather*}
\Gamma(z)\Gamma(1-z)=\frac{\pi}{\sin(\pi z)}\\
\hphantom{\Gamma(z)\Gamma(1-z)}{}
=\frac{1}{(2\pi\i)^2}\big({-}2\i {\rm e}^{\pi\i m_2}\big)\sin(\pi m_2) \frac{\Gamma(m_{12}+1)\Gamma(m_2+1)\Gamma(m_2+m_{12}+2)^{-1}}{m_1+m_2+m_{12}+2}\\
\hphantom{\Gamma(z)\Gamma(1-z)=}{}
-\frac{1}{(2\pi\i)^2}{\rm e}^{2\pi\i m_2+\pi\i m_{12}}\big({-}2\i {\rm e}^{\pi\i m_1}\big)\sin(\pi (m_1+m_{12}+1))\\
\hphantom{\Gamma(z)\Gamma(1-z)=}{}
\times\frac{\Gamma(m_{12}+1)\Gamma(-m_1)^{-1}\Gamma(-m_1-m_{12}-1)}{m_1+m_2+m_{12}+2}.
\end{gather*}
We now turn finally to
\begin{gather*}
\Fp(m_1,m_2,m_{12}) =\frac{{\rm e}^{2\pi\i m_2}-1}{2\pi\i}\frac{{\rm e}^{2\pi\i m_1+2\pi\i m_{12}}-1}{2\pi\i}
	 \frac{1}{m_1+m_2+m_{12}+2} \\
\hphantom{\Fp(m_1,m_2,m_{12}) =}{}
 \times \left(\Beta(m_2+1,m_{12}+1)+\frac{\sin\pi m_1}{\sin\pi(m_1+m_{12})}\Beta(m_1+1,m_{12}+1)\right).
\end{gather*}
This function has poles at a subset of $m_{12}\in -\N$, depending on $m_1$, $m_2$. We rearrange the sum to make the quantum symmetrizer formula
\[ \Fp(m_1,m_2,m_{12}) =\rFp(m_1,m_2,m_{12})+{\rm e}^{\pi\i m_{12}}  \rFp(m_2,m_1,m_{12})
\] visible:
\begin{gather*}
	 =\frac{1}{(2\pi\i)^2}\big(1-{\rm e}^{2\pi\i m_2}\big)
	 \frac{\Beta(m_2+1,m_{12}+1)}{m_1+m_2+m_{12}+2}\\
	 \quad{} -\frac{1}{(2\pi\i)^2}{\rm e}^{2\pi\i m_2+\pi\i m_{12}}\big(1-{\rm e}^{2\pi\i m_1}\big)
	 \frac{\Beta(m_1+1,m_{12}+1)}{m_1+m_2+m_{12}+2}\\
	 \quad{} +{\rm e}^{\pi\i m_{12}}\cdot \frac{1}{(2\pi\i)^2}\big(1-{\rm e}^{2\pi\i m_1}\big)
	 \frac{\Beta(m_1+1,m_{12}+1)}{m_1+m_2+m_{12}+2}\\
	 \quad{} -{\rm e}^{\pi\i m_{12}}\cdot \frac{1}{(2\pi\i)^2}{\rm e}^{2\pi\i m_1+\pi\i m_{12}}\big(1-{\rm e}^{2\pi\i m_2}\big)
	 \frac{\Beta(m_2+1,m_{12}+1)}{m_1+m_2+m_{12}+2}.
	 \end{gather*}
\begin{cor}\label{cor_product}
	The product of two screenings $\zem_1\zem_2$ can be analytically continued to parameters with $m_{12}\not\in-\N$ $($or further, depending on $m_1$, $m_2)$.
\end{cor}

In the tensor algebra $\C\langle x_1,x_2 \rangle$ we have
\[\sha_q[x_1,x_2]_q=\sha_q(x_1x_2-q_{12}\;x_2x_1)=(1-q_{12}q_{21})x_1x_2,\]
which is zero for $q_{12}q_{21}=1$. In the Nichols algebra, this gives in this case the quadratic relation $[x_1,x_2]_q=0$ in Example~\ref{exm_quadraticRelation}, which is the quantum Serre relation for $c_{12}=0$ in Lemma~\ref{lm_Serre}.

The corresponding expression of screening operators depends by the results in the last section on the analyticity and zeroes of
\begin{align*}
\Fp(12)-q_{12}\Fp(21)
&=(1-q_{12}q_{21})\rFp(12).
\end{align*}

\begin{cor}The expression $[\zem_1,\zem_2]_q$ can be analytically continued to all values of $m_1$,~$m_2$, $m_{12}$ and it vanishes for all~$m_{12}$ except if $m_{12}\in \Z$ $($where $q_{12}q_{21}={\rm e}^{2\pi\i m_{12}}=1)$ and $m_{12}< 0$ $($where~$\rFp$ has poles$)$.
\end{cor}	
\begin{oss}
	In the realizations $(m_{ij})$ derived below condition~(\ref{cond7}A) requires $m_{ij}=0$ whenever $q_{ij}q_{ji}=1$. Hence in these cases, commutativity always continues to hold for screening operators.
\end{oss}
\begin{ese}\label{ex_notcommuative}
	A typical example where the $q$-commutator $[\zem_1,\zem_2]_q$ is nonzero in contrast to the Nichols algebra is the standard case of two local screenings, i.e., $m_1,m_2,m_{12}\in\Z$. In this case the standard (anti-)commutator formula
	\begin{gather*}
	[\zem_1,\zem_2]_\pm =\operatorname{Res}\big(\Y\big(\zem_1{\rm e}^{\alpha_2},z\big)\big),
	\end{gather*}
	where $\zem_1{\rm e}^{\alpha_2}$ is zero unless $\Y({\rm e}^{\alpha_1},z){\rm e}^{\alpha_2}$ has a pole at $z=0$, which is the case for $m_{12}=(\alpha_1,\alpha_2)\allowbreak <0$. For example, in the case $m_{12}=-1$ we have\footnote{This formula appears in the Frenkel--Kac--Segal construction and shows that the local screenings in the lattice vertex algebra of the root lattice of a Lie algebra $\g$ generates $U(\g)$. See \cite[Section~5.6]{Kac98} or in our context \cite[Section~3.6]{Len17}.}
	\begin{gather*}
	[\zem_1,\zem_2]_+ =\operatorname{Res}\big(\Y\big({\rm e}^{\alpha_1+\alpha_2},z\big)\big)=:\zem_{1+2}.
	\end{gather*}
\end{ese}	
	
\subsection{Truncation relations}\label{sec_truncation}
Let $\iota\colon \{1,\dots,n\}\to \{1\}$ be the constant function $\iota(i)=1$, then $(m_{\iota(i)\iota(j)})=m_{11}$ and $m_{\iota(i)}=m_1$ for all $i$,~$j$. Consider
\[\rFp(\underbrace{11\cdots 1}_n)=\rFp(\iota(1)\iota(2)\cdots \iota(n))=\rFp((m_{\iota(i)}+k_i),(m_{\iota(i)\iota(j)})).\]
\begin{lem}[$n$-th power]\label{lm_rFpTrunc}
	The function $\rFp(11\cdots 1)$ extends analytically to
	\[m_{11}\not\in -\N\frac{2}{k},\qquad k=2,\dots, n\]
	with at most $($depending on $m_1)$ double poles for $k=2,\dots, n-1$ and a simple pole for $k=n$.
\end{lem}
Before we prove this, we state some consequences. In the tensor algebra $\C[x]$ we have by Lemma~\ref{lm_truncation}
\[
\sha_q x^n=\left(\prod_{k=1}^n\frac{1-q^k}{1-q} \right)x^n,
\]
which is zero for $q^k=1$ for $k=2,\dots, n$ and thus gives the truncation relation $x^n=0$ for $n\geq \operatorname{ord}(q)$.

The corresponding expression of screening operators depends by the results of the previous section on the analyticity and zeroes of the function
\[
\Fp(11\cdots1)^{{\rm sym}}=\left(\prod_{k=1}^n\frac{1-q^k}{1-q} \right)\rFp(11\cdots1)^{{\rm sym}}.
\]

\begin{cor}\label{cor_trun}
The power of a screening $\zem_1^n$ can be analytically continued to parameters with
\[m_{11}\not\in -\N\frac{2}{k},\qquad k=2,\dots, n-1\]
and it vanishes for all $m_{11}$ fulfilling the condition $m_{11}\in -\Z\frac{2}{k}$ for $k=2,\dots, n$ $($where the associated $q$-polynomial is zero$)$ and $m_{11}\geq 0$ $($where $\rFp$ has no poles$)$.
\end{cor}
Note that by this result the truncation relation $\zem^n$, $n=\operatorname{ord}\big({\rm e}^{\pi\i m_{11}}\big)$ can always be analytically continued, but higher powers $\zem^n$, $n>\operatorname{ord}\big({\rm e}^{\pi\i m_{11}}\big)$ may not converge. This is related to the product not converging, see Corollary~\ref{cor_product}.

A typical example where $\zem^n$ is nonzero in contrast to the Nichols algebra is the case $m_{ij}=-\frac{2}{p}$, where one can compute that $\zem^p$ is proportional to a single screening associated to ${\rm e}^{p\alpha}$. Since $(p\alpha,\alpha)\in 2\Z$ we call this screening \emph{local screening} since ${\rm e}^{p\alpha}$ is an element in a suitably defined integral lattice vertex algebra. This computation appears in the Liouville model and higher rank analogy, where the new screening is a long screening, see \cite[Section~6.4]{Len17}.

\begin{proof}[Proof of Lemma \ref{lm_rFpTrunc}]
	 In our special situation with equal $m_{ij}=m_{11}$ and $m_i=m_1$ we use the factorization in Lemma 6.3 cit.~loc.
	\begin{gather*}
	\rFp((m_{i}+k_i),(m_{ij}))
	:=\prod_{s=0}^{n-1} \big( \big({\rm e}^{\pi {\rm i} m_{11}}\big)^s {\rm e}^{2\pi {\rm i} m_{1}} -1 \big) \cdot
	\Sel((m_{11}+k_i),(0),(m_{11}))
	\end{gather*}
	and evaluate $\Sel$ for $(k_i)=(0)$ with the Selberg integral formula \cite{Sel44}
	\begin{gather*}
	\Sel((a-1),(b-1),(2c))  = \prod_{k=0}^{n-1} \frac{\Gamma(a + kc)\Gamma(b + kc)\Gamma(1 + (k+1)c)}{\Gamma(a + b + (n+k-1)c)\Gamma(1 + c)}
	\end{gather*}
	for $a-1=m_{1}$, $b-1=0$, $2c=m_{11}$, and for arbitrary $(k_i)$ with the following refinement called Kadell's integral~\cite{Kad97} and \cite[Section~VI.10]{Mac95}:
	For any partition in at most $n$ parts
	$l=(l_1,\dots,l_n)$, $l_1\geq \cdots \geq l_n\geq 0$ and for $P_{l}^{(1/c)}(z_1,\dots, z_n)$ the associated Jack~polynomial, we have
	\begin{gather*}
	 \idotsint\limits_{1> z_1>\dots >z_{n}> 0}
	P_{l}^{(1/c)}(z_1,\dots, z_n)\prod_i z_i^{a-1}\prod_i (1-z_i)^{b-1}\prod_{i<j} (z_i-z_j)^{2c} \;\d z_1\cdots \d z_{n}\\
	\qquad{} =\prod_{1\leq i<j\leq n}\frac{\Gamma((j-i+1)c+l_i-l_j)}
	{\Gamma((j-i)c+l_i-l_j)}
	\prod_{k=0}^{n-1}
	\frac{\Gamma(a+kc+l_{n-k})\Gamma(b+kc)}
	{\Gamma(a+b+(n+k-1)c+l_{n-k})}.
	\end{gather*}
We want to study the analyticity of $\rFp$. Since the Jack polynomials form a basis of the symmetric polynomials, it is sufficient to study analyticity of Kadell's integral for all partitions $l$. All possible poles come from the Gamma-functions in the numerators.
	\begin{itemize}\itemsep=0pt
		\item Consider $\Gamma(a+kc+l_{n-k})$ with simple poles at most at
		\[a+ k c \in -\N_0, \qquad k=0,\dots,n-1.\]
		These cancel with the zeroes in the prefactor $\big(\big({\rm e}^{\pi {\rm i} m_{\alpha\alpha}}\big)^s {\rm e}^{2\pi {\rm i} m_{\alpha\lambda}} -1\big)$, so at these values $\rFp$ is analytic. We remark however, that these exceptional non-zero values of $\tilde{F}$ give rise to reflection operators~\cite[Section~6.3]{Len17}.
		
		We remark that the poles in $\Sel$ depending on $m_i$ must always disappear in $\rFp$, because $\rFp$ is a contour integral avoiding the singularity at $z_i=0$.
		\item Consider $\Gamma(b+kc)$ for $b=1$, which gives simple poles for
		\begin{gather*}
		kc  \in -\N,  \qquad k=2,\dots,n-1,
		\end{gather*}
		while the possible pole for $c\in -\N$ for $k=1$ cancels with the many zeroes coming from the denominators $\Gamma((j-i)c+l_i-l_j)$ for $j-i=1$ with $l_i-l_j\leq 0$.
		\item Consider $\Gamma((j-i+1)c+l_i-l_j)$ for all $i$, $j$ with fixed $k:=j-i+1$, which are $n-k+1$ terms that can together produce a pole up to this order for
		\begin{gather*}
		(l_i-l_j) + kc \in -\N_0, \qquad k=2,\dots,n.
		\end{gather*}
		On the other hand consider the $n-k-1$ terms in the denominator $\Gamma((j'-i')c+l_{i'}-l_{j'})$ for $j'=j$, $i'=i-1$ for $i\neq 1$. Since $l_{i'}-l_{j'}\geq l_i-l_j$ these zeroes cancel with the possible poles above, leaving only a possible single pole at $l_1-l_2+kc\in -\N_0$. A possible pole for $c=0$ cancels again with the many zeroes coming from the denominators $\Gamma((j-i)c+l_i-l_j)$ for $j-i=1$ with $l_i-l_j \leq 0$ (note that previously we have only used these denominators for $j-i>1$ and for $c\in -\N$), leaving possible poles
		\begin{gather*}
		kc \in -\N, \qquad k=2,\dots,n.
		\end{gather*}
	\end{itemize} 	
	This proves the assertion.
\end{proof}

We remark that our calculation can be compared with the special case $(k_i)=(0)$ described by the easier Selberg integral above. Conversely, in the case of general $(k_i)$ we integrate additional positive integer powers~$z_i$, so it is reasonable that the case $(k_i)=(0)$ already exhibits the maximal set of poles. However, in presence of Kadell's integral we chose to be explicit here.

\subsection{Analytical continuation by recursion}\label{sec_recursion}

We introduce a way of analytically continuing $\tilde{F}$ without explicitly computing it. We will use this in the Serre relations, but also it seems suitable also for more complicated relations.

A trivial recursion relation is obtained by splitting off a factor
\[
(z_k-z_l)^{m_{kl}}=(z_k-z_l)(z_k-z_l)^{m_{kl}-1}
\] in the integrand, multiplying $(z_k-z_l)$ out and joining additional $z_k$ resp.~$z_l$ to the powers $z_k^{m_k}$ resp $z_l^{m_l}$. Thus
\begin{gather*}
	\rFp((m_i),(m_{ij}))
	=\rFp((m_i+\delta_{i,k}),(m_{ij}-\delta_{i,k}\delta_{j,l}))
	-\rFp((m_i+\delta_{i,l}),(m_{ij}-\delta_{i,k}\delta_{j,l})).
\end{gather*}
In particular $\rFp$ can be analytically continued from some set of $(m_{ij})$ to positive translates $(m_{ij})+(N_{ij})$ for $N_{ij}\in\N_0$. Note that if $(m_{ij})$ is subpolar, so is $(m_{ij})+(N_{ij})$. It is more difficult to continue to smaller~$m_{ij}$, where poles appear:

\begin{lem}\label{lm_recursion}Assume $k<l$ are fixed indices, then we have the following
recursion relation
\begin{gather*}\rFp((m_i),(m_{ij})) =\sum_{k'\neq k,l} -\frac{m_{kk'}}{1+m_{kl}}\operatorname{sgn}(k'-k) \rFp((m_i),(m_{ij}+\delta_{i,k}\delta_{j,l}-\delta_{i,k}\delta_{j,k'}))\\
\hphantom{\rFp((m_i),(m_{ij})) =}{}
-\frac{m_{k}}{1+m_{kl}} \rFp(m_i-\delta_{i,l}),(m_{ij}+\delta_{i,k}\delta_{j,l})),
 \end{gather*}
where $\operatorname{sgn}(x)=\pm1$ denotes the sign of $x$ and we
restrict ourselves for definiteness, e.g., to the set of $(m_{ij})$ with
all $m_{ij}>0$. For $k=1$ the formula gets an additional summand
\[
\frac{1}{1+m_{kl}}\int_1^{{\rm e}^{2\pi\i }}\d z_2\cdots
\int_{z_{n-1}}^{{\rm e}^{2\pi\i }}\d z_{n-1} \prod_{1\neq i}
z_i^{m_i}(1-z_i)^{m_{1i}}\prod_{1\neq i<j}(z_i - z_j)^{m_{ij}}.
\]
\end{lem}

When we write $\int_{1}^{{\rm e}^{2\pi\i t}}$ we mean the
corresponding arc on the unit circle, lifted to the universal covering
of $\C\backslash\{0\}$.

\begin{proof}We integrate by parts with respect to the variable $z_k$ and with
respect to the factor $(z_k-z_l)^{m_{kl}}$:
\begin{gather*}
\rFp((m_i),(m_{ij}))
=\int_1^{{\rm e}^{2\pi\i }}\d z_1 \int_{z_1}^{{\rm e}^{2\pi\i }}\d z_2\cdots\int_{z_{n-1}}^{{\rm e}^{2\pi\i }}\d z_n\prod_i
z_i^{m_i}\prod_{i<j}(z_i - z_j)^{m_{ij}}\\
=\frac{1}{1+m_{kl}}\int_1^{{\rm e}^{2\pi\i }}\d z_1\cdots 
\int_{z_{k-2}}^{{\rm e}^{2\pi\i }}\d z_{k-1}\\
\quad {} \times\Bigg[\int_{z_k}^{{\rm e}^{2\pi\i }}\d z_{k+1}\cdots\int_{z_{n-1}}^{{\rm e}^{2\pi\i }}\d z_n \prod_i
z_i^{m_i}\prod_{i<j}(z_i - z_j)^{m_{ij}+\delta_{i,k}\delta_{j,l}}\Bigg]_{z_k=z_{k-1}}^{z_k={\rm e}^{2\pi\i }}\\
\quad{} -\sum_{k'\neq k,l} \frac{m_{kk'}}{1+m_{kl}}\operatorname{sgn}(k'-k)
\int_1^{{\rm e}^{2\pi\i }}\d z_1 \cdots \int_{z_{n-1}}^{{\rm e}^{2\pi\i }}\d z_n \prod_i
z_i^{m_i}\prod_{i<j}(z_i -z_j)^{m_{ij}+\delta_{i,k}\delta_{j,l}-\delta_{i,k}\delta_{j,k'}} \\
\quad{}-\frac{m_{k}}{1+m_{kl}}\int_1^{{\rm e}^{2\pi\i }}\d z_1\cdots
\int_{z_{n-1}}^{{\rm e}^{2\pi\i }}\d z_n \prod_i z_i^{m_i-\delta_{i,k}}\prod_{i<j}(z_i -z_j)^{m_{ij}+\delta_{i,k}\delta_{j,l}},
\end{gather*}
and the derivative of $\int_{z_k}^{{\rm e}^{2\pi\i}}$ vanishes due to $(z_k-z_{k+1})^{m_{k,k+1}}$. Also it should be silently implied that if $k'<k$ then the roles of $i$, $j$ are switched where appropriate. Now the boundary term in the square bracket vanishes for $z_k={\rm e}^{2\pi\i}$, because then the integration domain for ${\rm d}z_{k+1}$ is $\big\{{\rm e}^{2\pi\i }\big\}$ and for $k\neq 1$ it vanishes for $z_k=z_{k-1}$, again because of a term $(z_{k-1}-z_k)^{m_{k-1,k}}$. For $k=1$ the boundary term for $z_1=1$ produces an additional term
\[
\frac{1}{1+m_{kl}}\int_1^{{\rm e}^{2\pi\i }}\d z_2\cdots \int_{z_{n-1}}^{{\rm e}^{2\pi\i }}\d z_{n-1} \prod_{1\neq i}
z_i^{m_i}(1-z_i)^{m_{1i}}\prod_{1\neq i<j}(z_i - z_j)^{m_{ij}}.\tag*{\qed}
\]\renewcommand{\qed}{}
\end{proof}

This is still not satisfactory, because $m_{kl}$ is increased at the
expense of $m_{kk'}$ being decreased, but at least the overall sum of
the $m_{ij}$ does not decrease. We now demonstrate how this can be
applied in the case $n=3$. Inductively applying the previous lemma $n$
times to $(k,l)=(2,3)$ gives
\begin{gather*}
\rFp(m_1,m_2,m_3,m_{12},m_{13},m_{23})\\
\qquad =\sum_{i+j=n}(-1)^j\frac{{m_{13} \choose i}{m_{3} \choose j}}{{m_{23}+n
\choose n}}\rFp(m_1,m_2-j,m_3,m_{12},m_{13}-i,m_{23}+n).
\end{gather*}

Now suppose $(m_{ij})$ fulfills $m_{12}+m_{13}+m_{23}>-2$ (subpolarity for $I=\{1,2,3\}$) and $m_{12}>-1$ (subpolarity for $I=\{1,2\}$) but not necessarily $m_{23}>-1$ (subpolarity for $I=\{2,3\}$). Then for $n$ sufficient large such that $m_{23}+n>-1$ all parameters in the previous formula are subpolar on intervals.

\begin{cor}\label{cor_RecursionContinuation3} The previous recursion formula gives an analytic continuation of $\rFp((m_i){,}(m_{ij}))$ for $n=3$ to all parameters $(m_{ij})$ with $m_{12}+m_{13}+m_{23}>-2$ and $m_{12}>-1$, with at most simple poles at $m_{23}\in-\N$.
\end{cor}

\subsection[Serre relations for Cartan matrix entry $-1$]{Serre relations for Cartan matrix entry $\boldsymbol{-1}$}\label{sec_Serre}

We want to study the Serre relation $[\zem_1,[\zem_1,\zem_2]_q]_q=0$, which holds by Lemma~\ref{lm_Serre} for $q_{12}q_{21}=q_{11}^{-1}$ or $q_{11}^2=1$. We start again by setting up an analytical continuation. Let, in more generality
\[\iota_N\colon \ \{0,\dots,n\}\to \{1,2\},\qquad
\iota(N)=2,\qquad \iota(i)=1, \qquad i\neq N.\]

\begin{lem}[$N=0$] There exists an analytic continuation of
\[\Sel(211\cdots 1)=\Sel((m_{\iota_0(i)}+k_i),(0), (m_{\iota_0(i)\iota_0(j)}))\]
as a product of Gamma functions, with at most simple poles at
	\begin{alignat*}{3}
		& m_1+ k \frac{m_{11}}{2}  \in -\N, \qquad && k=0,\dots,n-1,& \\
		& m_{12}+k\frac{m_{11}}{2}  \in -\N,  \qquad&&  k=0,\dots,n-1, & \\
		& k\frac{m_{11}}{2}  \in -\N,  \qquad && k=2,\dots,n, &
		\end{alignat*}
	and at $n+ nm_1+m_2 + nm_{12} + {n\choose 2} m_{11} \in -\N$.
\end{lem}
	
\begin{proof}By the substitution $\tilde{z}_i=z_i/z_0$ we can isolate the first variable in the Selberg integral and integrate it
\begin{gather*}
 \Sel((m_{\iota(i)}+k_i),(0), (m_{\iota(i)\iota(j)}))_{0\leq i<j\leq n}\\
\qquad{} =\idotsint\limits_{1> z_0>\cdots >z_n> 0} z_0^{m_2+k_0} \prod_{i=1}^n z_i^{m_1+k_i} \prod_{j=2}^{n} (z_1 - z_j)^{m_{12}} \prod\limits_{1 \leq i < j \leq n} (z_i - z_j)^{m_{11}}  \d z_0 \d z_1 \cdots \d z_n\\
\qquad{} =\int_0^1 z_0^{n+(m_2+k_0) + \sum_j m_{12} + \sum_{i<j} m_{11} + \sum_i (m_1+k_i)} \d z_0\\
\qquad\quad{}\times  \idotsint\limits_{1> z_1>\cdots >z_n > 0} \prod_{i=1}^n \tilde{z}_i^{m_1+k_i} \prod_{j=1}^{n} (1 - \tilde{z}_j)^{m_{12}} \prod_{2 \leq i < j \leq n} (\tilde{z}_i - \tilde{z}_j)^{m_{11}}  \d\tilde{z}_1 \cdots \d\tilde{z}_n\\
\qquad{} =\frac{1}{1+\tilde{m}}
  \Sel((m_{\iota(i)}+k_i),(0), (m_{\iota(i)\iota(j)}))_{0\leq i<j\leq n},
\end{gather*} 	
where $\tilde{m}:=n+ nm_1+m_2 + nm_{12} + {n\choose 2} m_{11}+\sum_{i=0}^n k_i$. Hence analytic continuation is again possible using Kadell's integral, as in the previous section and we thus analyze the poles in
\begin{gather*}
 \frac{1}{1+\tilde{m}}\idotsint\displaylimits_{1> z_1>\dots >z_{n}> 0}
P_{l}^{(1/c)}(z_1,\dots, z_n)\prod_i z_i^{a-1}\prod_i (1-z_i)^{b-1}\prod_{i<j} (z_i-z_j)^{2c} \;\d z_1\cdots \d z_{n}\\
\qquad{} =\frac{1}{1+\tilde{m}}\prod_{1\leq i<j\leq n}\frac{\Gamma((j-i+1)c+l_i-l_j)}
{\Gamma((j-i)c+l_i-l_j)}
\prod_{k=0}^{n-1}
\frac{\Gamma(a+kc+l_{n-k})\Gamma(b+kc)}
{\Gamma(a+b+(n+k-1)c+l_{n-k})}
\end{gather*}
for $a-1=m_1$, $b-1=m_{12}$, $2c=m_{11}$ as in the last section
	\begin{itemize}\itemsep=0pt
	\item The fraction produces a simple pole at $\tilde{m}=-1$.
	\item $\Gamma(a+kc+l_{n-k})$ again produces possible poles at
	\begin{gather*}
	a+ k c  \in -\N_0,  \qquad k=0,\dots,n-1.
	\end{gather*}
	\item $\Gamma(b+kc)$ is changed and produces possible simple poles at
	\begin{gather*}
	b+kc  \in -\N_0,  \qquad k=0,\dots,n-1
	\end{gather*}
	\item $\Gamma((j-i+1)c+l_i-l_j)$ for all $i$, $j$ with fixed $k=j-i+1$ again produces a possible simple pole at
	\begin{gather*}
	kc  \in -\N,  \qquad k=2,\dots,n\tag*{\qed}
	\end{gather*}
\end{itemize}\renewcommand{\qed}{} 	
\end{proof}

\begin{ese}For $n=2$ the formula in the proof reads
	\begin{gather*}
	 \Sel(211)
	=\frac{1}{1+\tilde{m}}\frac{\Gamma(m_{11}+l_1-l_2)}{\Gamma(m_{11}/2+l_1-l_2)} \\
\hphantom{\Sel(211)=}{}
\times \frac{\Gamma(1+m_1)+l_2)\Gamma(1+m_{12})}
	{\Gamma(2+m_1+m_{12}+m_{11}/2+l_2)}
	\frac{\Gamma(1+m_1+m_{11}/2+l_1)\Gamma(1+m_{12}+m_{11}/2)}
	{\Gamma(2+m_1+m_{12}+nm_{11}/2+l_2)}.
	\end{gather*}
	This has simple poles at most at $m_{11}/2\in -\frac{1}{2}-\N_0$ and $m_{12}\in -\N$ and $m_{12}+m_{11}/2\in -\N$, and other poles involving $m_1$. We discuss two examples that are relevant later-on, see condition~\eqref{cond7}
	\begin{itemize}\itemsep=0pt
		\item For $m_{11}=2m$, $m_{12}=-m$ with $m\not\in \Z$ we have simple poles at \[m_{11}/2=m\in -\frac{1}{2}-\N_0.\]
		\item For $m_{11}=1$, $m_{12}=-m$ with $m\not\in \Z$ we have simple poles at
		\begin{gather*}
	m_{12}+1/2=-m+1/2  \in -\N\quad
			\Leftrightarrow\quad m \in\frac{1}{2}+\N.
		\end{gather*}
	\end{itemize}
\end{ese}

Unfortunately we have found no version of the previous lemma if the distinguished index is not the first index in~$\Sel$. For a specific choice of~$(m_{ij})$ associated to~$A_2$ we can use the Selberg integral associated to the Lie algebra~$A_2$ \cite{TV03,War09}, which gives an expression in terms of Gamma functions for a specific linear combination:

\begin{lem}[$n=2$]
 	Let $m_{11}=2m$ and $m_{12}=-m$. Consider the linear combination of Selberg integrals
	\begin{align*}
	S&=-\Sel(112)-\frac{\sin(\pi m)}{\sin(2\pi m)}\Sel(121).
	\end{align*}
	Then $S$ has an analytic continuation with at most the following poles
	\begin{itemize}\itemsep=0pt
		\item at $2m\in\pm\N$ a simple pole,
		\item simple poles at $m_1,m_2\in-\N$ and $m_1+m_2-m\in-2-\N_0$.
	\end{itemize}
\end{lem}
\begin{proof}
	The expression $S$ is a special case of \cite[Theorem~1.2]{War09}: In the notation of cit.~ļoc.~we set
\[k_1=1,\qquad k_2=1,\qquad \alpha=1,\qquad \beta_i=1+m_i,\qquad \gamma=\frac{m_{11}}{2}=m,\]
	then after a slight change in variables $z_i=1-t_i$, which reverses the order of the integration variables and causes the factor~$(-1)$, then the expression in question reads
	\begin{gather*}
	S =
	\frac{\Gamma(\beta_1)}{\Gamma(1+\beta_1-2\gamma)}
	\frac{\Gamma(\beta_1+\beta_2-\gamma)}{\Gamma(1+\beta_1+\beta_2)}
	\frac{\Gamma(\beta_2)}{\Gamma(1+\beta_2+\gamma)}\\
\hphantom{S=}{}\times
	\frac{\Gamma(1-2\gamma)\Gamma(\gamma)}{\Gamma(\gamma)}
	\frac{\Gamma(1)\Gamma(\gamma)}{\Gamma(\gamma)}
	\frac{\Gamma(1+\gamma)\Gamma(2\gamma)}{\Gamma(\gamma)}\\
\hphantom{S}{}
=
	\frac{\Gamma(1+m_1)}{\Gamma(2+m_1-m)}
	\frac{\Gamma(2+m_1+m_2-m)}{\Gamma(3+m_1+m_2)}
	\frac{\Gamma(1+m_2)}{\Gamma(2+m_2+m)}
	\Gamma(1-2m)(m/2)\Gamma(2m).
	\end{gather*}
	We again read off the possible poles from the Gamma-functions: In the first numerator $1+m_1\in-\N_0$, in the second numerator $2+m_1+m_2-m_{12}\in-\N_0$, in the third numerator $1+m_2\in-\N_0$, and in the rest $2m\in\pm\N$.
\end{proof}

The last assertion does not determine $\rFp(121)$ and $\rFp(112)$ individually, but if we assume $m<1$, then $\rFp(121)$ is subpolar on intervals, so it is analytic. Hence in this case we can combine the information on the poles of $\Sel(211)$ and $S$, $\Sel(112)$ to determine all possible poles of~$\rFp(112)$ and $\rFp(211)$ (and~$\rFp(121)$, but this is again analytic by subpolarity on intervals). Again, since~$\rFp$ is an integral over a contour not on~$0$, the poles depending on $m_1$, $m_2$ will disappear in the linear combinations of~$\Sel$.

\begin{cor}[later-on the case $A_2$, $m<1$]\label{cor_A2Method1}
	Suppose $m_{11}=2m$, $m_{12}=-m$ and suppose in addition $m<1$, then the previous two lemmas and the analyticity of $\Sel(121)$ give an analytic continuation of $\rFp(211)$, $\rFp(112)$ to all values of $m<1$ with at most simple poles at $2m\in-\N$.
\end{cor}
Observe that $\Sel(211)$ only contributes poles for negative half-integer $m$, which apparently are related to the power~$\zem_1^2$. On the other hand $\Sel(112)$ also contributes poles for positive half-integer~$m$, which apparently are related to the product~$\zem_1\zem_2$. The possible pole at $m=\frac{1}{2}$ must in fact be analytic, since subpolarity holds~-- it appears that all possible poles $m\in\frac{1}{2}+\N_0$ are artifacts from the sine-fraction appearing in the definition of~$S$.

As a second method (with different range of applicability), we now consider directly for~$\rFp(112)$ the analytic continuation via recursion in Corollary~\ref{cor_RecursionContinuation3}. Note that this could also be applied to the previous case, but then the additional boundary term in Lemma~\ref{lm_recursion} would come into consideration.
\begin{cor}[later-on the cases $A_2$, $m>-\frac{1}{2}$ and $A(1|0)$, $m<\frac{3}{2}$]\label{cor_A2Method2}
	We have the following analytic continuations:
\begin{itemize}\itemsep=0pt
	\item For $m_{11}=2m$, $m_{12}=-m$ with $m>-\frac{1}{2}$ we have \[m_{11}=2m>-1,\qquad
	m_{11}+m_{12}+m_{12}=2m-m-m=0>-2,\]
	so Corollary~{\rm \ref{cor_RecursionContinuation3}} applies and we have an analytic continuation with at most simple poles at $m\in\N$.
	\item For $m_{11}=1$, $m_{12}=-m$ with $m<\frac{3}{2}$ we have
	\[m_{11}=1>-1,\qquad
	m_{11}+m_{12}+m_{12}=1-m-m>-2,\]
	so Corollary~{\rm \ref{cor_RecursionContinuation3}} applies and we have an analytic continuation with at most a simple pole at $m=1$. Note that for $m<1$ the expression is subpolar.
\end{itemize}
\end{cor}

We are now ready to check the Serre relations with $c_{ij}=-1$. We calculate in the tensor algebra explicitly:
\begin{gather*}
[x_1,[x_1,x_2]_q]_q=x_1x_1x_2-q_{12}(q_{11}+1)x_1x_2x_1+q_{11}q_{12}^2 x_2x_1x_1, \\
 \sha [x_1,[x_1,x_2]_q]_q =x_1x_1x_2 -q_{12}(q_{11}+1)  x_1x_2x_1+q_{11}q_{12}^2  x_2x_1x_1 \\
\hphantom{\sha [x_1,[x_1,x_2]_q]_q =}{}
+q_{11}x_1x_1x_2 -q_{12}(q_{11}+1) q_{12} x_2x_1x_1+ q_{11}q_{12}^2  q_{21}x_1x_2x_1+q_{12}x_1x_2x_1 \\
\hphantom{\sha [x_1,[x_1,x_2]_q]_q =}{}
-q_{12}(q_{11}+1)  q_{21}x_1x_1x_2 +q_{11}q_{12}^2  q_{11}x_2x_1x_1+q_{12}q_{12}x_2x_1x_1 \\
\hphantom{\sha [x_1,[x_1,x_2]_q]_q =}{}
-q_{12}(q_{11}+1) q_{21}q_{11}x_1x_1x_2+q_{11}q_{12}^2  q_{11}q_{21}x_1x_2x_1+q_{11}q_{12}x_1x_2x_1 \\
\hphantom{\sha [x_1,[x_1,x_2]_q]_q =}{}
-q_{12}(q_{11}+1) q_{12}q_{11} x_2x_1x_1+ q_{11}q_{12}^2  q_{21}q_{21} x_1x_1x_2+q_{11}q_{12}q_{12}x_2x_1x_1 \\
\hphantom{\sha [x_1,[x_1,x_2]_q]_q =}{}
-q_{12}(q_{11}+1) q_{12}q_{11}q_{21} x_1x_2x_1+ q_{11}q_{12}^2  q_{21}q_{21}q_{11} x_1x_1x_2 \\
\hphantom{\sha [x_1,[x_1,x_2]_q]_q }{}
=(q_{11}+1)(q_{12}q_{21}-1)(q_{11} q_{12}q_{21}-1)x_1x_1x_2.
\end{gather*}
In the factorization we see the three possibilities for this relation to hold, namely: $q$-truncation, $q$-Cartan with $c_{ij}=0$ and $q$-Cartan with $c_{ij}=-1$.

Now by the results in Section~\ref{smallTheorems} this translates to an equation
\begin{gather*}
 \Fp(112)^{{\rm sym}}-q_{12}(q_{11}+1)\Fp(121)^{{\rm sym}}+q_{11}q_{12}^2 \Fp(211)^{{\rm sym}}\\
\qquad{} =(q_{11}+1)(q_{12}q_{21}-1)(q_{11}\;q_{12}q_{21}-1)\rFp(112)^{{\rm sym}}.
\end{gather*}
Combining the analytic continuations in Corollaries~\ref{cor_A2Method1} and~\ref{cor_A2Method2} we thus find:

\begin{cor}[later-on the case $A_2$]\label{cor_A2}
The function $\rFp(112)$ can be analytically continued to parameters $m_{11}=2m$, $m_{12}=-m$ with at most simple poles for $m\in-\frac{1}{2}\N$ and $m\in\N$. For $m\in-\frac{1}{2}-\N_0$ we have $m_{ii}\in -1-2\N_0$ and $q_{ii}=-1$, for which the factor above $(q_{11}+1)(q_{12}q_{21}\allowbreak -1)(q_{11} q_{12}q_{21}-1)$ has a double zero, and thus its product with $\rFp(112)$ is zero. As a consequence, for $m\not\in \Z$ two screening operators $\zem_1$, $\zem_2$ with such $(m_{ij})$ fulfill the quantum Serre relation
\[[\zem_1,[\zem_1,\zem_2]_q]_q=0.\]
\end{cor}

\begin{cor}[later-on the case $A(1,0)$]\label{cor_A10}
For $m<\frac{3}{2}$ the function $\rFp(112)$ can be analytically continued to parameters $m_{11}=1$, $m_{12}=-m$ with at most a simple pole at $m=1$. As a~consequence, for $m\not\in \Z$ two screening operators $\zem_1$, $\zem_2$ with such~$(m_{ij})$ fulfill the quantum Serre relation
	\[[\zem_1,[\zem_1,\zem_2]_q]_q=0.\]
\end{cor}

Here we make no assertion if the Serre relation holds for $m>\frac{3}{2}$.

\section{Formulation of the classification problem}\label{section5}
\begin{defin}\label{Mm} Let $\Lambda$ be a lattice of rank $r$, basis $\{a_1, \dots, a_r\}$, bilinear form $(\,,\,)$ and Cartan matrix $(c_{ij})$ and let $m_{ij}:=(a_i, a_j)$. Given a braiding matrix $(q_{ij})$, we say that the lattice $\Lambda$ described by $(m_{ij})$ \emph{realizes} $(q_{ij})$ iff
\begin{itemize}\itemsep=0pt
	\item the matrix elements $m_{ij}$ exponentiate to the matrix elements $q_{ij}$:
	\[{\rm e}^{{\rm i}\pi m_{ij}} = q_{ij},\]
	\item for each pair $(i,j)$ one of the following conditions hold: \begin{align}\label{cond7}
\text{A:} \ 2m_{ij} = c_{ij} m_{ii} \qquad \text{or} \qquad \text{B:} \ (1-c_{ij})m_{ii} = 2
\end{align}
(this condition originates in \cite{Sem14}, but not the next condition),
	\item a base change by precomposing with a Weyl reflection $\s_k(\alpha_i)=\alpha_i-c_{ki}\alpha_k$
	returns a new matrix $\r_k(m_{ij})$ with
	\[\r_k(m_{ij})_{ij}=(\s_k(\alpha_i),\s_k(\alpha_j))\]
	and as suggested by notation $\r_k(m_{ij})$ trivially fulfills condition (1) with the reflected braiding matrix $\r_k(q_{ij})$.
 We now demand in addition that all iterated reflections $\r_{k_1}\cdots\r_{k_n}(m_{ij})$ again satisfy condition~(\ref{cond7}).
\end{itemize}
\end{defin}

In complete analogy to Definition \ref{def_qCartanRoot} we define
\begin{defin}
	Let $(m_{ij})$ be a realization.
	\begin{enumerate}\itemsep=0pt
		\item A pair $(i,j)$ is called $m$-\emph{Cartan} if $m_{ij}$ satisfies (\ref{cond7}A), and $m$-\emph{truncation} if it satis\-fies~(\ref{cond7}B).
		\item A root $\alpha$ is called $m$-\emph{Cartan} resp.~$m$-\emph{truncation} if in any Weyl chamber containing $\alpha=\alpha_i$ as a simple root and any neighbour $j\sim i$ the pair $(i,j)$ is $m$-{Cartan} resp.~$m$-{truncation}.
		\item A root $\alpha$ is called \emph{only $m$-Cartan} if in any Weyl chamber containing $\alpha=\alpha_i$ as a simple root and any neighbour $j\sim i$ the pair $(i,j)$ is only $m$-{Cartan}.
	\end{enumerate}
\end{defin}	
We visualize a realization by a diagram decorated on the top as a $q$-diagram by $q_{ii}$, $q_{ij}q_{ji}$ and on the bottom by the realization $m_{ii}$, $m_{ij}+m_{ji}=2m_{ij}$, e.g.,
\[
	\begin{tikzpicture}
	\draw (0,0)--(1.6,0) (1.8,0)--(3.4,0);
	\draw (-0.1,0) circle[radius=0.1cm] node[anchor=south]{$ q_{11}$} node[anchor=north]{$ m_{11}$}
	(1.7,0) circle[radius=0.1cm] node[anchor=south]{$ q_{22} $}node[anchor=north]{$m_{22}$}
	(3.5,0) circle[radius=0.1cm] node[anchor=south]{$ q_{33} $}node[anchor=north]{$ m_{33}$};
	\draw (0.9,0) node[anchor=south]{$ q_{12}q_{21}$}node[anchor=north]{$ 2m_{12}$};
	\draw (2.7,0) node[anchor=south]{$ q_{23}q_{32}$} node[anchor=north]{$ 2m_{23}$};
	\end{tikzpicture}
\]
and no line connecting the vertices $1$, $3$ if $m_{13}=0$. Note that if $q_{ij}q_{ji}=1$ and thus $c_{ij}=0$, then in any realization the pair $(i,j)$ cannot be $m$-trunctation, must be $m$-Cartan, and thus $m_{ij}=0$.

We now summarize the strategy by which these notions allow to construct and classify realizations:
\begin{itemize}\itemsep=0pt
	\item Any pair $(i,j)$ that is $m$-Cartan resp.~$m$-truncation is surely $q$-Cartan resp.~$q$-truncation. Conversely, if a pair $(i,j)$ is only $q$-Cartan resp.~only $q$-truncation, than in any realization $(m_{ij})$ the pair has to be (only) $m$-Cartan resp.~(only) $m$-Cartan.
	\item If a simple root $\alpha_i$ is not $q$-Cartan, then $m_{ii}=\frac{2}{\operatorname{ord}(q_ii)}$. If a simple root $\alpha$ is $q$-Cartan, then (\ref{cond7}A) determines $m_{ij}$ for all $j\neq i$, hence one may proceed inductively. Moreover, for a subsystem generated by only $q$-Cartan simple roots $\alpha_{i},\alpha_{i'},\dots$ this determines $(m_{ij})$ for this subsystem to be a rescaled root lattice (see Lemma~\ref{cartanclassification}).
	\item If a root $\alpha$ is $q$-Cartan and $q$-truncation, then there might exist different realizations, depending on the assumption that it is $m$-Cartan or $m$-truncation. In more difficult cases one may argue with individual pairs $(i,j)$. See as example Remark~\ref{moresolsl21}.
	\item Conversely, suppose we are given a possible realization $(m_{ij})$ and want to proof that this is indeed a realization. If $m_{ii}=\frac{2}{\operatorname{ord}(q_ii)}$ and we already know $\alpha_i$ is $q$-truncation (in particular if it is fermionic), then~(\ref{cond7}B) holds. Otherwise we check condition (\ref{cond7}A). Then we have to go through all reflections and check the same conditions, possibly fixing additionally open parameters. As for the $q$-diagrams, the following fact reduces greatly the amount of computation:
\end{itemize}

\begin{prop}\label{prop_CartanRefl}
If the pairs $(k,i)$ and $(k,j)$ are $m$-Cartan $($for example if the root $\alpha_k$ is $m$-Cartan$)$ then we have ${\r_k(m_{ij})=(m_{ij})}$.
\end{prop}
\begin{proof}
	\begin{align*}
	\r_k(m_{ij})_{ij}&=(\s_k\alpha_i,\s_k\alpha_j)=(\alpha_i-c_{ki}\alpha_k,\alpha_j-c_{kj}\alpha_k)\\
	&=m_{ij}-c_{kj}m_{ik}- c_{ki}m_{kj}+c_{ki}c_{kj}m_{kk}\\
	&=m_{ij}-c_{kj}\cdot \frac{1}{2}c_{ki}m_{kk}- c_{ki}\cdot\frac{1}{2}c_{kj}m_{kk}+c_{ki}c_{kj}m_{kk}=m_{ij}.\tag*{\qed}
	\end{align*}\renewcommand{\qed}{}
\end{proof}

\begin{itemize}
\item For a reflection on a $m$-truncation root the result is less predictable. We derive in Propositions~\ref{prop_rank2reflection1} and~\ref{prop_rank2reflection2} a sufficient criterion, essentially from performing one reflection and give conditions when this is $m$-truncation of $m$-Cartan. In practice, these conditions are sufficient to fix~$(m_{ij})$ uniquely.
\item In particular for the Nichols algebras that do not follow into families, we proceed by induction: The realizations $(m_{ij})$ restrict on a subset of simple roots to a realization of the respective $q$-subdiagram.
\end{itemize}

\begin{ese}\label{sl(2|1)}
We now show an example of this procedure. We consider row 3 of Table~1 in~\cite{Hecklist}, described by the braiding matrices:
\[ \big(q_{ij}^\mathrm{I}\big)= \begin{bmatrix}
 q^2 & q^{-1}\vspace{1mm}\\
 q^{-1} & -1
\end{bmatrix}, \qquad \big(q_{ij}^\mathrm{II}\big)= \begin{bmatrix}
 -1 & -q\vspace{1mm}\\
 -q & -1
\end{bmatrix}, \]
and corresponding diagrams:
\[
\begin{tikzpicture}
	\draw (0,0)--(1.6,0);
	\draw (-0.1,0) circle[radius=0.1cm] node[anchor=south]{$ q^2$}
			(1.7,0) circle[radius=0.1cm] node[anchor=south]{$-1$};
	\draw (0.8,0) node[anchor=south]{$ q^{-2}$} ;
	\draw (3.6,0)--(5.2,0);
	\draw (3.5,0) circle[radius=0.1cm] node[anchor=south]{$ -1$}
			(5.3,0) circle[radius=0.1cm] node[anchor=south]{$-1$};
	\draw (4.4,0) node[anchor=south]{$ q^2$};
\node at (0.8,-0.4) {I};
\node at (4.4,-0.4) {II};
\end{tikzpicture}
\]
with $q \in \C^\times$, $q^2 \neq \pm 1$, simple roots $\{\alpha_1, \alpha_2\}$ and $\{\alpha_{12}, \alpha_2\}$ respectively, and a unique associated Cartan matrix \[\big(c_{ij}^\mathrm{I}\big) = \big(c_{ij}^\mathrm{II}\big) = \begin{bmatrix}
 \hphantom{-}2 & -1\\
 -1 & \hphantom{-}2
\end{bmatrix}. \]
This describes the Lie superalgebra $\mathfrak{sl}(2|1)$.
The set of positive roots is $\{\alpha_1, \alpha_2, \alpha_{12}\}$ where $\alpha_1$ is a $q$-Cartan root and $\alpha_2$, $\alpha_{12}$ are fermionic, thus $q$-truncation. All pairs $(i,j)$ are only $q$-Cartan or only $q$-truncation, since we assumed $q^2\neq -1$.

\begin{prop}\label{sl21mij} For this braiding and its reflections, the following are all realizations $(m_{ij})$:
\[\big(m_{ij}^{\mathrm{I}}\big) = \begin{bmatrix}
 2m & -m\\
 -m & 1
\end{bmatrix}, \qquad \big(m_{ij}^{\mathrm{II}}\big)= \begin{bmatrix}
 1 & -1+m\\
 -1+m & 1
\end{bmatrix}\]
for all $m= \frac{p'}{p}\in \mathbb{Q}$ with $(p',p)=1$ such that ${\rm e}^{{\rm i} \pi m} = q$.
\end{prop}
\begin{proof}
We check that condition (\ref{cond7}B) is satisfied for the pair $(2,1)$ in $\mathrm{I}$ and both pairs $(1,2)$ and $(2,1)$ in~$\mathrm{II}$:
\begin{gather*}
m_{22}^{\mathrm{I}} = \frac{2}{1-c_{21}^{\mathrm{I}}} = 1, \qquad
m_{11}^{\mathrm{II}} = \frac{2}{1-c_{12}^{\mathrm{II}}} = 1, \qquad
m_{22}^{\mathrm{II}} = \frac{2}{1-c_{21}^{\mathrm{II}}} = 1,
\end{gather*}
while condition (\ref{cond7}A) is satisfied for the pair $(1,2)$ in $\mathrm{I}$:
\begin{gather*}
m_{11}^{\mathrm{I}} = \frac{2m_{12}^{\mathrm{I}}}{c_{12}^{\mathrm{I}}} = 2m.
\end{gather*}
The reflection $\r_1$ preserves $\big(q_{ij}^{\mathrm{I}}\big)$ as well as $\big(m_{ij}^{\mathrm{I}}\big)$ by Proposition~\ref{prop_CartanRefl}, because~$\alpha_1$ is $m$-Cartan. We check that the other reflection
		\[\r_2(\alpha_1)=\alpha_1+\alpha_2,\qquad \r_2(\alpha_2)=-\alpha_2\]
maps $\big(m_{ij}^{\mathrm{I}}\big)$ to our choice of $\big(m_{ij}^{\mathrm{II}}\big)$:
\begin{gather*}
\r_2\big(m_{ij}^{\mathrm{I}}\big)
 = \begin{bmatrix}
(\alpha_1+\alpha_2,\alpha_1+\alpha_2) & (\alpha_1+\alpha_2,-\alpha_2)\\
\\
(-\alpha_2,\alpha_1+\alpha_2) & (-\alpha_2,-\alpha_2)
\end{bmatrix}
 =\begin{bmatrix}
2m-2m+1 &  m-1 \\
m-1 & 1
\end{bmatrix}
=\big(m_{ij}^{\mathrm{II}}\big).
\end{gather*}

We now prove conversely, that this is the only realization. Thereby we will find the typical dichotomy of arguments that we will also find in later cases:

Since $(1,2)$ in $\mathrm{I}$ is only $q$-Cartan, the realization has to be $m$-Cartan, which fixes
\begin{gather*}
m_{12}^{\mathrm{I}}=\frac{c_{12}^{\mathrm{I}}}{2}m_{11}^{\mathrm{I}}=-m
\end{gather*}
once we have defined $m$ via
\begin{gather*}
m_{11}^{\mathrm{I}}=:2m.
\end{gather*}
Since $(2,1)$ in $\mathrm{II}$ is only $q$-truncation, the realization has to be $m$-truncation, so
\begin{gather*}
m_{22}^{\mathrm{I}} = \frac{2}{1-c_{21}^{\mathrm{I}}} = 1.
\end{gather*}
Fixing $m_{ij}^{\mathrm{I}}$ already fixes the bilinear form, so for proving uniqueness of the realization, this is sufficient.
\end{proof}

\begin{oss}\label{moresolsl21} We now discuss the case $q^2=-1$ in the previous $q$-diagram, which is excluded in row~3. It appears in row 2, which corresponds to $\mathfrak{sl}_3$:
\[
	\begin{tikzpicture}
	\draw (0,0)--(1.6,0);
	\draw (-0.1,0) circle[radius=0.1cm] node[anchor=south]{$ -1$}
	(1.7,0) circle[radius=0.1cm] node[anchor=south]{$-1$};
	\draw (0.8,0) node[anchor=south]{$ -1 $} ;
	\end{tikzpicture}
\]	
	
 But in some sense, this diagram can be viewed as special case of both the $q$-diagrams appearing in rows~2 and~3, and there is an exceptional isomorphism $u_q(\mathfrak{sl}_3)^+\cong u_q(\mathfrak{sl}(2|1))$ for $q^2=-1$. We also find that in this case all pairs $i\sim j$ are both $q$-truncation and $q$-Cartan, which opens the possibility for different realizations $(m_{ij})$ in which different pairs are $m$-truncation or $m$-Cartan. Indeed we find two solutions, and they are special cases of the two different realizations we can construct for the diagrams in rows~2 and~3 respectively:

For this diagram, we find precisely the following two families of realizations $(m_{ij})$, each parametrized by odd $p',p'' \in \Z$,
\begin{itemize}\itemsep=0pt
	\item if we assume $(1,2)$ and $(2,1)$ in $\mathrm{I}$ to be $m$-truncation, we find the unique realizations
	\[
	 \big(m_{ij}^{\mathrm{I}}\big)= \begin{bmatrix}
 1 & -\frac{p''}{2}\\
 -\frac{p''}{2} & 1
\end{bmatrix},\]
	\item if we assume $(1,2)$ in $\mathrm{I}$ to be $m$-truncation and $(2,1)$ in $\mathrm{I}$ to be $m$-Cartan (or vice versa), we find the realizations
 \[
	\big(m_{ij}^{\mathrm{II}}\big)= \begin{bmatrix}
 1 & -\frac{p'}{2}\\
 -\frac{p'}{2} & p'
\end{bmatrix}.
\]
Reflection $\r_1$ maps this realizations $\big(m_{ij}^{\mathrm{II}}\big)$ to the previous $\big(m_{ij}^{\mathrm{I}}\big)$ and back, for $p''=2-p'$. Thus we have essentially one solution where $(q_{ij})$ is invariant under reflection, but $(m_{ij})$ is different in different Weyl chambers.
This solution should be viewed as an instance (or limiting case) of the solution in Proposition \ref{sl21mij}, which corresponds to $\mathfrak{sl}(2|1)$ at $q^2=-1$.
\item if we assume both pairs $(1,2)$ and $(2,1)$ in $\mathrm{I}$ to $m$-Cartan, the unique family of realizations is given by
 \[(m_{ij})= \begin{bmatrix}
 p' & -\frac{p'}{2}\\
 -\frac{p'}{2} & p'
\end{bmatrix}, \]
which is the rescaled root lattice of $\mathfrak{sl}_3$. By Proposition~\ref{prop_CartanRefl} all reflections leave $(m_{ij})$ invariant. This solution should be viewed as an instance of the generic solution for Cartan type in the next section, here $\mathfrak{sl}_3$, $q^2=-1$.
\end{itemize}
\end{oss}
\end{ese}

\section{Cartan type}\label{Cartan}
\subsection[$q$ diagram]{$\boldsymbol{q}$ diagram}\label{Cartanconse}

Let $\mathfrak{g}$ be a simple Lie algebra with simple roots $\alpha_1, \dots, \alpha_n$ and Killing form in the standard normalization $(\alpha_{i}, \alpha_{j})_\mathfrak{g} \in \{-3, -2, -1, 0, 2, 4, 6\}$. Let $q \in \C^\times$ be a primitive $\ell$-th root of unity with $\ell \in \Z$ and let $\operatorname{ord}\big(q^2\big) >d$ with $d$ half length of the long roots. Define a braiding mat\-rix~$(q_{ij})$ by
\[q_{ij} =q^{(\alpha_{i}, \alpha_{j})_\mathfrak{g}}.\]
The finite-dimensional Nichols algebra $\mathcal{B}(q_{ij})$ \cite{AS10, Lusz93, Rosso98} is called of \emph{Cartan type} We have that:
\begin{itemize}\itemsep=0pt
	\item $(q_{ij})$ is invariant under reflections $\r_k$,
	\item the Weyl groupoid is the Weyl group associated to $\mathfrak{g}$,
	\item the set of positive roots is the set of roots associated to $\mathfrak{g}$,
	\item the Cartan matrix $c_{ij}$ is the Cartan matrix for $\mathfrak{g}$.
\end{itemize}

\subsection[Construction of $(m_{ij})$]{Construction of $\boldsymbol{(m_{ij})}$}

\begin{defin}\label{mCartan}
Given $m \in \mathbb{Q}$ we define
\[m_{ij} := (\alpha_i, \alpha_j)_\mathfrak{g} m.\]
\end{defin}
Hence, the lattice $\Lambda$ the root lattice of $\mathfrak{g}$ rescaled by $m$.
\begin{oss} If we choose relatively prime integers $k$, $\ell$, such that $\frac{m}{2}= \frac{k}{\ell}$, then $q={\rm e}^{\pi\i m}$ is a primitive $\ell$-th root of unity. In literature on the logarithmic Kazhdan Lusztig conjecture, e.g.,~\cite{FT10, FGST06a}, one usually sets $m= \frac{p'}{p}$, so $q^2$ is a primitive $p$-th root of unity and $\ell=2p,p$ depending on the parity of~$p'$.
\end{oss}
\begin{lem}
The matrix $(m_{ij})$ realizes the braiding $(q_{ij})$ for all reflections, and every simple root is $m$-Cartan.
\end{lem}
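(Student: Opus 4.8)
The plan is to verify the three defining conditions of Definition~\ref{Mm} directly from the explicit formula $m_{ij}=(\alpha_i,\alpha_j)r$ and the listed properties of Cartan type braidings. First I would check the exponential compatibility: since $q_{ij}=q^{(\alpha_i,\alpha_j)}$ with $q$ a primitive $\ell$-th root of unity, writing $q=e^{i\pi r}$ (which is exactly the content of $\tfrac r2=\tfrac k\ell$, $(k,\ell)=1$), we get $e^{i\pi m_{ij}}=e^{i\pi(\alpha_i,\alpha_j)r}=q^{(\alpha_i,\alpha_j)}=q_{ij}$. Second, I would show every simple root is $m$-Cartan, i.e.\ condition (\ref{cond7})A holds: $2m_{ij}=2(\alpha_i,\alpha_j)r$ and $a_{ij}m_{ii}=a_{ij}(\alpha_i,\alpha_i)r$; these are equal because the Cartan matrix of $\g$ satisfies $a_{ij}(\alpha_i,\alpha_i)=2(\alpha_i,\alpha_j)$, which is the standard defining relation $a_{ij}=\tfrac{2(\alpha_i,\alpha_j)}{(\alpha_i,\alpha_i)}$, and by the bullet list after the definition of Cartan type the associated Cartan matrix $a_{ij}$ of the braiding coincides with that of $\g$.

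Third, I must check the reflection stability: each reflected matrix $\mathcal{R}^k(m_{ij})$ again satisfies (\ref{cond7}). By Proposition~\ref{sl21mij}'s preceding remark (the Proposition stating $v_k$ $m$-Cartan $\Rightarrow\mathcal{R}^k(m_{ij})=m_{ij}$), since every simple root is $m$-Cartan, \emph{every} reflection $\mathcal{R}^k$ fixes $m_{ij}$; hence all reflected matrices equal $m_{ij}$ itself and trivially satisfy (\ref{cond7})A. This also shows the realisation is well-defined regardless of the Weyl chamber, consistent with the bullet point that $q_{ij}$ is invariant under all $\mathcal{R}^k$ for Cartan type. One should note here that $\mathcal{R}^k(m_{ij})=m_{ij}$ as bilinear forms follows because the reflection $s_k$ is an isometry of the root lattice of $\g$ for the Killing form, so rescaling by $r$ changes nothing; I would phrase the argument at the level of the lattice $\Lambda=r\Lambda_\g$ to make this transparent.

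The only mild subtlety — and the step I would treat most carefully — is making sure the arithmetic hypothesis on $r$ (namely $\tfrac r2=\tfrac k\ell$ with $(k,\ell)=1$) is exactly what is needed for $q=e^{i\pi r}$ to be a \emph{primitive} $\ell$-th root of unity, so that the associated Cartan matrix $a_{ij}$ of the braiding $q_{ij}=q^{(\alpha_i,\alpha_j)}$ genuinely equals the Cartan matrix of $\g$; this uses the running assumption $\mathrm{ord}(q^2)>d$ from the start of the section. Everything else is a one-line substitution. I expect no real obstacle: the lemma is essentially the observation that an isometric rescaling of a root lattice automatically respects all the Cartan/reflection bookkeeping.
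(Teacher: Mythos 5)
Your proposal is correct and follows essentially the same route as the paper: verify condition (\ref{cond7})A for the simple roots via the identity $a_{ij}=\tfrac{2(\alpha_i,\alpha_j)}{(\alpha_i,\alpha_i)}$ (valid since the Cartan matrix of the braiding coincides with that of $\mathfrak{g}$), and then conclude reflection stability from the fact that reflections around $m$-Cartan roots leave $m_{ij}$ invariant. Your extra care about the primitivity of $q=e^{i\pi r}$ and the explicit check of $e^{i\pi m_{ij}}=q_{ij}$ are sound additions but are already built into Definition \ref{mCartan} and the standing hypotheses of the section.
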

\begin{proof}
Condition (\ref{cond7}) asks
\begin{alignat*}{3}
&2m_{ij} = c_{ij} m_{ii}\qquad && \text{or} \qquad (1-c_{ij})m_{ii} = 2,& \\
&2m_{ji} = c_{ji} m_{jj}\qquad && \text{or} \qquad (1-c_{ji})m_{jj} = 2.&
\end{alignat*} But from the last point of enumeration in Section~\ref{Cartanconse} we have $c_{ij} = \frac{2(\alpha_i, \alpha_j)_\mathfrak{g}}{(\alpha_i, \alpha_i)_\mathfrak{g}}$. Hence
\[
c_{ij}m_{ii} = \frac{2(\alpha_i, \alpha_j)_\mathfrak{g}}{(\alpha_i, \alpha_i)}(\alpha_i, \alpha_i)_\mathfrak{g} m = 2 m_{ij},
\] which is (\ref{cond7}A), saying that the roots are $m$-Cartan.
Since by Proposition~\ref{prop_CartanRefl}, any reflection on such a root leaves the $(m_{ij})$ invariant, condition~(\ref{cond7}) holds also after reflections.
\end{proof}

\begin{prop}\label{prop_CartanTrunc} If $\ell_i > 1- c_{ij}$ with $\ell_i := \operatorname{ord}\big(q^{2d_i}\big)$, then the pair $(i,j)$ is not $q$-truncation. Hence the only Nichols algebras of Cartan type, where roots $\alpha$ are both $q$-Cartan and $q$-truncation are
\begin{center}\renewcommand{\arraystretch}{1.2}
	\begin{tabular}{lll}
	$\mathfrak{g}$ & $q$ & both $q$-Cartan and $q$-truncation\\
	 \hline
	 $A_n$ & $q^2=-1$ & all roots \\
	 $B_n$, $C_n$, $F_4$ & $q^4=-1$ & long roots \\
	 $B_n$, $C_n$, $F_4$ & $q^2\in\mathbb{G}_3$ & short roots \\
	 $G_2$ & $q^6=-1$ & long roots \\
	 $G_2$ & $q^2\in \mathbb{G}_4$ & short roots \\
	\end{tabular}
\end{center}
\end{prop}
\begin{proof} Assume that $(i,j)$ is $m$-truncation $(1-c_{ij})m_{ii} = 2$, this implies: $q_{ii}^{(1-c_{ij})}={\rm e}^{{\rm i}\pi m_{ii}(1-c_{ij})}={\rm e}^{{\rm i}\pi \cdot 2}=1$. But $\operatorname{ord}(q_{ii})= \operatorname{ord}\big(q^{2d_i}\big) > 1- c_{ij}$ and we find a contradiction. The second claim follows by writing out these equations for long and short roots, and discarding the cases excluded by the conditions on the $q$-diagram ($q^2\not\in\mathbb{G}_2$ for $B_n$, $C_n$, $F_4$ and $q^2\not\in \mathbb{G}_2,\mathbb{G}_3$ for~$G_2$).
\end{proof}

\begin{lem}\label{cartanclassification}
If $(m_{ij})$ is a realization, such that all pairs $(i,j)$ are $m$-Cartan, then $(m_{ij})$ is the realization in Definition~{\rm \ref{mCartan}} for some~$m$.
\end{lem}
\begin{proof} If we fix $m_{ii}=:2m$ for some short root $\alpha_i$, then $m_{ij}$ for all $j$ is fixed by condition~(\ref{cond7}A) and so is $m_{jj}$ by the same condition with reversed indices. Hence up to rescaling there is a unique solution $(m_{ij})$ and Definition~\ref{mCartan} is such a solution.
\end{proof}
\begin{cor}The realization in Definition~{\rm \ref{mCartan}} is unique for all Nichols algebras of Cartan type except for the cases listed in Proposition~{\rm \ref{prop_CartanTrunc}}.
\end{cor}

As a counterexample, consider the case $\mathfrak{sl}_3$ and $\ell=2p=4$, where all pairs are both $m$-Cartan and $m$-truncation. Indeed, we have in this case two realizations, as discussed in Remark~\ref{moresolsl21}, corresponding to~$\mathfrak{sl}_3$ and to~$\mathfrak{sl}(2|1)$. In the latter realization, not all pairs are $m$-Cartan, despite being $q$-Cartan.

\subsection{Central charge}
Recall $\{a_1, \dots, a_r\}$ as basis of $\Lambda$ with $m_{ij} = (a_i,a_j)$ and $r=\operatorname{rank}(\mathfrak{g})$.

\begin{prop} The central charge of the system is
\begin{equation*}
c= \operatorname{rank}(\mathfrak{g}) -12\left(\frac{1}{r}\big|\rho^\vee\big|_\mathfrak{g}^2-2\big\langle\rho, \rho^\vee\big\rangle_\mathfrak{g}+r |\rho |_\mathfrak{g}^2\right),
\end{equation*}
where $\rho$ is half the sum of all positive roots.
\end{prop}
\begin{proof}
The central charge is
\[
c = r - 12(Q, Q),
\] where $Q=\sum_j c_ja_j$ is the unique combination such that for every $i$
\begin{gather*}
\frac{1}{2}(a_i,a_i) -(a_i,Q) =1,\\
\frac{1}{2}(a_i,a_i) -\sum_j c_j(a_i,a_j)_\Lambda =1.
\end{gather*}
Rewriting $a_i = -\sqrt{m}\alpha_i$, with $\alpha_i$ root of $\mathfrak{g}$, this set of equations bring us to
\begin{equation*}
Q= \sqrt{\frac{1}{m}}\rho^\vee-\sqrt{m}\rho
\end{equation*}
that on turn gives the central charge as in the statement.
\end{proof}

\begin{oss}
The central charge matches with the one of the affine Lie algebra $\hat{\mathfrak{g}}_k$ at level $k+h^\vee= \frac{1}{r}$ as in \cite{Arak07}. Conjecturally, the kernel of screening contains the Hamiltonian reduction of $\hat{\mathfrak{g}}_k$.
\end{oss}
\begin{oss} For rank $2$ and $m=p'/p$, the central charge is
\[ c= 1-3 \frac{(2p'-2p)^2}{2pp'} = 13 -6\frac{p}{p'}- 6\frac{p'}{p},
\] which is the central charge of the $(p,p')$ Virasoro model, see, e.g.,~\cite{TW13}.
\end{oss}
\subsection{Algebra relations}\label{CartanNArel}

According to the results in Section \ref{sec_Serre} we restrict ourselves here to the case of simple-laced $\g=A_n,D_n,E_6,E_7,E_8$. By \cite[Sections~4.1, and~4.4 and~4.5]{AA17} a set of defining relations is
\begin{itemize}\itemsep=0pt
	\item The commutation relations resp.~Serre relations for Cartan matrix entry $c_{ij}=0$ are $[x_i,x_j]_q$ for $\i\not\sim j$.
	\item The Serre relations for Cartan matrix entry $c_{ij}=-1$ are $[x_i,[x_i,x_j]_q]_q=0$ for $i\sim j$ for $q^2\neq -1$ (for $q^2=-1$ the Serre relations are implied by $x_i^2=0$).
	\item The truncation relations of root vector are $x_\alpha^{\ell_\alpha}=0$ for any root $\alpha\in\Phi^+$ and $\ell_\alpha=\operatorname{ord}\big(q^{2}\big)$, where the root vector $x_\alpha$ is defined by repeated reflections using Lusztig's isomorphism.
	\item For $q^2=-1$ additional relations $[x_j,[x_i,[x_j,x_k]_q]_q]_q=0$ for any subsystem $\alpha_i$, $\alpha_j$, $\alpha_k$ of type $A_3$ (for $q^2\neq -1$ these relations are a consequence of the Serre relations).
\end{itemize}

We now consider the realization of Cartan type $m_{ij}=(\alpha_i,\alpha_j)_\g m$. By Corollary~\ref{cor_A2}, the quantum Serre relations can be analytically continued and hold for all values of~$m$. By Corollary~\ref{cor_trun} the truncation relations of simple root vectors can be analytically continued for all values of~$m$, but they only hold for $m\geq 0$. We make no assertion about the additional relation for $q^2=-1$. The Nichols algebra without truncations relations is the Borel part of the Kac--DeConcini--Procesi quantum group $U_q^\mathcal{K}(\g)$ resp.~the distinguished pre-Nichols algebra~\cite{Ang16}. In particular we have proven:
\begin{cor}In the realization $m_{ij}=(\alpha_i,\alpha_j)\,m$ of the braiding $q_{ij}={\rm e}^{\i\pi(\alpha_i,\alpha_j)_\g m}$ associated to a simply-laced Lie algebra $\g$ at $q={\rm e}^{{\rm i}\pi m}$, $m\not\in 2\Z$, the Nichols algebra relations hold for the corresponding screenings as follows:
	\begin{itemize}\itemsep=0pt
		\item For $0<m<1$ the parameters $(m_{ij})$ are subpolar and all relations hold. Differently spoken, the algebra of screenings is a surjective image of the Borel part of the small quantum group~$u_q(\g)$.
		\item For $m<0$ the Serre relations hold. The truncation relations of simple root vectors fail. Differently spoken, for $q^2\neq 1$ the algebra of screenings is a surjective image of the Borel part of the Kac--DeConcini--Procesi quantum group~$U_q^\mathcal{K}(\g)$.
		\item for $m>1$ the Serre relations and the truncation relations of simple root vectors hold.
	\end{itemize}
We would conjecture that for $m<0$ also the additional relation holds and for $q^2=-1$ holds, so that also in this case we get the Borel part of $U_q^\mathcal{K}(\g)$, and that for $m>1$ also the truncation relations of non-simple root vectors hold, so that also in this case for $q^2\neq -1$ we get the Borel part of $u_q(\g)$. We make no assertion about the additional relation for $q^2=-1$ for $m>1$.
	We would conjecture that all surjections above are in fact isomorphism.
\end{cor}

\subsection{Examples: Cartan type realizations in rank 2}
\subsubsection*{Heckenberger row 2 (Cartan type $\boldsymbol{A_2}$)}
This case of the list is described by the braiding diagram:
\[
\begin{tikzpicture}
	\draw (0,0)--(1.6,0);
	\draw (-0.1,0) circle[radius=0.1cm] node[anchor=south]{$ q^2$}
			(1.7,0) circle[radius=0.1cm] node[anchor=south]{$ q^2$};
	\draw (0.8,0) node[anchor=south]{$ q^{-2}$} ;
\end{tikzpicture}
\]
with $q \in \C$ $q^2 \neq 1$ and simple roots $\lbrace\alpha_1, \alpha_2\rbrace$. The set of positive roots is given by $\lbrace \alpha_1, \alpha_2, \alpha_{12}\rbrace$ with unique associate Cartan matrix:
\begin{gather*}
(c_{ij})= \begin{bmatrix}
 2 & -1\\
 -1 & 2
\end{bmatrix}.
\end{gather*}
Definition \ref{mCartan} gives the following realization
\begin{gather*}
(m_{ij})= \begin{bmatrix}
 2m & -m\\
 -m & 2m
\end{bmatrix},\\
	\mbox{\begin{tikzpicture}
	\draw (0,0)--(1.6,0);
	\draw (-0.1,0) circle[radius=0.1cm]
	node[anchor=south]{$ q^2$}
	node[anchor=north]{$ 2m$}
	(1.7,0) circle[radius=0.1cm]
	node[anchor=south]{$ q^2$}
	node[anchor=north]{$ 2m$};
	\draw (0.8,0)
	node[anchor=south]{$ q^{-2}$}
	node[anchor=north]{$ -2m$} ;
	\end{tikzpicture}}
\end{gather*}

For $q^2\neq-1$, these are all solutions by Lemma \ref{cartanclassification}. For $q^2=-1$ there is a second family of solutions associated to $A(1,0)$, as discussed in Remark \ref{moresolsl21}:
\begin{gather*}
\big(m_{ij}^{\mathrm{I}}\big)= \begin{bmatrix}
2m & -m\\
-m & 1
\end{bmatrix},
\qquad
\big(m_{ij}^{\mathrm{II}}\big)= \begin{bmatrix}
1 & -1+m\\
-1+m & 1
\end{bmatrix},
\\
\mbox{\begin{tikzpicture}
	\draw (0,0)--(1.6,0);
	\draw (-0.1,0) circle[radius=0.1cm]
	node[anchor=south]{$ -1$}
	node[anchor=north]{$ 2m$}
	(1.7,0) circle[radius=0.1cm]
	node[anchor=south]{$ -1$}
	node[anchor=north]{$ 1$};
	\draw (0.8,0)
	node[anchor=south]{$ -1$}
	node[anchor=north]{$ -2m$} ;
	\end{tikzpicture}
	\hspace{1.6cm}
	\begin{tikzpicture}
	\draw (0,0)--(1.8,0);
	\draw (-0.1,0) circle[radius=0.1cm]
	node[anchor=south]{$ -1$}
	node[anchor=north]{$ 1$}
	(1.9,0) circle[radius=0.1cm]
	node[anchor=south]{$ -1$}
	node[anchor=north]{$ 1$};
	\draw (0.9,0)
	node[anchor=south]{$ -1$}
	node[anchor=north]{$ -2+2m$} ;
	\end{tikzpicture}}
\end{gather*}
for $m=\frac{p'}{2}$ for $p'$ odd. The algebra relations for this realization are discussed in Section~\ref{sec_SuperNA}.

\subsubsection*{Heckenberger row 4 (Cartan type $\boldsymbol{B_2}$)}
This case of the list is described by the braiding diagram:
\[
\begin{tikzpicture}
	\draw (0,0)--(1.6,0);
	\draw (-0.1,0) circle[radius=0.1cm] node[anchor=south]{$ q^2$}
			(1.7,0) circle[radius=0.1cm] node[anchor=south]{$ q^4$};
	\draw (0.8,0) node[anchor=south]{$ q^{-4}$} ;
\end{tikzpicture}
\]
with $q \in \C$, $q^2 \neq \pm 1 $ and simple roots $\lbrace\alpha_1, \alpha_2\rbrace$. The set of positive roots is given by $\lbrace \alpha_1, \alpha_2, \alpha_{12}, \allowbreak \alpha_{112}\rbrace$ with unique associate Cartan matrix:
\[(c_{ij})= \begin{bmatrix}
 \hphantom{-}2 & -2\\
 -1 & \hphantom{-}2
\end{bmatrix}. \]
Definition \ref{mCartan} gives the following realization
\begin{gather*} (m_{ij})= \begin{bmatrix}
 2m & -2m\\
 -2m & 4m
\end{bmatrix},
\\
\mbox{\begin{tikzpicture}
	\draw (0,0)--(1.6,0);
	\draw (-0.1,0) circle[radius=0.1cm]
	node[anchor=south]{$ q^2$}
	node[anchor=north]{$ 2m$}
	(1.7,0) circle[radius=0.1cm]
	node[anchor=south]{$ q^4$}
	node[anchor=north]{$ 4m$};
	\draw (0.8,0)
	node[anchor=south]{$ q^{-4}$}
	node[anchor=north]{$ -4m$} ;
	\end{tikzpicture}}
\end{gather*}

If all pairs are $m$-Cartan, in particular if
$\operatorname{ord}\big(q^2\big)>3$, $\operatorname{ord}\big(q^4\big)>2$, then this is the unique realization by Lemma~\ref{cartanclassification}. We now discuss the other cases:
\begin{enumerate}\itemsep=0pt
	\item When $q^4=-1$, the pair $(2,1)$ is $q$-Cartan and $q$-truncation. We find an additional family of solutions where $(2,1)$ is $m$-truncation:
\begin{gather*}
\big(m_{ij}^{\mathrm{I}}\big)= \begin{bmatrix}
 2m& - 2m\\
 - 2m & 1
\end{bmatrix}, \qquad \big(m_{ij}^{\mathrm{II}}\big)= \begin{bmatrix}
 -2m+1 & 2m -1\\
 2m-1 & 1
\end{bmatrix},\\
\mbox{\begin{tikzpicture}
\draw (0,0)--(1.6,0);
\draw (-0.1,0) circle[radius=0.1cm]
node[anchor=south]{$ \pm i$}
node[anchor=north]{$ 2m$}
(1.7,0) circle[radius=0.1cm]
node[anchor=south]{$ -1$}
node[anchor=north]{$ 1$};
\draw (0.8,0)
node[anchor=south]{$ -1$}
node[anchor=north]{$ -4m$} ;
\end{tikzpicture}
	\hspace{1.6cm}
	\begin{tikzpicture}
\draw (0,0)--(2.2,0);
\draw (-0.1,0) circle[radius=0.1cm]
node[anchor=south]{$ \pm i$}
node[anchor=north]{$ -2m+1$}
(2.3,0) circle[radius=0.1cm]
node[anchor=south]{$ -1$}
node[anchor=north]{$ 1$};
\draw (1.1,0)
node[anchor=south]{$ -1$}
node[anchor=north]{$\;\;\;\; 4m-2$} ;
\end{tikzpicture}}
\end{gather*}
for $m= \frac{p'}{4}$ and $p'$ odd,
with simple roots I: $\lbrace\alpha_1, \alpha_2\rbrace$ and II: $\lbrace\alpha_{12}, -\alpha_2\rbrace$.

This can be interpreted as a limiting case of the Lie superalgebra $B(1,1)$ described in case Heckenberger row~5, which for this choice of $q^2$ has the same $q$-diagram.

\item When $q^2=\zeta\in \mathbb{G}_3$, the pair $(1,2)$ is $q$-Cartan and $q$-truncation. We find an additional family of solutions where $(1,2)$ is $m$-truncation:
\begin{gather*}
\big(m_{ij}^{\mathrm{I}}\big)= \begin{bmatrix}
 \frac{2}{3} & -2m\\
 -2m & 4m
\end{bmatrix}, \qquad \big(m_{ij}^{\mathrm{II}}\big)= \begin{bmatrix}
 \frac{2}{3} & -\frac{4}{3} + 2m\\
 -\frac{4}{3} + 2m & \frac{8}{3} - 4m
\end{bmatrix},\\
\mbox{\begin{tikzpicture}
	\draw (0,0)--(1.6,0);
	\draw (-0.1,0) circle[radius=0.1cm]
	node[anchor=south]{$ \zeta$}
	node[anchor=north]{$ \frac{2}{3}$}
	(1.7,0) circle[radius=0.1cm]
	node[anchor=south]{$ \zeta^2$}
	node[anchor=north]{$ 4m$};
	\draw (0.8,0)
	node[anchor=south]{$ \zeta$}
	node[anchor=north]{$ -4m$} ;
	\end{tikzpicture}
	\hspace{1.7cm}
	\begin{tikzpicture}
	\draw (0,0)--(2,0);
	\draw (-0.1,0) circle[radius=0.1cm]
	node[anchor=south]{$ \zeta$}
	node[anchor=north]{$ \frac{2}{3}$}
	(2.1,0) circle[radius=0.1cm]
	node[anchor=south]{$\zeta^2$}
	node[anchor=north]{$ \;\;\;\;\;\;\; \frac{8}{3}-4m$};
	\draw (1,0)
	node[anchor=south]{$ \zeta$}
	node[anchor=north]{$ -\frac{8}{3}+4m$} ;
	\end{tikzpicture}}
\end{gather*}
for $m= \frac{2+3p'}{6}$, $p' \in \Z$, with simple roots I: $\lbrace\alpha_1, \alpha_2\rbrace$ and II: $\lbrace -\alpha_{1}, \alpha_{112}\rbrace$.

It can be interpreted as a limiting case of Heckenberger row 6 (a color Lie algebra), which for this choice of $q^2$ has the same $q$-diagram.
\end{enumerate}

\subsubsection*{Heckenberger row 11 ($\boldsymbol{G_2}$)}
This case of the list is described by the braiding diagram:
\[
\begin{tikzpicture}
	\draw (0,0)--(1.6,0);
	\draw (-0.1,0) circle[radius=0.1cm] node[anchor=south]{$ q^2$}
			(1.7,0) circle[radius=0.1cm] node[anchor=south]{$ q^{6}$};
	\draw (0.8,0) node[anchor=south]{$ q^{-6}$} ;
\end{tikzpicture}
\]
with $q^2 \neq \pm 1$, $q^2 \not\in \mathbb{G}_3$ and simple roots $\lbrace\alpha_1, \alpha_2\rbrace$. The set of positive roots is given by $\lbrace \alpha_1, \alpha_2, \alpha_{12}, \alpha_{112}, \alpha_{1112}, \alpha_{11122}\rbrace$ with unique associate Cartan matrix:
\[(c_{ij})= \begin{bmatrix}
 \hphantom{-}2 & -3\\
 -1 & \hphantom{-}2
\end{bmatrix}. \]
Definition~\ref{mCartan} gives the following realization
\begin{gather*}
(m_{ij})= \begin{bmatrix}
 \hphantom{-}2m & -3m\\
 -3m & \hphantom{-}6m
\end{bmatrix},\\
\mbox{\begin{tikzpicture}
	\draw (0,0)--(1.6,0);
	\draw (-0.1,0) circle[radius=0.1cm]
	node[anchor=south]{$ q^2$}
	node[anchor=north]{$ 2m$}
	(1.7,0) circle[radius=0.1cm]
	node[anchor=south]{$ q^6$}
	node[anchor=north]{$ 6m$};
	\draw (0.8,0)
	node[anchor=south]{$ q^{-6}$}
	node[anchor=north]{$ -6m$} ;
	\end{tikzpicture}}
\end{gather*}
	
If all pairs are $m$-Cartan, in particular if
$\operatorname{ord}\big(q^2\big)>4,\operatorname{ord}\big(q^6\big)>2$, then this is the unique realization by Lemma~\ref{cartanclassification}. We now discuss the other cases, where we ultimately find {\it no additional realizations}:
\begin{enumerate}\itemsep=0pt
	\item
When $q^2 \in \mathbb{G}_4$, the pair $(1,2)$ is $q$-Cartan and $q$-truncation. We now look for a possible additional realization, where $(1,2)$ is $m$-truncation: This assumption fixes $m_{11}=\frac{1}{4}$, and since $(2,1)$ is still $m$-Cartan, $m_{12}$ is still fixed as in the previous case. Hence the assumption uniquely determines the following possible realization $(m_{ij})=:\big(m_{ij}^\mathrm{I}\big)$, and we also compute the reflection $\r_1(m_{ij}^\mathrm{I})=:\big(m_{ij}^\mathrm{II}\big)$:
\[\big(m_{ij}^{\mathrm{I}}\big)= \begin{bmatrix}
 \frac{1}{2} & -3m\\
 -3m & 6m
\end{bmatrix}, \qquad \big(m_{ij}^{\mathrm{II}}\big)= \begin{bmatrix}
 \frac{1}{2} & -\frac{3}{2} + 3m\\
 -\frac{3}{2} + 3m & \frac{9}{2} -12m
\end{bmatrix} \]
with simple roots~I: $\lbrace\alpha_1, \alpha_2\rbrace$ and~II: $\lbrace -\alpha_{1}, \alpha_{1112}\rbrace$.
The pair $(2,1)$ in $\mathrm{II}$ is only $q$-Cartan, so it must be $m$-Cartan, which requires $m= \frac{1}{4}$. But for this value of $m$ the realization agrees with the previous realization. It appears here a second time, because for this realization $(1,2)$ is both $m$-Cartan and $m$-truncation

\item When $q^2\in \mathbb{G}_6$, the pair $(2,1)$ is $m$-Cartan and $m$-truncation, and we now look for a~possible additional realization where it is $m$-truncation. Again, this uniquely determines the following possible realization $(m_{ij})=:\big(m_{ij}^\mathrm{I}\big)$, and we also compute the reflection $\r_1\big(m_{ij}^\mathrm{I}\big)=:\big(m_{ij}^\mathrm{II}\big)$:
\[\big(m_{ij}^{\mathrm{I}}\big)= \begin{bmatrix}
 2m & -3m\\
 -3m & 1
\end{bmatrix}, \qquad \big(m_{ij}^{\mathrm{II}}\big)= \begin{bmatrix}
 1 - 4m & -1 + 3m\\
 -\frac{3}{2} + 3m & 1
\end{bmatrix} \]
with simple roots I: $\lbrace\alpha_1, \alpha_2\rbrace$ and II: $\lbrace \alpha_{12}, -\alpha_{2}\rbrace$. The pair $(1,2)$ in $\mathrm{II}$ is only $q$-Cartan, so it must be $m$-Cartan, which requires $m= \frac{1}{6}$. But for this value of $m$ again the realization agrees with the first realization.
\end{enumerate}

\section{Super Lie type}\label{SuperLie}

In the classification of finite-dimensional Nichols algebras of diagonal type in~\cite{Heck07} several infinite series occur, which are not of Cartan type, but which are linked to the root system of certain Lie superalgebras. Corresponding quantum super groups had been defined, e.g., in~\cite{KT91}. In~\cite[Example~9 and Theorem~24]{HY08} it is shown that an earlier definition of a generalized root system for Lie superalgebras \cite{Ser96} is a special case of the generalized root system in the sense of Section~\ref{sec_rootsys}. In~\cite{AAY11} the contragradient Lie superalgebras and their quantum supergroups are related to the corresponding Nichols algebras by the process of bozonization.

\subsection[$q$ diagram]{$\boldsymbol{q}$ diagram}

Let $\mathfrak{g} = \mathfrak{g_0} \oplus \mathfrak{g_1}$ be a simple Lie superalgebra of \emph{classical}, \emph{basic} type \cite{FSS96}, i.e., of type $A(m, n)$, $B(m, n)$, $C(n + 1)$, $D(m, n)$, $F(4)$, $G(3)$, $D(2, 1; \alpha)$. For these Lie superalgebras a (non degenerate or zero) Killing form $(\, ,\,)_\mathfrak{g} $ is defined.

We now choose a Weyl chamber $ \alpha_1, \dots, \alpha_{\f-1}, \alpha_\f,  \alpha_{\f+1}, \dots, \alpha_{n}$ with just one simple fermionic root $\alpha_\f$. We call it the \emph{standard chamber} according to~\cite{Kac77}. Given $\alpha$ positive root in the standard chamber, we define $f(\alpha)$ the multiplicity of $\alpha_\f$ in $\alpha$.

We can then split $\mathfrak{g}$ as the direct sum of vector spaces
\[\mathfrak{g} = \mathfrak{g'} \oplus \mathfrak{g''} \oplus \mathfrak{m},\]
where $\mathfrak{g'}$ and $\mathfrak{g''}$ are two bosonic connected component generated by the simple roots $ \alpha_1, \dots, \alpha_{\f-1}$ and $ \alpha_{\f+1}, \dots, \alpha_{n}$ respectively, while $\mathfrak{m}$ is the $\mathfrak{g'} \oplus \mathfrak{g''}$-module spanned by all other roots.

We have that $\mathfrak{m}$ contains $\mathfrak{g_1}$ and thus in particular contains the $\mathfrak{g'} \oplus \mathfrak{g''}$-submodule generated by the fermion $\alpha_\f$, i.e., the vector space of fermions $\gamma$, with $f(\gamma) =1$.
Moreover $\mathfrak{m}$ may contain bosonic roots $\delta$, with $f(\delta)\in 2\N$.
\begin{defin}We can write the inner product $\left( \,,\,\right)_\mathfrak{g}$ of two arbitrary simple roots as \[(\alpha_{i}, \alpha_{j})_\mathfrak{g} = (\alpha_{i}, \alpha_{j})_\mathfrak{g'}+(\alpha_{i}, \alpha_{j})_\mathfrak{g''}= \begin{cases} (\alpha_{i}, \alpha_{j})_\mathfrak{g'}& \text{if } i \leq \f ,\  j < \f, \\
0 & \text{if } i \leq \f \leq j, \\
(\alpha_{i}, \alpha_{j})_\mathfrak{g''}& \text{if } i \geq \f ,\  j > \f.
\end{cases}\]
In particular we assume $(\alpha_{f}, \alpha_{f})_\mathfrak{g}= (\alpha_{f}, \alpha_{f})_\mathfrak{g'} = (\alpha_{f}, \alpha_{f})_\mathfrak{g''}=0$.
\end{defin}

\begin{defin}\label{defbraidSuper} Let $q'$, $q''$ be primitive roots of unity.
Then to the above data in the standard chamber we associate the braiding matrix $(q_{ij})$
with
\[q_{ij} =
\begin{cases} (q')^{(\alpha_{i}, \alpha_{j})_\mathfrak{g'}}&\text{ if } i \leq \f ,\  j < \f, \\
(q'')^{(\alpha_{i}, \alpha_{j})_\mathfrak{g''}}& \text{if } i \geq \f ,\  j > \f, \\
  1 & \text{if } i>\f>j, \\
-1 & \text{if } i=\f=j.
\end{cases}\]
\end{defin}

Under certain conditions relating $q'$, $q''$, these braiding gives a finite-dimensional Nichols algebra~$\mathcal{B}(q_{ij})$, which we call of \emph{super Lie type}. We will continue our general considerations without having to specify these conditions on~$q'$,~$q''$. In the process of establishing a realization~$(m_{ij})$ depending on~$m'$,~$m''$ we will encounter additional conditions relating~$m'$,~$m''$. These additional conditions will in each case imply the conditions relating $q'$, $q''$ in Heckenberger's list for this specific Nichols algebra of super Lie type. For an explicit example, see Example~\ref{exm_superDReflection}, in general these conditions will arise for the exceptional cases in Corollary~\ref{3 conditions II} and will be spelled out when we go through all cases in Sections~\ref{subsec_rank2super} and~\ref{subsec_rankNsuper}.

The reflections will act on the braiding as follow:
\begin{itemize}\itemsep=0pt
	\item Reflections $\r_k$ around bosonic roots $\alpha_k$ leave $(q_{ij})$ invariant.
	\item Reflections $\r_k$ around fermionic roots $\alpha_k$ interchange fermionic and bosonic roots.
\end{itemize}
\begin{oss}
In the classification of Nichols algebras in \cite{Hecklist} and \cite{Heck06} we find that the fermion (as in the Lie superalgebra sense of the term) in the standard chamber $\alpha_\f$ has $q_{ff}= -1$, i.e., it is $q$-truncation. This is not true in general for every fermion as we can see in the following example.
\begin{ese}\label{fermnot1}
The case Heckenberger row~5 of Table~1 in~\cite{Hecklist} is described by two diagrams:
\begin{gather*}
\begin{tikzpicture}
	\draw (0,0)--(1.6,0);
	\draw (-0.1,0) circle[radius=0.1cm] node[anchor=south]{$ -1$}
			(1.7,0) circle[radius=0.1cm] node[anchor=south]{$ q^2$};
	\draw (0.8,0) node[anchor=south]{$ q^{-4}$};
	\draw (3.6,0)--(5.2,0);
	\draw (3.5,0) circle[radius=0.1cm] node[anchor=south]{$ -1$}
			(5.3,0) circle[radius=0.1cm] node[anchor=south]{$ -q^{-2}$};
	\draw (4.4,0) node[anchor=south]{$ q^{4}$};
\node at (0.8,-0.3) {$\mathrm{I}$};
\node at (4.3,-0.3) {$\mathrm{II}$};
\end{tikzpicture}
\end{gather*}
corresponding to the simple roots: \[ \mathrm{I}\colon \ \lbrace\alpha_1, \alpha_2\rbrace, \qquad \mathrm{II}\colon \ \lbrace -\alpha_{1}, \alpha_{12}\rbrace. \]
This is the Lie superalgebra ${B(1,1)}$ and $\alpha_{12}$ is a fermion with $q(\alpha_{12},\alpha_{12}) \neq -1$.\\
We will describe this example in detail later in this section.
\end{ese}
\end{oss}

\subsection[Construction of $(m_{ij})$]{Construction of $\boldsymbol{(m_{ij})}$}

\begin{defin}\label{mSuperLie} Given $p', p''\in \Z$ such that $(p',p)= (p'',p) =1$, we define $m':= \frac{p'}{p}$, $m'':= \frac{p''}{p} $ and in the standard chamber:
\[m_{ij}^S =
\begin{cases} (\alpha_i, \alpha_j)_{\mathfrak{g'}} m' &  \text{if } i \leq \f ,\  j < \f, \\
(\alpha_i, \alpha_j)_{\mathfrak{g''}} m'' &  \text{if } i \geq \f ,\  j > \f, \\
0 & \text{if } i>\f>j ,\\
1 & \text{if } i=\f=j.
\end{cases}\]
\end{defin}

We notice that if we restrict to $\mathfrak{g'}$ or $\mathfrak{g''}$, we get exactly the same result as in the Cartan type section for $p'$, $p$ respectively $p''$, $p$.
\begin{lem}
If we call $q'={\rm e}^{{\rm i} \pi m'}$ and $q''={\rm e}^{{\rm i} \pi m''}$, then $q_{ij}={\rm e}^{{\rm i} \pi m_{ij}}$ is the braiding defined in Definition~{\rm \ref{defbraidSuper}}.
\end{lem}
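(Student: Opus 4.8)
The plan is to verify the identity $q_{ij}=e^{i\pi m_{ij}^S}$ by a straightforward case-by-case check over the four regimes defining both $m_{ij}^S$ in \ref{mSuperLie} and $q_{ij}$ in \ref{defbraidSuper}. Since the definitions are given piecewise with respect to the position of $i,j$ relative to the fermionic index $f$, and the two piecewise definitions use exactly the same case distinctions, the proof reduces to comparing the four cases one at a time; no global argument is needed.

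First, in the case $i\le f$, $j<f$, we have $m_{ij}^S=(\alpha_i,\alpha_j)_{\mathfrak{g}'}\,r'$ with $r'=p'/p$, so $e^{i\pi m_{ij}^S}=e^{i\pi (\alpha_i,\alpha_j)_{\mathfrak{g}'} p'/p}=\big(e^{i\pi p'/p}\big)^{(\alpha_i,\alpha_j)_{\mathfrak{g}'}}=(q')^{(\alpha_i,\alpha_j)_{\mathfrak{g}'}}$, which is exactly $q_{ij}$ in that case. Here I use that $(\alpha_i,\alpha_j)_{\mathfrak{g}'}$ is an integer (it is an entry of a Gram matrix of an integral Lie-theoretic form), so that exponentiating commutes with pulling the integer out of the exponent; and I use the hypothesis $q'=e^{i\pi r'}$. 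Symmetrically, the case $i\ge f$, $j>f$ gives $e^{i\pi m_{ij}^S}=(q'')^{(\alpha_i,\alpha_j)_{\mathfrak{g}''}}=q_{ij}$ using $q''=e^{i\pi r''}$. Second, in the case $i>f>j$ we have $m_{ij}^S=0$, hence $e^{i\pi\cdot 0}=1=q_{ij}$. Third, in the case $i=f=j$ we have $m_{ff}^S=1$, hence $e^{i\pi\cdot 1}=-1=q_{ff}$. That exhausts all cases, so the identity holds for every pair $(i,j)$.

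One small point to address for completeness: the definition of $m_{ij}^S$ and of $q_{ij}$ must be symmetric enough that every ordered pair $(i,j)$ falls under one of the listed cases. The listed cases for $j$ use strict inequalities $j<f$ and $j>f$ together with $j=f$ (subsumed in the $i=f=j$ line when $i=f$; and when $i<f=j$ one is in the first case $i\le f, j<f$? — no, $j=f$ is not $j<f$). So I should also note how the off-diagonal entries with $j=f$ (or $i=f$) are covered: for $i<f=j$ one reads off the case $i\le f,\,j<f$ is inapplicable, but $i\le f\le j$ holds, and Definition \ref{defbraidSuper}/\ref{mSuperLie} in the ``$i\le f\le j$'' regime gives $0$ on the $m$-side (via $(\alpha_i,\alpha_f)_{\mathfrak{g}}=0$, which forces $m_{if}^S$ to match; strictly one should confirm the intended reading of \ref{mSuperLie} here) and $1$ on the $q$-side, consistent via $e^{0}=1$. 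Thus the bookkeeping of which inequality is strict is the only thing requiring care.

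The main obstacle is therefore not mathematical depth but simply making the piecewise bookkeeping airtight: ensuring every ordered pair $(i,j)$ — including the mixed cases where exactly one index equals $f$ — is assigned to a unique case and that the $m$-side and $q$-side assignments agree there, and invoking the integrality of the restricted Killing-form values $(\alpha_i,\alpha_j)_{\mathfrak{g}'},(\alpha_i,\alpha_j)_{\mathfrak{g}''}$ so that $e^{i\pi(\text{integer})\cdot r}=(e^{i\pi r})^{\text{integer}}$. Once those are pinned down the lemma follows immediately.
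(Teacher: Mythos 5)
Your proof is correct and follows essentially the same route as the paper: a direct piecewise comparison of the two definitions, with the bosonic-sector cases reducing to $e^{i\pi(\alpha_i,\alpha_j)r}=q^{(\alpha_i,\alpha_j)}$ and the remaining cases checked by $e^{i\pi\cdot 0}=1$ and $e^{i\pi\cdot 1}=-1$. The paper's own proof only spells out the latter two cases (disconnected roots and the fermionic diagonal), so your version is simply a more explicit rendering of the same argument, including the bookkeeping for the mixed indices involving $f$.
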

\begin{proof}
We have $m_{ij}=0$ if $\alpha_i$ and $\alpha_j$ are disconnected, so that $1={\rm e}^{{\rm i} \pi \cdot 0}$ and $m_{\f\f}=1$ for the fermionic root which gives $-1 = {\rm e}^{{\rm i} \pi \cdot 1}$ as demanded.
\end{proof}

\begin{lem}In an arbitrary chamber $C$ with simple roots ${\gamma_1, \dots, \gamma_{r}}$ we have \[{m_{ij}}^C= (\gamma_i, \gamma_j)_{\mathfrak{g'}}m' + (\gamma_i, \gamma_j)_{\mathfrak{g''}}m'' +f(\gamma_i)f(\gamma_j).\]
\end{lem}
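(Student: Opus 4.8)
The plan is to recast the asserted formula as an equality of bilinear forms on the $\mathbb{Q}$-span $V$ of the root system, to note that such an equality is automatically preserved by the reflections $\mathcal{R}^k$, and hence to reduce everything to the standard chamber, where the formula is by construction Definition \ref{mSuperLie}. For the bookkeeping: each reflection $\mathcal{R}^k$ acts $\mathbb{Q}$-linearly on $V$ via $\gamma_i\mapsto\gamma_i-a_{ki}\gamma_k$, so the simple roots $\gamma_1,\dots,\gamma_n$ of any chamber $C$ are the images of the standard simple roots $\alpha_1,\dots,\alpha_n$ under a linear automorphism of $V$ and in particular again form a basis. Since the reflected matrix $\mathcal{R}^k(m_{ij})$ is, by definition, obtained by re-expressing the fixed bilinear form of $\Lambda$ in the new basis — just as $\mathcal{R}^k$ transforms the bicharacter $q_{ij}$ — one has $m_{ij}^C=(\gamma_i,\gamma_j)_\Lambda$ in every chamber, where $(\,,\,)_\Lambda$ denotes the form of $\Lambda$ transported to $V$ through $v_a\leftrightarrow\alpha_a$, so that $(\alpha_i,\alpha_j)_\Lambda=m_{ij}^S$.

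Next I would observe that the right-hand side of the claim is itself the value of a single bilinear form. Set
\[
\Phi(x,y):=r'\,(x,y)_{\mathfrak{g'}}+r''\,(x,y)_{\mathfrak{g''}}+f(x)\,f(y),\qquad x,y\in V,
\]
where $(\,,\,)_{\mathfrak{g'}}$ and $(\,,\,)_{\mathfrak{g''}}$ are the bilinear extensions to $V$ of the two forms on simple roots from the decomposition $(\,,\,)_{\mathfrak g}=(\,,\,)_{\mathfrak{g'}}+(\,,\,)_{\mathfrak{g''}}$ introduced before Definition \ref{defbraidSuper}, and $f\colon V\to\mathbb{Q}$ is the linear $\alpha_f$-multiplicity functional; then $\Phi$ is bilinear. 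With this notation the claim is exactly $m_{ij}^C=\Phi(\gamma_i,\gamma_j)$, and since $m_{ij}^C=(\gamma_i,\gamma_j)_\Lambda$ and $\{\gamma_i\}$ is a basis of $V$, this is equivalent to the single identity of bilinear forms $(\,,\,)_\Lambda=\Phi$.

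Because both sides are bilinear, it now suffices to check $(\alpha_i,\alpha_j)_\Lambda=\Phi(\alpha_i,\alpha_j)$ on the standard simple roots, i.e. $m_{ij}^S=r'(\alpha_i,\alpha_j)_{\mathfrak{g'}}+r''(\alpha_i,\alpha_j)_{\mathfrak{g''}}+\delta_{if}\delta_{jf}$ using $f(\alpha_i)=\delta_{if}$. This is a finite comparison against Definition \ref{mSuperLie} and the case distinction defining $(\,,\,)_{\mathfrak{g'}},(\,,\,)_{\mathfrak{g''}}$: for $i\le f,\ j<f$ both sides equal $r'(\alpha_i,\alpha_j)_{\mathfrak{g'}}$ (the $\mathfrak{g''}$-term and the $\delta$-term vanish); for $i\ge f,\ j>f$ symmetrically both equal $r''(\alpha_i,\alpha_j)_{\mathfrak{g''}}$; for $i>f>j$ both $(\,,\,)_{\mathfrak{g'}}$ and $(\,,\,)_{\mathfrak{g''}}$ vanish on $(\alpha_i,\alpha_j)$ and $\delta_{if}\delta_{jf}=0$, so both sides are $0$; and for $i=j=f$ one uses $(\alpha_f,\alpha_f)_{\mathfrak{g'}}=(\alpha_f,\alpha_f)_{\mathfrak{g''}}=0$ to get $1$ on both sides, the remaining index ranges following by the symmetry of $(\,,\,)_\Lambda$ and $\Phi$. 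Hence $(\,,\,)_\Lambda=\Phi$, and evaluating at $(\gamma_i,\gamma_j)$ yields the lemma.

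I do not expect a genuine obstacle: once the claim is phrased as an equality of bilinear forms, its stability under the linear maps $\mathcal{R}^k$ is automatic and it collapses to the standard-chamber identity, which is a definition. The only points requiring a little care are the identification $m_{ij}^C=(\gamma_i,\gamma_j)_\Lambda$ — that reflecting the $m$-matrix means nothing more than a change of basis in the fixed form of $\Lambda$ — and the routine matching of the one-sided index ranges in the last case check. Equivalently, one may run the same computation as an induction on the number of reflections, the inductive step being $m_{ij}^{\mathcal{R}^k(C)}=m_{ij}^C-a_{ki}m_{kj}^C-a_{kj}m_{ik}^C+a_{ki}a_{kj}m_{kk}^C$ together with biadditivity of each of the three summands of $\Phi$ in each argument.
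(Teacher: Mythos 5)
Your proof is correct and is essentially the paper's own argument: the paper likewise writes $\gamma_i=\sum_k x_{ik}\alpha_k$, uses bilinearity to get $m_{ij}^C=\sum_{k,l}x_{ik}x_{jl}m_{kl}^S$, and splits the sum into the $\mathfrak{g'}$-, $\mathfrak{g''}$- and $f$-multiplicity parts exactly as your form $\Phi$ does. Your reformulation as a single identity of bilinear forms checked on the standard basis is just a slightly more abstract packaging of the same computation.
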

\begin{proof}
We write $\gamma_i = \sum_{k} x_{ik} \alpha_k$ and $\gamma_j = \sum_{l} x_{jl} \alpha_l$ and we extend for linearity:
\begin{align*}
{m_{ij}}^C &=  \sum_{k,l} x_{ik} x_{jl} {m_{kl}}^S \\
&= \sum_{k,l \in \mathfrak{g'} \cup \lbrace f \rbrace} x_{ik} x_{jl} (\alpha_k, \alpha_l)_{\mathfrak{g'}}m' + \sum_{k,l \in \mathfrak{g''} \cup \lbrace f \rbrace} x_{ik} x_{jl} (\alpha_k, \alpha_l)_{\mathfrak{g''}}m'' + x_{i\f} x_{j\f}   \\
&=  (\gamma_i, \gamma_j)_{\mathfrak{g'}}m' + (\gamma_i, \gamma_j)_{\mathfrak{g''}}m''+f(\gamma_i) f(\gamma_j),
\end{align*}
where the last equality follows from the definition of $f(\gamma)$ as the multiplicity of $\alpha_\f$ in $\gamma$ and the fact that on each component $\mathfrak{g'}$ and $\mathfrak{g''}$ the roots are spanned as in a Lie algebra.
\end{proof}

\begin{cor}\label{Mcondotherchamb}A root $\gamma$ in an arbitrary chamber is
\begin{itemize}\itemsep=0pt
	\item[--] $m$-truncation if $(\gamma,\gamma)_\mathfrak{g'} m' + (\gamma,\gamma)_\mathfrak{g''} m'' + f(\gamma)f(\gamma) =1$,
	\item[--] $m$-Cartan if, for every simple root $\alpha_i$ in the standard chamber,
	\begin{gather*}
	2(\gamma,\alpha_i)_\mathfrak{g'} m' + 2(\gamma,\alpha_i)_\mathfrak{g''} m'' + 2f(\gamma)f(\alpha_i) \\
\qquad {} = c_{\gamma,\beta_i}((\gamma,\gamma)_\mathfrak{g'} m' + (\gamma,\gamma)_\mathfrak{g''} m'' + f(\gamma)f(\gamma)).
	\end{gather*}
\end{itemize}
\end{cor}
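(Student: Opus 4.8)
The statement is a direct translation of Definition~\ref{Mm} through the closed formula for the Gram matrix in an arbitrary chamber proved in the lemma immediately preceding it, so the plan is simply to unfold that definition and substitute. Concretely, I would start from the fact, supplied by that lemma, that any two roots $\gamma,\delta$ written in the basis of a chamber $C$ satisfy
\[m_{\gamma\delta}^C \;=\; (\gamma,\delta)_{\mathfrak g'}\,r' + (\gamma,\delta)_{\mathfrak g''}\,r'' + f(\gamma)f(\delta),\]
obtained by extending $m^S$ of Definition~\ref{mSuperLie} bilinearly and collecting the $\mathfrak g'$-, $\mathfrak g''$- and fermionic parts; everything below is just this identity inserted into conditions A and B of~(\ref{cond7}).

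For the truncation clause, $\gamma$ is $m$-truncation iff~(\ref{cond7})B holds, i.e.\ $(1-a_{\gamma\beta})\,m_{\gamma\gamma}=2$ for the relevant neighbouring simple root $\beta$. Being $m$-truncation forces $q_{\gamma\gamma}=e^{i\pi m_{\gamma\gamma}}=-1$, whence~(\ref{braidingcond}) gives $a_{\gamma\beta}\in\{0,-1\}$; since $a_{\gamma\beta}=0$ would yield $m_{\gamma\gamma}=2$, i.e.\ $q_{\gamma\gamma}=1$, which is impossible, one gets $a_{\gamma\beta}=-1$ and~(\ref{cond7})B collapses to $m_{\gamma\gamma}=1$. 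Putting $\delta=\gamma$ in the displayed formula rewrites this as $(\gamma,\gamma)_{\mathfrak g'}r'+(\gamma,\gamma)_{\mathfrak g''}r''+f(\gamma)f(\gamma)=1$, which is the first assertion. For the Cartan clause, $\gamma$ is $m$-Cartan iff~(\ref{cond7})A holds against every simple root $\beta_i$, i.e.\ $2\,m_{\gamma\beta_i}=a_{\gamma\beta_i}\,m_{\gamma\gamma}$; I would then substitute the displayed formula for both $m_{\gamma\beta_i}$ and $m_{\gamma\gamma}$ and clear the factor $2$ to obtain exactly the displayed system.

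The genuinely non-trivial input is the preceding lemma itself (that $m^S$ extends consistently to every chamber, and that reflections keep~(\ref{cond7}) valid), which I would take as given; the corollary proper is bookkeeping. The one point inside that bookkeeping that needs care — and the one I expect to be the real obstacle — is the treatment of roots that are simultaneously $q$-Cartan and $q$-truncation, where both A and B may legitimately be imposed and one must keep careful track of which Cartan integers $a_{\gamma\beta_i}$, namely those read off from the braiding in the chamber containing $\gamma$ via~(\ref{braidingcond}), enter each equation; this is precisely the mechanism producing the branching of solutions already visible in Remark~\ref{moresolsl21}.
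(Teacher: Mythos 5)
Your overall strategy is exactly what the paper does: Corollary \ref{Mcondotherchamb} carries no separate proof there, being an immediate substitution of the chamber formula $m^C_{\gamma\delta}=(\gamma,\delta)_{\mathfrak g'}r'+(\gamma,\delta)_{\mathfrak g''}r''+f(\gamma)f(\delta)$ from the preceding lemma into conditions (\ref{cond7})A and (\ref{cond7})B. Your Cartan clause is correct as written (and in fact repairs an evident typo in the paper's display, where the factor $2$ is missing from the $\mathfrak g'$-term on the left-hand side).

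Two caveats on the truncation clause. First, the step ``being $m$-truncation forces $q_{\gamma\gamma}=-1$'' is false as a general statement about (\ref{cond7})B: that condition only gives $q_{\gamma\gamma}^{1-a_{\gamma\beta}}=1$, and the paper itself realises $m$-truncation roots with $m_{\gamma\gamma}=\tfrac{2}{3}$ and $a_{\gamma\beta}=-2$ (Heckenberger row 6). This does not damage the corollary, because only the sufficient direction $m_{\gamma\gamma}=1\Rightarrow m$-truncation is asserted, and there your chain runs the right way: $m_{\gamma\gamma}=1$ gives $q_{\gamma\gamma}=-1$, hence $a_{\gamma\beta}\in\{0,-1\}$ by (\ref{braidingcond}), and $a_{\gamma\beta}=-1$ for every connected $\beta$. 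Second, you cannot dismiss $a_{\gamma\beta}=0$ as ``impossible'': the Cartan entry is determined by the braiding, not by which of A or B you would like to hold, and $a_{\gamma\beta}=0$ occurs precisely when $\gamma$ and $\beta$ are disconnected; in that case (\ref{cond7})B fails and one must instead verify (\ref{cond7})A, i.e.\ $m_{\gamma\beta}=0$. This is exactly the strong-orthogonality exception the paper isolates in Corollary \ref{3 conditions II}, and it is the source of the conditions such as $r'+r''=1$ appearing later; the truncation clause of the corollary must therefore be read as holding relative to connected neighbours only.
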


\begin{ese}\label{A(1,1)}
We consider as an example the Lie superalgebra ${A(1,1)}$ of rank~3.
The simple roots in the standard chamber are $\lbrace\alpha_1,   \alpha_2 = \alpha_\f,   \alpha_3\rbrace$ with inner product:
\[
(\alpha_i, \alpha_j)= \begin{bmatrix}
 \hphantom{-}2 & -1 & \hphantom{-}0\\
 -1 &   \hphantom{-}0 & -1 \\
 \hphantom{-}0 & -1 & \hphantom{-}2
\end{bmatrix}.
\]
Hence,
\[
\big(m_{ij}^S\big)= \begin{bmatrix}
 2m' & -m' & 0\\
 -m' &   1 & -m'' \\
 0 & -m'' & 2m''
\end{bmatrix}, \qquad
(q_{ij})= \begin{bmatrix}
 (q')^2 & (q')^{-1} & 1\\
  (q')^{-1} &   -1 & (q'')^{-1} \\
 1 & (q'')^{-1} & (q'')^2
\end{bmatrix}.
\]
 \end{ese}
We check, under which conditions $\big(m_{ij}^S\big)$ above is indeed a realization in all Weyl chambers.
\begin{lem}\label{onebossector}
	 If $\gamma= \sum_{i=1}^{f-1} a_i \alpha_i\in\mathfrak{g'}$ $($resp.\ for $\gamma \in \mathfrak{g''})$ the root $\gamma$ is $m$-Cartan.
	 \end{lem}
\begin{proof}
Suppose $\gamma \in \mathfrak{g'}$; then
\begin{alignat*}{3}
&(\gamma, \gamma)= (\gamma, \gamma)_\mathfrak{g'}, \qquad && (\gamma, \gamma)_\mathfrak{g''}=0,& \\
& (\gamma, \alpha_i)= (\gamma, \alpha_i)_\mathfrak{g'}, \qquad&& (\gamma, \alpha_i)_\mathfrak{g''}=0&
\end{alignat*}	
for an arbitrary simple root $\alpha_i$. Moreover $f(\gamma)=0$.
We then write out {\rm (\ref{cond7}A)}
\begin{gather*}
2(\gamma, \alpha_i) m'  = c_{\gamma,\alpha_i}((\gamma, \gamma) m').
\end{gather*}
This is true because of definition of $c_{\gamma\alpha_i}$ in the Lie algebra setting.
By linearity in the simple roots $\alpha_i$ it is possible to extend this result to an arbitrary root $\alpha = \sum b_i \alpha_i$.
\end{proof}	
\begin{lem}\label{isofermions} If $\gamma \neq \alpha_\f$ is isotropic, i.e., $(\gamma, \gamma) =(\gamma, \gamma)_\mathfrak{g'}= (\gamma, \gamma)_\mathfrak{g''}= 0$, and $f(\gamma) = \pm 1$ then~$\gamma$ is $m$-truncation.
\end{lem}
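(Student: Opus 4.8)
The plan is to show that an isotropic root $\gamma$ with $f(\gamma) = \pm 1$ automatically satisfies condition (\ref{cond7})B, i.e. $(1-a_{\gamma,\beta})\,m_{\gamma\gamma} = 2$ for every simple root $\beta$ adjacent to $\gamma$, by computing both $m_{\gamma\gamma}$ and the Cartan integer $a_{\gamma,\beta}$ directly. First I would use the formula for $m_{ij}^C$ in an arbitrary chamber established just above: since $(\gamma,\gamma)_{\mathfrak g'} = (\gamma,\gamma)_{\mathfrak g''} = 0$ and $f(\gamma) = \pm 1$, we get
\[
m_{\gamma\gamma} = (\gamma,\gamma)_{\mathfrak g'} r' + (\gamma,\gamma)_{\mathfrak g''} r'' + f(\gamma)^2 = 0 + 0 + 1 = 1.
\]
So condition B reduces to the purely combinatorial statement $a_{\gamma,\beta} = -1$ for each neighbour $\beta$; equivalently I must show that $\gamma$ is a \emph{fermion} in the Lie-superalgebra sense from the Nichols-algebra data, i.e. $q_{\gamma\gamma} = -1$, and that $a_{\gamma,\beta} = -1$ whenever $q_{\gamma\gamma} = -1$ and $q_{\gamma\beta}q_{\beta\gamma} \neq 1$.

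The key steps, in order: (i) Observe $q_{\gamma\gamma} = e^{i\pi m_{\gamma\gamma}} = e^{i\pi} = -1$, so $\gamma$ is $q$-truncation and $a_{\gamma,\gamma}$-style computations simplify. (ii) Recall the definition of the Cartan entry: $a_{\gamma,\beta} = -\min\{ m \in \mathbb Z \mid q_{\gamma\gamma}^{-m} = q_{\gamma\beta}q_{\beta\gamma} \text{ or } q_{\gamma\gamma}^{1+m} = 1\}$. Since $q_{\gamma\gamma} = -1$, the condition $q_{\gamma\gamma}^{1+m} = 1$ holds exactly for odd $m$, so the minimum over that branch alone is $m = 1$, giving $a_{\gamma,\beta} \geq -1$ always. (iii) It then suffices to rule out $a_{\gamma,\beta} = 0$, i.e. to show $q_{\gamma\beta}q_{\beta\gamma} \neq q_{\gamma\gamma}^{0} = 1$ is \emph{not} needed — rather, I need $a_{\gamma,\beta}$ to be exactly $-1$, so I must check that the $m=0$ branch, $q_{\gamma\gamma}^{0}=q_{\gamma\beta}q_{\beta\gamma}$, i.e. $q_{\gamma\beta}q_{\beta\gamma} = 1$, fails whenever $\gamma$ and $\beta$ are genuinely adjacent. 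Here I would compute $m_{\gamma\beta}$ from the chamber formula: $q_{\gamma\beta}q_{\beta\gamma} = e^{2\pi i\, m_{\gamma\beta}}$, and use that $\gamma$ being a fermion attached to a bosonic $\beta$ in a Lie superalgebra forces $(\gamma,\beta)_{\mathfrak g}$ (hence $m_{\gamma\beta}$, up to the rescalings $r', r''$ and the $f(\gamma)f(\beta)$ correction term) to be nonzero in a way incompatible with $q_{\gamma\beta}q_{\beta\gamma}=1$ — unless $\beta$ is disconnected from $\gamma$, in which case $a_{\gamma,\beta} = 0$ trivially and condition (\ref{cond7})B is vacuously fine since no relation is imposed. (iv) Conclude that for every neighbour $\beta$, $(1 - a_{\gamma,\beta}) m_{\gamma\gamma} = (1-(-1))\cdot 1 = 2$, so (\ref{cond7})B holds; since this also must survive reflections, and reflections around $\gamma$ itself act trivially on the relevant data by the earlier discussion of $m$-Cartan/$m$-truncation reflections, we are done.

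The main obstacle I anticipate is step (iii): showing that adjacency of $\gamma$ and $\beta$ in the root system genuinely prevents $q_{\gamma\beta}q_{\beta\gamma} = 1$. This is where the structure theory of classical basic Lie superalgebras enters — one needs that an isotropic root $\gamma$ with $f(\gamma)=\pm 1$ pairs with a neighbouring simple root $\beta$ in such a way that $(\gamma,\beta)_{\mathfrak g} \neq 0$, and moreover that after the rescalings $r', r''$ the resulting $m_{\gamma\beta}$ is not an integer (equivalently $q_{\gamma\beta}q_{\beta\gamma}\neq 1$), which uses the coprimality conditions $(p',p) = (p'',p) = 1$ from Definition \ref{mSuperLie}. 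In the edge cases where $q_{\gamma\beta}q_{\beta\gamma}$ could collapse to $1$ one would have $a_{\gamma,\beta} = 0$, meaning $\gamma$ and $\beta$ are not actually connected in the Dynkin diagram, and then there is nothing to prove for that pair; so the argument splits cleanly into the connected and disconnected cases, and the content is entirely in verifying the dichotomy.
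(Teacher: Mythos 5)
Your proposal is correct and follows essentially the same route as the paper: the paper's proof is the single observation that, by Corollary \ref{Mcondotherchamb}, the $m$-truncation condition in an arbitrary chamber reads $(\gamma,\gamma)_{\mathfrak{g'}}r' + (\gamma,\gamma)_{\mathfrak{g''}}r'' + f(\gamma)^2 = 1$, which is $0+0+1=1$ under the hypotheses. Your steps (ii)--(iii), checking that $q_{\gamma\gamma}=e^{i\pi}=-1$ forces $a_{\gamma,\beta}=-1$ for every connected neighbour, only make explicit what is already packaged into that corollary; the one small imprecision is your claim that a disconnected pair leaves ``nothing to prove'' --- condition (\ref{cond7})A then still imposes $m_{\gamma\beta}=0$, which is not automatic but is exactly the strong-orthogonality exception the paper defers to Corollary \ref{3 conditions II}, so it does not affect this lemma.
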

\begin{proof}
Condition (\ref{cond7}B) for a root to be $m$-truncation reads:
\begin{gather*}
(\gamma, \gamma)_\mathfrak{g'} + (\gamma, \gamma)_\mathfrak{g''} + f(\gamma)f(\gamma) = 1,
\end{gather*}
which is clearly true under these hypothesis.
\end{proof}	

We summarize these results in the following:
\begin{cor}\label{3 conditions II}
The matrix $(m_{ij})$ defined in Definition~{\rm \ref{mSuperLie}} corresponds to the given braiding~$(q_{ij})$ and the realization condition \eqref{cond7} holds for every root $\alpha$ in every Weyl chamber containing~$\alpha$ as simpe root, with the following possible exceptions:
\begin{enumerate}\itemsep=0pt
	\item[$1.$] $\alpha$ is a boson in $\mathfrak{g'}\cup \mathfrak{g''}$, i.e., $f(\alpha)$ is a strictly positive even integer.
	\item[$2.$] $\alpha$ is an isotropic fermion with $f(\alpha) \neq \pm 1$.
	\item[$3.$] $\alpha$ is a non-isotropic fermion.
	\item[$4.$] $\alpha$ is a fermion strong orthogonal to another fermion $\gamma$, i.e., in their real span $\langle \alpha, \gamma \rangle_\mathbb{R}$ there are no roots.
\end{enumerate}
\end{cor}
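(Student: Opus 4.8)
The plan is to reduce the realisation requirement to a chamber-independent check on single roots and then feed it into Lemmas~\ref{onebossector} and~\ref{isofermions}. By Definition~\ref{Mm}, saying that $\Lambda,\,m_{ij}^S$ realises $q_{ij}$ means: in every Weyl chamber each simple root is either $m$-Cartan or $m$-truncation, i.e. satisfies~(\ref{cond7})A, respectively~(\ref{cond7})B, against all the remaining simple roots of that chamber. By Corollary~\ref{Mcondotherchamb} these equations, written out for a root $\gamma$ occurring as a simple root of some chamber, involve only the intrinsic quantities $(\gamma,\gamma)_{\mathfrak{g'}}$, $(\gamma,\gamma)_{\mathfrak{g''}}$, $f(\gamma)$ and the analogous pairings with the standard simple roots; hence whether $\gamma$ is $m$-Cartan or $m$-truncation does not depend on the chamber in which $\gamma$ is simple. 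So it suffices to run through the roots of $\mathfrak{g}$ (an over-approximation of the set of roots simple in some chamber) and verify the dichotomy for each.

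The check itself is a case split on $f(\gamma)$ and the (non-)isotropy of $\gamma$. If $f(\gamma)=0$ then, since $\mathfrak{g'}$ and $\mathfrak{g''}$ are disconnected, $\gamma$ lies entirely in $\mathfrak{g'}$ or entirely in $\mathfrak{g''}$, and Lemma~\ref{onebossector} gives that $\gamma$ is $m$-Cartan, so~(\ref{cond7})A holds. If $\gamma$ is an isotropic fermion with $f(\gamma)=\pm1$, then $m_{\gamma\gamma}^S=f(\gamma)^2=1$ and Lemma~\ref{isofermions} gives that $\gamma$ is $m$-truncation, so~(\ref{cond7})B holds. A root not covered by these two cases is, by the definition of $f$ and of isotropy, of one of the four listed types: a boson with $f(\gamma)$ a strictly positive even integer (hence $\gamma\in\mathfrak{m}$, outside $\mathfrak{g'}\cup\mathfrak{g''}$), an isotropic fermion with $|f(\gamma)|\neq1$, a non-isotropic fermion, or a fermion strongly orthogonal to another fermion. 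For these the two lemmas say nothing, and~(\ref{cond7}) can genuinely fail; for instance when a fermion $\gamma$ is strongly orthogonal to a simple fermion $\gamma'$ one computes $m_{\gamma\gamma'}^S=f(\gamma)f(\gamma')=\pm1$ (the $\mathfrak{g'}$- and $\mathfrak{g''}$-parts of $(\gamma,\gamma')$ vanishing by strong orthogonality), while the associated Cartan integer is $a_{\gamma\gamma'}=0$, so with $m_{\gamma\gamma}^S=1$ neither $2m_{\gamma\gamma'}^S=a_{\gamma\gamma'}m_{\gamma\gamma}^S$ nor $(1-a_{\gamma\gamma'})m_{\gamma\gamma}^S=2$ can hold. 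This is exactly why these four families are recorded only as \emph{possible} exceptions.

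The main obstacle is the structural input hidden in the first step: one must know that the roots appearing as simple roots after arbitrary sequences of even and odd reflections lie among the $\pm$-roots of the basic classical superalgebra $\mathfrak{g}$, and that the Cartan integers entering~(\ref{cond7}) in a reflected chamber agree with those computed intrinsically from $m_{ij}^S$, so that Corollary~\ref{Mcondotherchamb} truly makes the check chamber-independent; both rest on the (odd) reflection theory of Lie superalgebras rather than being formal. A secondary subtlety is that Lemma~\ref{isofermions} only pins down $m_{\gamma\gamma}^S=1$, which yields~(\ref{cond7})B for a pair $(\gamma,\beta)$ precisely when $a_{\gamma\beta}=-1$; this holds for the braiding-neighbours of $\gamma$ but not when $\gamma$ is strongly orthogonal to another fermionic simple root, which is again why family (4) must be set aside. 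Granting all this, what remains is the elementary exhaustive case split above, and the four exceptional families are analysed one at a time in the rank-$2$ and higher-rank examples that follow.
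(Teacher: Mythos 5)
Your overall strategy is the paper's: reduce to a root-by-root check via Corollary \ref{Mcondotherchamb}, dispose of bosons lying in a single sector with Lemma \ref{onebossector} and of isotropic fermions with $f=\pm1$ with Lemma \ref{isofermions}, and record everything else as a possible exception. That part is fine, as is your observation that for $a_{\gamma\beta}=0$ condition (\ref{cond7})B is unavailable and (\ref{cond7})A forces $m_{\gamma\beta}=0$.

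However, your illustration of case (4) contains a genuine error. You claim that for strongly orthogonal fermions the $\mathfrak{g'}$- and $\mathfrak{g''}$-parts of $(\gamma,\gamma')$ vanish, so that $m^S_{\gamma\gamma'}=f(\gamma)f(\gamma')=\pm1$ and the realisation condition can never hold. Strong orthogonality (no further roots in $\langle\gamma,\gamma'\rangle_{\mathbb R}$) forces $a_{\gamma\gamma'}=0$, but it does \emph{not} make the sector-wise pairings vanish. For instance in $A(1,1)$ the strongly orthogonal pair $\alpha_{12},\alpha_{23}$ has $(\gamma,\gamma')_{\mathfrak{g'}}=(\gamma,\gamma')_{\mathfrak{g''}}=-1$, so $m_{\gamma\gamma'}=-r'-r''+f(\gamma)f(\gamma')$ and the requirement $m_{\gamma\gamma'}=0$ becomes the solvable constraint $r'+r''=1$ — which is precisely the condition the paper extracts for $A(m,n)$, $B(m,n)$, $D(m,n)$, etc. If your computation were right, case (4) would be an unconditional obstruction and none of these families would admit realising lattices, contradicting the rest of Section 6. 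The correct statement is that case (4) yields a linear condition on $(r',r'')$ that must be checked example by example; only the positive part of your case split (the non-exceptional roots) is actually established by the two lemmas, which is all the corollary asserts.
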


\begin{proof} If a boson $\alpha$ is contained in $\mathfrak{g'}$ or $\mathfrak{g''}$, then Lemma~\ref{onebossector} asserts that it must be $m$-Cartan. Otherwise, $\alpha$ must have the exceptional case (1) with $f(\alpha)>0$ even.

Let now $\alpha$ be a fermion which is not strong orthogonal to any other fermions. If it is isotropic and $f(\alpha) = \pm 1$, thanks to Lemma~\ref{isofermions}, it satisfies condition (\ref{cond7}B). If $f(\alpha) \neq \pm 1$ or it is non-isotropic, we have the exceptional case (2) and (3).

If $\alpha$ and $\gamma$ are two strong orthogonal fermions, then $c_{\alpha\beta}=0$.
\end{proof}

In the cases (1)--(4) we have to check explicitly condition~(\ref{cond7}A) or (\ref{cond7}B) by Corollary~\ref{Mcondotherchamb}. Note that it is possible that in a realization a fermion is $m$-Cartan but not $m$-trunctation. In case~(4) the condition simplifies to
In this case we have to check for which $m'$ and $m''$
\[m_{\alpha,\beta}=(\alpha,\beta)_\mathfrak{g'} m' + (\alpha,\beta)_\mathfrak{g''} m'' + f(\alpha)f(\beta) =0.\]

\begin{oss} Going through all cases, we do not find any boson with $f(\alpha)>2$ and any fermion with $f(\alpha)>1$. Thus, point (1) concerns then just bosons with $f(\alpha)=2$, for example in type $D(m,n)$ and $F(4)$, and point~(2) never occurs.
\end{oss}

As last general result we now state conversely a classification lemma:
\begin{lem}\label{superlieclassification}
If all the bosonic roots are $m$-Cartan, then the unique possible realizing solution for the given braiding is the matrix $(m_{ij})$ of Definition~{\rm \ref{mSuperLie}}. In particular this is the case if $\ell_i > 1- c_{ij}$ for $\forall i \neq \f$.
\end{lem}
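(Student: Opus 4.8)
The plan is to follow the blueprint of Lemma~\ref{cartanclassification}, but now splitting the Gram matrix $m_{ij}$ into three pieces governed by the decomposition of the standard chamber: the purely bosonic indices $i,j<f$ generating $\mathfrak{g}'$, the purely bosonic indices $i,j>f$ generating $\mathfrak{g}''$, and the remaining entries involving the fermion $\alpha_f$ or joining the two bosonic components. In the standard chamber $\alpha_1,\dots,\alpha_{f-1}$ and $\alpha_{f+1},\dots,\alpha_n$ span connected bosonic sub-diagrams of Cartan type associated to $\mathfrak{g}'$ and $\mathfrak{g}''$ respectively (their Cartan submatrices being those of $\mathfrak{g}'$, $\mathfrak{g}''$, since $q'$ and $q''$ have large enough order), so the rigidity argument of Lemma~\ref{cartanclassification} should apply inside each of these blocks.

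Concretely, I would first fix $m_{i_0i_0}$ for one bosonic root $\alpha_{i_0}$ with $i_0<f$. By hypothesis $\alpha_{i_0}$ and all its bosonic neighbours are $m$-Cartan, so condition (\ref{cond7})A (used with both orders of indices) propagates $m_{i_0i_0}$ along the connected diagram of $\mathfrak{g}'$ to all $m_{ij}$ with $i,j<f$, and the reflections $\mathcal{R}^k$ with $k<f$ leave the $m_{ij}$ invariant because $\alpha_k$ is $m$-Cartan; exactly as in Lemma~\ref{cartanclassification} this leaves no further freedom. Hence $m_{ij}=(\alpha_i,\alpha_j)_{\mathfrak{g}'}\,r'$ for all $i,j<f$ and a single scalar $r'$, and symmetrically $m_{ij}=(\alpha_i,\alpha_j)_{\mathfrak{g}''}\,r''$ for all $i,j>f$. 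The $m$-Cartan conditions at $\alpha_{f-1}$ and $\alpha_{f+1}$ additionally fix the mixed fermion--boson entries $m_{f,f-1}=(\alpha_{f-1},\alpha_f)_{\mathfrak{g}'}\,r'$ and $m_{f,f+1}=(\alpha_{f+1},\alpha_f)_{\mathfrak{g}''}\,r''$; and $r'=p'/p$, $r''=p''/p$ are then determined up to the congruences in Definition~\ref{mSuperLie} by $e^{i\pi r'}=q'$, $e^{i\pi r''}=q''$.

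It remains to pin down $m_{ff}$ and the entries $m_{ij}$ with $i<f<j$. For $m_{ff}$: since $q_{ff}=-1$ the number $m_{ff}$ is an odd integer, and since $\alpha_f$ is (only) $q$-truncation a realising matrix must satisfy (\ref{cond7})B at $\alpha_f$, so $(1-a_{fj})m_{ff}=2$ with $1-a_{fj}\ge 1$ a positive integer; an odd positive divisor of $2$ being $1$, this forces $m_{ff}=1$, as in Definition~\ref{mSuperLie}. For $i<f<j$: the simple roots $\alpha_i,\alpha_j$ lie in different bosonic components and are disconnected, so $q_{ij}q_{ji}=1$ and hence $a_{ij}=0$; since $\alpha_i$ is $m$-Cartan, (\ref{cond7})A gives $2m_{ij}=a_{ij}m_{ii}=0$, so $m_{ij}=0$. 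Assembling the three pieces, $m_{ij}=m_{ij}^S$, which gives the asserted uniqueness.

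Finally, for the ``in particular'' clause I would reuse the argument of Lemma~\ref{ell_i}: if a bosonic root $\alpha_i$ ($i\ne f$) were $m$-truncation then $q_{ii}^{1-a_{ij}}=1$, contradicting $\mathrm{ord}(q_{ii})=\ell_i>1-a_{ij}$; since every root of a realising matrix satisfies (\ref{cond7})A or (\ref{cond7})B (Definition~\ref{Mm}), all bosonic roots are then $m$-Cartan and the first part applies. The step I expect to be the main obstacle is making the internal rigidity rigorous: one must check that restricting to the bosonic sub-diagram $\mathfrak{g}'$ (resp. $\mathfrak{g}''$) genuinely reduces to the Cartan-type situation of Lemma~\ref{cartanclassification} --- in particular that the relevant Cartan submatrix really is that of $\mathfrak{g}'$ --- and that no extra solution leaks in through the fermion node. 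A secondary subtlety is the standing assumption that $\alpha_f$ is only $q$-truncation; in the degenerate cases where $\alpha_f$ is simultaneously $q$-Cartan and $q$-truncation the count of solutions changes, as in Remark~\ref{moresolsl21}, so the lemma is to be read in the standard-chamber regime where this does not happen.
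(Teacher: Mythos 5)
Your proposal is correct and follows essentially the same route as the paper's proof: fix $m_{ff}=1$ from the $q$-truncation condition at the fermion, reduce each bosonic block to the rigidity argument of Lemma \ref{cartanclassification}, observe the mixed entries vanish, and let reflection-compatibility propagate the solution to the other chambers. Your version just spells out the details (the odd-integer argument for $m_{ff}$, the vanishing of $m_{ij}$ for $i<f<j$, and the reuse of Lemma \ref{ell_i} for the ``in particular'' clause) that the paper leaves implicit.
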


\begin{proof}
Condition (\ref{cond7}) gives a unique solution $(m_{ij})$ in the standard chamber: the fermionic root is $m$-truncation and thus fixed to $m_{\f\f} =1$, while, since all the other roots are $m$-Cartan, restricting our study to the two bosonic sectors separately we end up in the same situation of Lemma~\ref{cartanclassification}.
Moreover the bilinear form is uniquely fixed by its values on one basis.
\end{proof}

\begin{ese}\label{exm_superDReflection}
We apply Lemma~\ref{3 conditions II} to Example~\ref{A(1,1)}: after reflecting the standard chamber set of roots around the fermion $\alpha_2$, we find for new simple roots: $\lbrace \alpha_{12}, -\alpha_2, \alpha_{23}\rbrace$ the matrix:
\[ \big(m_{ij}^{\r_2(S)}\big)= \begin{bmatrix}
1 & -1 + m' & -1+m' +m'' \\
-1 + m' & 1 & -1 + m'' \\
-1+m' +m'' & -1+m'' & 1	
\end{bmatrix}.
\]
Exception (4) of Lemma~\ref{3 conditions II} appears. We then have to ask $m_{23}=0$, i.e., $m' + m''=1$ and thus $q'q''=-1$. In that case $(m_{ij})$ is a~realizing solution.
This construction realizes the Nichols algebra~$\mathcal{B}(q_{ij})$ described by case row 8 of Table~2 in~\cite{Hecklist} when $q\neq \pm 1$.
\end{ese}

\subsection{Central charge}
We will compute the central charge of systems associated to a Lie superalgebra $\mathfrak{g}$ of rank $r$, with non degenerate Killing form $(\,,\,)$.
\begin{prop}
The central charge is $c= r - 12(Q,Q)$ with \[Q=\frac{\rho_{\mathfrak{g'}}^\vee}{\sqrt{m'}}-\rho_{\mathfrak{g'}}\sqrt{m'}+\frac{\rho_{\mathfrak{g''}}^\vee}{\sqrt{m''}} -\rho_{\mathfrak{g''}}\sqrt{m''}-\rho_{{\rm rest}}^\vee,\]
where we denoted by $\rho_{\mathfrak{g'}}$ the half sum of positive roots in~$\mathfrak{g'}$, $\rho_{\mathfrak{g''}}$ the half sum of positive roots in~$\mathfrak{g''}$ and $\rho_{{\rm rest}}$ the half sum of the remaining positive roots of~$\mathfrak{g}$.
\end{prop}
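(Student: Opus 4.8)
The plan is to reduce everything to the general formula $c=\mathrm{rank}-12(Q,Q)$ already established for the Heisenberg algebra, so that it is enough to prove that the element $Q$ displayed in the statement is the (unique) background charge, i.e.\ that it solves the linear system $\tfrac12(v_i,v_i)_\Lambda-(v_i,Q)_\Lambda=1$ for all $i$ in the standard chamber. By bilinearity this amounts to one scalar identity per simple root $\alpha_i$; since the Gram form is non-degenerate (as is assumed throughout for the central charge to be defined), the solution of the system is unique, so verifying these identities finishes the proof.

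To evaluate the pairings I use the block structure of the form on $\Lambda$ recorded in the lemma computing $m_{ij}^C$ in an arbitrary chamber: one may realise $\Lambda$ so that $v_i=\sqrt{r'}\,\alpha_i$ for $i<f$ and $v_i=\sqrt{r''}\,\alpha_i$ for $i>f$ sit in two mutually orthogonal blocks (the rescaled root lattices of $\mathfrak{g'}$ and $\mathfrak{g''}$, since $m^S_{ij}=0$ for $i<f<j$), while $v_f$ is a norm-$1$ vector bridging them, with one extra orthogonal direction carrying the fermion number $f(\cdot)$. Correspondingly $Q$ splits into a $\mathfrak{g'}$-piece $\tfrac{1}{\sqrt{r'}}\rho^\vee_{\mathfrak{g'}}-\sqrt{r'}\rho_{\mathfrak{g'}}$, a symmetric $\mathfrak{g''}$-piece, and the correction term $-\rho^\vee_{\mathrm{rest}}$.

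For a bosonic simple root $\alpha_i$ with $i<f$ the equation decouples completely: $\alpha_i$ is orthogonal to the $\mathfrak{g''}$-block and has $f(\alpha_i)=0$, and it is also orthogonal to $\rho^\vee_{\mathrm{rest}}$ --- the reflection $s_{\alpha_i}$ permutes the positive roots of $\mathfrak{g}$ outside $\mathfrak{g'}\cup\mathfrak{g''}$ among themselves (it preserves the $\alpha_f$-multiplicity), so $\rho_{\mathrm{rest}}$ and $\rho^\vee_{\mathrm{rest}}$ are $s_{\alpha_i}$-invariant and hence orthogonal to $\alpha_i$. What remains is exactly the Cartan-type computation of the previous section with $r$ replaced by $r'$, which reduces the identity to the classical relations between $\rho_{\mathfrak{g'}}$, $\rho^\vee_{\mathfrak{g'}}$ and the simple roots of $\mathfrak{g'}$; symmetrically for $i>f$.

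The last equation, for the fermionic simple root $\alpha_f$, is the crux. Since $(v_f,v_f)_\Lambda=m_{ff}=1$ it reads $(v_f,Q)_\Lambda=-\tfrac12$. Here the $\mathfrak{g'}$- and $\mathfrak{g''}$-parts of $Q$ do pair non-trivially with $v_f$, through the adjacency of $\alpha_f$ in the distinguished Dynkin diagram, and contribute $r'$-, resp.\ $r''$-dependent terms; the role of $-\rho^\vee_{\mathrm{rest}}$ is precisely to cancel these and leave the $r$-independent value $-\tfrac12$. I expect this to be the main obstacle: it is the super-analogue, at an isotropic node, of the identity used in the Cartan case, and a uniform argument uses the splitting of the super-Weyl vector $\rho_s=\tfrac12\sum_{\beta\in\Phi_0^+}\beta-\tfrac12\sum_{\gamma\in\Phi_1^+}\gamma$ into its $\mathfrak{g'}$-, $\mathfrak{g''}$- and rest-components, together with the defining property $(\alpha_f,\rho_s)=0$ of the distinguished simple system (which holds because $\alpha_f$ is isotropic), cf.\ \cite{Kac77}; for the finitely many basic classical types it may instead be checked directly from Kac's root tables. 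Once $Q$ is identified, $c=\mathrm{rank}-12(Q,Q)$ is immediate, and if a closed formula for $c$ is desired one expands $(Q,Q)$ using the orthogonality of the $\mathfrak{g'}$- and $\mathfrak{g''}$-blocks and the explicit components of $v_f$ and $\rho^\vee_{\mathrm{rest}}$.
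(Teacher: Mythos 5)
Your overall strategy is the same as the paper's: reduce the proposition to checking that the displayed $Q$ solves the linear system $\tfrac12(v_i,v_i)-(v_i,Q)=1$, one equation per simple root of the standard chamber. Your handling of the bosonic equations is correct --- the observation that $s_{\alpha_i}$ permutes the positive roots outside $\mathfrak{g'}\cup\mathfrak{g''}$ among themselves (since it preserves the $\alpha_f$-multiplicity), whence $(\alpha_i,\rho_{\text{rest}})=0$ for $i\neq f$, is a legitimate substitute for the paper's identification $\rho_{\text{rest}}=\lambda_f$, after which the $i\neq f$ equations reduce to the Cartan-type computation exactly as you describe.

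The fermionic equation, however, is a genuine gap: you correctly single it out as the crux but do not prove it, and the strategy you sketch would not close it. The cancellation cannot work the way you describe: the contributions of $\tfrac{1}{\sqrt{r'}}\rho^\vee_{\mathfrak{g'}}$ and $-\sqrt{r'}\rho_{\mathfrak{g'}}$ to $(v_f,Q)$ carry different powers of $\sqrt{r'}$ (likewise for $r''$), whereas $-\rho^\vee_{\text{rest}}$ enters $Q$ with an $r$-independent coefficient, so it cannot absorb the $r'$- and $r''$-dependent terms; since $(v_f,Q)=-\tfrac12$ must hold for the whole two-parameter family $(r',r'')$, several independent pairings are forced to take prescribed values, not just one linear combination of them. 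A single scalar identity such as $(\alpha_f,\rho_s)=0$ therefore cannot suffice. The paper disposes of all equations at once, the fermionic one included, by a device you never invoke: it expands $Q=\sum_j\bigl(\tfrac{1}{\sqrt{r_j}}-\sqrt{r_j}d_j\bigr)\lambda_j^\vee$ in the basis dual to the simple roots, using $\rho_{\mathfrak{g'}}=\sum_{j<f}\lambda_j$, $\rho_{\mathfrak{g''}}=\sum_{j>f}\lambda_j$ and $\rho_{\text{rest}}=\lambda_f$, so that $(\alpha_i,\lambda_j^\vee)=\delta_{ij}$ makes every equation immediate, with $r_f=1$ and $d_f=0$ giving exactly the value $1$ at the fermionic node with nothing left to cancel. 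To complete your argument you would need either to establish this dual-basis expansion, or to verify directly (e.g.\ from Kac's tables, which you mention but do not carry out) the individual pairings of $\alpha_f$ with each of $\rho_{\mathfrak{g'}}$, $\rho^\vee_{\mathfrak{g'}}$, $\rho_{\mathfrak{g''}}$, $\rho^\vee_{\mathfrak{g''}}$ and $\rho^\vee_{\text{rest}}$.
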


\begin{proof}By Proposition \ref{central_charge} the vector $Q$ is characterized uniquely by the following condition for all simple roots $\alpha_i$ of $\mathfrak{g}$
\[
\frac{1}{2}\big({-}\sqrt{m_i}\alpha_i,-\sqrt{m_i}\alpha_i\big) -\big({-}\sqrt{m_i}\alpha_i,Q\big) =1, \qquad \text{where}\quad m_i = \begin{cases} \dfrac{p'}{p} & \text{if } i<f,\\
1 & \text{if } i=f,\\
 \dfrac{p''}{p} &  \text{if } i>f.
\end{cases}
\]
Let $\lambda_j^\vee= \frac{\lambda_j}{d_j}$ be such that $(\alpha_i, \lambda_j^\vee) = \delta_{ij}$. Since $\rho_\mathfrak{g}= \sum\limits_{i=1}^n \lambda_i$, we have that $\rho_{\mathfrak{g'}}= \sum\limits_{i<f} \lambda_i$, $\rho_{\mathfrak{g''}}= \sum\limits_{i>f} \lambda_i$ and then $\rho_{{\rm rest}} = \lambda_\f$. We can thus rewrite $Q$ as
\begin{gather*}
Q=\frac{\rho_{\mathfrak{g'}}^\vee}{\sqrt{m'}}-\rho_{\mathfrak{g'}}\sqrt{m'}+\frac{\rho_{\mathfrak{g''}}^\vee}{\sqrt{m''}}-\rho_{\mathfrak{g''}}\sqrt{m''}-\rho_{{\rm rest}}^\vee
= \sum\limits_i\left( \frac{1}{\sqrt{m_i}}-\sqrt{m_i}d_i\right) \lambda_i^\vee.
\end{gather*}
Hence the previous equation becomes
\begin{gather*}
 \frac{1}{2}\big({-}\alpha_i \sqrt{m_i},-\alpha_i \sqrt{m_i}\big)-\big({-}\alpha_i \sqrt{m_i}, Q\big) \\
\qquad{}= \frac{1}{2} 2d_i m_i+ \sum\limits_j \sqrt{m_i} \left(\frac{1}{ \sqrt{m_j}}- \sqrt{m_j} d_j\right) \big(\alpha_i, \lambda_j^\vee\big) =1.\tag*{\qed}
\end{gather*}\renewcommand{\qed}{}
\end{proof}

\subsection{Algebra relations}\label{sec_SuperNA}

 According to the results in Section \ref{sec_Serre} we restrict ourselves here to the case of simple-laced $\g=A(n,m),D(m,n),D(2,1;\alpha)$. By~\cite[Sections~4.1,~4.4 and~4.5]{AA17} a set of defining relations is
 \begin{itemize}\itemsep=0pt
 	\item Commutation relation $[x_i,x_j]_q$ for $\i\not\sim j$.
 	\item Serre relations $[x_i,[x_i,x_j]_q]_q=0$ for $i\sim j$ for $i$ bosonic (for $\alpha_i$ fermionic the Serre relation is implied by $x_i^2=0$).
 	\item Root vector truncation relations $x_\alpha^{\ell_\alpha}=0$ for any root $\alpha\in\Phi^+$ and $\ell_\alpha=\operatorname{ord}\big(q^{2}\big)$ for~$\alpha$ bosonic or $\ell_\alpha=2$ for~$\alpha$ fermionic, where the root vector $x_\alpha$ is defined by repeated reflections using Lusztig's isomorphism.
 	\item For $j$ fermionic the additional relations $[x_j,[x_i,[x_j,x_k]_q]_q]_q=0$ for any subsystem $\alpha_i$, $\alpha_j$, $\alpha_k$ of type $A_3$.
 \end{itemize}

 We now consider the realization in Definition \ref{mSuperLie}. We first clarify, which parameters $m'$, $m''$ give subpolar $(m_{ij})$.
 \begin{lem}
 	Subpolarity for $(m_{ij})$ in Definition~{\rm \ref{mSuperLie}} holds under the condition
 \[\frac{1}{2d'}\geq m'>0,\qquad \frac{1}{2d''}\geq m''>0,\qquad \det (m_{ij})>0.\]
 \end{lem}
 \begin{proof}
 	Subpolarity in Section~\ref{smallTheorems} for all monomials holds under the assumptions $|\alpha_i|\leq 1$, which means $2d'm'\leq 1$, $2d''m''\leq 1$, and $(m_{ij})$ positive definite. By Sylvester's criterion, this is equivalent to $\det (m_{ij})>0$ and to the principal minor being positive definite. The principal minor is a~rescaling of the root lattices $\g'$, $\g''$, so it is positive definite for $m',m''>0$.
 \end{proof}

 \begin{ese}For type $A(n',n'')$ these conditions read
 \[\frac{1}{2}\geq m'>0,\qquad \frac{1}{2}\geq m''>0,\qquad \frac{n'}{n'+1}m'+\frac{n''}{n''+1}m''<1,\]
 where $m'+m''=1$.
 \end{ese}

By Corollary \ref{cor_A2}, for $\alpha_i$ bosonic the quantum Serre relations hold for all $m$ and by Corollary~\ref{cor_A10}, for $\alpha_i$ fermionic, the quantum Serre relations can be analytically continued and hold for $m<\frac{3}{2}$. By Corollary~\ref{cor_trun}, the truncation relations of simple root vectors $x_\alpha^{\ell_\alpha}=0$ can be analytically continued for all values of $m$, but they only hold for $m\geq 0$ for $\alpha$ bosonic and for all $m$ for $\alpha$ fermionic.

 The additional relation in degree $\alpha_i+2\alpha_j+\alpha_k$ with $m_{ij}=-m'$, $m_{jj}=1$, $m_{jk}=-m''$, $m_{ik}=0$ is subpolar for $m',m''<1$, because going through all subsets with multiplicities $J$, subpolarity amounts to the following inequalities
 \begin{gather*}
 -m'>-1,\\
 -m''>-1,\\
 -2m'+1>-2,\\
 -2m''+1>-2,\\
 -m'-m''>-2,\\
 -2m'+1-2m''>-3.
 \end{gather*}

The Nichols algebra without truncations relations of bosonic roots is the Borel part of the Kac--DeConcini--Procesi quantum super group $U_q^\mathcal{K}(\g)$ resp.~the distinguished pre-Nichols algebra~\cite{Ang16}. We hence find:

 \begin{cor}\label{cor_superNA}
 	In $(m_{ij})$ in Definition~{\rm \ref{mSuperLie}}, which is the realization of the braiding $(q_{ij})$ associated to a simply-laced Lie superalgebra $\g$ at $q={\rm e}^{{\rm i}\pi m}$, $m\not\in \frac{1}{2}\Z$ or the alternate realization of the braiding associated to a simply-laced Lie algebra $\g$ at $q={\rm e}^{{\rm i}\pi m}={\rm i}$, $m\in \frac{1}{2}+\Z$, the Nichols algebra relations hold for the corresponding screenings as follows:
 	\begin{itemize}\itemsep=0pt
 		\item For $\frac{1}{2}\geq m', m''>0$ and $\det(m_{ij})<0$ subpolarity holds, so that all relations hold. Differently spoken, the algebra of screenings is a surjective image of the Borel part of the small quantum super group $u_q(\g)$.
 		\item For $m'<0$ the Serre relations and the additional relation of a root $\alpha$ in $\g'$ with the fermionic simple root $\alpha_\f$ holds. The truncation relations of simple fermionic root vectors hold, the truncation relations of simple bosonic root vectors fail.
 		\item The analogous statment holds for $m''$ and simple roots in $\g''$.
 		\item For $\frac{3}{2}>m', m''>\frac{1}{2}$ the Serre relation holds and for $1>m',m''>\frac{1}{2}$ the additional relation holds, and we can make no assertion for larger values of $m'$, $m''$.
 	\end{itemize}
 	We would conjecture that for $m',m''<0$ also the truncation relations of non-simple root fermionic vectors hold, so that this case we get the Borel part Kac--DeConcini--Procesi quantum super group $U_q^\mathcal{K}(\g)$ where the truncation relations for one or both subsets of bosonic roots hold. We would conjecture that all surjections above are in fact isomorphism.
\end{cor}
In the example $A(n',n'')$ we have the additional condition $m'+m''=1$, so we expect (proven except for truncation of non-simple root vectors) for $1>m>0$ the small quantum supergroup and for $m'<0$, $m''>1$ (or vice versa) the Borel part of a version of the Kac--DeConcini--Procesi quantum super group $U_q^\mathcal{K}(\g)$ where the bosonic root vectors of~$A(n')$ fail and those of~$A(n'')$ hold and for the latter the additinal relation is in question.

\subsection{Examples in rank 2}\label{subsec_rank2super}
We now present the cases of Table~1 in~\cite{Hecklist} that come from Lie superalgebras of rank~$2$. We will check in every case whether the exceptions of Corollary~\ref{3 conditions II} appear.
In rank $2$, there is obviously always just one bosonic sector $\mathfrak{g'}$.

We also remark in each case how the simple roots in the standard chamber can be expressed using the standard basis $\epsilon_i$ and $\delta_i$ in \cite{Kac77}.

\subsubsection*{Heckenberger row 3}
The case row 3 of Table~1 in \cite{Hecklist}, studied in Example~\ref{sl(2|1)}, is realized by the Lie superalgebra lattice ${A(1,0)}$. This case is described by the diagrams
\begin{gather*}
\mbox{\begin{tikzpicture}
	\draw (0,0)--(1.6,0);
	\draw (-0.1,0) circle[radius=0.1cm] node[anchor=south]{$ q^2$}
			(1.7,0) circle[radius=0.1cm] node[anchor=south]{$ {-1}$};
	\draw (0.8,0) node[anchor=south]{$ q^{-2}$};
	\draw (3.6,0)--(5.2,0);
	\draw (3.5,0) circle[radius=0.1cm] node[anchor=south]{$ -1$}
			(5.3,0) circle[radius=0.1cm] node[anchor=south]{$ -1$};
	\draw (4.4,0) node[anchor=south]{$ q^2$};
\node at (0.8,-0.3) {$\mathrm{I}$};
\node at (4.4,-0.3) {$\mathrm{II}$};
\end{tikzpicture}}
\end{gather*}
with $q^2 \neq \pm 1$ and simple roots $\mathrm{I}\colon \lbrace\alpha_1, \alpha_2\rbrace$, $\mathrm{II}\colon \lbrace\alpha_{12}, -\alpha_2\rbrace$. The set of positive roots is given by $\lbrace \alpha_1, \alpha_2, \alpha_{12}\rbrace$ with unique associate Cartan matrix and inner products
\[(c_{ij})= \begin{bmatrix}
 \hphantom{-}2 & -1\\
 -1 & \hphantom{-}2
\end{bmatrix}, \qquad
(\alpha_i, \alpha_j)= \begin{bmatrix}
 \hphantom{-}2 & -1 \\
 -1 &  \hphantom{-}0
\end{bmatrix}.
\]

Therefore the matrix $(m_{ij})$ in the standard basis and after reflecting around $\alpha_2$ are given by
\[
\big(m_{ij}^{\mathrm{I}}\big)= \begin{bmatrix}
 2m & -m\\
 -m &  1
\end{bmatrix}, \qquad
\big(m_{ij}^{\mathrm{II}}\big)= \begin{bmatrix}
 1 & -1+m\\
 -1+m & 1
\end{bmatrix}.
\]
None of the exceptions of Lemma~\ref{3 conditions II} appears; therefore $(m_{ij})$ is a realizing solution for all~$m$. This result matches with Example~\ref{sl(2|1)}.
\begin{oss} As observed in Example~\ref{sl(2|1)}, if we allow the value $q^2=-1$ we obtain row 2 of Table~1 in~\cite{Hecklist}.
\end{oss}
\begin{oss}
The simple roots in the standard chamber of $A(1,0)$ can be expressed by
\[ \alpha_1 = \epsilon_1 - \epsilon_2, \qquad \alpha_2 = \alpha_\f = \epsilon_2 - \delta_1.\]
\end{oss}

\subsubsection*{Heckenberger row 5}
Row 5 of Table 1 in~\cite{Hecklist} is realized by the Lie superalgebra lattice ${B(1,1)}$. This case is described by the diagrams
\begin{gather*}
\mbox{\begin{tikzpicture}
	\draw (0,0)--(1.6,0);
	\draw (-0.1,0) circle[radius=0.1cm] node[anchor=south]{$ q^2$}
			(1.7,0) circle[radius=0.1cm] node[anchor=south]{$ {-1}$};
	\draw (0.8,0) node[anchor=south]{$ q^{-4}$};
	\draw (3.6,0)--(5.2,0);
	\draw (3.5,0) circle[radius=0.1cm] node[anchor=south]{$ -q^{-2}$}
			(5.3,0) circle[radius=0.1cm] node[anchor=south]{$ -1$};
	\draw (4.4,0) node[anchor=south]{$ q^{4}$};
\node at (0.8,-0.3) {$\mathrm{I}$};
\node at (4.4,-0.3) {$\mathrm{II}$};
\end{tikzpicture}}
\end{gather*}
with $q^2 \neq \pm 1$, $q^2 \not \in \mathbb{G}_4$ and simple roots $\mathrm{I}\colon \lbrace\alpha_1, \alpha_2\rbrace$, $\mathrm{II}\colon \lbrace\alpha_{12}, -\alpha_2\rbrace$.
The set of positive roots is given by $\lbrace \alpha_1, \alpha_2, \alpha_{12}, \alpha_{112}\rbrace$ with unique associate Cartan matrix
\[(c_{ij})= \begin{bmatrix}
 \hphantom{-}2 & -2\\
 -1 & \hphantom{-}2
\end{bmatrix} \]
and inner product
\[
(\alpha_i, \alpha_j)= \begin{bmatrix}
 \hphantom{-}2 & -2 \\
 -2 &  \hphantom{-}0
\end{bmatrix}.
\]
Therefore the matrix $(m_{ij})$ and its reflecting around $\alpha_1$ are given by
\[\big(m_{ij}^{\mathrm{I}}\big)= \begin{bmatrix}
 2m & -2m\\
 -2m& 1
\end{bmatrix}, \qquad
\big(m_{ij}^{\mathrm{II}}\big)= \begin{bmatrix}
 -2m+1 & 2m-1\\
 2m-1 & 1
\end{bmatrix}. \]
None of the exceptions of Lemma~\ref{3 conditions II} appears; therefore~$(m_{ij})$ is a realizing solution for all~$m$.
\begin{oss}\label{Heck5trunc}
When $q^2\in \mathbb{G}_3$, the root $\alpha_1$ is $q$-Cartan \textit{and} $q$-truncation.
When it is $m$-truncation we get
	\begin{gather*} \big(m_{ij}^{\mathrm{I}}\big)= \begin{bmatrix}
 \frac{2}{3} &-2m\\
 -2m & 1
\end{bmatrix}, \qquad
\big(m_{ij}^{\mathrm{II}}\big)= \begin{bmatrix}
 \frac{5}{3} -4m & 2m-1\\
 2m-1 & 1
\end{bmatrix}, \\
(m_{ij}^{\mathrm{III}})= \begin{bmatrix}
 \frac{2}{3} & 2m-\frac{4}{3}\\
 2m-\frac{4}{3} & \frac{11}{3}-8m
\end{bmatrix},
\end{gather*}
where III: $\lbrace -\alpha_{1}, \alpha_{112}\rbrace$ comes after reflecting around~$\alpha_1$.
The root $\alpha_{112}$ is never $m$-Cartan and it is $m$-truncation iff~$m=\frac{1}{3}$.
 But for this value of $m$, $\alpha_1$ is also $m$-Cartan and thus this is \emph{not} a new solution.
\end{oss}

\begin{oss} The roots can be expressed by \[ \alpha_1 =\epsilon_1, \qquad \alpha_2 = \alpha_\f = \delta_1 - \epsilon_1.\]
\end{oss}

\subsection{Arbitrary rank}\label{subsec_rankNsuper}
We generalize our study to arbitrary rank cases.
In every case we will see under which additional assumptions on $m'$, $m''$ the matrices $(m_{ij})$ in Definition~\ref{mSuperLie} are indeed realizing solutions.

\subsubsection*{$\boldsymbol{A(m,n)}$}
\[
\begin{tikzpicture}
	\draw (-0.6,0)--(1,0);
	\draw (-0.7,0) circle[radius=0.1cm] node[anchor=south]{$ q^2$}
			(1.1,0) circle[radius=0.1cm] node[anchor=south]{$ {q^2}$};
	\draw (0.2,0) node[anchor=south]{$ q^{-2}$};
	\draw (1.2,0)--(1.7,0);
			\draw (2.2,0) node{$\cdots$};
	\draw (2.7,0)--(3.2,0);
	\draw (3.3,0) circle[radius=0.1cm] node[anchor=south]{$ -1$};
	\draw (3.4,0)--(3.9,0);
		\draw (4.4,0) node{$\cdots$};
	\draw (4.9,0)--(5.4,0);
	\draw (5.5,0) circle[radius=0.1cm] node[anchor=south]{$ q^{-2}$};
	\draw (5.6,0)--(7.2,0);
	\draw (6.4,0) node[anchor=south]{$ q^2$};
	\draw (7.3,0) circle[radius=0.1cm] node[anchor=south]{$ q^{-2}$};
\end{tikzpicture}
\]
The simple roots in the standard chamber are \[\alpha_1, \dots, \alpha_\f=\alpha_{m+1}, \dots, \alpha_{m+n+1}\]
with inner product matrix
\[
(\alpha_i, \alpha_j)=\begin{bmatrix}
2 & -1 & & & & \\
-1 & \ddots & \ddots & & & \\
 & \ddots & \ddots & \ddots & & \\
 & & \ddots & 0& \ddots & \\
 & & & \ddots & \ddots & -1 \\
 & & & & -1 &  \hphantom{-} 2
\end{bmatrix}\]
We list all the positive roots. We denote by $\Delta_0$ the set of bosons and by $\Delta_1$ the set of fermions according to the literature~\cite{Kac77}.
\begin{gather*}
\Delta_0 = \left\lbrace \alpha_l+ \dots +\alpha_k, \text{ with } l,k<f \text{ or } l,k > f\right\rbrace, \\
\Delta_1 = \left\lbrace \alpha_l+ \dots +\alpha_k, \text{ with } l\leq f \leq k\right\rbrace.
\end{gather*}

We now apply the lemmas of the previous section to determine possible conditions on~$m'$ and~$m''$ such that the matrix~$(m_{ij})$ defined as in Definition~\ref{mSuperLie} is a realizing solution.
\begin{itemize}\itemsep=0pt
	\item All the bosons are either in $\mathfrak{g'}$ or $\mathfrak{g''}$. By Lemma \ref{onebossector}, we know they are always $m$-Cartan.
	\item All the fermions are isotropic and have $f(\alpha)= \pm 1$. By Lemma \ref{isofermions} we know that if they are not strong orthogonal to any other root they are $m$-truncation.
	\item We now focus on the case of strong orthogonal fermions. Let us consider two fermions:
\begin{gather*}
\gamma_1 = \alpha_{l_1}+ \dots + \alpha_{k_1} \qquad \text{with }  l_1\leq f \leq k_1, \\
\gamma_2 = \alpha_{l_2}+ \dots + \alpha_{k_2} \qquad \text{with }  l_2\leq f \leq k_2.
\end{gather*}	
	They are strong orthogonal if $l_1 \neq l_2$, $k_1 \neq k_2$.
	In this case we have to check that $m_{12}=(\gamma_1, \gamma_2)_\mathfrak{g'}m' + (\gamma_1, \gamma_2)_\mathfrak{g''}m'' + f(\gamma_1)f(\gamma_2) =0$.

	We thus compute the inner products in the two bosonic sides. We assume $l_1 < l_2$ and $k_1 < k_2$, because every other combination works analogously and gives the same result. Without loss of generality we can assume $l_2 = l_1+1$ and $k_2 = k_1+1$ and thus
\begin{gather*}
(\gamma_1, \gamma_2) = (\alpha_{l_1}, \gamma_2) + ( \alpha_{l_1+1}, \gamma_2) + \dots + (\alpha_\f, \gamma_2) + \dots + (\gamma_{k_1}, \gamma_2)\\
\hphantom{(\gamma_1, \gamma_2)}{}
= (\alpha_{l_1}, \alpha_{l_1+1})_\mathfrak{g'}
							 + ( \alpha_{l_1+1}, \alpha_{l_1+1})_\mathfrak{g'} + ( \alpha_{l_1+1}, \alpha_{l_1+2})_\mathfrak{g'}
							 + \cdots \\
\hphantom{(\gamma_1, \gamma_2)=}{}
							 +(\alpha_\f, \alpha_{\f-1})_\mathfrak{g'} + (\alpha_\f, \alpha_{f}) + (\alpha_\f, \alpha_{\f+1})_\mathfrak{g''}
							 +\cdots\\
\hphantom{(\gamma_1, \gamma_2)=}{}
+(\alpha_{k_1}, \alpha_{k_1}-1)_\mathfrak{g''} + (\alpha_{k_1},\alpha_{k_1})_\mathfrak{g''}+ (\alpha_{k_1}, \alpha_{k_1+1})_\mathfrak{g''}	.				
		\end{gather*}				
		The only term that contributes is $(\alpha_\f, \alpha_{\f-1})_\mathfrak{g'} + (\alpha_\f, \alpha_{f}) + (\alpha_\f, \alpha_{\f+1})_\mathfrak{g''}$ since the previous terms sum up to zero in $\mathfrak{g'}$, and the following terms sum up to zero in $\mathfrak{g''}$. Hence we have
		$(\gamma_1, \gamma_2)_{\g'}= (\gamma_1, \gamma_2)_{\g''}= -1.$
		Asking $m_{12}$ to be zero, means to ask \[-1 \cdot m' -1\cdot m'' + 1 = 0 \qquad \Rightarrow \qquad m' + m'' = 1.\]
\end{itemize}
To conclude, the only condition needed for the matrix $(m_{ij})$ to be a realizing solution is $m' + m'' = 1$. This condition matches with the formulation of $A(m,n)$ in terms of Nichols algebra diagram \cite[Table~C, row 2]{Heck06}, where $q'=q$ and $q''= -q^{-1}$. Indeed, if $m' + m'' = 1$ then $q'q''={\rm e}^{{\rm i}\pi m'}{\rm e}^{{\rm i}\pi m''} =-1$.
\begin{oss} We can write the simple roots in the standard chamber using as in \cite{Kac77} the standard basis $\epsilon_1, \dots, \epsilon_{m+1}, \delta_1, \dots, \delta_{n+1}$:
\begin{gather*}
 \{ \alpha_1 = \epsilon_1-\epsilon_2, \,\alpha_2= \epsilon_2-\epsilon_3, \,\dots,\,  \alpha_{m+1}= \epsilon_{m+1}-\delta_1, \\
 \alpha_{m+2}=\delta_1-\delta_2, \,\dots,\,\alpha_{m+n+1}=\delta_n-\delta_{n+1} \},
\end{gather*}
\end{oss}

\subsubsection*{$\boldsymbol{B(m,n)}$}
\begin{gather*}
\begin{tikzpicture}
	\draw (-0.6,0)--(1,0);
	\draw (-0.7,0) circle[radius=0.1cm] node[anchor=south]{$ q^{-4}$}
			(1.1,0) circle[radius=0.1cm] node[anchor=south]{$ q^{-4}$};
	\draw (0.2,0) node[anchor=south]{$ q^{4}$};
	\draw (1.2,0)--(1.7,0);
		\draw (2.2,0) node{$\cdots$};
	\draw (2.7,0)--(3.2,0);
	\draw (3.3,0) circle[radius=0.1cm] node[anchor=south]{$ -1$};
	\draw (3.4,0)--(3.9,0);
		\draw (4.4,0) node{$\cdots$};
	\draw (4.9,0)--(5.4,0);
	\draw (5.5,0) circle[radius=0.1cm] node[anchor=south]{$ q^{4}$};
	\draw (5.6,0)--(7.2,0);
	\draw (6.4,0) node[anchor=south]{$ q^{-4}$};
	\draw (7.3,0) circle[radius=0.1cm] node[anchor=south]{$ q^2$};
\end{tikzpicture}
\end{gather*}
The simple roots in the standard chamber are \[\alpha_1, \dots, \alpha_\f=\alpha_{n}, \dots, \alpha_{m+n}\]
with inner product matrix
\[
(\alpha_i, \alpha_j)=\begin{bmatrix}
4 & -2 & & & & \\
-2 & \ddots & \ddots & & & \\
 & \ddots & \ddots & \ddots & & \\
 & & \ddots & 0& \ddots & \\
 & & & \ddots & \ddots & -2 \\
 & & & & -2 & \hphantom{-} 2
\end{bmatrix}.\]
All the positive roots are
\begin{gather*}
\Delta_0 = \big\lbrace \alpha_l+ \dots +\alpha_k  \text{ with } l,k<\f,   \\
\hphantom{\Delta_0 = \big\lbrace}{}  \alpha_l+ \dots +\alpha_k \ \text{with } l,k > \f, \  k \neq m+n, \\
\hphantom{\Delta_0 = \big\lbrace}{} \alpha_l+ \dots +\alpha_{m+n} \ \text{with } l > \f,\\
\hphantom{\Delta_0 = \big\lbrace}{} \alpha_l+ \dots +2\alpha_{k}+\dots +2\alpha_{m+n} \ \text{with } l < \f,\  k\leq \f,\\
\hphantom{\Delta_0 = \big\lbrace}{}  \alpha_l+ \dots +2\alpha_{k}+\dots +2\alpha_{m+n} \ \text{with } l,k > \f \big\rbrace,\\
\Delta_1 = \big\lbrace \alpha_l+ \dots +\alpha_{m+n} \ \text{ with } l\leq \f, \\
\hphantom{\Delta_1 =\big\lbrace}{}
\alpha_l+ \dots +2\alpha_{k}+\dots +2\alpha_{m+n} \ \text{with } l < \f<k,\\
\hphantom{\Delta_1 =\big\lbrace}{}
  \alpha_l+ \dots +\alpha_{k} \  \text{with } l<\f<k, \  k\neq m+n \big\rbrace.
\end{gather*}

We now apply the lemmas of the previous section to determine possible conditions on~$m'$ and~$m''$ such that the matrix $(m_{ij})$ matrix defined as in Definition~\ref{mSuperLie} is a realizing solution.
\begin{itemize}\itemsep=0pt
	\item All the bosons which are not of the type $\gamma_{lk}:= \alpha_l+ \dots +2\alpha_{k}+\dots +2\alpha_{m+n}$, with $l < \f$, $k\leq \f$, are either in $\mathfrak{g'}$ or $\mathfrak{g''}$. By Lemma~\ref{onebossector}, we know they are always $m$-Cartan.
	\item For $\gamma_{lk}$, we need to explicitly ask condition~(\ref{cond7}).

The inner product is $(\gamma_{lk},\gamma_{lk})_{\g'}= -2$, $(\gamma_{lk},\gamma_{lk})_{\g''}= -4$.
	\begin{itemize}\itemsep=0pt
	\item $\gamma_{lk}$ is $m$-truncation if $2m' +4m'' =3$.
	\item $\gamma_{lk}$ is $m$-Cartan if $m'+m'' = 1$.
	\end{itemize}
	\item All the fermions which are not of the type $\gamma_{l}:= \alpha_l+ \dots +\alpha_{m+n}$, are isotropic and have $f(\alpha)= \pm 1$. By Lemma \ref{isofermions} we then know that if they are not strong orthogonal to any other root they are $m$-truncation.
	\item For $\gamma_{l}$, we need to explicitly ask condition (\ref{cond7}).

The inner product is $(\gamma_{l},\gamma_{l})_{\g'}=0$, $(\gamma_{l},\gamma_{l})_{\g''}= -1$.
	\begin{itemize}\itemsep=0pt
	\item $\gamma_{l}$ is $m$-truncation if $m'' =0$.
	\item $\gamma_{l}$ is $m$-Cartan if $m'+m'' = 1$.
	\end{itemize}
\item We now focus on the case of strong orthogonal fermions. Let us consider the fermions:
\begin{gather*} \big\lbrace \gamma_1 := \alpha_{l_1}+ \dots +\alpha_{m+n},\,
 \gamma_2:=\alpha_{l_2}+ \dots +2\alpha_{k_2}+\dots +2\alpha_{m+n},\\
 \qquad
  \gamma_3:= \alpha_{l_3}+ \dots +\alpha_{k_3} \big\rbrace.			
\end{gather*}
	The fermions $\gamma_1$ and $\gamma_2$ are strong orthogonal iff $l_1\neq l_2$;\\
	The fermions $\gamma_2$ and $\gamma_3$ are strong orthogonal iff $l_2\neq l_3$ or $k_2\neq k_3+1$;\\
	The fermions $\gamma_1$ and $\gamma_3$ are strong orthogonal iff $l_1\neq l_3$;\\
	Two fermions of type $\gamma_2$ are strong orthogonal for different $l_2$ and $k_2$;\\
	Two fermions of type $\gamma_3$ are strong orthogonal for different $l_3$ and $k_3$;
	Asking the condition $m_{ij}=0$ for those cases, we find again the condition $m'+m''=1$.
\end{itemize}		
In conclusion, the only condition needed for the matrix $(m_{ij})$ to be a realizing solution is $m' + m'' = 1$. If this condition is satisfied the bosons with $f(\alpha)=2$ as well as the non isotropic fermions are $m$-Cartan. If moreover $m'=m''=\frac{1}{2}$ then the bosons with $f(\alpha)=2$ are also $m$-truncation.

\begin{oss} We can write the simple roots in the standard chamber using as in~\cite{Kac77} the standard basis $\epsilon_1, \dots, \epsilon_{m}, \delta_1, \dots, \delta_{n}$:
\begin{gather*}
 \{ \alpha_1 = \delta_1-\delta_2,  \alpha_2= \delta_2-\delta_3, \dots,  \alpha_{n}= \delta_n-\epsilon_{1},
  \alpha_{n+1}=\epsilon_1-\epsilon_2, \dots, \alpha_{m+n}=\epsilon_m \}.
\end{gather*}
The bosons with $f(\alpha)=2$ will be of the form $\delta_i+\delta_j$, while the non isotropic fermions will be $\delta_i$.
\end{oss}

\subsubsection*{$\boldsymbol{C(n)}$}
\begin{gather*}
\begin{tikzpicture}
	\draw (-0.6,0)--(1,0);
	\draw (-0.7,0) circle[radius=0.1cm] node[anchor=south]{$ -1$}
			(1.1,0) circle[radius=0.1cm] node[anchor=south]{$ q^2$};
	\draw (0.2,0) node[anchor=south]{$ q^{-2}$};
	\draw (1.2,0)--(1.7,0);
		\draw (2.2,0) node{$\cdots$};
	\draw (2.7,0)--(3.2,0);
	\draw (3.3,0) circle[radius=0.1cm] node[anchor=south]{$ q^2$};
	\draw (3.4,0)--(5,0);
	\draw (5.1,0) circle[radius=0.1cm] node[anchor=south]{$ q^2$};
	\draw (4.2,0) node[anchor=south]{$ q^{-2}$};
	\draw (5.2,0)--(6.8,0);
	\draw (6,0) node[anchor=south]{$ q^{-4}$};
	\draw (6.9,0) circle[radius=0.1cm] node[anchor=south]{$ q^{4}$};
\end{tikzpicture}
\end{gather*}
The simple roots in the standard chamber are
\[\alpha_\f=\alpha_1, \dots,\alpha_{n}\]
with inner product matrix
\[
(\alpha_i, \alpha_j)=\begin{bmatrix}
\hphantom{-}0 & -1 & & & & \\
-1 & \hphantom{-} 2 & \ddots & & & \\
 & \ddots & \ddots & \ddots & & \\
 & & \ddots & \ddots & -1 & \\
 & & & -1 & \hphantom{-} 2 & -2 \\
 & & & & -2 &\hphantom{-} 4
\end{bmatrix}.\]
All the positive roots are
\begin{gather*}
\Delta_0 = \big\lbrace \alpha_l+ \dots +\alpha_k \ \text{with } l \neq 1, \ k \neq n, \\
\hphantom{\Delta_0 = \big\lbrace}{}
 \alpha_l+ \dots +2\alpha_{k}+\dots +2\alpha_{n-1}+\alpha_n \ \text{with } l \neq 1, \ k \neq n, \\
\hphantom{\Delta_0 = \big\lbrace}{}
\alpha_l+ \dots +\alpha_{n} \ \text{with } l \neq 1,\\
\hphantom{\Delta_0 = \big\lbrace}{}
 2\alpha_l+ \dots +2\alpha_{n-1}+\alpha_{n} \ \text{with } l \neq 1 \big\rbrace,\\
\Delta_1 =  \big\lbrace \alpha_1+ \dots +\alpha_{n}, \\
\hphantom{\Delta_1 =  \big\lbrace}{}
 \alpha_1+ \dots +\alpha_{k} \ \text{with } k \neq 1,\\
\hphantom{\Delta_1 =  \big\lbrace}{}
\alpha_1+ \dots +2\alpha_{k}+\dots +2\alpha_{n-1}+\alpha_n \  \text{with } k \neq n \big\rbrace.			
\end{gather*}

We now apply the lemmas of the previous section to determine possible conditions on $m'$ such that the matrix $(m_{ij})$ defined as in Definition~\ref{mSuperLie} is a realizing solution.
\begin{itemize}\itemsep=0pt
	\item Since there is just one bosonic side and there are no bosons with $f(\alpha)>0$ it is obvious that all the bosons are $m$-Cartan.
	\item All the fermions are isotropic, not strong orthogonal to each other, and have $f(\alpha)= \pm 1$. By Lemma~\ref{isofermions} we then know that they are $m$-truncation.
\end{itemize}
To conclude, the matrix $(m_{ij})$ is always a realizing solution.

\begin{oss} We can write the simple roots in the standard chamber using as in \cite{Kac77} the standard basis $\epsilon_1, \delta_ 1, \dots, \delta_{n-1}$:
\[  \lbrace \alpha_1 = \epsilon_1-\delta_1,  \alpha_2= \delta_1-\delta_2, \dots, \alpha_{n-1}=\delta_{n-2}-\delta_{n-1},  \alpha_n= 2\delta_{n-1} \rbrace. \]
\end{oss}

\subsubsection*{$\boldsymbol{D(m,n)}$}
\begin{gather*}
\begin{tikzpicture}
	\draw (-0.6,0)--(1,0);
	\draw (-0.7,0) circle[radius=0.1cm] node[anchor=south]{$ q^{-2}$}
			(1.1,0) circle[radius=0.1cm] node[anchor=south]{$ q^{-2}$};
	\draw (0.2,0) node[anchor=south]{$ q^2$};
	\draw (1.2,0)--(1.7,0);
	\draw (2.2,0) node{$\cdots$};
	\draw (2.7,0)--(3.2,0);
	\draw (3.3,0) circle[radius=0.1cm] node[anchor=south]{$ -1$};
	\draw (3.4,0)--(3.9,0);
	\draw (4.4,0) node{$\cdots$};
	\draw (4.9,0)--(5.4,0);
	\draw (5.5,0) circle[radius=0.1cm] node[anchor=south]{$ q^2$};
	\draw (5.6,0)--(7.2,0);
	\draw (6.4,0) node[anchor=south]{$ q^{-2}$};
	\draw (7.3,0) circle[radius=0.1cm] node[anchor=south]{$ q^2$};
	\draw (7.4,0)--(8.5,1.1);
	\draw (7.4,0)--(8.5,-1.1);
	\draw (8.56,1.17) circle[radius=0.1cm] node[anchor=south]{$ q^2$};
	\draw (8.2,1) node[anchor=east]{$ q^{-2}$};
	\draw (8.56,-1.17) circle[radius=0.1cm] node[anchor=north]{$ q^2$};
	\draw (8.2,-1) node[anchor=east]{$ q^{-2}$};
\end{tikzpicture}
\end{gather*}
The simple roots in the standard chamber are  \[\alpha_1, \dots,\alpha_{n}= \alpha_\f, \dots, \alpha_{n+m}\]
with inner product matrix
\[
(\alpha_i, \alpha_j)=\begin{bmatrix}
2 & -1 & & & & \\
-1 & \ddots & \ddots & & & \\
 & \ddots & 0 & \ddots & & \\
 & & \ddots & \hphantom{-}2& -1 & -1 \\
 & & & -1 & \hphantom{-}2 & \hphantom{-} 0 \\
 & & & -1 & \hphantom{-}0 & \hphantom{-}2
\end{bmatrix}.\]
All the positive roots are
\begin{gather*}
\Delta_0 =  \big\lbrace \alpha_l+ \dots +\alpha_k \ \text{with } l,k<\f,  \\
\hphantom{\Delta_0 =  \big\lbrace}{}  \alpha_l+ \dots +\alpha_k \ \text{with } l,k > \f, \\
\hphantom{\Delta_0 =  \big\lbrace}{}\alpha_l+ \dots +\alpha_{m+n-2}+\alpha_{m+n} \ \text{with } l > \f,\\
\hphantom{\Delta_0 =  \big\lbrace}{}\alpha_l+ \dots +2\alpha_{k}+\dots +2\alpha_{m+n-2} +\alpha_{m+n-1}+\alpha_{m+n} \ \text{with } l < \f, \  k\leq \f,\\
\hphantom{\Delta_0 =  \big\lbrace}{}\alpha_l+ \dots +2\alpha_{k}+\dots +2\alpha_{m+n-2} +\alpha_{m+n-1}+\alpha_{m+n} \ \text{with } l,k > \f, \\
\hphantom{\Delta_0 =  \big\lbrace}{}2\alpha_l+ \dots +2\alpha_{k}+\dots +2\alpha_{m+n-2} +\alpha_{m+n-1}+\alpha_{m+n} \ \text{with } l < \f,\  k\leq \f\big\rbrace,\\
\Delta_1 =  \big\lbrace \alpha_l+ \dots +\alpha_{k} \ \text{with } l\leq \f \leq k,\\
\hphantom{\Delta_1 =  \big\lbrace}{}
\alpha_l+ \dots +\alpha_{n+m-2}+\alpha_{n+m} \ \text{with } l\leq \f, \\
\hphantom{\Delta_1 =  \big\lbrace}{} \alpha_l+ \dots +2\alpha_{k}+\dots +2\alpha_{m+n-2}+\alpha_{n+m-1}+\alpha_{n+m} \ \text{with } l < \f<k\big\rbrace.
\end{gather*}
We now apply the lemmas of the previous section to determine possible conditions on~$m'$ and~$m''$ such that the matrix $(m_{ij})$ defined as in Definition~\ref{mSuperLie} is a realizing solution.
\begin{itemize}\itemsep=0pt
	\item All bosons except the IV or VI type in the list, are either in $\mathfrak{g'}$ or $\mathfrak{g''}$. Then, thanks to Lemma~\ref{onebossector}, we know they are always $m$-Cartan.
	\item The bosons of type IV have inner product $-2$ in $\mathfrak{g'}$ and $-4$ in $\mathfrak{g''}$:
	\begin{itemize}\itemsep=0pt
		\item it is $m$-truncation if $2m' +4m'' =3$,
	\item it is $m$-Cartan if $m'+m'' = 1$.
	\end{itemize}
	 The bosons of type VI have inner product $-4$ in $\mathfrak{g''}$:
	\begin{itemize}\itemsep=0pt
		\item it is $m$-truncation if $4m'' =3$,
	\item it is $m$-Cartan if $m'+m'' = 1$.
	\end{itemize}
	\item All fermions are isotropic and have $f(\alpha)= \pm 1$. By Lemma~\ref{isofermions} we then know that if they are not strong orthogonal to any other root they are $m$-truncation.
	\item There are many possibility for two fermions to be strong orthogonal. Asking the condition $m_{ij} = 0$ for those cases, we find again the condition $m' + m'' = 1$.
\end{itemize}
In conclusion, the only condition needed for the matrix $(m_{ij})$ to be a realizing solution is $m' + m'' = 1$. If this condition is satisfied the bosons with $f(\alpha)=2$ are $m$-Cartan. If moreover $m'=m''=\frac{1}{2}$ then the boson of type IV are also $m$-truncation. Instead if $m'=\frac{1}{4}$, $m''=\frac{3}{4}$ then the boson of type VI are also $m$-truncation.

\begin{oss} As in the previous cases the condition $m' + m'' = 1$ matches with the formulation of $D(m,n)$ in terms of Nichols algebra diagram \cite[Table~C, row~10]{Heck06}, where $q'= q$ and $q'= q^{-1}$.
\end{oss}
\begin{oss} We can write the simple roots in the standard chamber using as in \cite{Kac77} the standard basis $\epsilon_1, \dots, \epsilon_{m}, \delta_1, \dots, \delta_{n}$:
\begin{gather*}
 \{ \alpha_1 = \delta_1-\delta_2, \dots, \alpha_{n}= \delta_n-\epsilon_{1}, \alpha_{n+1}=\epsilon_1-\epsilon_2,  \dots,\\
\qquad  \alpha_{m+n-1}=\epsilon_{m-1}-\epsilon_m,  \alpha_{m+n}=\epsilon_{m-1}+\epsilon_m \}.
\end{gather*}
The bosons of type IV will be of the form $\delta_i+\delta_j$, while the one of type VI will be of the form~$2\delta_i$.
\end{oss}

\subsection{Sporadic cases}
\subsubsection[$G(3)$]{$\boldsymbol{G(3)}$}\label{G3}
\begin{gather*}
\begin{tikzpicture}
	\draw (0,0)--(1.6,0) (1.8,0)--(3.4,0);
	\draw (-0.1,0) circle[radius=0.1cm] node[anchor=south]{$ -1$}
			(1.7,0) circle[radius=0.1cm] node[anchor=south]{$ q^2$}
			(3.5,0) circle[radius=0.1cm] node[anchor=south]{$ q^6$};
	\draw (0.8,0) node[anchor=south]{$ q^{-2}$};
	\draw (2.6,0) node[anchor=south]{$ q^{-6}$};
	\end{tikzpicture}
\end{gather*}
The simple roots in the standard chamber are $\lbrace \alpha_\f =\alpha_1, \alpha_2, \alpha_3\rbrace$ with inner product
\[
(\alpha_i, \alpha_j)= \begin{bmatrix}
 \hphantom{-}0 & -1 & \hphantom{-}0\\
 -1 & \hphantom{-}2 & -3\\
 \hphantom{-}0 & -3 & \hphantom{-}6
\end{bmatrix}.
\]
There is only one bosonic part $\mathfrak{g'}$ and the positive roots are
\begin{gather*}
\lbrace  \alpha_1,  \alpha_2,  \alpha_3,  \alpha_{12},  \alpha_{23},  \alpha_{223},  \alpha_{123},  \alpha_{1223},
 \alpha_{12223},  \alpha_{2223},  \alpha_{22233},  \alpha_{1222233},  \alpha_{122233} \rbrace.
\end{gather*}

The matrix $(m_{ij})$ is given by
\[
\big(m_{ij}^\mathrm{I}\big)= \begin{bmatrix}
1 & -m & 0\\
 -m & 2m & -3m\\
0 & -3m & 6m
\end{bmatrix}.
\]
\begin{itemize}\itemsep=0pt
	\item Since there is just one bosonic side and there are no bosons with $f(\alpha)>0$, it is obvious that all the bosons satisfy are $m$-Cartan.
	\item All the fermions, except for $\alpha_{1223}$, are isotropic and have $f(\alpha)= \pm 1$. By Lemma~\ref{isofermions} we then know that they are $m$-truncation.
	\item The fermion $\alpha_{1223}$ is $m$-Cartan without further assumptions on $m$ by Corollary~\ref{Mcondotherchamb} and explicit computation
	(hence this is a rare example that a fermionic root in a Lie superalgebra can be $m$-Cartan and not $m$-truncation).
	\item There are no pairs of strong orthogonal fermions.
\end{itemize}
To conclude, the matrix $(m_{ij})$ is a realizing solution for all $m$.
This construction realize the Nichols algebra $\mathcal{B}(q_{ij})$ described row 7 of Table~2 in~\cite{Hecklist} when $q\neq \pm 1$, $q\not\in\mathbb{G}_3$.

For convenience we show explicitly all the reflections of the matrix~$(m_{ij})$:
Reflecting $\big(m_{ij}^\mathrm{I}\big)$ with $\r_1$ around $\alpha_1$ we find the following
\[
\big(m_{ij}^\mathrm{II}\big)= \begin{bmatrix}
 1 & -1+m & 0\\
 -1+m &  1 & -3m \\
 0 & -3m & 6m
\end{bmatrix}
\qquad \text{in basis } \{-\alpha_1,\alpha_{12},\alpha_3\}.
\]
Reflecting it with $\r_2$ around $\alpha_{12}$ we find the following
\[
\big(m_{ij}^\mathrm{III}\big)= \begin{bmatrix}
 2m & -m & -2m\\
 -m &  1 & -1+3m \\
 -2m & -1+3m & 1
\end{bmatrix}\qquad
\text{in basis } \{\alpha_2,-\alpha_{12},\alpha_{123}\}.
\]
Reflecting it around with $\r_3$ around $\alpha_{123}$ and permuting the indices we find
\[
\big(m_{ij}^\mathrm{IV}\big)= \begin{bmatrix}
6m & -3m & 0\\
-3m &  1 & -1+2m \\
0 & -1+2m & 1-2m.
\end{bmatrix}\qquad
\text{in basis } \{\alpha_{3},-\alpha_{123},\alpha_{1223}\}.
\]

\begin{oss}
If $q^2\in\mathbb{G}_6$, $\alpha_3$ is both $q$-Cartan and $q$-truncation. When it is $m$-truncation we find
\begin{gather*}
\begin{tikzpicture}
	\draw (0,0)--(1.4,0) (1.6,0)--(3,0);
	\draw (-0.1,0) circle[radius=0.1cm] node[anchor=south]{$ -1$} node[anchor=north]{$ 1$}
			(1.5,0) circle[radius=0.1cm] node[anchor=south]{$ \zeta$}node[anchor=north]{$ 2m$}
			(3.1,0) circle[radius=0.1cm] node[anchor=south]{$ -1$}node[anchor=north]{$ 1$};
	\draw (0.7,0) node[anchor=south]{$ \zeta^{-2}$}node[anchor=north]{$ -2m$};
	\draw (2.3,0) node[anchor=south]{$ -1$} node[anchor=north]{$ -6m$};
	\end{tikzpicture}
	\end{gather*}
	with $\zeta\in \mathbb{G}_6$.
This is a solution iff $m=\frac{1}{6}$. But for this value of $m$, $\alpha_3$ is also $m$-Cartan and thus this is not a new solution.
\end{oss}

\begin{oss}The roots can be expressed by
\[ \alpha_1 =\alpha_\f = \delta+ \epsilon_1, \qquad \alpha_2 = \epsilon_2, \qquad \alpha_3 = \epsilon_3-\epsilon_2.\]
\end{oss}

\subsubsection*{$\boldsymbol{F(4)}$}
\begin{gather*}
\begin{tikzpicture}
	\draw (0,0)--(1.6,0) (1.8,0)--(3.4,0) (3.6,0)--(5.2,0);
	\draw (-0.1,0) circle[radius=0.1cm] node[anchor=south]{$ q^{4}$}
			(1.7,0) circle[radius=0.1cm] node[anchor=south]{$ q^{4}$}
			(3.5,0) circle[radius=0.1cm] node[anchor=south]{$ q^2$}
			(5.3,0) circle[radius=0.1cm] node[anchor=south]{$ -1$};
	\draw (0.8,0) node[anchor=south]{$ q^{-4}$};
	\draw (2.6,0) node[anchor=south]{$ q^{-4}$};
	\draw (4.4,0) node[anchor=south]{$ q^{-2}$};
	\end{tikzpicture}
\end{gather*}
The simple roots in the standard chamber are $\lbrace \alpha_1, \alpha_2, \alpha_3, \alpha_4 =\alpha_\f \rbrace$ with inner product
\[
(\alpha_i, \alpha_j)= \begin{bmatrix}
 \hphantom{-}4 & -2 & & \\
 -2 & \hphantom{-}4 & -2 &\\
 & -2 & \hphantom{-}2 & -1\\
 & & -1 & \hphantom{-}0
\end{bmatrix}.
\]
There are $18$ roots: $9$ in the one bosonic part $\mathfrak{g'}$ of type $C_3$, furthermore $8$ fermionic roots
\begin{align*}
&\{ \alpha_{4},\; \alpha_{34},\;\alpha_{234},\;\alpha_{1234},\;
\alpha_{2334},\;\alpha_{12334},\;\alpha_{122334},\;\alpha_{1223334}\}
\end{align*}
and one bosonic root $\alpha_{12233344}$ with $f(\alpha)=2$.
\begin{itemize}\itemsep=0pt
	\item All bosons in $\mathfrak{g'}$ are automatically $m$-Cartan.
	 \item The boson $\alpha_{12233344}$ is $m$-Cartan without further assumptions by Corollary~\ref{Mcondotherchamb} and explicit computation.
	\item All fermions are isotropic and have $f(\alpha)= \pm 1$. By Lemma~\ref{isofermions} we then know they are $m$-truncation.
	\item We have two couples of strong orthogonal fermions:
	\[\lbrace \alpha_{34}, \alpha_{122334} \rbrace, \qquad \lbrace \alpha_{234}, \alpha_{12334} \rbrace\]
	which give the condition $m= \frac{1}{3}$.
\end{itemize}
To conclude, the condition for the matrix $(m_{ij})$ to be a realizing solution is $m= \frac{1}{3}$.

\subsubsection*{$\boldsymbol{D(2,1;\alpha}$)}
\begin{gather*}
\begin{tikzpicture}
\draw (5.5,0)--(4,-2.5)--(7,-2.5)--(5.5,0);
	\draw (5.5,0.1) circle[radius=0.1cm]
			(3.9,-2.5) circle[radius=0.1cm]
			(7.1,-2.5) circle[radius=0.1cm];
	\draw (5.5,-0.2) node[anchor=north]{$ -1$};
	\draw (4.3,-2.1) node[anchor=north]{$ -1 $};
	\draw (6.6,-2.1) node[anchor=north]{$ -1$};
	\draw (4.6,-1.5) node[anchor=west]{$ q'$};
	\draw (6.3,-1.5) node[anchor=east]{$ q''$};
	\draw (5.6,-2.5) node[anchor=south]{$ q'''$};
\draw (5.5,0.1) node[anchor=south]{$ 1$}
			(3.9,-2.5) node[anchor=east]{$ 1$}
			(7.1,-2.5) node[anchor=west]{$ 1$};
	\draw (4.6,-1.3) node[anchor=east]{$ m'-2$};
	\draw (6.3,-1.3) node[anchor=west]{$ m''-2$};
	\draw (5.5,-2.7) node[anchor=north]{$ m'''-2$};	
	\end{tikzpicture}
\end{gather*}

The simple roots in the standard chamber are $\lbrace \alpha_1, \alpha_2 = \alpha_\f, \alpha_3 \rbrace$ with inner product:
\[
(\alpha_i, \alpha_j)= \begin{bmatrix}
 \hphantom{-}2 & -2 & \hphantom{-}0\\
 -2 & \hphantom{-}0 & -2\\
 \hphantom{-}0 & -2 & \hphantom{-}2
\end{bmatrix}.
\]
The positive roots are
\[\lbrace \alpha_1,   \alpha_2 ,  \alpha_3,  \alpha_{12},  \alpha_{23}, \alpha_{123},  \alpha_{1223}\rbrace.\]
Reflecting the diagram around, say, $\alpha_2$ (the system is completely symmetric in the three roots), we obtain
\begin{gather*}
\begin{tikzpicture}
	\draw (0,0)--(1.4,0) (1.6,0)--(3,0);
	\draw (-0.1,0) circle[radius=0.1cm] node[anchor=south]{$ q'$} node[anchor=north]{$ m'$}
			(1.5,0) circle[radius=0.1cm] node[anchor=south]{$ -1$}node[anchor=north]{$ 1$}
			(3.1,0) circle[radius=0.1cm] node[anchor=south]{$ q'''$}node[anchor=north]{$ m'''$};
	\draw (0.7,0) node[anchor=south]{$ (q')^{-1}$}node[anchor=north]{$ -m'$};
	\draw (2.3,0) node[anchor=south]{$ (q''')^{-1}$} node[anchor=north]{$ -m'''$};
	\end{tikzpicture}
	\end{gather*}
	
Exception (4) of Lemma~\ref{3 conditions II} appears. Imposing that the first and the third roots are not connected we find the condition $m'+m''+m''' =2$.
This corresponds to the condition \mbox{$q'\cdot q''\cdot q'''=1$} of rows~9,~10,~11) in Table~2 of~\cite{Hecklist}. Hence these matrices $(m_{ij})$ are realizing solution.

\section{Other cases in rank 2}\label{section8}
\subsection[Construction of $(m_{ij})$]{Construction of $\boldsymbol{(m_{ij})}$}\label{Strange}

In this section we are going to present the examples of rank $2$ Nichols algebras that do not belong to the Cartan and super Lie study of the previous two sections. We use abbreviations such as $q_{112,112}:=q(\alpha_{112},\alpha_{112})$ and similarly $m_{112,112}:=(\alpha_{112},\alpha_{112})$.

\subsubsection*{Heckenberger row 6}
This corresponds to a $\Z_3$-(color-)Lie algebra \cite{AAB14, Yam07}. In Table~1 in~\cite{Hecklist} it is described by two diagrams:
\begin{gather*}
\begin{tikzpicture}
	\draw (0,0)--(1.6,0);
	\draw (-0.1,0) circle[radius=0.1cm] node[anchor=south]{$ \zeta$}
			(1.7,0) circle[radius=0.1cm] node[anchor=south]{$ q^2$};
	\draw (0.8,0) node[anchor=south]{$ q^{-2}$};
	\draw (3.6,0)--(5.2,0);
	\draw (3.5,0) circle[radius=0.1cm] node[anchor=south]{$ \zeta$}
			(5.3,0) circle[radius=0.1cm];
	\draw (5.5,0) node[anchor=south]{$ \zeta q^{-2}$};
	\draw (4.4,0) node[anchor=south]{$ \zeta^{-1}q^2$};
\node at (0.8,-0.2) {$\mathrm{I}$};
\node at (4.4,-0.2) {$\mathrm{II}$};
\end{tikzpicture}
\end{gather*}
where $\zeta \in \mathbb{G}_{3}$ and $q^2 \neq 1$, $\zeta$, $\zeta^2$ and with respectively simple roots: \[ \mathrm{I}\colon \ \lbrace\alpha_1, \alpha_2\rbrace, \qquad \mathrm{II}\colon \ \lbrace-\alpha_1, \alpha_{112}\rbrace. \]
There is just one associate Cartan matrix:
\[(c_{ij})= \begin{bmatrix}
\hphantom{-}2 & -2\\
 -1 & \hphantom{-}2
\end{bmatrix}.
\]
The set of positive roots is $\lbrace \alpha_1, \alpha_2, \alpha_{12}, \alpha_{112}\rbrace$ where $\alpha_2$ and $\alpha_{112}$ are only $q$-Cartan while the others are only $q$-truncation.
\begin{prop}
The following matrices $(m_{ij})$ are realizing solutions of the given braiding and its reflections:
\[\big(m_{ij}^{\mathrm{I}}\big)= \begin{bmatrix}
 \frac{2}{3} & -m\\
 -m & 2m
\end{bmatrix}, \qquad
\big(m_{ij}^{\mathrm{II}}\big)= \begin{bmatrix}
 \frac{2}{3} & -\frac{4}{3} +m\\
 -\frac{4}{3} +m & \frac{8}{3} -2m
\end{bmatrix}.\]
\end{prop}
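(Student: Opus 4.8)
The plan is to verify, for each of the two diagrams, the three requirements of Definition~\ref{Mm}, and then to check that the reflections $\mathcal R^1,\mathcal R^2$ merely permute the two proposed matrices, so that condition~(\ref{cond7}) automatically holds in every chamber of the reflection orbit.

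First I would normalise the data: take $\zeta=e^{2\pi i/3}$ and let $r$ be the rational parameter with $e^{2\pi i r}=q^2$, so that the hypothesis $q^2\ne 1,\zeta,\zeta^2$ reads $3r\notin 2\mathbb Z$. Then $e^{i\pi m_{ij}}=q_{ij}$ is an entrywise check: for diagram~I one uses $e^{i\pi\cdot2/3}=\zeta$, $e^{i\pi\cdot2r}=q^2$, and (on the edge, where $m_{12}=m_{21}=-r$) $e^{2\pi i(-r)}=q^{-2}$; for diagram~II one uses in addition $e^{i\pi\cdot8/3}=\zeta$ and $e^{-i\pi\cdot8/3}=\zeta^{-1}$ to get $e^{i\pi(8/3-2r)}=\zeta q^{-2}$ and $e^{2\pi i(-4/3+r)}=\zeta^{-1}q^2$. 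For condition~(\ref{cond7}) I use the Cartan matrix $a_{ij}=\bigl(\begin{smallmatrix}2&-2\\-1&2\end{smallmatrix}\bigr)$ of this row, with node~$1$ the node of self-braiding $\zeta$ (only $q$-truncation) and node~$2$ the other node (only $q$-Cartan). For $m^{\text{I}}$: $(1-a_{12})m^{\text{I}}_{11}=3\cdot\tfrac23=2$ gives~B at $\alpha_1$, and $2m^{\text{I}}_{21}=-2r=a_{21}m^{\text{I}}_{22}$ gives~A at $\alpha_2$. For $m^{\text{II}}$, with node~$1=-\alpha_1$ and node~$2=\alpha_{112}$: $(1-a_{12})m^{\text{II}}_{11}=3\cdot\tfrac23=2$ gives~B at $-\alpha_1$, and $2m^{\text{II}}_{21}=2(-\tfrac43+r)=-\tfrac83+2r=a_{21}m^{\text{II}}_{22}$ gives~A at $\alpha_{112}$.

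Next the reflections. In chamber~I the node $\alpha_2$ is $m$-Cartan, so by the proposition that an $m$-Cartan reflection fixes the matrix, $\mathcal R^{2}$ leaves $m^{\text{I}}$ and its Cartan data unchanged; likewise $\alpha_{112}$ is $m$-Cartan in chamber~II and its reflection fixes $m^{\text{II}}$. In both cases the new simple system has the same shape (the node of self-braiding $\zeta$ is again $m$-truncation), so~(\ref{cond7}) is the check already done. The remaining generator $\mathcal R^{1}$ reflects at the $m$-truncation node: from chamber~I it sends $(\alpha_1,\alpha_2)$ to $(-\alpha_1,\ \alpha_2-a_{12}\alpha_1)=(-\alpha_1,\alpha_{112})$, the simple system of chamber~II, and carries diagram~I to diagram~II. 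Evaluating the bilinear form $m^{\text{I}}$ in this new basis, i.e. computing $(-\alpha_1,-\alpha_1)$, $(-\alpha_1,\alpha_{112})$ and $(\alpha_{112},\alpha_{112})$, gives
\[
\begin{pmatrix} m^{\text{I}}_{11} & -(2m^{\text{I}}_{11}+m^{\text{I}}_{12})\\ -(2m^{\text{I}}_{11}+m^{\text{I}}_{12}) & 4m^{\text{I}}_{11}+4m^{\text{I}}_{12}+m^{\text{I}}_{22}\end{pmatrix}
=\begin{pmatrix} \tfrac23 & -\tfrac43+r\\ -\tfrac43+r & \tfrac83-2r\end{pmatrix}=m^{\text{II}},
\]
so $\mathcal R^{1}(m^{\text{I}})=m^{\text{II}}$, which satisfies~(\ref{cond7}) by the previous step, and $\mathcal R^{1}(m^{\text{II}})=m^{\text{I}}$ by involutivity. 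Therefore every chamber reachable by reflections carries $m^{\text{I}}$ or $m^{\text{II}}$ (up to renaming or negating simple roots, which changes neither the Gram matrix nor the Cartan matrix), condition~(\ref{cond7}) holds throughout, and the rescaled lattice $\Lambda$ together with $m^{\text{I}},m^{\text{II}}$ realises the braiding.

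The one step I expect to be delicate is the bookkeeping for the $m$-truncation reflection: one must be sure that $\mathcal R^{1}$ carries diagram~I precisely to diagram~II (rather than to some third diagram) and that the transformed form is exactly the $m^{\text{II}}$ of the statement. The displayed $2\times2$ identity disposes of the second point, and the first is a finite verification against the Weyl-groupoid reflection data for Heckenberger row~6; the rest is routine substitution.
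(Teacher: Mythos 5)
Your proposal is correct and follows essentially the same route as the paper: verify condition (\ref{cond7})B at the $\zeta$-node and (\ref{cond7})A at the other node in each chamber, then check that the truncation reflection $\mathcal R^1$ carries $m^{\text{I}}$ to $m^{\text{II}}$ (your explicit bilinear-form computation is exactly the paper's conjugation by $\bigl(\begin{smallmatrix}-1&2\\0&1\end{smallmatrix}\bigr)$), while the Cartan reflections fix the matrices. The only blemish is the side remark that $q^2\ne 1,\zeta,\zeta^2$ reads $3r\notin 2\mathbb Z$ — it should be $3r\notin\mathbb Z$ — but this plays no role in the argument.
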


\begin{proof}First we check that condition (\ref{cond7})B is satisfied for~$\alpha_1$:
\[m_{11} = \frac{2}{1-c_{12}} = \frac{2}{3}\]
and condition (\ref{cond7}A) is satisfied for $\alpha_{22}$ and $\alpha_{112}$:
\begin{gather*}
m_{22,22} = \frac{2m_{12}}{c_{21}} = 2m,\\
m_{112,112} = \frac{2m_{112,-1}}{c_{112,1}} = \frac{8}{3} -2m.
\end{gather*}
We then check that the reflection around $\alpha_1$ sends one matrix $(m_{ij})$ to the other as follows
\begin{gather*}
\big(m_{ij}^{\mathrm{II}}\big) = \begin{bmatrix}
 \frac{2}{3} & -\frac{4}{3} +m\\
 -\frac{4}{3} +m & \frac{8}{3} -2m
\end{bmatrix} =
\begin{bmatrix}
 -1 & 2\\
 \hphantom{-}0 & 1
\end{bmatrix}^{\rm T}
\begin{bmatrix}
 \frac{2}{3} & -m\\
 -m & 2m
\end{bmatrix}
\begin{bmatrix}
 -1 & 2\\
 0 & 1
\end{bmatrix} = \r_1 \big(m_{ij}^{\mathrm{I}}\big).\tag*{\qed}
\end{gather*}\renewcommand{\qed}{}
\end{proof}

\begin{oss}\label{Heck6trunc}
When $q^2\in \mathbb{G}_2$, the root $\alpha_2$ is $q$-Cartan \textit{and} $q$-truncation.
When we assume it is $m$-truncation, we get the additional solution:
\begin{gather*}
\big(m_{ij}^{\mathrm{I}}\big)= \begin{bmatrix}
 \frac{2}{3} & -m\\
 -m & 1
\end{bmatrix}, \qquad\!
\big(m_{ij}^{\mathrm{II}}\big)= \begin{bmatrix}
 \frac{2}{3} & -\frac{4}{3} +m\\
 -\frac{4}{3} +m & \frac{11}{3} -4m
\end{bmatrix}, \qquad\!
\big(m_{ij}^{\mathrm{III}}\big)= \begin{bmatrix}
 \frac{5}{3}-2m & m-1\\
 m-1 & 1
\end{bmatrix}.
\end{gather*}
with III: $\lbrace\alpha_{12}, -\alpha_2\rbrace$.
The root $\alpha_{112}$ is never $m$-truncation and it is $m$-Cartan iff $m=\frac{1}{2}$. But for this value of $m$, $\alpha_2$ is also $m$-Cartan and thus this is \emph{not} a new solution.
\end{oss}

Since this is the only other series in rank $2$ and the only other example with nontrivial behaviour of the Nichols algebra relations, we briefly discuss them: The relations are given in~\cite{Helb10} and \cite[Section~7.2]{AA17}. For $q\neq -1$ the truncation and Serre relations are the only defining relations for this Nichols algebra, for $q=-1$ there is an additional relation $[x_{112},x_{12}]_q=0$
\begin{prop}\label{prop_colorLieNA}\quad
		\begin{enumerate}\itemsep=0pt
	\item[$1.$] For $0<m<\frac{1}{2}$ the condition $|m_{ii}|<1$ holds and $(m_{ij})$ is positive definite, so in this we are in the subpolar region and all Nichols algebra relations hold.
	\item[$2.$] The truncation relation for the root $\alpha_1$ always holds. The truncation relation for $\alpha_2$ holds iff $m>0$. In chamber II the truncation relation for $\alpha_{112}$ holds if $m<\frac{4}{3}$. We make no assertion about the truncation relation for the non-simple root vectors.
	\item[$3.$] The quantum Serre relation always holds.
	\item[$4.$] We make no assertion about the additional relation.
\end{enumerate}
\end{prop}

Note that the classification Lemma \ref{superlieclassification} from Lie superalgebras extends to this case mutatis mutandis.

\subsubsection*{Heckenberger row 9}
This case of Table~1 in \cite{Hecklist} is described by three diagrams:
\begin{gather*}
\begin{tikzpicture}
	\draw (0,0)--(1.6,0);
	\draw (-0.1,0) circle[radius=0.1cm] node[anchor=south]{$ -\zeta^2$}
			(1.7,0) circle[radius=0.1cm] node[anchor=south]{$ -\zeta^{2}$};
	\draw (0.8,0) node[anchor=south]{$ \zeta$};
	\draw (3.6,0)--(5.2,0);
	\draw (3.5,0) circle[radius=0.1cm] node[anchor=south]{$ -\zeta^2$}
			(5.3,0) circle[radius=0.1cm] node[anchor=south]{$ -1$};
	\draw (4.4,0) node[anchor=south]{$ \zeta^{3}$};
	\draw (7.2,0)--(8.8,0);
	\draw (7.1,0) circle[radius=0.1cm] node[anchor=south]{$ -\zeta^{-1}$}
			(8.9,0) circle[radius=0.1cm] node[anchor=south]{$ -1$};
	\draw (8,0) node[anchor=south]{$ -\zeta^3$};
\node at (0.8,-0.2) {$\mathrm{I}$};
\node at (4.4,-0.2) {$\mathrm{II}$};
\node at (8,-0.2) {$\mathrm{III}$};
\end{tikzpicture}
\end{gather*}
where $\zeta \in \mathbb{G}_{12}$ and with respectively simple roots:
\[ \mathrm{I}\colon \ \lbrace\alpha_1, \alpha_2\rbrace, \qquad \mathrm{II}\colon \ \lbrace-\alpha_1, \alpha_{112}\rbrace, \qquad \mathrm{III}\colon \ \lbrace\alpha_{12}, -\alpha_{122}\rbrace. \]
The associate Cartan matrices are
\[\big(c_{ij}^{\mathrm{I}}\big)= \begin{bmatrix}
 \hphantom{-}2 & -2\\
 -2 & \hphantom{-}2
\end{bmatrix}, \qquad
\big(c_{ij}^{\mathrm{II}}\big)= \begin{bmatrix}
 \hphantom{-}2 & -2\\
 -1 & \hphantom{-}2
\end{bmatrix}, \qquad
\big(c_{ij}^{\mathrm{III}}\big)= \begin{bmatrix}
 \hphantom{-}2 & -3\\
 -1 & \hphantom{-}2
\end{bmatrix}.
\]
The set of positive roots is $\lbrace \alpha_1, \alpha_2, \alpha_{12}, \alpha_{112}, \alpha_{122}\rbrace$ where $\alpha_{12}$ is only $q$-Cartan while the others are only $q$-truncation.
\begin{prop}
The following $m_{ij}$ matrices are realizing solutions of the given braiding and its reflections:
\[\big(m_{ij}^{\mathrm{I}}\big)= \begin{bmatrix}
 \frac{2}{3} & -\frac{7}{12}\vspace{1mm}\\
 -\frac{7}{12} & \frac{2}{3}
\end{bmatrix}, \qquad
\big(m_{ij}^{\mathrm{II}}\big)= \begin{bmatrix}
 \frac{2}{3} & -\frac{3}{4}\vspace{1mm}\\
 -\frac{3}{4} & 1
\end{bmatrix}, \qquad
\big(m_{ij}^{\mathrm{III}}\big)= \begin{bmatrix}
 \frac{1}{6} & -\frac{1}{4}\vspace{1mm}\\
 -\frac{1}{4} & 1
\end{bmatrix}. \]
\end{prop}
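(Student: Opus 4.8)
The plan is to verify, for each of the three explicit matrices $m_{ij}^{\text{I}},m_{ij}^{\text{II}},m_{ij}^{\text{III}}$, the three defining properties of a realising solution in Definition~\ref{Mm}, exactly as in Proposition~\ref{sl21mij} and in the treatment of Heckenberger row~6 above. There are no free parameters here, so the whole argument is a finite bookkeeping exercise.

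\textbf{Step 1 (exponentiation).} I would first fix a suitable primitive $12$th root of unity, namely $\zeta=e^{5\pi i/6}$; the remaining choices of $\zeta\in\mathcal R_{12}$ give the complex-conjugate braidings, realised by $-m_{ij}$. Then one checks entry by entry that $e^{i\pi m_{ij}^X}=q_{ij}^X$ for $X=\text{I},\text{II},\text{III}$, using $e^{i\pi\cdot 2/3}=e^{2\pi i/3}=-\zeta^{2}$, $e^{i\pi/6}=-\zeta^{-1}$, $e^{2\pi i(-7/12)}=e^{5\pi i/6}=\zeta$, $e^{2\pi i(-3/4)}=i=\zeta^{3}$, $e^{2\pi i(-1/4)}=-i=-\zeta^{3}$ and $e^{i\pi}=-1$. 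This is purely a computation modulo $2\mathbb Z$.

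\textbf{Step 2 (condition (\ref{cond7}) in each chamber).} Using the three listed Cartan matrices I would check (\ref{cond7}) at every simple node. In chamber~I, $1-a_{12}=1-a_{21}=3$ and $(1-a_{12})m_{11}=(1-a_{21})m_{22}=3\cdot\frac23=2$, so both simple roots are $m$-truncation (they fail (\ref{cond7}A), since $2m_{12}=-\frac76\neq-\frac43=a_{12}m_{11}$). In chamber~II, $1-a_{12}=3$ and $1-a_{21}=2$ give $3\cdot\frac23=2$ and $2\cdot1=2$, so again both simple roots are $m$-truncation. In chamber~III, $2m_{12}=-\frac12=(-3)\cdot\frac16=a_{12}m_{11}$ shows $\alpha_{12}$ is $m$-Cartan, while $(1-a_{21})m_{22}=2\cdot1=2$ shows $-\alpha_{122}$ is $m$-truncation. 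This matches the stated $q$-types ($\alpha_{12}$ only $q$-Cartan, the others only $q$-truncation) and gives the first two bullets of Definition~\ref{Mm}.

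\textbf{Step 3 (compatibility with reflections).} The $m$-Cartan simple root $\alpha_{12}$ of chamber~III is harmless: by the Proposition following Definition~\ref{Mm}, $\mathcal{R}^1(m^{\text{III}})=m^{\text{III}}$, only the basis changing. For the other reflections I would run through the finite Weyl groupoid orbit of the three chambers, computing each reflected Gram matrix as $P^{T}mP$, where $P$ is the integer change-of-basis matrix read off from $\mathcal{R}^k(\gamma_i)=\gamma_i-a_{ki}\gamma_k$. For example, reflecting chamber~I at $\alpha_1$ replaces $\{\alpha_1,\alpha_2\}$ by $\{-\alpha_1,\alpha_{112}\}$ with $P=\left(\begin{smallmatrix}-1&2\\0&1\end{smallmatrix}\right)$, and a direct computation gives $P^{T}m^{\text{I}}P=m^{\text{II}}$; reflecting at $\alpha_2$ yields $m^{\text{II}}$ with its two nodes swapped, and the remaining reflections carry $m^{\text{II}},m^{\text{III}}$ back into this set, always up to a swap of the two nodes or an overall sign on a basis vector. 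Neither operation affects condition (\ref{cond7}), which is checked node by node, so by Step~2 every reflected matrix satisfies (\ref{cond7}) and the third bullet of Definition~\ref{Mm} holds. As in Remark~\ref{quasiSuper}, the rigidity of these values (here $m_{11}^{\text{I}}=\frac23$ is forced by (\ref{cond7}B) at $\alpha_1$, and then $m_{12}^{\text{I}}=-\frac{7}{12}$ by the $m$-Cartan condition at $\alpha_{12}$ in chamber~III, i.e.\ $m_{11}^{\text{I}}+2m_{12}^{\text{I}}+m_{22}^{\text{I}}=m^{\text{III}}_{11}=\frac16$) shows in addition that these are the only realising solutions, via the analogue of Lemma~\ref{superlieclassification}.

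\textbf{Main obstacle.} Everything is elementary; the only delicate points are bookkeeping. First, one must choose $\zeta$ so that all three full matrices — not merely their diagonal entries — exponentiate correctly. Second, since the root system has five positive roots, the Weyl groupoid has more than three chambers, so one must write the non-simple roots $\alpha_{12},\alpha_{112},\alpha_{122}$ in coordinates in each chamber and check that the orbit of $\{m^{\text{I}},m^{\text{II}},m^{\text{III}}\}$ really closes up under all the reflections (up to node-swaps and sign changes), so that \emph{no} reflected matrix escapes condition (\ref{cond7}). Once this is set up, the verification is a short finite list of $2\times2$ congruences and integer conjugations.
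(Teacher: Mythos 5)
Your proposal is correct and follows essentially the same route as the paper's proof: you verify condition (\ref{cond7}) at each node (the paper does this once for all five positive roots, you do it chamber by chamber, which is equivalent) and then check that the reflections act by congruence $P^{T}mP$ with exactly the change-of-basis matrices the paper uses, e.g.\ $P=\left(\begin{smallmatrix}-1&2\\0&1\end{smallmatrix}\right)$ sending $m^{\text{I}}$ to $m^{\text{II}}$; your Step 1 (explicit exponentiation for $\zeta=e^{5\pi i/6}$) is a useful addition the paper leaves implicit. One parenthetical aside is wrong, though it does not affect the proposition: the complex-conjugate braiding is \emph{not} realised by $-m_{ij}$, since negating $m$ preserves (\ref{cond7})A but turns $(1-a_{ij})m_{ii}=2$ into $(1-a_{ij})m_{ii}=-2$, so the truncation condition fails.
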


\begin{proof}
First we check that condition (\ref{cond7}B) is satisfied for all the roots:
\begin{gather*}
m_{11} = \frac{2}{1-c_{12}} = \frac{2}{3}, \qquad
m_{22} = \frac{2}{1-c_{21}} = \frac{2}{3}, \\
m_{112,112} = \frac{2}{1-c_{112,1}} = 1, \qquad
m_{122,122} = \frac{2}{1-c_{122,12}} = 1
\end{gather*}
and condition (\ref{cond7}A) is satisfied for the root $\alpha_{12}$:
\[m_{12,12} = \frac{2m_{-122,12}}{c_{12,112}} = \frac{1}{6}.\]
We then check that the reflections send the matrices $(m_{ij})$ above to each other:
\begin{gather*}
\big(m_{ij}^{\mathrm{II}}\big) =\begin{bmatrix}
 \hphantom{-}\frac{2}{3} & -\frac{3}{4}\vspace{1mm}\\
 -\frac{3}{4} & \hphantom{-}1
\end{bmatrix} =
\begin{bmatrix}
 -1 & 2\\
\hphantom{-}0 & 1
\end{bmatrix}^{\rm T}
\begin{bmatrix}
 \hphantom{-}\frac{2}{3} & -\frac{7}{12}\vspace{1mm}\\
 -\frac{7}{12} & \hphantom{-}\frac{2}{3}
\end{bmatrix}
\begin{bmatrix}
 -1 & 2\\
 \hphantom{-}0 & 1
\end{bmatrix} = \r_1 \big(m_{ij}^{\mathrm{I}}\big),\\
\big(m_{ij}^{\mathrm{III}}\big)=\begin{bmatrix}
 \hphantom{-}\frac{1}{6} & -\frac{1}{4}\vspace{1mm}\\
 -\frac{1}{4} & \hphantom{-}1
\end{bmatrix} =
\begin{bmatrix}
 1 & \hphantom{-}0\\
 1 & -1
\end{bmatrix}^{\rm T}
\begin{bmatrix}
\hphantom{-}\frac{2}{3} & -\frac{3}{4}\vspace{1mm}\\
 -\frac{3}{4} & \hphantom{-}1
\end{bmatrix}
\begin{bmatrix}
 1 & \hphantom{-}0\\
 1 & -1
\end{bmatrix} = \r_{122} \r_2 \big(m_{ij}^{\mathrm{I}}\big).\tag*{\qed}
\end{gather*}\renewcommand{\qed}{}
\end{proof}

\begin{cor}By formula \eqref{centralcharge} for rank~$2$, we have that the central charge of the system is $c= -126$.
\end{cor}

\begin{cor}Since $\big(m_{ij}^\mathrm{I}\big)$ is positive definite and has diagonal entries $|m_{ii}|\leq 1$, by Lemma~{\rm \ref{pos+small}} the screening algebra is the Nichols algebra.
\end{cor}

We conclude this case with a picture illustrating how the set of simple roots behave under reflections. We write~I,~II,~III, to indicate to which diagram do the simple roots in each case belong:
\begin{equation*}
 \begin{tikzcd}
 && \ar{ld}[swap]{\r_1} \substack{\{\alpha_1, \alpha_2\}^\mathrm{I} } \ar{rd}{\r_2}&&\\
 &\ar{d}[swap]{\r_{112}}\substack{\{-\alpha_1, \alpha_{112}\}^\mathrm{II} }&& \substack{\{\alpha_{122}, -\alpha_2\}^\mathrm{II} } \ar{d}{\r_{122}} \\
 &\ar{rd}[swap]{\text{sign swap}}\substack{\{\alpha_{12}, -\alpha_{112}\}^\mathrm{III} } && \substack{\{-\alpha_{122}, \alpha_{12}\}^\mathrm{III}}\ar{ld}{{\r_{12}}} \\
 && \substack{\{\alpha_{112}, -\alpha_{12}\}^\mathrm{III} }&&\\
 \end{tikzcd}
\end{equation*}

\subsubsection*{Heckenberger row 10}
This case of Table~1 in~\cite{Hecklist} is described by three diagrams:
\begin{gather*}
\begin{tikzpicture}
	\draw (0,0)--(1.6,0);
	\draw (-0.1,0) circle[radius=0.1cm] node[anchor=south]{$ -\zeta$}
			(1.7,0) circle[radius=0.1cm] node[anchor=south]{$ \zeta^{3}$};
	\draw (0.8,0) node[anchor=south]{$ \zeta^{-2}$};
	\draw (3.6,0)--(5.2,0);
	\draw (3.5,0) circle[radius=0.1cm] node[anchor=south]{$ \zeta^3$}
			(5.3,0) circle[radius=0.1cm] node[anchor=south]{$ -1$};
	\draw (4.4,0) node[anchor=south]{$ \zeta^{-1}$};
	\draw (7.2,0)--(8.8,0);
	\draw (7.1,0) circle[radius=0.1cm] node[anchor=south]{$ -\zeta^{2}$}
			(8.9,0) circle[radius=0.1cm] node[anchor=south]{$ -1$};
	\draw (8,0) node[anchor=south]{$ \zeta$};
\node at (0.8,-0.2) {$\mathrm{I}$};
\node at (4.4,-0.2) {$\mathrm{II}$};
\node at (8,-0.2) {$\mathrm{III}$};
\end{tikzpicture}
\end{gather*}
where $\zeta \in \mathbb{G}_{9}$ and with respectively simple roots: \[ \mathrm{I}\colon \ \lbrace\alpha_1, \alpha_2\rbrace, \qquad \mathrm{II}\colon \ \lbrace-\alpha_{2}, \alpha_{122}\rbrace, \qquad \mathrm{III}\colon \ \lbrace\alpha_{12}, -\alpha_{122}\rbrace. \]
The associate Cartan matrices are:
\[\big(c_{ij}^{\mathrm{I}}\big)= \begin{bmatrix}
 \hphantom{-}2 & -2\\
 -2 & \hphantom{-}2
\end{bmatrix}, \qquad
\big(c_{ij}^{\mathrm{II}}\big)= \begin{bmatrix}
 \hphantom{-}2 & -2\\
 -1 & \hphantom{-}2
\end{bmatrix}, \qquad
\big(c_{ij}^{\mathrm{III}}\big)= \begin{bmatrix}
 \hphantom{-}2 & -4\\
 -1 & \hphantom{-}2
\end{bmatrix}.
\]
The set of positive roots is $\lbrace \alpha_1, \alpha_2, \alpha_{12}, \alpha_{112}, \alpha_{122}, \alpha_{11122}\rbrace$ where $\alpha_1$ and $\alpha_{12}$ are only $q$-Cartan while the others are only $q$-truncation.

\begin{prop}
The following $m_{ij}$ matrices are realizing solutions of the given braiding and its reflections:
\[\big(m_{ij}^{\mathrm{I}}\big)= \begin{bmatrix}
 \hphantom{-}\frac{5}{9} & -\frac{5}{9}\vspace{1mm}\\
 -\frac{5}{9} & \hphantom{-}\frac{2}{3}
\end{bmatrix}, \qquad
\big(m_{ij}^{\mathrm{II}}\big)= \begin{bmatrix}
\hphantom{-} \frac{2}{3} & -\frac{7}{9}\vspace{1mm}\\
 -\frac{7}{9} & \hphantom{-}1
\end{bmatrix}, \qquad
\big(m_{ij}^{\mathrm{III}}\big)= \begin{bmatrix}
 \hphantom{-}\frac{1}{9} & -\frac{2}{9}\vspace{1mm}\\
 -\frac{2}{9} & \hphantom{-}1
\end{bmatrix}. \]
\end{prop}
\begin{proof}
We check that the roots $\lbrace \alpha_2, \alpha_{112}, \alpha_{122}, \alpha_{11122}\rbrace$ satisfy condition~(\ref{cond7}B), while the root $\alpha_1$ and $\alpha_{12}$ satisfy condition~(\ref{cond7}A).
We check that the reflections send one~$(m_{ij})$ to the other.
\end{proof}
\begin{cor}
By formula \eqref{centralcharge} for rank~$2$, we have that the central charge of the system is~$-\frac{1088}{5}$.
\end{cor}

\begin{cor}Since $\big(m_{ij}^I\big)$ is positive definite and has diagonal entries $|m_{ii}|\leq 1$, by Lemma~{\rm \ref{pos+small}} the screening algebra is the Nichols algebra.
\end{cor}

\subsubsection*{Heckenberger row 12}
This case of Table~1 in \cite{Hecklist} is described by three diagrams:
\begin{gather*}
\begin{tikzpicture}
	\draw (0,0)--(1.6,0);
	\draw (-0.1,0) circle[radius=0.1cm] node[anchor=south]{$ \zeta^2$}
			(1.7,0) circle[radius=0.1cm] node[anchor=south]{$ \zeta^{-1}$};
	\draw (0.8,0) node[anchor=south]{$ \zeta$};
	\draw (3.6,0)--(5.2,0);
	\draw (3.5,0) circle[radius=0.1cm] node[anchor=south]{$ \zeta^2$}
			(5.3,0) circle[radius=0.1cm] node[anchor=south]{$ -1$};
	\draw (4.4,0) node[anchor=south]{$ -\zeta^{-1}$};
	\draw (7.2,0)--(8.8,0);
	\draw (7.1,0) circle[radius=0.1cm] node[anchor=south]{$ \zeta$}
			(8.9,0) circle[radius=0.1cm] node[anchor=south]{$ -1$};
	\draw (8,0) node[anchor=south]{$ -\zeta$};
\node at (0.8,-0.2) {$\mathrm{I}$};
\node at (4.4,-0.2) {$\mathrm{II}$};
\node at (8.0,-0.2) {$\mathrm{III}$};
\end{tikzpicture}
\end{gather*}
where $\zeta \in \mathbb{G}_{8}$ and with respectively simple roots: \[ \mathrm{I}\colon \ \lbrace\alpha_1, \alpha_2\rbrace, \qquad \mathrm{II}\colon \ \lbrace -\alpha_{1}, \alpha_{1112}\rbrace, \qquad \mathrm{III}\colon \ \lbrace\alpha_{112}, -\alpha_{1112}\rbrace. \]
There is just one associate Cartan matrix:
\[(c_{ij})= \begin{bmatrix}
 \hphantom{-}2 & -3\\
 -1 & \hphantom{-}2
\end{bmatrix}.
\]
The set of positive roots is $\lbrace \alpha_1, \alpha_2, \alpha_{12}, \alpha_{112}, \alpha_{1112}, \alpha_{11122}\rbrace$ where $\alpha_2$ and $\alpha_{112}$ are only $q$-Cartan while the others are only $q$-truncation.
\begin{prop}
The following $m_{ij}$ matrices are realizing solutions of the given braiding and its reflections:
\[\big(m_{ij}^{\mathrm{I}}\big)= \begin{bmatrix}
 \hphantom{-}\frac{1}{2} & -\frac{7}{8}\vspace{1mm}\\
 -\frac{7}{8} & \hphantom{-}\frac{7}{4}
\end{bmatrix}, \qquad
\big(m_{ij}^{\mathrm{II}}\big)= \begin{bmatrix}
 \hphantom{-}\frac{1}{2} & -\frac{5}{8}\vspace{1mm}\\
 -\frac{5}{8} & \hphantom{-}1
\end{bmatrix}, \qquad
\big(m_{ij}^{\mathrm{III}}\big)= \begin{bmatrix}
 \hphantom{-}\frac{1}{4} & -\frac{3}{8}\vspace{1mm}\\
 -\frac{3}{8} & \hphantom{-}1
\end{bmatrix}. \]
\end{prop}
\begin{proof}
We check that the roots $\lbrace \alpha_1, \alpha_{12}, \alpha_{1112}, \alpha_{11122}\rbrace$ satisfy condition~(\ref{cond7}B), while the root~$\alpha_2$ and $\alpha_{112}$ satisfy condition~(\ref{cond7}A).
We check that the reflections send one~$(m_{ij})$ to the other.
\end{proof}
\begin{cor}
By formula \eqref{centralcharge} for rank~$2$, we have that the central charge of the system is~$-\frac{874}{7}$.
\end{cor}

\begin{cor}Since $\big(m_{ij}^{\mathrm{II}}\big)$ is positive definite and has diagonal entries $|m_{ii}|\leq 1$, by Lem\-ma~{\rm \ref{pos+small}} the screening algebra is the Nichols algebra.
\end{cor}

\subsubsection*{Heckenberger row 13}
This case of Table~1 in \cite{Hecklist} is described by four diagrams:
\begin{gather*}
\begin{tikzpicture}
	\draw (0,0)--(1.6,0);
	\draw (-0.1,0) circle[radius=0.1cm] node[anchor=south]{$ \zeta^6$}
			(1.7,0) circle[radius=0.1cm] node[anchor=south]{$ -\zeta^{-4}$};
	\draw (0.8,0) node[anchor=south]{$ -\zeta^{-1}$};
	\draw (3.1,0)--(4.7,0);
	\draw (3.0,0) circle[radius=0.1cm] node[anchor=south]{$ \zeta^6$}
			(4.8,0) circle[radius=0.1cm] node[anchor=south]{$ \zeta^{-1}$};
	\draw (3.9,0) node[anchor=south]{$ \zeta$};
	\draw (6.2,0)--(7.8,0);
	\draw (6.1,0) circle[radius=0.1cm] node[anchor=south]{$ -\zeta^{-4}$}
			(7.9,0) circle[radius=0.1cm] node[anchor=south]{$ -1$};
	\draw (7,0) node[anchor=south]{$ \zeta^5$};
	\draw (9.3,0)--(10.9,0);
	\draw (9.2,0) circle[radius=0.1cm] node[anchor=south]{$ \zeta$}
			(11.0,0) circle[radius=0.1cm] node[anchor=south]{$ -1$};
	\draw (10.1,0) node[anchor=south]{$ \zeta^{-5}$};
\node at (0.8,-0.2) {$\mathrm{I}$};
\node at (3.9,-0.2) {$\mathrm{II}$};
\node at (7.0,-0.2) {$\mathrm{III}$};
\node at (10.1,-0.2) {$\mathrm{IV}$};
\end{tikzpicture}
\end{gather*}
where $\zeta \in \mathbb{G}_{24}$ and with respectively simple roots: \[ \mathrm{I}\colon \ \lbrace\alpha_1, \alpha_2\rbrace, \qquad \mathrm{II}\colon \ \lbrace-\alpha_1, \alpha_{1112}\rbrace, \qquad \mathrm{III}\colon \ \lbrace -\alpha_{2}, \alpha_{122}\rbrace, \qquad \mathrm{IV}\colon \ \lbrace\alpha_{12}, -\alpha_{122}\rbrace. \]
The associate Cartan matrices are
\begin{gather*}
\big(c_{ij}^{\mathrm{I}}\big)= \begin{bmatrix}
 \hphantom{-}2 & -3\\
 -2 & \hphantom{-}2
\end{bmatrix}, \qquad
\big(c_{ij}^{\mathrm{II}}\big)= \begin{bmatrix}
 \hphantom{-}2 & -3\\
 -1 & \hphantom{-}2
\end{bmatrix}, \\
\big(c_{ij}^{\mathrm{III}}\big)= \begin{bmatrix}
 \hphantom{-}2 & -2\\
 -1 & \hphantom{-}2
\end{bmatrix},\qquad
\big(c_{ij}^{\mathrm{IV}}\big)= \begin{bmatrix}
 \hphantom{-}2 & -5\\
 -1 & \hphantom{-}2
\end{bmatrix}.
\end{gather*}
The set of positive roots is $\lbrace \alpha_1, \alpha_2, \alpha_{12}, \alpha_{112}, \alpha_{122}, \alpha_{1112}, \alpha_{11122}, \alpha_{1111222}\rbrace$ where $\alpha_{12}$ and $\alpha_{1112}$ are the only $q$-Cartan roots while the others are only $q$-truncation.
\begin{prop}
The following $(m_{ij})$ matrices are realizing solutions of the given braiding and its reflections:
\begin{gather*}
\big(m_{ij}^{\mathrm{I}}\big)=  \begin{bmatrix}
 \frac{1}{2} & -\frac{13}{24}\vspace{1mm}\\
 -\frac{13}{24} & \frac{2}{3}
\end{bmatrix}, \qquad
\big(m_{ij}^{\mathrm{II}}\big)= \begin{bmatrix}
 \frac{1}{2} & -\frac{23}{24}\vspace{1mm}\\
 -\frac{23}{24} & \frac{23}{12}
\end{bmatrix}, \\
\big(m_{ij}^{\mathrm{III}}\big)=  \begin{bmatrix}
 \hphantom{-}1 & -\frac{19}{24}\vspace{1mm}\\
 -\frac{19}{24} & \hphantom{-}\frac{2}{3}
\end{bmatrix}, \qquad
\big(m_{ij}^{\mathrm{IV}}\big)= \begin{bmatrix}
 \hphantom{-}1 & -\frac{5}{24}\vspace{1mm}\\
 -\frac{5}{24} & \hphantom{-}\frac{1}{12}
\end{bmatrix}.
\end{gather*}
\end{prop}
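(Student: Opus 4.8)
The plan is to follow the same three-step verification already used for Heckenberger rows 9, 10 and 12 above.

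\emph{Step 1: the exponential condition.} Fix a primitive $24$-th root of unity, say $\zeta=e^{i\pi/12}$, chosen so that the four displayed diagrams are realised simultaneously; such a choice exists because the diagrams are linked by the Weyl groupoid action, and one simply selects the branch of the logarithm accordingly. For each of the four chambers one then checks the node identities $e^{i\pi m_{ii}}=q_{ii}$ and the edge identity $e^{2i\pi m_{12}}=q_{12}q_{21}$ by direct substitution; for instance in chamber $\mathrm{I}$ one has $e^{i\pi/2}=i=\zeta^{6}$, $e^{2i\pi/3}=-\zeta^{-4}$ and $e^{2i\pi(-13/24)}=e^{11i\pi/12}=-\zeta^{-1}$. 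This settles the first requirement of Definition~\ref{Mm}.

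\emph{Step 2: condition (\ref{cond7}) in each chamber.} From the stated list of positive roots, $\alpha_{12}$ and $\alpha_{1112}$ are the only $q$-Cartan roots and every other root is only $q$-truncation; since no root is both, every simple root of every chamber falls unambiguously into case A or case B of~(\ref{cond7}). For a $q$-truncation simple root one checks $(1-a_{ij})m_{ii}=2$ against the displayed Cartan matrices, e.g. $(1-a^{\mathrm I}_{12})m^{\mathrm I}_{11}=4\cdot\tfrac12=2$ and $(1-a^{\mathrm I}_{21})m^{\mathrm I}_{22}=3\cdot\tfrac23=2$ in chamber~$\mathrm I$; for a $q$-Cartan simple root -- occurring as $\alpha_{1112}$ in chamber~$\mathrm{II}$ and as $\alpha_{12}$ in chambers~$\mathrm{III}$ and $\mathrm{IV}$ -- one checks $2m_{ij}=a_{ij}m_{ii}$. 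These are finitely many elementary identities.

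\emph{Step 3: compatibility with reflections.} One records the adjacencies of the groupoid objects -- $\mathcal{R}^{1}$ sending chamber $\mathrm{I}$ to chamber $\mathrm{II}$, $\mathcal{R}^{2}$ sending $\mathrm{I}$ to $\mathrm{III}$, and $\mathcal{R}^{122}$ sending $\mathrm{III}$ to $\mathrm{IV}$ -- and for each writes the integer change-of-basis matrix $T$ that expresses the new simple roots in the old basis (precisely as in the proofs for rows 9 and 10); then one verifies $T^{\mathsf T}m^{\mathrm{old}}T=m^{\mathrm{new}}$. Reflections around the $q$-Cartan roots $\alpha_{12}$ and $\alpha_{1112}$ leave the respective $m$ unchanged by the Proposition following Definition~\ref{Mm}, so the orbit of $m^{\mathrm I}$ under the (finite) Weyl groupoid is exhausted by the four displayed matrices, and the validity of~(\ref{cond7}) in all four chambers gives the remaining requirement of Definition~\ref{Mm}. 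Uniqueness of the solution is not asserted here; it is addressed in the rank-$2$ classification.

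The hard part is the bookkeeping of Step 3: this is a rank-$2$ system with eight positive roots and four distinct Dynkin diagrams, the Cartan entry $a_{12}$ running through $-2,-3,-5$ in the various chambers, and several of the simple-root sets carry a sign $-\alpha_i$; getting the chamber adjacencies and the change-of-basis matrices $T$ exactly right -- rather than the routine numerical identities of Steps 1 and 2 -- is where the care is needed.
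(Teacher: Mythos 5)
Your three-step scheme is exactly the paper's (very terse) proof expanded: verify condition (\ref{cond7}) root by root in each chamber and verify that the truncation-root reflections carry each $m_{ij}$-matrix to the next; your added Step 1 on $e^{i\pi m_{ij}}=q_{ij}$ is left implicit in the paper but is correct and worth recording. Your chamber adjacencies in Step 3 ($\mathcal{R}^{1}:\mathrm{I}\to\mathrm{II}$, $\mathcal{R}^{2}:\mathrm{I}\to\mathrm{III}$, $\mathcal{R}^{122}:\mathrm{III}\to\mathrm{IV}$) are also right.

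There is, however, one concrete misidentification in Step 2 that would make the verification fail if carried out as written: you claim the $q$-Cartan root $\alpha_{12}$ occurs as a simple root in chambers $\mathrm{III}$ \emph{and} $\mathrm{IV}$. Chamber $\mathrm{III}$ has simple roots $\{-\alpha_{2},\alpha_{122}\}$, both of which are only $q$-truncation, so \emph{both} must be checked against (\ref{cond7})B; indeed $(1-(-2))\cdot\tfrac{2}{3}=2$ and $(1-(-1))\cdot 1=2$, whereas condition A fails there ($2m_{12}^{\mathrm{III}}=-\tfrac{19}{12}$ equals neither $-2\cdot\tfrac{2}{3}$ nor $-1\cdot 1$). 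The root $\alpha_{12}$ enters as a ($q$-Cartan, hence condition-A) simple root only in chamber $\mathrm{IV}$, where $2\cdot(-\tfrac{5}{24})=-5\cdot\tfrac{1}{12}$ does hold. With that correction the argument goes through and coincides with the paper's.
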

\begin{proof}
We check that the roots $\alpha_{12}$ and $\alpha_{1112}$ satisfy condition (\ref{cond7}A), while the others satisfy condition (\ref{cond7}B).
We check that the reflections send one $(m_{ij})$ to the other.
\end{proof}
\begin{cor}
By formula \eqref{centralcharge} for rank~$2$, we have that the central charge of the system is~$-\frac{7826}{23}$.
\end{cor}

\begin{cor}Since $\big(m_{ij}^{\mathrm{I}}\big)$ is positive definite and has diagonal entries $|m_{ii}|\leq 1$, by Lem\-ma~{\rm \ref{pos+small}} the screening algebra is the Nichols algebra.
\end{cor}

\subsubsection*{Heckenberger row 14}
This case of Table~1 in \cite{Hecklist} is described by two diagrams:
\begin{gather*}
\begin{tikzpicture}
	\draw (0,0)--(1.6,0);
	\draw (-0.1,0) circle[radius=0.1cm] node[anchor=south]{$ \zeta$}
			(1.7,0) circle[radius=0.1cm] node[anchor=south]{$ {-1}$};
	\draw (0.8,0) node[anchor=south]{$ \zeta^{2}$};
	\draw (3.6,0)--(5.2,0);
	\draw (3.5,0) circle[radius=0.1cm] node[anchor=south]{$ -\zeta^{-2}$}
			(5.3,0) circle[radius=0.1cm] node[anchor=south]{$ -1$};
	\draw (4.4,0) node[anchor=south]{$ \zeta^{-2}$};
\node at (0.8,-0.2) {$\mathrm{I}$};
\node at (4.4,-0.2) {$\mathrm{II}$};
\end{tikzpicture}
\end{gather*}
where $\zeta \in \mathbb{G}_{5}$ and with respectively simple roots: \[ \mathrm{I}\colon \ \lbrace\alpha_1, \alpha_2\rbrace, \qquad \mathrm{II}\colon \ \lbrace \alpha_{12}, -\alpha_{2}\rbrace. \]
The associate Cartan matrices are
\[\big(c_{ij}^{\mathrm{I}}\big)= \begin{bmatrix}
 \hphantom{-}2 & -3\\
 -1 & \hphantom{-}2
\end{bmatrix}, \qquad
\big(c_{ij}^{\mathrm{II}}\big)= \begin{bmatrix}
 \hphantom{-}2 & -4\\
 -1 & \hphantom{-}2
\end{bmatrix}.
\]
The set of positive roots is $\lbrace \alpha_1, \alpha_2, \alpha_{12}, \alpha_{112}, \alpha_{1112}, \alpha_{1111222}, \alpha_{11122}, \alpha_{11111222}\rbrace$ where $\alpha_1$, $\alpha_{12}$, $\alpha_{112}$ and $\alpha_{11122}$ are only $q$-Cartan while the others are only $q$-truncation.
\begin{prop}
The following $m_{ij}$ matrices are realizing solutions of the given braiding and its reflections:
\[\big(m_{ij}^{\mathrm{I}}\big)= \begin{bmatrix}
 \hphantom{-}\frac{2}{5} & -\frac{3}{5}\vspace{1mm}\\
 -\frac{3}{5} & \hphantom{-}1
\end{bmatrix}, \qquad
\big(m_{ij}^{\mathrm{II}}\big)= \begin{bmatrix}
 \hphantom{-}\frac{1}{5} & -\frac{2}{5}\vspace{1mm}\\
 -\frac{2}{5} & \hphantom{-}1
\end{bmatrix}. \]
\end{prop}
\begin{proof}
We check that the roots $\alpha_1$, $\alpha_{12}$, $\alpha_{112}$ and $\alpha_{11122}$ satisfy condition (\ref{cond7}A), while the others satisfy condition (\ref{cond7}B). We check that the reflections send one $(m_{ij})$ to the other.
\end{proof}
\begin{cor}
By formula \eqref{centralcharge} for rank~$2$, we have that the central charge of the system is~$-364$.
\end{cor}

\begin{cor}
Since $\big(m_{ij}^{\mathrm{I}}\big)$ is positive definite and has diagonal entries $|m_{ii}|\leq 1$, by Lem\-ma~{\rm \ref{pos+small}} the screening algebra is the Nichols algebra.
\end{cor}

\subsubsection*{Heckenberger row 17}
This case of Table~1 in \cite{Hecklist} is described by two diagrams:
\[
\begin{tikzpicture}
	\draw (0,0)--(1.6,0);
	\draw (-0.1,0) circle[radius=0.1cm] node[anchor=south]{$ -\zeta$}
			(1.7,0) circle[radius=0.1cm] node[anchor=south]{$ {-1}$};
	\draw (0.8,0) node[anchor=south]{$ -\zeta^{-3}$};
	\draw (3.6,0)--(5.2,0);
	\draw (3.5,0) circle[radius=0.1cm] node[anchor=south]{$ -\zeta^{-2}$}
			(5.3,0) circle[radius=0.1cm] node[anchor=south]{$ -1$};
	\draw (4.4,0) node[anchor=south]{$ -\zeta^{3}$};
\node at (0.8,-0.2) {$\mathrm{I}$};
\node at (4.4,-0.2) {$\mathrm{II}$};
\end{tikzpicture}
\]
where $\zeta \in \mathbb{G}_{7}$ and with respectively simple roots: \[
\mathrm{I}\colon \ \lbrace\alpha_1, \alpha_2\rbrace, \qquad \mathrm{II}\colon \ \lbrace \alpha_{12}, -\alpha_{2}\rbrace. \]
The associate Cartan matrices are
\[\big(c_{ij}^{\mathrm{I}}\big)= \begin{bmatrix}
 \hphantom{-}2 & -3\\
 -1 & \hphantom{-}2
\end{bmatrix}, \qquad
\big(c_{ij}^{\mathrm{II}}\big)= \begin{bmatrix}
 \hphantom{-}2 & -5\\
 -1 & \hphantom{-}2
\end{bmatrix}.
\]
The set of positive roots is
\begin{gather*}
\lbrace  \alpha_1, \alpha_2, \alpha_{12}, \alpha_{112}, \alpha_{1112}, \alpha_{11122}, \alpha_{1111222}, \alpha_{111112222}, \alpha_{111111122222},\\
\qquad {}\alpha_{11111222}, \alpha_{1111111122222}, \alpha_{11111112222}\rbrace,
\end{gather*}
where $\lbrace \alpha_1, \alpha_{12}, \alpha_{112}, \alpha_{11122}, \alpha_{1111222}, \alpha_{11111222} \rbrace$ are only $q$-Cartan while the others are only $q$-truncation.
\begin{prop}
The following $(m_{ij})$ matrices are realizing solutions of the given braiding and its reflections:
\[\big(m_{ij}^{\mathrm{I}}\big)= \begin{bmatrix}
 \hphantom{-}\frac{6}{14} & -\frac{9}{14}\vspace{1mm}\\
 -\frac{9}{14} & \hphantom{-}1
\end{bmatrix}, \qquad
\big(m_{ij}^{\mathrm{II}}\big)= \begin{bmatrix}
 \hphantom{-}\frac{2}{14} & -\frac{5}{14}\vspace{1mm}\\
 -\frac{5}{14} & \hphantom{-}1
\end{bmatrix}. \]
\end{prop}
\begin{proof}
We check that the roots $\lbrace \alpha_1{,} \alpha_{12}{,} \alpha_{112}{,} \alpha_{11122}{,} \alpha_{1111222}{,} \alpha_{11111222} \rbrace$ satisfy condi\-tion~(\ref{cond7}A), while the others satisfy condition~(\ref{cond7}B). We check that the reflections send one $m_{ij}$-matrix to the other.
\end{proof}

\begin{cor}By formula \eqref{centralcharge} for rank~$2$, we have that the central charge of the system is~$-962$.
\end{cor}

\begin{cor}Since $\big(m_{ij}^{\mathrm{I}}\big)$ is positive definite and has diagonal entries $|m_{ii}|\leq 1$, by Lem\-ma~{\rm \ref{pos+small}} the screening algebra is the Nichols algebra.
\end{cor}

\subsection{Classification}\label{Classification}
In this section we are going to prove the following
\begin{teo}\label{classifthm}
For all finite-dimensional diagonal Nichols algebras of rank~$2$, the realiza\-tions~$(m_{ij})$ constructed in Sections~{\rm \ref{Cartan}},~{\rm \ref{SuperLie}} or~{\rm \ref{Strange}} are all realizations.
\end{teo}

In order to prove it, we are going to go through Table~1 in~\cite{Hecklist}, see which roots are $q$-truncation, $q$-Cartan and compute for every diagram the corresponding $(m_{ij})$ from some necessary conditions in the following Lemma. We will see that for every case this already fixes~$(m_{ij})$ uniquely, and of course we recover what we computed in the previous section.

To prove this result we will need the following two Propositions giving necessary conditions for a realization. In essence, it lists the conditions on a general $(m_{ij})$ of rank $2$, such that after one reflection condition~\eqref{cond7} holds.
\begin{prop}\label{prop_rank2reflection1}
We consider a diagram
\[
\begin{tikzpicture}
	\draw (0,0)--(1.6,0);
	\draw (-0.1,0) circle[radius=0.1cm] node[anchor=south]{$ q_{ii}$}
			(1.7,0) circle[radius=0.1cm] node[anchor=south]{$ q_{jj}$};
	\draw (0.8,0) node[anchor=south]{$ q_{ij} q_{ji}$} ;
\end{tikzpicture}
\]
where we assume that both $\{\alpha_i, \alpha_j\}$ are $q$-truncation, and apply a reflection $\r_{i}$ around the root~$\alpha_i$
\begin{align*}
\s_{i}\colon\quad &\alpha_i \longmapsto -\alpha_i, \\
&\alpha_j \longmapsto \alpha
\end{align*}
arriving to a new diagram with simple roots $\{-\alpha_i,   \alpha:= \alpha_j-c_{ij}\alpha_i\}$. We have
\begin{enumerate}\itemsep=0pt
	\item[$1)$] if $\alpha$ is $m$-truncation then \begin{equation}\label{TR-TR->TR}
	m_{ij} = \frac{c_{ij}}{1-c_{ij}} - \frac{1}{c_{ij}(1-a_{\beta,-\alpha_i})} + \frac{1}{c_{ij}(1-c_{ji})},
	\end{equation}
	\item[$2)$] if $\alpha$ is $m$-Cartan then \begin{equation}\label{TR-TR->CA}
	m_{ij} = \frac{c_{ij}}{1-c_{ij}} + \frac{\left( \frac{1}{1-c_{ji}} -\frac{c_{ij}}{(1-c_{ij})c_{\beta \alpha_i}}\right)}{(-\frac{1}{c_{\beta \alpha_i}} + c_{ij})}.
	\end{equation}
\end{enumerate}
\end{prop}

\begin{proof}
Since $\{\alpha_i, \alpha_j\}$ are only $q$-truncation, thus $m$-truncation, we have the relations
\[ m_{ii} = \frac{2}{1-c_{ij}},\qquad m_{jj} = \frac{2}{1-c_{ji}}.\]

1.~If $\beta$ is $m$-truncation then $m_{\beta \beta} = \frac{2}{1-c_{\beta, -\alpha_i}}$. But for definition of $\beta$ we have
\[ m_{\beta \beta} = m_{jj} - 2c_{ij} m_{ij} + c_{ij}^2 m_{ii}.\]
	Gathering all the information together we get
	\begin{align*}
	\frac{2}{1-c_{\beta, -\beta_i}} = \frac{2}{1-c_{ji}} - 2c_{ij} m_{ij} + c_{ij}^2 \frac{2}{1-c_{ij}}
	\end{align*}
and from this the final result.

2.~This case is completely analogous, with the only difference that $\beta$ is $m$-Cartan and thus $m_{\beta \beta} = \frac{2 m_{\beta, -i}}{c_{\beta \alpha_i}}$ we will then have
	\begin{gather*}
	m_{\beta \beta} = \frac{2 m_{\beta, -i}}{c_{\beta \alpha_i}} = - 2 \frac{m_{ij}}{c_{\beta \alpha_i}} + \frac{2 c_{ij} (\frac{2}{1-c_{ij}})}{c_{\beta \alpha_i}}, \\
	m_{\beta \beta} = \frac{2}{1-c_{ji}} - 2c_{ij} m_{ij} + c_{ij}^2 \frac{2}{1-c_{ij}}.
	\end{gather*}
The two equations together give the thesis.
\end{proof}

\begin{prop}\label{prop_rank2reflection2}
We consider a diagram
\[
\begin{tikzpicture}
	\draw (0,0)--(1.6,0);
	\draw (-0.1,0) circle[radius=0.1cm] node[anchor=south]{$ q_{ii}$}
			(1.7,0) circle[radius=0.1cm] node[anchor=south]{$ q_{jj}$};
	\draw (0.8,0) node[anchor=south]{$ q_{ij} q_{ji}$} ;
\end{tikzpicture}
\]
where we assume that $\{\alpha_i, \alpha_j\}$ are the first $q$-Cartan and the latter $q$-truncation. We apply a~reflection around the $q$-truncation root $\alpha_j$,
\begin{align*}
\s_{j}\colon \quad &\alpha_j \longmapsto -\alpha_j, \\
&\alpha_i \longmapsto \beta
\end{align*}
arriving to a new diagram $\r_j(q_{ij})$ associated to the roots: $\{\beta:= \alpha_i-c_{ji}\alpha_j,  -\alpha_j\}$.
In this diagram we have the necessary conditions
\begin{enumerate}\itemsep=0pt
	\item[$1)$] if $\beta$ is $m$-truncation then \begin{equation}\label{CA-TR->TR}
	m_{ij} = \frac{c_{ij}}{1-c_{ij}c_{ji}}\left( \frac{1}{1-c_{\beta,-\alpha_j}}- \frac{c_{ji}^2}{1-c_{ji}}\right),
	\end{equation}
	\item[$2)$] if $\beta$ is $m$-Cartan then \begin{equation}\label{CA-TR->CA}
	m_{ij} = \frac{a_{ij}c_{ji}}{1-c_{ji}}\cdot\frac{c_{ji}c_{\beta, -\alpha_j}-2}{c_{ji}c_{ij}c_{\beta,-j}-c_{\beta, -j}-c_{ij}}.
	\end{equation}
\end{enumerate}
\end{prop}

\subsubsection*{Heckenberger row 2}
We have $d=1$ and then $\ell_1 = \ell_2 = \frac{\ell}{gdc(\ell, 2)}$. Therefore $\ell \neq 2$ and since $c_{ij}=-1$ we have the following:
If $\ell > 4$ or $\ell=3$ then by classification Lemma~\ref{cartanclassification} we get a unique solution, presented in Section~\ref{Cartan} Heckenberger row~2.
If $\ell = 4$ then $q_{ii}=q^2= -1$ and the roots are both $q$-Cartan and $q$-truncation:
\begin{itemize}\itemsep=0pt
	\item If both are $m$-Cartan, we find a unique solution, by Lemma~\ref{cartanclassification} presented in Section~\ref{Cartan} Heckenberger row~2, in the limit case $q^2=-1$.
	\item If one of the two is $m$-truncation, we find a unique solution, presented in Section~\ref{SuperLie}, Heckenberger row~3, in the limit case $q^2 = -1$. This result is a consequence of Lemma~\ref{superlieclassification}.
	\item If both are only $m$-truncation we recognize the matrix
	$\left[\begin{smallmatrix}
 1 & -\frac{p'}{2}\\
 -\frac{p'}{2} & 1
\end{smallmatrix} \right]$, which is the other Weyl chamber in Example~\ref{sl(2|1)}.
\end{itemize}

\subsubsection*{Heckenberger row 3}
We have $d=1$ and then $\ell_1 = \ell_2 = \frac{\ell}{gdc(\ell, 2)}$. Therefore $\ell \neq 2$ and since $c_{12}=-1$ we have the following:
If $\ell > 4$ or $\ell=3$ then by classification Lemma~\ref{superlieclassification} we get a unique solution, presented in Section~\ref{SuperLie} case Heckenberger row~3.
If $\ell = 4$, $\alpha_1$ is both $q$-Cartan and $q$-truncation.
\begin{itemize}\itemsep=0pt
	\item If it is $m$-Cartan, we find again the unique solution presented in Section~\ref{SuperLie} Heckenberger row~3, in the limit case $q^2=-1$.
	This result is a consequence of Lemma~\ref{superlieclassification}.
	\item If it is $m$-truncation we recognize again the matrix $\left[\begin{smallmatrix}
 1 & -\frac{p'}{2}\\
 -\frac{p'}{2} & 1
\end{smallmatrix}\right] $ which is the other Weyl chamber in Example~\ref{sl(2|1)}.
\end{itemize}

\subsubsection*{Heckenberger row 4}
We have $d=d_2=2$ and then $\ell_1 = \frac{\ell}{gdc(\ell,2)}$, $\ell_2 = \frac{\ell}{gdc(\ell,4)}$. Moreover $\ell \neq 2,4$, because $q^2 \neq \pm 1$, and since $c_{12}=-2$, $c_{21}=-1$ we have the following:
If $\ell > 8$ or $\ell=5,7$ then by classification Lemma~\ref{cartanclassification} we get a unique solution, presented in Section~\ref{Cartan} Heckenberger row~4.
If $\ell = 8$ then the long root $\alpha_2$ is both $q$-Cartan and $q$-truncation, while $\alpha_1$ is only $q$-Cartan.
\begin{itemize}\itemsep=0pt
	\item If $\alpha_2$ is $m$-Cartan, we find again the unique solution presented in Section~\ref{Cartan}, Heckenberger row~4, by Lemma~\ref{cartanclassification}.
	\item If $\alpha_2$ is $m$-truncation, we find the unique solution presented in Section~\ref{SuperLie}, Heckenberger row~5, in the limit case $q^2=i$, by Lemma~\ref{superlieclassification}.
	
\end{itemize}
If $\ell = 3,6$ then the short root $\alpha_1$ is both $q$-Cartan and $q$-truncation, while $\alpha_2$ is only $q$-Cartan.
\begin{itemize}\itemsep=0pt
	\item If $\alpha_1$ is $m$-Cartan, we find a unique solution, presented in Section~\ref{Cartan} Heckenberger row 4, again thanks to Lemma \ref{cartanclassification}.
	\item If $\alpha_1$ is $m$-truncation, we find a family of solution, presented in Section~\ref{Strange}, Heckenberger row~6, up to rescaling. The uniqueness follows from Lemma \ref{superlieclassification}.
\end{itemize}
\subsubsection*{Heckenberger row 5} We have $d=1$ and then $\ell_1 = \frac{\ell}{gdc(\ell,2)}$. Moreover $\ell \neq 2,4$, because $q^2 \neq \pm 1$, and since $c_{12}=-2$ we have the following: If $\ell > 6 $ or $\ell = 5$ then by classification Lemma~\ref{superlieclassification} we get a unique solution, presented in Section~\ref{SuperLie} Heckenberger row~5.
If $\ell = 3, 6$ then the bosonic root $\alpha_1$ is both $q$-Cartan and $q$-truncation.
\begin{itemize}\itemsep=0pt
	\item If $\alpha_1$ is $m$-Cartan, we find again the unique solution presented in Section~\ref{SuperLie} Heckenberger row~5, by Lemma~\ref{superlieclassification}.
	\item If $\alpha_1$ is $m$-truncation, we recognize the matrix $\left[\begin{smallmatrix}
 \frac{2}{3} & -2m\\
 -2m & 1
\end{smallmatrix} \right]$
of Remark~\ref{Heck5trunc} which is a~solution only for $m=\frac{1}{3}$.
\end{itemize}

\subsubsection*{Heckenberger row 6}
We have $d=1$ and then $\ell_2 = \frac{\ell}{gdc(\ell,2)}$. Moreover $\ell \neq 2, 3, 6$, because $q^2 \neq 1$, $\zeta$, $\zeta^2$, with $\zeta \in \mathbb{G}_3$. Since $c_{12}=-1$ we have the following: If $\ell > 6$ or $\ell = 5$ then by classification Lemma~\ref{superlieclassification} we get a unique solution, presented in Section~\ref{Strange} Heckenberger row~6. If $\ell = 4$ then the root $\alpha_2$ is both $q$-Cartan and $q$-truncation.
\begin{itemize}\itemsep=0pt
	\item If $\alpha_2$ is $m$-Cartan, we find again the unique solution presented in Section~\ref{Strange} Heckenberger row~6, by Lemma~\ref{superlieclassification}.
	\item If $\alpha_2$ is $m$-truncation, we recognize the matrix $\left[\begin{smallmatrix}
 \frac{2}{3} & -m\\
 -m & 1
\end{smallmatrix}\right] $
of Remark~\ref{Heck6trunc} which is a solution only for $m=\frac{1}{2}$.
\end{itemize}

\subsubsection*{Heckenberger row 7}
We apply formula (\ref{TR-TR->TR}) to the reflection $\r_1$ and $\r_2$, since the simple roots $\alpha_1$ and $\alpha_2$ as well as the ones after reflections are only $q$-truncation and thus $m$-truncation. From the first reflection we obtain $m_{12}=-\frac{2}{3}$, while from the latter $m_{12}=-\frac{1}{2}$. Since these results do not match, it means that there is no possible formulation of the Nichols algebra braiding in terms of the matrix~$(m_{ij})$.
\begin{oss} We have $q$-truncation roots $\alpha_i$, $\alpha_j$, with $q_{ii}= \zeta$, $q_{jj}=\zeta^{-1}$, both third roots of unity and it is not possible to realize both of them with $m_{ii}=m_{jj}=\frac{2}{3}$. This is another way to see that this case is not realizable.
\end{oss}

\subsubsection*{Heckenberger row 8}
We apply formula (\ref{TR-TR->TR}) to the reflections $\r_1$ and $\r_2$, since the simple roots $\alpha_1$ and $\alpha_2$ as well as the ones after reflections are only $q$-truncation and thus $m$-truncation. From the first reflection we obtain $m_{12}=-\frac{3}{4}$, while from the latter $m_{12}=-\frac{7}{12}$. Since these results do not match, it means that there is no possible formulation of the Nichols algebra braiding in terms of the matrix~$(m_{ij})$.

\subsubsection*{Heckenberger row 9}
We apply formula (\ref{TR-TR->TR}) to the reflection $\r_1$ or $\r_2$, since the simple roots $\alpha_1$ and $\alpha_2$ as well as the ones after reflections are only $q$-truncation and thus $m$-truncation. The resulting $m_{12}$ shows that this is the matrix $(m_{ij})$ appearing in Section~\ref{Strange}. This is thus the only possible solution.

\subsubsection*{Heckenberger row 10}
We apply formula (\ref{CA-TR->TR}) to the reflection $\r_{2}$, since the simple root $\alpha_1$ is only $q$-Cartan and thus $m$-Cartan, while $\alpha_2$ as well as the ones after reflections are only $q$-truncation and thus $m$-truncation. The resulting~$m_{12}$ shows that this is the~$m_{ij}$ appearing in Section~\ref{Strange}. This is thus the only possible solution.

\subsubsection*{Heckenberger row 11}
We have $d=d_2=3$ and then $\ell_1 = \frac{\ell}{gdc(\ell,2)}$, $\ell_2=\frac{\ell}{gdc(\ell,6)}$. Moreover $\ell \neq 2, 3, 4, 6$ because $q^2 \neq \pm 1$, $q^2 \not\in \mathbb{G}_3$. Since $c_{12}=-3$ and $c_{21}=-1$ we have the following:
If $\ell > 12$ or $\ell =5, 7, 9, 10, 11$ then by classification Lemma~\ref{cartanclassification} we get a unique solution, presented in Section~\ref{Cartan} Heckenberger row~11.
If $\ell = 12$ then the root $\alpha_2$ is both $q$-Cartan and $q$-truncation, while the root $\alpha_1$ is only $q$-Cartan.
\begin{itemize}\itemsep=0pt
	\item If $\alpha_2$ is $m$-Cartan, we find again the unique solution presented in Section~\ref{Cartan} Heckenberger row~11, by Lemma~\ref{cartanclassification}.
	\item If $\alpha_2$ is $m$-truncation, we recognize the matrix
	$\left[\begin{smallmatrix}
 2m & -3m\\
 -3m & 1
\end{smallmatrix}\right]$,
which is a solution only for $m=\frac{1}{6}$.
\end{itemize}
If $\ell = 8$ then the root $\alpha_1$ is both $q$-Cartan and $q$-truncation, while the root $\alpha_2$ is only $q$-Cartan.
\begin{itemize}\itemsep=0pt
	\item If $\alpha_1$ is $m$-Cartan, we find again the unique solution presented in Section~\ref{Cartan} Heckenberger row~11, by Lemma~\ref{cartanclassification}.
	\item If $\alpha_1$ is $m$-truncation, we recognize the matrix $\left[\begin{smallmatrix}
 \frac{1}{2} & -3m\\
 -3m & 6m
\end{smallmatrix}\right]$
which is a solution only for $m=\frac{1}{4}$.
\end{itemize}

\subsubsection*{Heckenberger row 12}
We apply formula (\ref{CA-TR->TR}) to the reflections $\r_1$, since the simple roots $\alpha_1$ as well as the ones after reflections are only $q$-truncation and thus $m$-truncation, while $\alpha_2$ is only $q$-Cartan, and thus $m$-Cartan. The result is $m_{12}=-\frac{7}{8}$, which matches with the one of Section~\ref{Strange}.

\subsubsection*{Heckenberger row 13}
We apply formula (\ref{TR-TR->TR}) to the reflection $\r_1$ or $\r_2$, since the simple roots $\alpha_1$ and $\alpha_2$ as well as the ones after reflections are only $q$-truncation and thus $m$-truncation. The resulting~$m_{12}$ shows that this is the $m_{ij}$ appearing in Section~\ref{Strange}. This is thus the only possible solution.

\subsubsection*{Heckenberger row 14}
We apply formula (\ref{CA-TR->CA}) to the reflections $\r_2$, since the simple roots $\alpha_1$ as well as the ones after reflections are only $q$-Cartan and thus $m$-Cartan, while $\alpha_2$ is only $q$-truncation, and thus $m$-truncation. The result is $m_{12}=-\frac{3}{5}$, which matches with the one of Section~\ref{Strange}.

\subsubsection*{Heckenberger row 15}
We apply formula (\ref{TR-TR->TR}) to the reflections $\r_1$ and (\ref{TR-TR->CA}) to~$\r_2$ since the simple roots~$\alpha_1$ and~$\alpha_2$ as well as the ones after $\r_1$ are only $q$-truncation and thus $m$-truncation, while the ones after~$\r_2$ are only $q$-Cartan, and thus $m$-Cartan. From the first reflection we obtain $m_{12}=-\frac{4}{5}$, while from the latter $m_{12}=-\frac{11}{20}$. Since these results do not match, it means that there is no possible formulation of the Nichols algebra braiding in terms of the matrix~$(m_{ij})$.

\subsubsection*{Heckenberger row 16}
The root $\alpha_1$ is $q$-Cartan so we can not start with the system of simple roots $\alpha_1$, $\alpha_2$ if we want to compare the results of the reflections around them.
We then start with the simple roots $\alpha_{122}$ and $-\alpha_2$ which are only $q$-truncation and thus $m$-truncation.
After reflection $\r_{122}$ we obtain a only $q$-Cartan, and thus $m$-Cartan, simple root. While after reflection $\r_{2}$ we obtain a only $q$-truncation, and thus $m$-truncation, simple root. We then apply (\ref{TR-TR->CA}) to $\r_{122}$ and (\ref{TR-TR->TR}) to $\r_{2}$ obtaining two different results. Hence there is no possible formulation of the Nichols algebra braiding in terms of the matrix $(m_{ij})$.

\subsubsection*{Heckenberger row 17}
We apply formula (\ref{CA-TR->TR}) to the reflections $\r_2$, since the simple roots $\alpha_2$ as well as the ones after reflections are only $q$-truncation and thus $m$-truncation, while $\alpha_1$ is only $q$-Cartan, and thus $m$-Cartan. The result is $m_{12}=-\frac{5}{14}$, which matches with the one of Section~\ref{Strange}.

\section{Rank 3}\label{rank3}
We now rise the rank by one and construct all matrices $(m_{ij})$ which realize finite-dimensional diagonal Nichols algebras of rank~$3$, listed in Table 2 of~\cite{Hecklist}.

For Cartan type we will refer to the study of Section~\ref{Cartan}.
For super Lie type we will explicitly compute the realizing solutions.

For the other cases, we will see that the matrices $(m_{ij})$ matrices are completely fixed by the lower rank: this will imply uniqueness of the solution and make it not just a construction result but also a classification one.

In particular for these latter cases we will proceed as follows:
\begin{itemize}\itemsep=0pt
 	\item Given a $q$-diagram in rank $3$, we will consider it as two rank $2$ $q$-diagrams joined in the middle node. We will then associate to both sides the matrices $(m_{ij})$ realizing them, found in the rank $2$ study.
For these matrices $(m_{ij})$ to be compatible, some restriction on the parameter of which they depend will possibly appear.
 	\item We will then reflect the $q$-diagram on its $q$-truncation roots and proceed again as in the first point for the new diagram.
 	We reflect until we arrive not just to an already found $q$-diagram, but also when the realization $(m_{ij})$ is repeated (the matrix $(m_{ij})$ can be different also if associated to the same $q$-diagram).
 	\item We will then have to make sure that all the conditions found on the parameters are compatible and acceptable, in order for the rank $3$ matrices $(m_{ij})$ to be realizing solutions.
\end{itemize}

The $q$-diagrams and the associated realizing solutions are listed in Table~\ref{App3} of the appendix.

\subsubsection*{Heckenberger row 1}
This case belongs to the Cartan section. In particular it corresponds to the Lie algebras $A_3$ and it is described by the following $q$-diagram with corresponding realization $(m_{ij})$:
\[
\begin{tikzpicture}
	\draw (0,0)--(1.4,0) (1.6,0)--(3,0);
	\draw (-0.1,0) circle[radius=0.1cm] node[anchor=south]{$ q^2$} node[anchor=north]{$ 2m$}
			(1.5,0) circle[radius=0.1cm] node[anchor=south]{$ q^2 $}node[anchor=north]{$ 2m$}
			(3.1,0) circle[radius=0.1cm] node[anchor=south]{$ q^2 $}node[anchor=north]{$ 2m$};
	\draw (0.7,0) node[anchor=south]{$ q^{-2}$}node[anchor=north]{$ -2m$};
	\draw (2.3,0) node[anchor=south]{$ q^{-2}$} node[anchor=north]{$ -2m$};
\end{tikzpicture}
\]
\begin{oss}\label{rk3H1sp} When $q^2\in \mathbb{G}_2$ the roots are both $q$-Cartan and $q$-truncation and the $q$-diagram reads
\[
\begin{tikzpicture}
	\draw (0,0)--(1.4,0) (1.6,0)--(3,0);
	\draw (-0.1,0) circle[radius=0.1cm] node[anchor=south]{$ -1$} node[anchor=north]{$ $}
			(1.5,0) circle[radius=0.1cm] node[anchor=south]{$ -1$}node[anchor=north]{$ $}
			(3.1,0) circle[radius=0.1cm] node[anchor=south]{$ -1$}node[anchor=north]{$ $};
	\draw (0.7,0) node[anchor=south]{$ -1$}node[anchor=north]{$ $};
	\draw (2.3,0) node[anchor=south]{$ -1$} node[anchor=north]{$ $};
	\end{tikzpicture}
\]
\end{oss}
We have the following extra solutions:
\begin{itemize}\itemsep=0pt
	\item[--] When $\alpha_1$ is $m$-truncation and $\alpha_2$, $\alpha_3$ are $m$-Cartan we find
\[
\begin{tikzpicture}
	\draw (0,0)--(1.4,0) (1.6,0)--(3,0);
	\draw (-0.1,0) circle[radius=0.1cm] node[anchor=south]{$ -1$} node[anchor=north]{$ 1$}
			(1.5,0) circle[radius=0.1cm] node[anchor=south]{$ -1$}node[anchor=north]{$ 2m$}
			(3.1,0) circle[radius=0.1cm] node[anchor=south]{$ -1$}node[anchor=north]{$ 2m$};
	\draw (0.7,0) node[anchor=south]{$ -1$}node[anchor=north]{$ -2m$};
	\draw (2.3,0) node[anchor=south]{$ -1$} node[anchor=north]{$ -2m$};
	\end{tikzpicture}
\]
	which is one chamber of the Lie superalgebra $A(2,0)$ described in Heckenberger row~4.
	\item[--] When $\alpha_1$, $\alpha_2$ are $m$-truncation and $\alpha_3$ is $m$-Cartan we find
\[
\begin{tikzpicture}
	\draw (0,0)--(1.4,0) (1.6,0)--(3,0);
	\draw (-0.1,0) circle[radius=0.1cm] node[anchor=south]{$ -1$} node[anchor=north]{$ 1$}
			(1.5,0) circle[radius=0.1cm] node[anchor=south]{$ -1$}node[anchor=north]{$ 1$}
			(3.1,0) circle[radius=0.1cm] node[anchor=south]{$ -1$}node[anchor=north]{$ 2m$};
	\draw (0.7,0) node[anchor=south]{$ -1$}node[anchor=north]{$ m'$};
	\draw (2.3,0) node[anchor=south]{$ -1$} node[anchor=north]{$ -2m$};
	\end{tikzpicture}
\]
	which is a $m$-solution just for $m=\frac{1}{2}$ and $m'=-1$. But for these values of $m$, $m'$ the roots $\alpha_1$, $\alpha_2$ are also $m$-Cartan and thus this is not a new solution.
	\item[--] When $\alpha_2$ is $m$-truncation and $\alpha_1$, $\alpha_3$ are $m$-Cartan we find
\[
\begin{tikzpicture}
	\draw (0,0)--(1.4,0) (1.6,0)--(3,0);
	\draw (-0.1,0) circle[radius=0.1cm] node[anchor=south]{$ -1$} node[anchor=north]{$ 2m'$}
			(1.5,0) circle[radius=0.1cm] node[anchor=south]{$ -1$}node[anchor=north]{$ 1$}
			(3.1,0) circle[radius=0.1cm] node[anchor=south]{$ -1$}node[anchor=north]{$ 2m''$};
	\draw (0.7,0) node[anchor=south]{$ -1$}node[anchor=north]{$ -2m'$};
	\draw (2.3,0) node[anchor=south]{$ -1$} node[anchor=north]{$ -2m''$};
	\end{tikzpicture}
\]
	This is a solution either for $m'=\frac{1}{2}$ for which we end up again in the previous point, or for $m'=1-m''$, which gives us one chamber of the Lie superalgebra $A(1,1)$ described in Heckenberger row~8.
\item[--] When $\alpha_1$, $\alpha_3$ are $m$-truncation and $\alpha_2$ is $m$-Cartan we find
\[
\begin{tikzpicture}
	\draw (0,0)--(1.4,0) (1.6,0)--(3,0);
	\draw (-0.1,0) circle[radius=0.1cm] node[anchor=south]{$ -1$} node[anchor=north]{$ 1$}
			(1.5,0) circle[radius=0.1cm] node[anchor=south]{$ -1$}node[anchor=north]{$ 2m$}
			(3.1,0) circle[radius=0.1cm] node[anchor=south]{$ -1$}node[anchor=north]{$ 1$};
	\draw (0.7,0) node[anchor=south]{$ -1$}node[anchor=north]{$ -2m$};
	\draw (2.3,0) node[anchor=south]{$ -1$} node[anchor=north]{$ -2m$};
	\end{tikzpicture}
\]
	which is another chamber of the Lie superalgebra $A(1,1)$ described in Heckenberger row~8.
\item[--] When the roots are all $m$-truncation we find
\[
\begin{tikzpicture}
	\draw (0,0)--(1.4,0) (1.6,0)--(3,0);
	\draw (-0.1,0) circle[radius=0.1cm] node[anchor=south]{$ -1$} node[anchor=north]{$ 1$}
			(1.5,0) circle[radius=0.1cm] node[anchor=south]{$ -1$}node[anchor=north]{$ 1$}
			(3.1,0) circle[radius=0.1cm] node[anchor=south]{$ -1$}node[anchor=north]{$ 1$};
	\draw (0.7,0) node[anchor=south]{$ -1$}node[anchor=north]{$ m'$};
	\draw (2.3,0) node[anchor=south]{$ -1$} node[anchor=north]{$ m''$};
	\end{tikzpicture}
\]
	This is a solution either for $m'=-m''-2$ which is again a chamber of the Lie superalgebra $A(1,1)$, or for $m'=m''=-1$ for which the roots are also $m$-Cartan and thus does not give a new solution.
\end{itemize}
\subsubsection*{Heckenberger row 2}
This case belongs to the Cartan section. In particular it corresponds to the Lie algebras $B_3$ and it is described by the following $q$-diagram with corresponding realization $(m_{ij})$:
\[
\begin{tikzpicture}
	\draw (0,0)--(1.4,0) (1.6,0)--(3,0);
	\draw (-0.1,0) circle[radius=0.1cm] node[anchor=south]{$ q^4$} node[anchor=north]{$4r $}
			(1.5,0) circle[radius=0.1cm] node[anchor=south]{$ q^4 $}node[anchor=north]{$4m $}
			(3.1,0) circle[radius=0.1cm] node[anchor=south]{$ q^2 $}node[anchor=north]{$ 2m $};
	\draw (0.7,0) node[anchor=south]{$ q^{-4}$}node[anchor=north]{$ -4m$};
	\draw (2.3,0) node[anchor=south]{$ q^{-4}$} node[anchor=north]{$ -4m$};
\end{tikzpicture}
\]
\begin{oss}\label{rk3H2sp} When $q^2\in \mathbb{G}_4$ the roots $\alpha_1$, $\alpha_2$ are both $q$-Cartan and $q$-truncation and the $q$-diagram reads
\[
\begin{tikzpicture}
	\draw (0,0)--(1.4,0) (1.6,0)--(3,0);
	\draw (-0.1,0) circle[radius=0.1cm] node[anchor=south]{$ -1$} node[anchor=north]{$ $}
			(1.5,0) circle[radius=0.1cm] node[anchor=south]{$ -1 $}node[anchor=north]{$ $}
			(3.1,0) circle[radius=0.1cm] node[anchor=south]{$ i $}node[anchor=north]{$ $};
	\draw (0.7,0) node[anchor=south]{$ -1$}node[anchor=north]{$ $};
	\draw (2.3,0) node[anchor=south]{$ -1$} node[anchor=north]{$ $};
\end{tikzpicture}
\]
	For all the possible combinations of $m$-truncation and $m$-Cartan roots, no new solution is found. In some cases we find the Lie superalgebra $B(2,1)$ described in Heckenberger row~5.
	\end{oss}
	
\begin{oss} When $q^2\in \mathbb{G}_3$ the root $\alpha_3$ is both $q$-Cartan and $q$-truncation and the $q$-diagram reads
\[
\begin{tikzpicture}
	\draw (0,0)--(1.4,0) (1.6,0)--(3,0);
	\draw (-0.1,0) circle[radius=0.1cm] node[anchor=south]{$ \zeta^2$} node[anchor=north]{$ $}
			(1.5,0) circle[radius=0.1cm] node[anchor=south]{$ \zeta^2$}node[anchor=north]{$ $}
			(3.1,0) circle[radius=0.1cm] node[anchor=south]{$\zeta$}node[anchor=north]{$ $};
	\draw (0.7,0) node[anchor=south]{$ \zeta^{-2}$}node[anchor=north]{$ $};
	\draw (2.3,0) node[anchor=south]{$ \zeta^{-2}$} node[anchor=north]{$ $};
\end{tikzpicture}
\]
	with $\zeta\in \mathbb{G}_3$.
	The case when it is $m$-truncation is a solution only for $m=\frac{1}{3}$ for which the root is also $m$-Cartan and thus does not give a new solution.
	\end{oss}
\subsubsection*{Heckenberger row 3}
This case belongs to the Cartan section. In particular it corresponds to the Lie algebras $C_3$ and it is described by the following $q$-diagram with corresponding realization $(m_{ij})$:
\[
 \begin{tikzpicture}
	\draw (0,0)--(1.4,0) (1.6,0)--(3,0);
	\draw (-0.1,0) circle[radius=0.1cm] node[anchor=south]{$ q^2$} node[anchor=north]{$ 2m $}
			(1.5,0) circle[radius=0.1cm] node[anchor=south]{$ q^2 $}node[anchor=north]{$ 2m $}
			(3.1,0) circle[radius=0.1cm] node[anchor=south]{$ q^4 $}node[anchor=north]{$ 4m $};
	\draw (0.7,0) node[anchor=south]{$ q^{-2}$}node[anchor=north]{$ -2m$};
	\draw (2.3,0) node[anchor=south]{$ q^{-4}$} node[anchor=north]{$ -4m $};
\end{tikzpicture}
\]
\begin{oss}
If $q^2\in \mathbb{G}_4$, $ \alpha_3$ is both $q$-Cartan and $q$-truncation and the $q$-diagram reads
\[
\begin{tikzpicture}
	\draw (0,0)--(1.4,0) (1.6,0)--(3,0);
	\draw (-0.1,0) circle[radius=0.1cm] node[anchor=south]{$ i$} node[anchor=north]{$2m $}
			(1.5,0) circle[radius=0.1cm] node[anchor=south]{$ i $}node[anchor=north]{$2m $}
			(3.1,0) circle[radius=0.1cm] node[anchor=south]{$ -1 $}node[anchor=north]{$ 1 $};
	\draw (0.7,0) node[anchor=south]{$ -i$}node[anchor=north]{$ -2m$};
	\draw (2.3,0) node[anchor=south]{$ -1$} node[anchor=north]{$-4m $};
\end{tikzpicture}
\]
The case when it is $m$-truncation is a solution iff $m=\frac{1}{4}$ for which it is actually also $m$-Cartan. So this is not a new solution.
\end{oss}
\subsubsection*{Heckenberger row 4}
Row 4 of Table~2 in \cite{Hecklist} corresponds to the Lie superalgebra $A(2,0)$.

The simple roots in the standard chamber are $ \lbrace \alpha_1 =\alpha_\f, \alpha_2, \alpha_3\rbrace.$
We then have just a bosonic part $\mathfrak{g'}$. The inner products is given by
\[
(\alpha_i, \alpha_j)= \begin{bmatrix}
\hphantom{-}0 & -1 & \hphantom{-}0\\
 -1 & \hphantom{-}2 & -1\\
 \hphantom{-}0 & -1 & \hphantom{-}2
\end{bmatrix}
\]
and therefore
\[
\begin{tikzpicture}
	\draw (0,0)--(1.4,0) (1.6,0)--(3,0);
	\draw (-0.1,0) circle[radius=0.1cm] node[anchor=south]{$ -1$} node[anchor=north]{$ 1$}
			(1.5,0) circle[radius=0.1cm] node[anchor=south]{$ q^2$}node[anchor=north]{$ 2m$}
			(3.1,0) circle[radius=0.1cm] node[anchor=south]{$ q^2$}node[anchor=north]{$ 2m$};
	\draw (0.7,0) node[anchor=south]{$ q^{-2}$}node[anchor=north]{$ -2m$};
	\draw (2.3,0) node[anchor=south]{$ q^{-2}$} node[anchor=north]{$ -2m$};
	\end{tikzpicture}
\]

Reflecting around $\alpha_1$ we find the following
\[
\begin{tikzpicture}
	\draw (0,0)--(1.4,0) (1.6,0)--(3,0);
	\draw (-0.1,0) circle[radius=0.1cm] node[anchor=south]{$ -1$} node[anchor=north]{$ 1$}
			(1.5,0) circle[radius=0.1cm] node[anchor=south]{$ -1$}node[anchor=north]{$ 1$}
			(3.1,0) circle[radius=0.1cm] node[anchor=south]{$ q^2$}node[anchor=north]{$ 2m$};
	\draw (0.7,0) node[anchor=south]{$ q^2$}node[anchor=north]{$ -2+2m$};
	\draw (2.3,0) node[anchor=south]{$ q^{-2}$} node[anchor=north]{$ -2m$};
	\end{tikzpicture}
\]

Reflecting around the second root we find a symmetric result.
The roots satisfy condition~(\ref{cond7}) for all $m$ and therefore this $(m_{ij})$ is a realizing solution.

\subsubsection*{Heckenberger row 5}
Row 5 of Table~2 in \cite{Hecklist} corresponds to the Lie superalgebra $B(2,1)$.

The simple roots in the standard chamber are $ \lbrace \alpha_1 =\alpha_\f, \alpha_2, \alpha_3\rbrace.$
We then have just a bosonic part $\mathfrak{g'}$. The inner products is given by
\[
(\alpha_i, \alpha_j)= \begin{bmatrix}
 \hphantom{-}0 & -2 & \hphantom{-}0\\
 -2 & \hphantom{-}4 & -2\\
 \hphantom{-}0 & -2 & \hphantom{-}2
\end{bmatrix}
\]
and therefore
\[
\begin{tikzpicture}
	\draw (0,0)--(1.4,0) (1.6,0)--(3,0);
	\draw (-0.1,0) circle[radius=0.1cm] node[anchor=south]{$ -1$} node[anchor=north]{$ 1$}
			(1.5,0) circle[radius=0.1cm] node[anchor=south]{$ q^{4}$}node[anchor=north]{$ 4m$}
			(3.1,0) circle[radius=0.1cm] node[anchor=south]{$ q^2$}node[anchor=north]{$ 2m$};
	\draw (0.7,0) node[anchor=south]{$ q^{-4}$}node[anchor=north]{$ -4m$};
	\draw (2.3,0) node[anchor=south]{$ q^{-4}$} node[anchor=north]{$ -4m$};
	\end{tikzpicture}
\]

Reflecting around $\alpha_1$ we find the following
\[
\begin{tikzpicture}
	\draw (-0.3,0)--(1.4,0) (1.6,0)--(3,0);
	\draw (-0.4,0) circle[radius=0.1cm] node[anchor=south]{$ -1$} node[anchor=north]{$ 1$}
			(1.5,0) circle[radius=0.1cm] node[anchor=south]{$ -1$}node[anchor=north]{$ 1$}
			(3.1,0) circle[radius=0.1cm] node[anchor=south]{$ q^2$}node[anchor=north]{$ 2m$};
	\draw (0.5,0) node[anchor=south]{$ q^{4}$}node[anchor=north]{$ -2+4m$};
	\draw (2.3,0) node[anchor=south]{$ q^{-4}$} node[anchor=north]{$ -4m$};
	\end{tikzpicture}
\]

and after another reflection around the second root we find the following
\[
\begin{tikzpicture}
	\draw (-0.2,0)--(1.2,0) (1.4,0)--(3.3,0);
	\draw (-0.3,0) circle[radius=0.1cm] node[anchor=south]{$ q^{4}$} node[anchor=north]{$ 4m$}
			(1.3,0) circle[radius=0.1cm] node[anchor=south]{$ -1$}node[anchor=north]{$ 1$}
			(3.4,0) circle[radius=0.1cm] node[anchor=south]{$ -q^{-2}$};
	\draw (0.5,0) node[anchor=south]{$ q^{-4}$}node[anchor=north]{$-4m$};
	\draw (2.2,0) node[anchor=south]{$ q^{4}$} node[anchor=north]{$-2+4m$};
	\draw (3.6,0) node[anchor=north]{$1-2m$};
	\end{tikzpicture}
\]

The roots satisfy condition (\ref{cond7}) for all $m$ and therefore this $(m_{ij})$ is a realizing solution.
\begin{oss}
If $q^2\in \mathbb{G}_4$ then the root $\alpha_2$ is both $q$-Cartan and $q$-truncation. This case has been already studied in details in Heckenberger row 2 Remark \ref{rk3H2sp}.
\end{oss}
\begin{oss}
If $q^2\in \mathbb{G}_3$ then the root $\alpha_3$ is both $q$-Cartan and $q$-truncation.
When it is $m$-truncation we get
\[
\begin{tikzpicture}
	\draw (0,0)--(1.4,0) (1.6,0)--(3,0);
	\draw (-0.1,0) circle[radius=0.1cm] node[anchor=south]{$ -1$} node[anchor=north]{$ 1$}
			(1.5,0) circle[radius=0.1cm] node[anchor=south]{$ \zeta^{2}$}node[anchor=north]{$ 4m$}
			(3.1,0) circle[radius=0.1cm] node[anchor=south]{$ \zeta$}node[anchor=north]{$ \frac{2}{3}$};
	\draw (0.7,0) node[anchor=south]{$ \zeta^{-2}$}node[anchor=north]{$ -4m$};
	\draw (2.3,0) node[anchor=south]{$ \zeta^{-2}$} node[anchor=north]{$ -4m$};
	\end{tikzpicture}
\]

This is a solution iff $m=\frac{1}{3}$. But for this value of $m$, $\alpha_3$ is also $m$-Cartan and thus this is not a new solution.
\end{oss}

\subsubsection*{Heckenberger row 6}
Row 6 of Table~2 in \cite{Hecklist} corresponds to the Lie superalgebra $C(3)$.

The simple roots in the standard chamber are $ \lbrace \alpha_1 =\alpha_\f, \alpha_2, \alpha_3\rbrace.$
We then have just a bosonic part $\mathfrak{g'}$. The inner products is given by
\[
(\alpha_i, \alpha_j)= - \begin{bmatrix}
 \hphantom{-}0 & -1 & \hphantom{-}0\\
 -1 & \hphantom{-}2 & -2\\
 \hphantom{-}0 & -2 & \hphantom{-}4
\end{bmatrix}
\]
and therefore
\[
\begin{tikzpicture}
	\draw (0,0)--(1.4,0) (1.6,0)--(3,0);
	\draw (-0.1,0) circle[radius=0.1cm] node[anchor=south]{$ -1$} node[anchor=north]{$ 1$}
			(1.5,0) circle[radius=0.1cm] node[anchor=south]{$ q^2$}node[anchor=north]{$ 2m$}
			(3.1,0) circle[radius=0.1cm] node[anchor=south]{$ q^4$}node[anchor=north]{$ 4m$};
	\draw (0.7,0) node[anchor=south]{$ q^{-2}$}node[anchor=north]{$ -2m$};
	\draw (2.3,0) node[anchor=south]{$ q^{-4}$} node[anchor=north]{$ -4m$};
	\end{tikzpicture}
\]

Reflecting around $\alpha_1$ we find the following
\[
\begin{tikzpicture}
	\draw (-0.3,0)--(1.4,0) (1.6,0)--(3,0);
	\draw (-0.4,0) circle[radius=0.1cm] node[anchor=south]{$ -1$} node[anchor=north]{$ 1$}
			(1.5,0) circle[radius=0.1cm] node[anchor=south]{$ -1$}node[anchor=north]{$ 1$}
			(3.1,0) circle[radius=0.1cm] node[anchor=south]{$ q^4$}node[anchor=north]{$ 4m$};
	\draw (0.5,0) node[anchor=south]{$ q^2$}node[anchor=north]{$ -2+2m$};
	\draw (2.3,0) node[anchor=south]{$ q^{-4}$} node[anchor=north]{$ -4m$};
	\end{tikzpicture}
\]

Reflecting around $\alpha_{12}$ we find the following
\[
\begin{tikzpicture}
	\draw (5.5,0)--(4,-2.5)--(7,-2.5)--(5.5,0);
	\draw (5.5,0.1) circle[radius=0.1cm]
			(3.9,-2.5) circle[radius=0.1cm]
			(7.1,-2.5) circle[radius=0.1cm];
	\draw (5.5,-0.2) node[anchor=north]{$ -1$};
	\draw (4.3,-2) node[anchor=north]{$ q^2$};
	\draw (6.6,-2.1) node[anchor=north]{$ -1$};
	\draw (4.5,-1.5) node[anchor=west]{$ q^{-2}$};
	\draw (6.4,-1.5) node[anchor=east]{$ q^{4}$};
	\draw (5.6,-2.5) node[anchor=south]{$ q^{-2}$};
	\draw (5.5,0.1) node[anchor=south]{$ 1$}
			(3.9,-2.5) node[anchor=east]{$ 2m$}
			(7.1,-2.5) node[anchor=west]{$ 1$};
	\draw (4.6,-1.3) node[anchor=east]{$ -2m$};
	\draw (6.3,-1.3) node[anchor=west]{$ -2+4m$};
	\draw (5.5,-2.7) node[anchor=north]{$ -2m$};
\end{tikzpicture}
\]

The roots satisfy condition (\ref{cond7}) for all $m$ and therefore this $(m_{ij})$ is a realizing solution.
\begin{oss}
If $q^2\in\mathbb{G}_4$, $\alpha_3$ is both $q$-Cartan and $q$-truncation. When it is $m$-truncation we find
\[
\begin{tikzpicture}
	\draw (0,0)--(1.4,0) (1.6,0)--(3,0);
	\draw (-0.1,0) circle[radius=0.1cm] node[anchor=south]{$ -1$} node[anchor=north]{$ 1$}
			(1.5,0) circle[radius=0.1cm] node[anchor=south]{$ i$}node[anchor=north]{$ 2r$}
			(3.1,0) circle[radius=0.1cm] node[anchor=south]{$ -1$}node[anchor=north]{$ 1$};
	\draw (0.7,0) node[anchor=south]{$ -i$}node[anchor=north]{$ -2m$};
	\draw (2.3,0) node[anchor=south]{$ -1$} node[anchor=north]{$ -4m$};
	\end{tikzpicture}
\]
This is a solution iff $m=\frac{1}{4}$. But for this value of $m$, $\alpha_3$ is also $m$-Cartan and thus this is not a new solution.
\end{oss}
\begin{oss}
The simple roots in the standard chamber can be expressed according to \cite{Kac77} by
\[ \alpha_1 =\alpha_\f = \epsilon_1 - \delta_1, \qquad \alpha_2 = \delta_1 - \delta_2, \qquad \alpha_3 = 2\delta_2.\]
\end{oss}

\subsubsection*{Heckenberger row 7}
Row 7 of Table~2 in~\cite{Hecklist} corresponds to the Lie superalgebra $G(3)$ and it has been already explicitly treated as sporadic case of super Lie type in Section~\ref{G3}.

\subsubsection*{Heckenberger row 8}
Row 8 of Table~2 in~\cite{Hecklist} corresponds to the Lie superalgebra $A(1,1)$.

The simple roots in the standard chamber are $\lbrace\alpha_1,  \alpha_2 = \alpha_\f,  \alpha_3\rbrace$.
We then have two bosonic parts $\mathfrak{g'}$ and $\mathfrak{g''}$. The inner products is given by
\[
(\alpha_i, \alpha_j)= \begin{bmatrix}
\hphantom{-}2 & -1 & \hphantom{-}0\\
 -1 & \hphantom{-}0 & -1 \\
 \hphantom{-}0 & -1 & \hphantom{-}2
\end{bmatrix}
\]
and therefore
\[
\begin{tikzpicture}
	\draw (0,0)--(1.4,0) (1.6,0)--(3.1,0);
	\draw (-0.1,0) circle[radius=0.1cm] node[anchor=south]{$ q^2$} node[anchor=north]{$ 2m'$}
			(1.5,0) circle[radius=0.1cm] node[anchor=south]{$ -1$}node[anchor=north]{$ 1$}
			(3.2,0) circle[radius=0.1cm] node[anchor=south]{$ q^{-2}$}node[anchor=north]{$ 2m''$};
	\draw (0.7,0) node[anchor=south]{$ q^{-2}$}node[anchor=north]{$ -2m'$};
	\draw (2.25,0) node[anchor=south]{$ q^2$} node[anchor=north]{$ -2m''$};
	\end{tikzpicture}
\]

Reflecting around $\alpha_2$ we find the following
\[
\begin{tikzpicture}
	\draw (-0.3,0)--(1.4,0) (1.6,0)--(3.4,0);
	\draw (-0.4,0) circle[radius=0.1cm] node[anchor=south]{$ -1$} node[anchor=north]{$ 1$}
			(1.5,0) circle[radius=0.1cm] node[anchor=south]{$ -1$}node[anchor=north]{$ 1$}
			(3.5,0) circle[radius=0.1cm] node[anchor=south]{$ -1$}node[anchor=north]{$ 1$};
	\draw (0.5,0) node[anchor=south]{$ q^2$}node[anchor=north]{$ -2+2m'$};
	\draw (2.5,0) node[anchor=south]{$ q^{-2}$} node[anchor=north]{$ -2+2m''$};
	\end{tikzpicture}
\]

Other reflections give different matrices $(m_{ij})$ as shown in Table~\ref{App3}.
However, exception~(4) of Lemma~\ref{3 conditions II}, already appears. Indeed to the latter diagram is associated the following:
\[
m_{ij}^C= \begin{bmatrix}
1 & -1 + m' & -1+m' +m'' \\
-1 + m' & 1 & -1 + m'' \\
-1+m' +m'' & -1+m'' & 1	
\end{bmatrix}.
\]
We then have to ask $m_{13}^C=0$, i.e., $m' + m''=1$. In this case these matrices $(m_{ij})$ are realizing solution.

\begin{oss}
The simple roots in the standard chamber can be expressed according to~\cite{Kac77} by
\[\alpha_1 = \epsilon_1 - \epsilon_2,\qquad \alpha_2 = \alpha_\f = \epsilon_2 - \delta_1, \qquad \alpha_3 = \delta_1 - \delta_2,\]
with vectors $\epsilon_i$ generating $\mathfrak{g'}$ and $\delta_i$ generating $\mathfrak{g''}$.
\end{oss}

\subsubsection*{Heckenberger row 9--10--11}
Rows 9, 10, 11 of Table~2 in~\cite{Hecklist} correspond to the Lie superalgebra $D(2,1;\alpha)$ and it has been already explicitly treated as sporadic case of super Lie type in Section~\ref{G3}.

\subsubsection*{Heckenberger row 12}
The first diagram is a composition of the diagrams of rank $2$: $\# 2$ with $q= -\zeta^{-1}$ and $\# 6$ with $q= -\zeta^{-1}$, with $\zeta \in \mathbb{G}_3$.
\[
\begin{tikzpicture}
	\draw (0,0)--(1.4,0) (1.6,0)--(3,0);
	\draw (-0.1,0) circle[radius=0.1cm] node[anchor=south]{$ -\zeta^{-1}$} node[anchor=north]{$ 2m'$}
		 (1.5,0) circle[radius=0.1cm] node[anchor=south]{$ -\zeta^{-1} $} node[anchor=north]{$ 2m'$}
		 (3.1,0) circle[radius=0.1cm] node[anchor=south]{$ \zeta $};
	\draw (0.7,0) node[anchor=south]{$ -\zeta$} node[anchor=north]{$ -2m'$};
	\draw (2.3,0) node[anchor=south]{$ -\zeta$};
	\draw (1.5,-0.4) node[anchor=north]{$ 2m''$};
	\draw (2.4,-0.4) node[anchor=north]{$ -2m''$};
	\draw (3.1,-0.4) node[anchor=north]{$ \frac{2}{3} $};
\end{tikzpicture}
\]

For them to be joint in the middle circle we find $m'=m''=:m$.

The only $q$-truncation root is the third. Reflecting on it we find the same diagram and as matching condition $2m=\frac{8}{3}-2m$, i.e., $m=\frac{2}{3}$. But $q={\rm e}^{{\rm i} \pi m} \in \mathbb{G}_6$. So this case is not realizable.
\subsubsection*{Heckenberger row 13}
This case has two sub cases: $\zeta \in \mathbb{G}_3$ and $\zeta \in \mathbb{G}_6$ and diagram:
\[
\begin{tikzpicture}
	\draw (0,0)--(1.4,0) (1.6,0)--(3,0);
	\draw (-0.1,0) circle[radius=0.1cm] node[anchor=south]{$ \zeta$} node[anchor=north]{$ 2m'$}
		 (1.5,0) circle[radius=0.1cm] node[anchor=south]{$ \zeta $} node[anchor=north]{$ 2m'$}
		 (3.1,0) circle[radius=0.1cm] node[anchor=south]{$ -1 $};
	\draw (0.7,0) node[anchor=south]{$ \zeta^{-1}$} node[anchor=north]{$ -2m'$};
	\draw (2.3,0) node[anchor=south]{$ \zeta^{-2}$};
	\draw (1.5,-0.4) node[anchor=north]{$ 2m''$};
	\draw (2.4,-0.4) node[anchor=north]{$ -4m''$};
	\draw (3.1,-0.4) node[anchor=north]{$ 1 $};
\end{tikzpicture}
\]
\begin{enumerate}\itemsep=0pt
	\item Suppose $\zeta \in \mathbb{G}_3$.
	The first diagram is a composition of the diagrams of rank $2$: $\# 2$ with $q= \zeta$ and $\# 5$ with $q=\zeta$. For them to be joint in the middle circle we find $m'=m''=:m$.

The only $q$-truncation root is the third. Reflecting on it we find a diagram composition of $\# 4$ with $q= -\zeta^{-1}$ and $\# 5$ with $q= \zeta$. As matching condition we find $m=-2m+1$, i.e., $m=\frac{1}{3}$.
This case is thus realizable by the unique solution with parameter $m=\frac{1}{3}$.
	\item Suppose $\zeta \in \mathbb{G}_6$. We proceed analogously, but after reflecting around the third root we find a diagram which is composition of $\# 6$ with $q= \zeta$ and $\# 5$ with $q=\zeta$. The condition now is $m=\frac{1}{6}$ which is an acceptable condition.
	This case is thus realizable by the unique solution with parameter $m=\frac{1}{6}$.
\end{enumerate}

\subsubsection*{Heckenberger row 14}
This case is not realizable, since one of the diagrams contains diagram $\# 7$ of rank 2 which is on turn not realizable.

\subsubsection*{Heckenberger row 15}
The first diagram is a composition of the diagrams of rank $2$: $\# 3$ with $q= \zeta$ and $\# 5$ with $q=\zeta$, where $\zeta \in \mathbb{G}_3$.
\[
\begin{tikzpicture}
	\draw (0,0)--(1.4,0) (1.6,0)--(3,0);
	\draw (-0.1,0) circle[radius=0.1cm] node[anchor=south]{$ {-1}$} node[anchor=north]{$ 1$}
		 (1.5,0) circle[radius=0.1cm] node[anchor=south]{$ \zeta $} node[anchor=north]{$ 2m'$}
		 (3.1,0) circle[radius=0.1cm] node[anchor=south]{$ -1 $};
	\draw (0.7,0) node[anchor=south]{$ \zeta^{-1}$} node[anchor=north]{$ -2m'$};
	\draw (2.3,0) node[anchor=south]{$ \zeta$};
	\draw (1.5,-0.4) node[anchor=north]{$ 2m''$};
	\draw (2.4,-0.4) node[anchor=north]{$ -4m''$};
	\draw (3.1,-0.4) node[anchor=north]{$ 1 $};
\end{tikzpicture}
\]
For them to be joint in the middle circle we find $m'=m''=:m$.
After the reflections around $\r_{12}\circ \r_{1}$ we find the condition $m= \frac{1}{3}$ which is acceptable and gives a unique realizable solution.

\subsubsection*{Heckenberger row 16}
The first diagram is a composition of the diagrams of rank $2$: $\# 3$ with $q= \zeta$ and $\# 6$ with $q= -\zeta$, where $\zeta \in \mathbb{G}_3$.
\[
\begin{tikzpicture}
	\draw (0,0)--(1.4,0) (1.6,0)--(3,0);
	\draw (-0.1,0) circle[radius=0.1cm] node[anchor=south]{$ {-1}$} node[anchor=north]{$ 1$}
		 (1.5,0) circle[radius=0.1cm] node[anchor=south]{$ \zeta $} node[anchor=north]{$ 2m'$}
		 (3.1,0) circle[radius=0.1cm] node[anchor=south]{$ -\zeta $};
	\draw (0.7,0) node[anchor=south]{$ \zeta^{-1}$} node[anchor=north]{$ -2m'$};
	\draw (2.3,0) node[anchor=south]{$ -\zeta^{-1}$};
	\draw (1.5,-0.4) node[anchor=north]{$ \frac{2}{3}$};
	\draw (2.4,-0.4) node[anchor=north]{$ -2m''$};
	\draw (3.3,-0.4) node[anchor=north]{$ 2m'' $};
\end{tikzpicture}
\]
 For them to be joint in the middle circle we find $m'=\frac{1}{3}$.
 After reflecting on the second root we find the condition $m'' = \frac{5}{6}$.
This case is thus realizable by the unique solution with parameters $m'=\frac{1}{3}$ and $m''=\frac{5}{6}$.

\subsubsection*{Heckenberger row 17}
This case is not realizable, since one of the diagrams contains diagram $\# 7$ of rank 2 which is on turn not realizable.

\subsubsection*{Heckenberger row 18}
The first diagram is a composition of the diagrams of rank $2$: $\# 2$ with $q= \zeta$ and $\# 6$ with $q= \zeta$, with $\zeta \in \mathbb{G}_9$:
\[
\begin{tikzpicture}
	\draw (0,0)--(1.4,0) (1.6,0)--(3,0);
	\draw (-0.1,0) circle[radius=0.1cm] node[anchor=south]{$ \zeta$} node[anchor=north]{$ 2m' $}
		 (1.5,0) circle[radius=0.1cm] node[anchor=south]{$ \zeta $} node[anchor=north]{$ 2m'$}
		 (3.1,0) circle[radius=0.1cm] node[anchor=south]{$ \zeta^{-3} $};
	\draw (0.7,0) node[anchor=south]{$ \zeta^{-1} $} node[anchor=north]{$ -2m'$};
	\draw (2.3,0) node[anchor=south]{$ \zeta^{-1} $};
	\draw (1.5,-0.4) node[anchor=north]{$ 2m''$};
	\draw (2.4,-0.4) node[anchor=north]{$ -2m''$};
	\draw (3.1,-0.4) node[anchor=north]{$ \frac{2}{3} $};
\end{tikzpicture}
\]
For them to be joint in the middle circle we find $m'=m''=:m$.
The only $q$-truncation root is the third. Reflecting on it we find the same diagram and as matching condition $m=-\frac{8}{3}+2r$, i.e., $m=\frac{8}{9}$.
This case is thus realizable by the unique solution with parameter $m=\frac{8}{9}$.

\section[Rank $\geq 4$]{Rank $\boldsymbol{\geq 4}$}\label{section10}

In rank $\geq 4$ we do not list all diagram, but we give an effective way to determine all possible realizations from the list of rank~$3$ realizations:

Determining all possible realizations is a simple matter of covering a $q$-diagram with smaller $q$-diagrams, looking up their realizations (which are typically unique or depend on one parameter) and choosing the parameters such that $(m_{ij})$ agrees on the overlap of the subdiagrams. Typically the result is a unique possible realization.

Verifying on the other hand that a possible realization is indeed a realization can in rank $\geq 4$ be in principle done as follows: The entry $m_{ij}^{\r_k(C)}$ after a reflection on $\alpha_k$ from $m_{ij}^C$ is entirely determine in the rank~$3$ Nichols subalgebra and root system generated by $\alpha_i$, $\alpha_j$, $\alpha_k$. Hence in principle we go through all simple roots $\alpha_k$ in all chamber, which are not $m$-Cartan (otherwise the diagram and its realization remains unchanged), and compare our choices of $\big(m_{ij}^{C}\big)$, $\big(m_{ij}^{\r_k(C)}\big)$ as follows:
\begin{itemize}\itemsep=0pt
	\item If $\alpha_k$ is connected to $\alpha_i$ and $\alpha_j$ determine its reflection of this rank $3$ subdiagram from the list and verify that it coincides with the choice of the realization $\big(m_{ij}^{\r_k(C)}\big)$. If $\alpha_k$ is a branch point, this has to be verified for all combinations of $\alpha_i$ and $\alpha_j$. If $\alpha_k$ is only connected to one vertex~$\alpha_i$, this has to be verified only for the rank $2$ subdiagram.
	\item For each $\alpha_i$, $\alpha_j$ not connected to~$\alpha_k$, verify that $m_{ij}^{C}$ coincides with the choice of realization~$m_{ij}^{\r_k(C)}$.
\end{itemize}
In practical examples, we have chosen large subdiagrams that correspond in many Weyl chambers, and we have tried to mostly have an overlap of rank $2$ between these subdiagrams, so that most verifications above are true by construction.

\begin{ese}
	We consider rank $4$ row 18 with $q^3=1$, $q\neq 1$:
\[
	\begin{tikzpicture}
	\draw (0,0)--(1.4,0) (1.6,0)--(3,0) (3.2,0)--(4.6,0);
	\draw (-0.1,0) circle[radius=0.1cm]
	node[anchor=south]{$q^{-1}$};
	\draw (1.5,0) circle[radius=0.1cm]
	node[anchor=south]{$q^{-1}$};
	\draw (3.1,0) circle[radius=0.1cm]
	node[anchor=south]{$q$};
	\draw (4.7,0) circle[radius=0.1cm]
	node[anchor=south]{$-1$};
	\draw (0.7,0)
	node[anchor=south]{$q$};
	\draw (2.3,0)
	node[anchor=south]{$q$};
	\draw (3.9,0)
	node[anchor=south]{$q^{-1}$};
	\end{tikzpicture}
\]
	
	In the first diagram we consider the subdiagram on the nodes $1$, $2$, $3$ of Cartan type $B_3$ (we slightly rewrite the $q$'s to make this visible) and the subdiagram on $2$, $3$, $4$ of super Lie type~$C(3)$. Each has a unique family of realizations depending on a parameter $m_1$ resp.~$m_2$. The overlap between the diagrams (Cartan type $B_2$) has decorations $2m_1$, $-2m_1$, $m_1$ resp.~$2m_2$, $-2m_2$, $m_2$. Hence the only possible realization is for $m_1=m_2=:m$
\[
	\begin{tikzpicture}
	\draw (0,0)--(1.4,0) (1.6,0)--(3,0) (3.2,0)--(4.6,0);
	\draw (-0.1,0) circle[radius=0.1cm]
	node[anchor=south]{$q^2$}
	node[anchor=north]{$2m$};
	\draw (1.5,0) circle[radius=0.1cm]
	node[anchor=south]{$q^2$}
	node[anchor=north]{$2m$};
	\draw (3.1,0) circle[radius=0.1cm]
	node[anchor=south]{$q$}
	node[anchor=north]{$m$};
	\draw (4.7,0) circle[radius=0.1cm]
	node[anchor=south]{$-1$}
	node[anchor=north]{$1$};
	\draw (0.7,0)
	node[anchor=south]{$q^{-2}$}
	node[anchor=north]{$-2m$};
	\draw (2.3,0)
	node[anchor=south]{$q^{-2}$}
	node[anchor=north]{$-2m$};
	\draw (3.9,0)
	node[anchor=south]{$q^{-1}$}
	node[anchor=north]{$-m$};
	\end{tikzpicture}
\]
The only relevant reflection is $\r_4$, and the entire neighborhood is contained in $C(3)$. So we look up the reflection of the realization of $C(3)$ and leave the remaining realization unchanged:
\[
	\begin{tikzpicture}
	\draw (0,0)--(1.4,0) (1.6,0)--(3,0) (3.2,0)--(4.6,0);
	\draw (-0.1,0) circle[radius=0.1cm]
	node[anchor=south]{$q^2$}
	node[anchor=north]{$2m$};
	\draw (1.5,0) circle[radius=0.1cm]
	node[anchor=south]{$q^2$}
	node[anchor=north]{$2m$};
	\draw (3.1,0) circle[radius=0.1cm]
	node[anchor=south]{$-1$}
	node[anchor=north]{$1$};
	\draw (4.7,0) circle[radius=0.1cm]
	node[anchor=south]{$-1$}
	node[anchor=north]{$1$};
	\draw (0.7,0)
	node[anchor=south]{$q^{-2}$}
	node[anchor=north]{$-2m$};
	\draw (2.3,0)
	node[anchor=south]{$q^{-2}$}
	node[anchor=north]{$-2m$};
	\draw (3.9,0)
	node[anchor=south]{$q$}
	node[anchor=north]{$-2+m$};
	\end{tikzpicture}
\]
We now have to \emph{verify} that the subdiagram on $1$, $2$, $3$ with this decoration turns into a listed realization. Indeed this is the realization of $A(2|0)$ at $q^2$ with parameter $m_3=2m$.

The only relevant new reflection is $\r_3$ and the neighborhood is contained in $C(3)$. So we look up the reflection there:
\[
	\begin{tikzpicture}
	\draw (0,0)--(1.4,0) (1.6,0)--(3,0) (3.2,0)--(4.6,0);
	\draw (-0.1,0) circle[radius=0.1cm]
	node[anchor=south]{$q^2$}
	node[anchor=north]{$2m$};
	\draw (1.5,0) circle[radius=0.1cm]
	node[anchor=south]{$-1$}
	node[anchor=north]{$1$};
	\draw (3.1,0) circle[radius=0.1cm]
	node[anchor=south]{$-1$}
	node[anchor=north]{$1$};
	\draw (4.7,0) circle[radius=0.1cm]
	node[anchor=south]{$q$}
	node[anchor=north]{$m$};
	\draw (0.7,0)
	node[anchor=south]{$q^{-2}$}
	node[anchor=north]{$-2m$};
	\draw (2.3,0)
	node[anchor=south]{$q^2$}
	node[anchor=north]{$2m-2$};
	\draw (3.9,0)
	node[anchor=south]{$q^{-1}$}
	node[anchor=north]{$m$};
	\draw (1.5+0.1,-0.1) arc (-90-60:-90+60:1.74);
	\draw (3.1,-1)
	node[anchor=south]{$q^{-1}$}
	node[anchor=north]{$-m$};
	\end{tikzpicture}
\]
Since $3$ is also in the subdiagram on $1$, $2$, $3$, this is automatically still the realization of $A(2|0)$.

The only relevant new reflection is $\r_2$ at the branch point.We introduce a new subdiagram on~$1$, $2$, $4$. This is the $q$-diagram of $A(1|1)$ at $q$ with realizations parametrized by~$m_4$. But \emph{matching the decorations} on the right side of $2$ requires $m_4=m$ and matching it on the left side requires $2-m_4=2m$. This is only possible for $m=\frac{2}{3}$.

The only relevant new reflection is $\r_2$, a branch point, and part of the neighborhood is contained in $A(2|0)$ and parts in $A(1|1)$. So we look up the reflection there:
\[
	\begin{tikzpicture}
	\draw (0,0)--(1.4,0) (1.6,0)--(3,0); 
	\draw (-0.1,0) circle[radius=0.1cm]
	node[anchor=south]{$-1$}
	node[anchor=north]{$1$};
	\draw (1.5,0) circle[radius=0.1cm]
	node[anchor=south]{$-1$}
	node[anchor=north]{$1$};
	\draw (3.1,0) circle[radius=0.1cm]
	node[anchor=south]{$q^2$}
	node[anchor=north]{$2m$};
	\draw (4.7,0) circle[radius=0.1cm]
	node[anchor=south]{$-1$}
	node[anchor=north]{$1$};
	\draw (0.7,0)
	node[anchor=south]{$q^{2}$}
	node[anchor=north]{$-2+2m$};
	\draw (2.3,0)
	node[anchor=south]{$q^{-2}$}
	node[anchor=north]{$-2m$};
	\draw (1.5+0.1,-0.1) arc (-90-60:-90+60:1.74);
	\draw (3.1,-1)
	node[anchor=south]{$q$}
	node[anchor=north]{$-2+m$};
	\end{tikzpicture}
\]
Now there are two relevant new reflections:
$\r_1$ has its neighborhood in $A(2|0)$ and $A(1|1)$ and accordingly gives
\[
	\begin{tikzpicture}
	\draw (0,0)--(1.4,0) (1.6,0)--(3,0); 
	\draw (-0.1,0) circle[radius=0.1cm]
	node[anchor=south]{$-1$}
	node[anchor=north]{$1$};
	\draw (1.5,0) circle[radius=0.1cm]
	node[anchor=south]{$q^{2}$}
	node[anchor=north]{$2m$};
	\draw (3.1,0) circle[radius=0.1cm]
	node[anchor=south]{$q^2$}
	node[anchor=north]{$2m$};
	\draw (4.7,0) circle[radius=0.1cm]
	node[anchor=south]{$-1$}
	node[anchor=north]{$1$};
	\draw (0.7,0)
	node[anchor=south]{$q^{-2}$}
	node[anchor=north]{$-2m$};
	\draw (2.3,0)
	node[anchor=south]{$q^{-2}$}
	node[anchor=north]{$-2m$};
	\draw (1.5+0.1,-0.1) arc (-90-60:-90+60:1.74);
	\draw (3.1,-1)
	node[anchor=south]{$q$}
	node[anchor=north]{$-2+m$};
	\end{tikzpicture}
\]
We have to verify that the reflections in both subdiagrams agree (which is clear because they are reflections in a common~$A(1|0)$).

The second reflection $\r_2$ has its neighborhood in $A(1|1)$:
\[
	\begin{tikzpicture}
	\draw (0,0)--(1.4,0) (1.6,0)--(3,0); 
	\draw (-0.1,0) circle[radius=0.1cm]
	node[anchor=south]{$-1$}
	node[anchor=north]{$1$};
	\draw (1.5,0) circle[radius=0.1cm]
	node[anchor=south]{$q$}
	node[anchor=north]{$m$};
	\draw (3.1,0) circle[radius=0.1cm]
	node[anchor=south]{$q^2$}
	node[anchor=north]{$2m$};
	\draw (4.7,0) circle[radius=0.1cm]
	node[anchor=south]{$-1$}
	node[anchor=north]{$1$};
	\draw (0.7,0)
	node[anchor=south]{$q^{-1}$}
	node[anchor=north]{$-m$};
	\draw (2.3,0)
	node[anchor=south]{$q^{-2}$}
	node[anchor=north]{$-2m$};
	\draw (1.5+0.1,-0.1) arc (-90-60:-90+60:1.74);
	\draw (3.1,-1)
	node[anchor=south]{$q^{-1}$}
	node[anchor=north]{$-m$};
	\end{tikzpicture}
\]
We have to verify that the new diagram on $1$, $2$, $3$ appears in the list of realizations, namely super Lie type $C(3)$ at $q$ with $m_5=5$, which again requires on the edge $12$ the identity $-2+2m=-m$ for $m=\frac{2}{3}$.

Applying both reflections in either order gives the following (again this is not problematic because the neighborhood is in both diagrams and the reflection on $A(1|0)$ gives the same result:
\[
	\begin{tikzpicture}
	\draw (0,0)--(1.4,0) (1.6,0)--(3,0); 
	\draw (-0.1,0) circle[radius=0.1cm]
	node[anchor=south]{$-1$}
	node[anchor=north]{$1$};
	\draw (1.5,0) circle[radius=0.1cm]
	node[anchor=south]{$-1$}
	node[anchor=north]{$1$};
	\draw (3.1,0) circle[radius=0.1cm]
	node[anchor=south]{$q^2$}
	node[anchor=north]{$2m$};
	\draw (4.7,0) circle[radius=0.1cm]
	node[anchor=south]{$-1$}
	node[anchor=north]{$1$};
	\draw (0.7,0)
	node[anchor=south]{$q$}
	node[anchor=north]{$-2+m$};
	\draw (2.3,0)
	node[anchor=south]{$q^{-2}$}
	node[anchor=north]{$-2m$};
	\draw (1.5+0.1,-0.1) arc (-90-60:-90+60:1.74);
	\draw (3.1,-1)
	node[anchor=south]{$q^{-1}$}
	node[anchor=north]{$-m$};
	\end{tikzpicture}
\]
To summarize: The following is the unique realization
\[
	\begin{tikzpicture}
	\draw (0,0)--(1.4,0) (1.6,0)--(3,0) (3.2,0)--(4.6,0);
	\draw (-0.1,0) circle[radius=0.1cm]
	node[anchor=south]{$q^2$}
	node[anchor=north]{$\frac{4}{3}$};
	\draw (1.5,0) circle[radius=0.1cm]
	node[anchor=south]{$q^2$}
	node[anchor=north]{$\frac{4}{3}$};
	\draw (3.1,0) circle[radius=0.1cm]
	node[anchor=south]{$q$}
	node[anchor=north]{$\frac{2}{3}$};
	\draw (4.7,0) circle[radius=0.1cm]
	node[anchor=south]{$-1$}
	node[anchor=north]{$1$};
	\draw (0.7,0)
	node[anchor=south]{$q^{-2}$}
	node[anchor=north]{$-\frac{4}{3}$};
	\draw (2.3,0)
	node[anchor=south]{$q^{-2}$}
	node[anchor=north]{$-\frac{4}{3}$};
	\draw (3.9,0)
	node[anchor=south]{$q^{-1}$}
	node[anchor=north]{$-\frac{2}{3}$};
	\end{tikzpicture}
\]
\end{ese}

\section{Tables: realizing lattices of Nichols algebras in rank 2 and 3}

We now list from \cite{Hecklist} all finite-dimensional diagonal Nichols algebras in rank~$2$ and $3$ in terms of their $q$-diagrams, and below each of them we display the corresponding realizing lattice in terms of $m$-diagrams, such that $q_{ij}={\rm e}^{{\rm i} \pi m_{ij}}$ and the reflection compatibility \eqref{cond7} holds.

The numbers of the rows are Heckenberger's numbering, but sometimes we subdivide the cases, e.g., row $2$ into $2'$ for $q=-1$ and $2''$ for $q\neq \pm 1$, if they have different number of realizations.
Note that we display the Nichols algebras associated to quantum (super-)groups as in Heckenberger list (with $q_{ii}=1$ for a short root) in contrast to the notation used for quantum (super-)groups (with $q_{ii}=q^{(\alpha_i,\alpha_i)}=q^2$ for a short root), due to the usual normalization of the Killing form, which we used in Sections~\ref{Cartan} and~\ref{SuperLie}.

\begin{table}[h!]\centering\footnotesize \renewcommand{\arraystretch}{1.06}
 \caption{Realization of finite-dimensional diagonal Nichols algebras of rank~2.}\label{App2}
 \vspace{1mm}


\end{table}

\subsection*{Acknowledgements}
IF and SL are partially supported by the RTG 1670 ``Mathematics inspired by String theory and Quantum Field Theory''. Many thanks to Christian Reiher for suggesting the analytic continuation by partial integration in Section~\ref{sec_recursion}, to Ivan Angiono for explaining to us Proposition~\ref{prop_IvanCartanRoot}, to Sven Ole Warnaar for answering questions on the $\g$-Selberg integral formula, and to the anonymous referees for many improving comments on the manuscript.

\addcontentsline{toc}{section}{References}
\LastPageEnding

\end{document}